\newcommand{\R}{\mathbb{R}}
\newcommand{\Z}{\mathbb{Z}}
\newcommand{\var}{\mathbf{var\,}}
\newcommand{\E}{\mathbf{E}}
\renewcommand{\P}{\mathbf{P}}
\newcommand{\I}{\mathbb{I}}
\newcommand{\J}{\mathbb{J}}
\newcommand{\D}{\mathcal{D}}
\newcommand{\chck}{{\color{red}[check]}}
\renewcommand{\chck}{}
\newcommand{\revrem}[1]{#1}
\renewcommand{\revrem}[1]{}
\newcommand{\revadd}[1]{{\color{red}{#1}}}
\renewcommand{\revadd}[1]{{\color{black}{#1}}}
\DeclareMathOperator*{\argmin}{arg\,min}
\DeclareMathOperator{\arcsinh}{arcsinh}
\numberwithin{equation}{section}
\theoremstyle{plain}
\newtheorem{definition}{Definition}
\newtheorem{theorem}{Theorem}[section]
\newtheorem{lemma}{Lemma}[section]
\newtheorem{proposition}{Proposition}[section]
\newtheorem{corollary}{Corollary}[section]
\newtheorem{remark}{Remark}
\begin{document}

\title{Sample Path Large Deviations for L\'evy Processes and Random Walks with Regularly Varying Increments
}


\author{Chang-Han Rhee$^{1,3}$
        \and
        Jose Blanchet$^{2,4}$
        \and
        Bert Zwart$^{1,3}$
}
\footnotetext[1]{Stochastics Group, Centrum Wiskunde \& Informatica, Science Park 123, Amsterdam, North Holland, 1098 XG, The Netherlands}
\footnotetext[2]{Management Science \& Engineering, Stanford University, Stanford, CA 94305, USA
}
\footnotetext[3]{
Supported by NWO VICI grant.  
}
\footnotetext[4]{
Supported by NSF grants DMS-0806145/0902075,  CMMI-0846816 and CMMI-1069064.
}



\date{\today}

\maketitle

\begin{abstract}
Let $X$ be a L\'evy process with regularly varying L\'evy measure $\nu$. We obtain sample-path large deviations for scaled processes $\bar X_n(t) \triangleq X(nt)/n$ and obtain a similar result for random walks.
Our results yield detailed asymptotic estimates in scenarios where multiple big jumps in the increment are required to make a rare event happen; we illustrate this through detailed conditional limit theorems. In addition, we investigate connections with the classical large deviations framework. In that setting, we show that a weak large deviation principle (with logarithmic speed) holds, but a full large deviation principle does not hold.

\smallskip
\noindent \textbf{Keywords} \ Sample Path Large Deviations $\cdot$ Regular Variation $\cdot$ $\mathbb M$-convergence $\cdot$ L\'evy Processes

\smallskip
\noindent \textbf{Mathematics Subject Classification} {\ 60F10 $\cdot$ 60G17 $\cdot$ 60B10}
\end{abstract}

\section{Introduction}
In this paper, we develop sample-path large deviations for one-dimensional L\'evy processes and random walks, assuming the jump sizes are heavy-tailed.
Specifically, let $X(t),t\geq 0,$ be a centered L\'evy process with regularly varying L\'evy measure $\nu$. Assume that $\P(X(1)>x)$ is regularly varying of index $-\alpha$, and that $\P(X(1)<-x)$ is regularly varying of index $-\beta$; i.e.
there exist slowly varying functions $L_{+}$ and $L_{-}$ such that
\begin{equation}
\label{intro-eq-twosided}
\P(X(1)>x) = L_{+} (x) x^{-\alpha}, \hspace{1cm} \P(X(1)<-x) = L_{-}(x) x^{-\beta}.
\end{equation}
Throughout the paper, we assume $\alpha, \beta>1$. We also consider spectrally one-sided processes; in that case only $\alpha$ plays a role.
Define $\bar X_n= \{\bar X_n(t), t\in [0,1]\}$, with $\bar X_n(t) = X(nt)/n, t\geq 0$. We are interested in large deviations of $\bar X_n$.

This topic fits well in a branch of limit theory that has a long history, has intimate connections to point processes and extreme value theory, and is still a subject of intense activity.
The investigation of tail estimates of the one-dimensional distributions of $\bar X_n$ (or random walks with heavy-tailed step size distribution) was initiated in \cite{Nagaev69,Nagaev77}. The state of the art of such results
is well summarized in \cite{BorovkovBorovkov, DiekerDenisovShneer, EmbrechtsKluppelbergMikosch97, FKZ}. In particular, \cite{DiekerDenisovShneer} describe in detail how fast $x$ needs to grow with $n$ for the asymptotic relation
\begin{equation}
\label{onebigjump}
\P( X(n) > x)  = n \P(X(1)>x)(1+o(1))
\end{equation}
to hold, as $n\rightarrow\infty$, in settings that go beyond  (\ref{intro-eq-twosided}). If (\ref{onebigjump}) is valid, the so-called \emph{principle of one big jump} is said to hold. A functional version of this insight has been derived in \cite{HLMS}. A significant number of studies investigate the question of if and how the principle of a single big jump is affected by the impact of (various forms of) dependence, and cover stable processes, autoregressive processes, modulated processes, and stochastic differential equations; see \cite{BuraDamekMikosch13, FossModulated, HultLindskog07,  KonstantinidesMikosch05, MikoschWintenberger13, mikosch2016large, MikoschSamorodnitsky00, Samorodnitsky04}.

The problem we investigate in this paper is markedly different from all of these works. Our aim is to develop asymptotic estimates of $\P(\bar X_n \in A)$ for a sufficiently general collection of sets $A$, so that it is possible to study continuous functionals of $\bar X_n$ in a systematic manner. For many such functionals, and many sets $A$, the associated rare event will not be caused by a single big jump, but multiple jumps. The results in this domain (e.g. \cite{blanchetrisk, FK12, ZBM}) are few, each with an ad-hoc approach. As in large deviations theory for light tails, it is desirable to have more general tools available.

Another aspect of heavy-tailed large deviations we aim to clarify in this paper is the connection with the standard large-deviations approach, which has not been touched upon in any of the above-mentioned references. In our setting, the goal would be to obtain a function $I$ such that
\begin{equation}
\label{weakldp}
-\inf_{\xi\in A^\circ} I(\xi)\leq \liminf_{n\rightarrow\infty} \frac{\log \P(\bar X_n \in A)}{\log n} \leq \limsup_{n\rightarrow\infty} \frac{\log \P(\bar X_n \in A)}{\log n} \leq -\inf_{\xi\in \bar A} I(\xi),
\end{equation}
where ${A}^\circ$ and $\bar A$ are the interior and closure of $A$; all our large deviations results are derived in the Skorokhod $J_1$ topology.
Equation (\ref{weakldp}) is a classical large deviations principle (LDP) with sub-linear speed (cf.\ \cite{dembozeitouni}).
Using existing results in the literature (e.g.\ \cite{DiekerDenisovShneer}), it is not difficult to show that  $X(n)/n=\bar X_n(1)$ satisfies an LDP with rate function $I_1=I_1(x)$ which is $0$ at $0$, equal to $(\alpha-1)$ if $x>0$, and $(\beta-1)$ if $x<0$.
This is a lower-semicontinuous function of which the level sets are not compact.
Thus, in large-deviations terminology, $I_1$ is a rate function, but is not a good one.
This implies that techniques such as the projective limit approach cannot be applied.
In fact, in Section \ref{subsec:nonexistence}, we show that there does not exist an LDP of the form (\ref{weakldp}) for general sets $A$, by giving a counterexample.
A version of (\ref{weakldp}) for compact sets is derived in Section~\ref{subsec:weak-ldp}, as a corollary of our main results.
A result similar to (\ref{weakldp}) for random walks with semi-exponential (Weibullian) tails has been derived in \cite{Gantert98} (see also \cite{Gantert00, Gantert14} for related results).
Though an LDP for finite-dimensional distributions can be derived, lack of exponential tightness also persists at the sample-path level. To make the rate function good (i.e., to have compact level sets), a topology  chosen in \cite{Gantert98}  is considerably weaker than any of the Skorokhod topologies (but sufficient for the application that is central in that work).

The approach followed in the present paper is based on the recent developments in the theory of regular variation. In particular, in \cite{LRR},  the classical notion of regular variation is re-defined through a new convergence concept called $\mathbb M$-convergence (this is in itself a refinement of other reformulations of regular variation in function spaces; see \cite{DeHaanLin01, HultLindskog05, HultLindskog06}). In Section~\ref{sec:preliminaries}, we further investigate the $\mathbb M$-convergence framework by deriving a number of general results that facilitate the development of our proofs.

This paves the way towards our main large deviations results, which are presented in Section~\ref{sec:sample-path-ldps}. We actually obtain estimates that are sharper than (\ref{weakldp}), though we impose a condition on $A$. For one-sided L\'evy processes, our result takes the form
\begin{equation}
\label{asymptoticslevy}
 C_{\mathcal J(A)}(A^\circ) \leq \liminf_{n\rightarrow\infty} \frac{\P(\bar X_n \in A)}{(n\nu[n,\infty))^{\mathcal J(A)}} \leq \limsup_{n\rightarrow\infty} \frac{\P(\bar X_n \in A)}{(n\nu[n,\infty))^{\mathcal J(A)}} \leq C_{\mathcal J(A)}(\bar A).
\end{equation}
Precise definitions can be found in Section~\ref{subsec:one-sided-large-deviations}; for now we just mention that $C_j$ is a measure on the Skorokhod space, and $\mathcal J(\cdot)$ is an integer valued set function defined as $\mathcal J(A) = \inf_{\xi\in A\cap \mathbb D_s^\uparrow} \mathcal D_+(\xi)$, where $\mathcal D_+(\xi)$ is the number of discontinuites of $\xi$, and $\mathbb D_s^\uparrow$ is the set of all non-increasing step functions vanishing at the origin. 
Throughout the paper, we adopt the convention that the infimum over an empty set is $\infty$.
Letting $\mathbb D_j$ and $\mathbb D_{< j}$ be the sets of step functions vanishing at the origin with precisely $j$ and at most $j-1$ steps respectively, we note that the measure $C_j$, defined on $\mathbb D \setminus \mathbb D_{<j}$
has its support on $\mathbb D_j$. A crucial assumption for (\ref{asymptoticslevy}) to hold is that the Skorokhod $J_1$ distance between the sets $A$ and $\mathbb D_{< \mathcal J(A)}$ is strictly positive.
For $A$ such that $\mathcal J(A)=1$ this result corresponds to the one shown in \cite{HLMS}.  
(Note that \cite{HLMS} deals with multi-variate regular variation whereas we focus on 1-dimensional regular variation in this paper.)
The interpretation of the ``rate function'' $\mathcal J(A)$ is that it provides the number of jumps in the L\'evy process that are necessary to make the event $A$ happen. This can be seen as an extension of the principle of a single big jump to multiple jumps. A rigorous statement on  when (\ref{asymptoticslevy}) holds can be found in Theorem~\ref{thm:one-sided-main-theorem}, which is the first main result of the paper.


The result that comes closest to (\ref{asymptoticslevy}) is Theorem~5.1 in \cite{LRR} which considers the $\mathbb M$-convergence of $\nu[n,\infty)^{-j} \P(X/n \in A)$.
This result could be used as a starting point to investigate rare events that happen on a time-scale of $O(1)$. However, in the large-deviations scaling we consider,
rare events that happen on a time-scale of $O(n)$. Controlling the L\'evy process on this larger time-scale requires more delicate estimates, eventually
leading to an additional factor $n^j$ in the asymptotic results. We further show that the choice $j=\mathcal J(A)$ is the only choice that leads to a non-trivial limit.
One useful notion that we develop and rely on in our setting is a form of asymptotic equivalence, which can best be compared with exponential equivalence in classical large deviations theory.

In Section~\ref{subsec:two-sided-large-deviations} we present sample-path large deviations for two-sided L\'evy processes. Our main results in this case are Theorems
\ref{thm:two-sided-limit-theorem}--\ref{thm:two-sided-multiple-asymptotics}.
In the two-sided case, \revrem{we need to resolve}%
determining the most likely path requires resolving
significant combinatorial issues which do not appear in the one sided case. 
The polynomial rate of decay for $\P(\bar X_n \in A)$, which was described by the function $\mathcal J(A)$ in the one-sided case, has a more complicated description; the corresponding polynomial rate in the two-sided case is
\begin{equation}
\label{two-sided rate function}
\inf_{\xi,\zeta\in \mathbb D_s^\uparrow;\; \xi-\zeta \in A} (\alpha-1)\mathcal D_+(\xi) + (\beta-1)\mathcal D_+(\zeta).
\end{equation}
Note that this is a result that one could expect from the result for one-sided L\'evy processes and a heuristic application of the contraction principle.
A rigorous treatment of the two-sided case requires a more delicate argument compared to the one-sided case: in the one-sided case, the argument simplifies since if one takes $j$ largest jumps away from $\bar X_n$, then the probability that the residual process is of significant size is $o\big((n\nu[n,\infty))^j\big)$ so that it does not contribute in (\ref{asymptoticslevy}), while in two-sided case, taking $j$ largest upward jumps and $k$ largest downward jumps from $\bar X_n$ doesn't guarantee that the residual process remains small with high enough probability---i.e., the probability that the residual process is of significant size cannot be bounded by $o\big((n\nu[n,\infty))^j(n\nu(-\infty,-n])^k\big)$. In addition, it may be the case that multiple pairs $(j,k)$ of jumps lead to optimal solutions of (\ref{two-sided rate function}).
\revadd{
To overcome such difficulties, we first develop general tools---Lemma~\ref{thm:simple-product-space} and \ref{thm:union-limsup}---that establish a suitable notion of $\mathbb M$-convergence on product spaces. 
Using these results,
we prove 
in  Theorem~\ref{thm:multi-d-limit-theorem}
the suitable $\mathbb M$-convergence for multiple L\'evy processes in the associated product space.
Viewing the two-sided L\'evy process as a superposition of one-sided L\'evy processes, we then apply the continuous mapping principle for $\mathbb M$-convergence to Theorem~\ref{thm:multi-d-limit-theorem} to establish our main results.
Although no further implications are discussed in this paper, 
we believe that Theorem~\ref{thm:multi-d-limit-theorem} itself is of independent interest as well because it can be applied to generate large deviations results for a general class of functionals of multiple L\'evy processes.
}

We derive analogous results for random walks in Section~\ref{subsec:random-walks}. Random walks cannot be decomposed into independent components with small jumps and large jumps as easily as L\'evy processes, making the analysis of random walks more technical if done directly. However, it is possible to follow an indirect approach. Given a random walk $S_k, k\geq 0$, one can study a subordinated version $S_{N(t)}, t\geq 0$ with
$N(t),t\geq 0$ an independent unit rate Poisson process. The Skorokhod $J_1$ distance between rescaled versions of $S_k, k\geq 0$ and $S_{N(t)}, t\geq 0$ can then be bounded in terms of the deviations of $N(t)$ from $t$, which have been studied thoroughly. 

In Section~\ref{subsec:conditional-limit-theorem}, we provide conditional limit theorems which give a precise description of the limit behavior of $\bar X_n$ given that $\bar X_n \in A$ as $n\rightarrow\infty$. An early result of this type is given in \cite{durrett1980conditioned}, which focuses on regularly varying random walks with finite variance conditioned on the event $A= \{\bar X_n(1)>a\}$.
Using the recent results that we have discussed (e.g. \cite{HLMS}) more general conditional limit theorems can be derived for single-jump events.

We prove an LDP of the form (\ref{weakldp}) in Section~\ref{subsec:weak-ldp}, where the upper bound requires a compactness assumption. We construct a counterexample showing that the compactness assumption cannot be totally removed, and thus, a full LDP does not hold. Essentially, if a rare event is caused by $j$ big jumps, then the framework developed in this paper applies if each of these jumps is bounded away from below by a strictly positive constant. Our counterexample in Section~\ref{subsec:nonexistence} indicates that it is not trivial to remove this condition.

As one may expect, it is not possible to apply classical variational methods to derive an expression for the exponent $\mathcal J(A)$, as is often the case in large deviations for light tails. Nevertheless, there seems to be a generic connection with a class of control problems called impulse control problems.
Equation (\ref{two-sided rate function}) is a specific deterministic impulse-control problem, which is related to \cite{Barles}.
We expect that techniques similar to those in \cite{Barles} will be useful to characterize optimality of solutions for
problems like (\ref{two-sided rate function}).
The latter challenge is not taken up in the present study and will be addressed elsewhere. 
Instead, in Section~\ref{sec:applications}, we analyse (\ref{two-sided rate function}) directly in several examples; see also \cite{chen2017efficient}.
In each case, a condition needs to be checked to see whether
our framework is applicable. We provide a general result that essentially
states that we only need to check this condition for step functions in A, which
makes this check rather straightforward.
%

In summary, this paper is organized as follows. After developing some preliminary results in Section~\ref{sec:preliminaries}, we present our main results in Section~\ref{sec:sample-path-ldps}. Applications to random walks and connections with classical large deviations theory are investigated in Section~\ref{sec:implications}. 
Section~\ref{sec:proofs} is devoted to proofs. We collect some useful bounds in Appendix A.

\section{$\mathbb M$-convergence}\label{sec:preliminaries}
This section reviews and develops general concepts and tools that are useful in deriving our large deviations results. 
The proofs of the lemmas and corollaries stated throughout this section are provided in Section~\ref{subsec:proofs-for-M-convergence}.
We start with briefly reviewing the notion of $\mathbb M$-convergence, introduced in \citet{LRR}.

Let $(\mathbb S,d)$ be a complete separable metric space, and $\mathscr S$ be the Borel $\sigma$-algebra on $\mathbb S$.
Given a closed subset $\mathbb C$ of $\mathbb S$, let $\mathbb S\setminus \mathbb C$ be equipped with the relative topology as a subspace of $\mathbb S$,
and consider the associated sub $\sigma$-algebra $\mathscr S_{\mathbb S\setminus \mathbb C} \triangleq \{A: A\subseteq \mathbb S\setminus \mathbb C, A\in \mathscr S\}$ on it.
Define $\mathbb C^r \triangleq \{x\in \mathbb S: d(x, \mathbb C) < r\}$ for $r>0$, and let $\mathbb M(\mathbb S\setminus \mathbb C)$  be the class of measures defined on  $\mathscr S_{\mathbb S\setminus \mathbb C}$ whose restrictions to $\mathbb S\setminus \mathbb C^r$ are finite for all $r>0$.
Topologize $\mathbb M(\mathbb S\setminus \mathbb C)$ with a sub-basis $\big\{\{\nu \in \mathbb M(\mathbb S\setminus \mathbb C): \nu (f) \in G\}$: $f\in \mathcal C_{\mathbb S\setminus \mathbb C}$, $G$ open in $\R_+\big\}$ where $\mathcal C_{\mathbb S\setminus \mathbb C}$ is the set of real-valued, non-negative, bounded, continuous functions whose support is bounded away from $\mathbb C$ (i.e., $f(\mathbb C^r) = \{0\}$ for some $r>0$).
A sequence of measures $\mu_n \in \mathbb M(\mathbb S\setminus \mathbb C)$ converges to  $\mu\in \mathbb M(\mathbb S\setminus \mathbb C)$ if $\mu_n (f)  \to \mu(f)$ for each $f\in \mathcal C_{\mathbb S\setminus \mathbb C}$. 
Note that this notion of convergence in $\mathbb M(\mathbb S\setminus\mathbb C)$ coincides with the classical notion of weak convergence of measures (\citealp{billingsley2013convergence}) if $\mathbb C$ is an empty set.
We say that a set $A\subseteq \mathbb S$ is bounded away from another set  $B\subseteq \mathbb S$  if $\inf_{x\in A, y\in B} d(x,y) > 0$.
An important characterization of $\mathbb M(\mathbb S\setminus \mathbb C)$-convergence is as follows:
\begin{theorem}[Theorem~2.1 of \citealp{LRR}]\label{result:L21}
Let $\mu, \mu_n \in \mathbb M({\mathbb S\setminus \mathbb C})$. Then $\mu_n \to \mu$ in $\mathbb M({\mathbb S\setminus \mathbb C})$ as $n \to \infty$ if and only if
\begin{equation}
\label{result1ub}
\limsup_{n\to\infty } \mu_n(F) \leq \mu(F)
\end{equation}
for all closed $F\in \mathscr S_{\mathbb S\setminus \mathbb C}$ bounded away from $\mathbb C$ and
\begin{equation}
\label{result1lb}
\liminf_{n\to\infty} \mu_n(G) \geq \mu(G)
\end{equation}
for all open $G\in \mathscr S_{\mathbb S\setminus \mathbb C}$ bounded away from $\mathbb C$.
\end{theorem}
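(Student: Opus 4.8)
The statement is a Portmanteau-type characterization of $\mathbb M(\mathbb S\setminus\mathbb C)$-convergence, so the plan is to run the classical argument, taking care of the two features that distinguish the present setting: the measures are only locally finite away from $\mathbb C$, and the admissible test functions are exactly those in $\mathcal C_{\mathbb S\setminus\mathbb C}$, i.e.\ with support bounded away from $\mathbb C$. I would treat the two implications separately. For the ``if'' direction, assuming \eqref{result1ub}--\eqref{result1lb}, I would fix $f\in\mathcal C_{\mathbb S\setminus\mathbb C}$ with $0\le f\le M$ and $\operatorname{supp}f\subseteq\mathbb S\setminus\mathbb C^{r}$ for some $r>0$. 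First I would apply \eqref{result1ub} to the closed, bounded-away set $F_0:=\operatorname{supp}f$ to obtain $\sup_n\mu_n(F_0)<\infty$; this uniform bound is precisely what legitimizes a reverse Fatou step later. Then I would use the layer-cake identities $\mu_n(f)=\int_0^M\mu_n(\{f>t\})\,\mathrm{d} t=\int_0^M\mu_n(\{f\ge t\})\,\mathrm{d} t$, and likewise for $\mu$, which are valid because every level set is contained in $F_0$ and hence has finite measure. Fatou's lemma applied to the open sets $\{f>t\}$ together with \eqref{result1lb} gives $\liminf_n\mu_n(f)\ge\int_0^M\mu(\{f>t\})\,\mathrm{d} t=\mu(f)$, while the reverse Fatou lemma applied to the closed sets $\{f\ge t\}$ (dominated on $[0,M]$ by the constant $\sup_n\mu_n(F_0)$) together with \eqref{result1ub} gives $\limsup_n\mu_n(f)\le\int_0^M\mu(\{f\ge t\})\,\mathrm{d} t=\mu(f)$. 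Hence $\mu_n(f)\to\mu(f)$ for every admissible $f$, which is the definition of $\mathbb M(\mathbb S\setminus\mathbb C)$-convergence.

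For the ``only if'' direction, assuming $\mu_n(f)\to\mu(f)$ for all $f\in\mathcal C_{\mathbb S\setminus\mathbb C}$, I would approximate indicators by admissible continuous functions. Given a closed $F$ with $d(F,\mathbb C)\ge 2\epsilon>0$, for $0<\delta<\epsilon$ I would take $f_\delta(x):=(1-d(x,F)/\delta)^{+}$, which is continuous, satisfies $\mathbf 1_F\le f_\delta\le 1$, and has $\operatorname{supp}f_\delta\subseteq F^{\delta}$ still bounded away from $\mathbb C$ (distance at least $2\epsilon-\delta>\epsilon$), hence $f_\delta\in\mathcal C_{\mathbb S\setminus\mathbb C}$. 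Then $\limsup_n\mu_n(F)\le\limsup_n\mu_n(f_\delta)=\mu(f_\delta)$, and since $f_\delta\downarrow\mathbf 1_F$ as $\delta\downarrow0$ with $\mu(f_{\epsilon/2})<\infty$, dominated convergence gives $\mu(f_\delta)\downarrow\mu(F)$, which yields \eqref{result1ub}. Symmetrically, for open $G$ with $d(G,\mathbb C)\ge2\epsilon>0$ I would take $g_\delta(x):=\min(1,d(x,\stcomp G)/\delta)$, which is continuous, satisfies $g_\delta\le\mathbf 1_G$, and has $\operatorname{supp}g_\delta\subseteq\overline G$ bounded away from $\mathbb C$, hence $g_\delta\in\mathcal C_{\mathbb S\setminus\mathbb C}$; then $\liminf_n\mu_n(G)\ge\liminf_n\mu_n(g_\delta)=\mu(g_\delta)$, and $g_\delta\uparrow\mathbf 1_G$ as $\delta\downarrow0$ gives $\mu(g_\delta)\uparrow\mu(G)$ by monotone convergence, which yields \eqref{result1lb}.

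I expect the only genuine subtleties to be the bookkeeping around ``bounded away from $\mathbb C$'': in the ``only if'' direction the smoothing parameter $\delta$ must be chosen strictly below the distance gap so that the approximants stay in $\mathcal C_{\mathbb S\setminus\mathbb C}$ (and so that their supports are $\mu$-finite, which is what makes the monotone/dominated convergence steps work), while in the ``if'' direction the reverse Fatou step is not automatic because the $\mu_n$ need not be uniformly finite on all of $\mathbb S\setminus\mathbb C$ --- the domination needed on the level sets is obtained exactly by first applying \eqref{result1ub} to $\operatorname{supp}f$. Beyond these points the argument is simply the classical Portmanteau theorem localized to the region bounded away from $\mathbb C$.
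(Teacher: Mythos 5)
Your argument is correct: both directions are sound, and the two places where the localization away from $\mathbb C$ actually matters (choosing the smoothing parameter below the distance gap so the approximants lie in $\mathcal C_{\mathbb S\setminus\mathbb C}$, and obtaining the domination for the reverse Fatou step by first applying \eqref{result1ub} to $\operatorname{supp}f$) are handled properly. Note, however, that this statement is imported: the paper quotes it as Theorem~2.1 of \citealp{LRR} without proof, so there is no internal proof to compare against; your proof is the standard Portmanteau argument localized to regions bounded away from $\mathbb C$, which is essentially the argument given in the cited reference.
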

We now introduce a new notion of equivalence between two families of random objects, which will prove to be useful in Section~\ref{subsec:one-sided-large-deviations}, and Section~\ref{subsec:random-walks}.
Let $F_\delta \triangleq \{x \in \mathbb S: d(x,F) \leq \delta\}$ and $G^{-\delta} \triangleq ((G^c)_\delta)^c$. (Compare these notations to $\mathbb C^r$; note that we are using the convention that superscript implies open sets and subscript implies closed sets.)
\begin{definition}\label{def:asymptotic-equivalence}
Suppose that $X_n$ and $Y_n$ are random elements taking values in a complete separable metric space $(\mathbb S, d)$, and $\epsilon_n$ is a sequence of positive real numbers.
$Y_n$ is said to be asymptotically equivalent to $X_n$ with respect to $\epsilon_n$ 
if
for each $\delta>0$, 
$$\limsup_{n\to\infty} \epsilon_n^{-1}\P(d(X_n,Y_n) \geq \delta) = 0.$$
\end{definition}
The usefulness of this notion of equivalence comes from the following lemma, which states that
if $Y_n$ is asymptotically equivalent to $X_n$, and $X_n$ satisfies a limit theorem, then $Y_n$ satisfies the same limit theorem.
Moreover, it also allows one to extend the lower and upper bounds to more general sets in case there are asymptotically equivalent distributions that are supported on a subspace $\mathbb S_0$ of $\mathbb S$:

\begin{lemma}\label{lem:extended-bounds}
Suppose that $\epsilon_n^{-1} \P(X_n \in \cdot)\to \mu(\cdot)$ in $\mathbb M(\mathbb S\setminus \mathbb C)$ for some sequence $\epsilon_n$ and a closed set $\mathbb C$. In addition, suppose that  $\mu(\mathbb S \setminus \mathbb S_0) = 0$ and $\P(X_n \in \mathbb S_0) = 1$ for each $n$. If $Y_n$ is asymptotically equivalent to $X_n$ with respect to $\epsilon_n$, 
then
$$
\liminf_{n\to\infty} \epsilon_n^{-1}\P(Y_n \in G) \geq \mu(G)
$$
if $G$ is open and $G\cap \mathbb S_0$ is bounded away from $\mathbb C$;
$$
\limsup_{n\to\infty} \epsilon_n^{-1}\P(Y_n \in F) \leq \mu(F)
$$
if $F$ is closed and there is a $\delta>0$ such that $F_\delta \cap \mathbb S_0$ is bounded away from $\mathbb C$.
\end{lemma}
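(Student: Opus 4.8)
The plan is to transfer the $\mathbb{M}$-convergence of $\epsilon_n^{-1}\P(X_n\in\cdot)$ to the process $Y_n$ by combining the portmanteau-type characterization of Theorem~\ref{result:L21} with the asymptotic-equivalence hypothesis, paying attention throughout to the fact that everything is happening inside the subspace $\mathbb{S}_0$. First I would record the two facts I get to use for free: (i) for any closed $F$ bounded away from $\mathbb{C}$, $\limsup_n \epsilon_n^{-1}\P(X_n\in F)\le\mu(F)$, and for any open $G$ bounded away from $\mathbb{C}$, $\liminf_n\epsilon_n^{-1}\P(X_n\in G)\ge\mu(G)$ (Theorem~\ref{result:L21}); and (ii) for every $\delta>0$, $\epsilon_n^{-1}\P(d(X_n,Y_n)\ge\delta)\to 0$. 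I would also use repeatedly that $\P(X_n\in\mathbb{S}_0)=1$ and $\mu(\mathbb{S}\setminus\mathbb{S}_0)=0$, so that intersecting events with $\mathbb{S}_0$ changes neither the probabilities nor the limit measure.

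For the upper bound, suppose $F$ is closed with $F_\delta\cap\mathbb{S}_0$ bounded away from $\mathbb{C}$ for some $\delta>0$. On the event $\{Y_n\in F\}$, either $X_n$ lies in the closed $\delta$-enlargement $F_\delta$, or $d(X_n,Y_n)\ge\delta$; hence
\begin{equation}
\epsilon_n^{-1}\P(Y_n\in F)\le \epsilon_n^{-1}\P(X_n\in F_\delta\cap\mathbb{S}_0) + \epsilon_n^{-1}\P(d(X_n,Y_n)\ge\delta),
\end{equation}
where I inserted $\cap\,\mathbb{S}_0$ using $\P(X_n\in\mathbb{S}_0)=1$. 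The second term vanishes by asymptotic equivalence. For the first term I want to apply (i), but $F_\delta\cap\mathbb{S}_0$ need not be closed in $\mathbb{S}$; so I would replace it by its closure $\overline{F_\delta\cap\mathbb{S}_0}$, which is still bounded away from $\mathbb{C}$ (enlarging a set that is bounded away from $\mathbb{C}$ to its closure keeps a positive distance, possibly after shrinking $r$), apply the $\limsup$ bound to get $\mu(\overline{F_\delta\cap\mathbb{S}_0})$, and then argue that $\mu(\overline{F_\delta\cap\mathbb{S}_0})=\mu(\overline{F_\delta\cap\mathbb{S}_0}\cap\mathbb{S}_0)\le\mu(F_\delta)$ because $\mu$ charges no mass outside $\mathbb{S}_0$ and $\overline{F_\delta\cap\mathbb{S}_0}\cap\mathbb{S}_0=F_\delta\cap\mathbb{S}_0$ whenever $\mathbb{S}_0$ is, say, relatively closed — here I would instead just use $\mu(\overline{F_\delta\cap\mathbb{S}_0})\le\mu((\overline{F_\delta})\,)=\mu(F_\delta)$ since $F_\delta$ is already closed and contains $F_\delta\cap\mathbb{S}_0$. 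Finally I let $\delta\downarrow 0$ and invoke the continuity-from-above of the finite measure $\mu(\cdot\,\cap(\mathbb{S}\setminus\mathbb{C}^r))$ together with $\bigcap_{\delta>0}F_\delta=F$ (valid since $F$ is closed) to conclude $\limsup_n\epsilon_n^{-1}\P(Y_n\in F)\le\mu(F)$.

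For the lower bound, suppose $G$ is open with $G\cap\mathbb{S}_0$ bounded away from $\mathbb{C}$. The idea is dual: if $Y_n\in G$ fails but $d(X_n,Y_n)$ is small then $X_n$ must sit in a shrunken version of $G$. Concretely, fix $\delta>0$ and consider the open set $G^{-\delta}=((G^c)_\delta)^c$; if $X_n\in G^{-\delta}$ and $d(X_n,Y_n)<\delta$ then $Y_n\in G$. Intersecting with $\mathbb{S}_0$ and using $\P(X_n\in\mathbb{S}_0)=1$,
\begin{equation}
\epsilon_n^{-1}\P(Y_n\in G)\ge \epsilon_n^{-1}\P\big(X_n\in G^{-\delta}\cap\mathbb{S}_0\big) - \epsilon_n^{-1}\P(d(X_n,Y_n)\ge\delta).
\end{equation}
The subtracted term vanishes; for the main term I note $G^{-\delta}\cap\mathbb{S}_0\subseteq G\cap\mathbb{S}_0$ is bounded away from $\mathbb{C}$, but again it may fail to be open relative to $\mathbb{S}$. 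However, $G^{-\delta}$ itself is open in $\mathbb{S}$, so I apply the $\liminf$ half of Theorem~\ref{result:L21} to $G^{-\delta}$ — provided $G^{-\delta}$ is bounded away from $\mathbb{C}$; this is where I would be slightly careful, since a priori only $G\cap\mathbb{S}_0$, not $G$, is assumed bounded away from $\mathbb{C}$. The fix is to first replace $G$ by $G\setminus\mathbb{C}^r$ for small $r$ (legitimate because $\mu$ and all $\P(X_n\in\cdot)$, $\P(Y_n\in\cdot)$ restricted to $\mathbb{S}\setminus\mathbb{C}$ are unaffected near $\mathbb{C}$ once $r$ is below the separation distance of $G\cap\mathbb{S}_0$ from $\mathbb{C}$), after which $G$ is bounded away from $\mathbb{C}$ outright, and then $G^{-\delta}$ is too for $\delta$ small. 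This yields $\liminf_n\epsilon_n^{-1}\P(Y_n\in G)\ge\mu(G^{-\delta})\ge\mu(G^{-\delta}\cap\mathbb{S}_0)$, and since $G^{-\delta}\cap\mathbb{S}_0\uparrow G\cap\mathbb{S}_0$ as $\delta\downarrow 0$ (because $G$ is open) and $\mu$ lives on $\mathbb{S}_0$, continuity from below on the finite restriction $\mathbb{S}\setminus\mathbb{C}^r$ gives $\mu(G^{-\delta}\cap\mathbb{S}_0)\to\mu(G\cap\mathbb{S}_0)=\mu(G)$.

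The main obstacle I anticipate is purely point-set-topological bookkeeping rather than anything deep: reconciling "open/closed in $\mathbb{S}$" (needed to invoke Theorem~\ref{result:L21}) with "open/closed intersected with $\mathbb{S}_0$" (what the hypotheses and the enlargement operations naturally produce), while simultaneously keeping the sets bounded away from $\mathbb{C}$ after each enlargement or shrinkage and justifying the $\delta\downarrow 0$ passage via continuity of $\mu$ on each finite restriction $\mathbb{M}(\mathbb{S}\setminus\mathbb{C})\ni\mu\mapsto\mu(\cdot\cap(\mathbb{S}\setminus\mathbb{C}^r))$. None of these steps is hard, but getting the quantifier order on $r$ and $\delta$ right is the place where a sloppy argument would break.
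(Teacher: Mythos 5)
Your proposal is correct and follows essentially the same route as the paper's proof: the same $F_\delta$/$G^{-\delta}$ decomposition via asymptotic equivalence, the same use of $\P(X_n\in\mathbb S_0)=1$ and $\mu(\mathbb S\setminus\mathbb S_0)=0$ to excise a neighborhood of $\mathbb C$ so that Theorem~\ref{result:L21} applies (the paper intersects $G^{-\delta}$ with ${\mathbb C_r}^{\mathsf c}$ rather than removing $\mathbb C^r$ from $G$, and in the upper bound passes to $\overline{F_\delta\cap\mathbb S_0}\subseteq F_\delta$ exactly as you do), and the same $\delta\downarrow 0$ limits by continuity of $\mu$. The only caveat is cosmetic: your excised set $G\setminus\mathbb C^r$ is not open, but since you then apply the lower portmanteau bound to its (always open) $-\delta$ shrinkage, the argument goes through as written.
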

This lemma is particularly useful 
when we work in Skorokhod space, and $\mathbb S_0$ is the class of step functions.
Taking $\mathbb S_0 = \mathbb S$, a simpler version of Lemma~\ref{lem:extended-bounds} follows immediately:

\begin{corollary}\label{lem:asymptotic-equivalence}
Suppose that $\epsilon_n^{-1} \P(X_n \in \cdot) \to \mu(\cdot)$ in $\mathbb M(\mathbb S\setminus \mathbb C)$ for some sequence $\epsilon_n$. 
If $Y_n$ is asymptotically equivalent to $X_n$ with respect to $\epsilon_n$, 
then the law of $Y_n$ has the same (normalized) limit, i.e.,
$\epsilon_n^{-1}
 \P(Y_n \in \cdot ) \to \mu(\cdot)$
in $\mathbb M(\mathbb S\setminus \mathbb C)$.
\end{corollary}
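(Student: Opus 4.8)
The plan is to derive the statement directly from Lemma~\ref{lem:extended-bounds} by specializing $\mathbb S_0 = \mathbb S$, and then to verify that the resulting lower and upper bounds match the hypotheses of the characterization in Theorem~\ref{result:L21}. First I would note that with $\mathbb S_0 = \mathbb S$ the two extra assumptions of Lemma~\ref{lem:extended-bounds}, namely $\mu(\mathbb S\setminus\mathbb S_0)=0$ and $\P(X_n\in\mathbb S_0)=1$, hold trivially. Hence, for any open $G$ that is bounded away from $\mathbb C$ (note $G\cap\mathbb S_0 = G$ in this case), Lemma~\ref{lem:extended-bounds} gives
$$
\liminf_{n\to\infty}\epsilon_n^{-1}\P(Y_n\in G)\geq \mu(G),
$$
and for any closed $F$ bounded away from $\mathbb C$ we may simply take $\delta>0$ small enough that $F_\delta$ is still bounded away from $\mathbb C$ (possible since $d(F,\mathbb C)>0$ and $F_\delta \subseteq \mathbb C^{\,c}$ once $\delta < d(F,\mathbb C)$, which is exactly the shape of the hypothesis $F_\delta\cap\mathbb S_0$ bounded away from $\mathbb C$), so that
$$
\limsup_{n\to\infty}\epsilon_n^{-1}\P(Y_n\in F)\leq \mu(F).
$$

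Next I would check that $\epsilon_n^{-1}\P(Y_n\in\cdot)$ actually defines, for each $n$ and in the limit, an element of $\mathbb M(\mathbb S\setminus\mathbb C)$, i.e.\ that its restriction to $\mathbb S\setminus\mathbb C^r$ is finite for every $r>0$. For the limiting object this is part of the hypothesis $\mu\in\mathbb M(\mathbb S\setminus\mathbb C)$; for the prelimit measures $\epsilon_n^{-1}\P(Y_n\in\cdot)$ one can either observe that it is a finite measure when restricted to the sub-$\sigma$-algebra $\mathscr S_{\mathbb S\setminus\mathbb C}$ relevant here (total mass $\epsilon_n^{-1}<\infty$), or note that the upper bound just established applied to $F = \mathbb S\setminus\mathbb C^r$ (closed and bounded away from $\mathbb C$) already yields eventual finiteness. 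With the membership in $\mathbb M(\mathbb S\setminus\mathbb C)$ secured, the two displayed inequalities above are precisely conditions \eqref{result1ub} and \eqref{result1lb} of Theorem~\ref{result:L21}, so that theorem yields $\epsilon_n^{-1}\P(Y_n\in\cdot)\to\mu(\cdot)$ in $\mathbb M(\mathbb S\setminus\mathbb C)$, which is the claim.

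I do not expect a genuine obstacle here, since the corollary is essentially a restatement of Lemma~\ref{lem:extended-bounds} plus an invocation of the portmanteau-type characterization. The only point requiring a little care is the bookkeeping around the set regularity conditions: Lemma~\ref{lem:extended-bounds} phrases the upper bound in terms of the existence of a $\delta>0$ with $F_\delta$ bounded away from $\mathbb C$, whereas Theorem~\ref{result:L21} asks only that $F$ itself be bounded away from $\mathbb C$; the reconciliation is the elementary fact that if $d(F,\mathbb C) = 2\eta > 0$ then $F_\eta$ is bounded away from $\mathbb C$ as well. Similarly one must remember that the lower-bound hypothesis of Lemma~\ref{lem:extended-bounds} becomes vacuous simplification ``$G$ bounded away from $\mathbb C$'' once $\mathbb S_0=\mathbb S$. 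Once these identifications are made, the proof is immediate.
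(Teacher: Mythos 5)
Your proposal is correct and matches the paper's intent exactly: the corollary is obtained by specializing Lemma~\ref{lem:extended-bounds} to $\mathbb S_0=\mathbb S$ (where its extra hypotheses hold trivially) and then invoking the characterization of $\mathbb M(\mathbb S\setminus\mathbb C)$-convergence in Theorem~\ref{result:L21}. Your bookkeeping on the $F_\delta$ condition and the membership of $\epsilon_n^{-1}\P(Y_n\in\cdot)$ in $\mathbb M(\mathbb S\setminus\mathbb C)$ is sound and adds only routine detail beyond what the paper leaves implicit.
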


Next, we discuss the $\mathbb M$-convergence in a product space as a result of the $\mathbb M$-convergences on each space. 
\begin{lemma}\label{thm:simple-product-space}
Suppose that $\mathbb S_1, \ldots, \mathbb S_d$ are separable metric spaces, $\mathbb C_1, \ldots, \allowbreak \mathbb C_d$ are closed subsets of $\mathbb S_1, \ldots, \mathbb S_d$, respectively. If
$\mu_n^{(i)}(\cdot)\to\mu^{(i)}(\cdot)$  in $\mathbb M(\mathbb S_i\setminus \mathbb C_i)$ for each $i=1,\ldots, d$
then,
\begin{equation}\label{crude-m-convergence-in-product-space}
\mu_n^{(1)}\times\cdots\times\mu_n^{(d)}(\cdot) \to \mu^{(1)}\times\cdots\times\mu^{(d)}(\cdot)
\end{equation}
in
$
\mathbb M\Big(\big(\prod_{i=1}^d \mathbb S_i\big) \setminus \bigcup_{i=1}^d \big(\big(\prod_{j=1}^{i-1} \mathbb S_j\big) \times  \mathbb C_i \times \big(\prod_{j=i+1}^d \mathbb S_j\big)
\big)\Big).
$

\end{lemma}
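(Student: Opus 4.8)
The plan is to reduce the statement to the characterization of $\mathbb M$-convergence given in Theorem~\ref{result:L21}, working first with $d=2$ and then extending to general $d$ by induction. Write $\mathbb S = \mathbb S_1 \times \mathbb S_2$ (with, say, the max-metric $d\big((x_1,x_2),(y_1,y_2)\big) = d_1(x_1,y_1)\vee d_2(x_2,y_2)$, which is compatible with the product topology and separable), and write $\mathbb C = (\mathbb C_1 \times \mathbb S_2) \cup (\mathbb S_1 \times \mathbb C_2)$. Note the key metric fact: a set of the form $A_1 \times A_2$ is bounded away from $\mathbb C$ in $\mathbb S$ if and only if $A_1$ is bounded away from $\mathbb C_1$ \emph{and} $A_2$ is bounded away from $\mathbb C_2$; more precisely, $d\big((x_1,x_2),\mathbb C\big) = d_1(x_1,\mathbb C_1) \wedge d_2(x_2,\mathbb C_2)$. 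In particular, for $f \in \mathcal C_{\mathbb S \setminus \mathbb C}$, if $\operatorname{supp}(f) \subseteq \mathbb S \setminus \mathbb C^r$, then $\operatorname{supp}(f)$ is bounded away from $\mathbb C$, hence its projections onto $\mathbb S_1$ and $\mathbb S_2$ are bounded away from $\mathbb C_1$ and $\mathbb C_2$ respectively.

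The first substantive step is to check that $\mu^{(1)} \times \mu^{(2)}$ (and each $\mu_n^{(1)} \times \mu_n^{(2)}$) actually lies in $\mathbb M(\mathbb S \setminus \mathbb C)$, i.e.\ that it is finite on $\mathbb S \setminus \mathbb C^r$ for every $r > 0$. This follows because $\mathbb S \setminus \mathbb C^r \subseteq (\mathbb S_1 \setminus \mathbb C_1^r) \times (\mathbb S_2 \setminus \mathbb C_2^r)$, and the right-hand side has finite product measure since each factor is finite by hypothesis. The second, and main, step is the convergence itself. Rather than testing against arbitrary $f \in \mathcal C_{\mathbb S \setminus \mathbb C}$, I would verify the upper bound \eqref{result1ub} on closed sets bounded away from $\mathbb C$ and the lower bound \eqref{result1lb} on open sets bounded away from $\mathbb C$, using a $\pi$-$\lambda$ / approximation argument to first reduce to product sets $F_1 \times F_2$ (resp.\ $G_1 \times G_2$) with $F_i$ (resp.\ $G_i$) bounded away from $\mathbb C_i$; on such sets the bounds factor, e.g.\ $\limsup_n \mu_n^{(1)}\times\mu_n^{(2)}(F_1\times F_2) \le \big(\limsup_n \mu_n^{(1)}(F_1)\big)\big(\limsup_n \mu_n^{(2)}(F_2)\big) \le \mu^{(1)}(F_1)\mu^{(2)}(F_2)$, once one knows the relevant masses are finite (which is where boundedness away from $\mathbb C_i$ is used). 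An alternative, perhaps cleaner, route is to reduce to weak convergence of finite measures: for fixed $r>0$, restrict all measures to $\mathbb S\setminus\mathbb C^r$; there they are finite, the restricted $\mu_n^{(i)}$ converge weakly (after a further restriction to a slightly smaller open set to handle the boundary, using a portmanteau-type argument and the fact that $\mathbb C^r$'s boundary can be made a null set by choosing a.e.\ $r$), and weak convergence of finite measures on product spaces follows from weak convergence of the marginals by the standard product-measure continuity theorem (e.g.\ via characteristic-function-free arguments such as convergence on a convergence-determining class of rectangles). Letting $r \downarrow 0$ along a suitable sequence then yields the claim in $\mathbb M(\mathbb S\setminus\mathbb C)$.

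The main obstacle I anticipate is the interplay between the ``boundary-away-from-$\mathbb C$'' restriction and the factorization: product measures behave well under weak convergence only when the total masses involved are finite and there is no escape of mass, and here $\mu^{(i)}$ can be infinite near $\mathbb C_i$, so one must carefully localize away from $\mathbb C_i$ in each coordinate \emph{simultaneously} and argue that the error from the region $\mathbb C^r \setminus$ (something smaller) is negligible. Concretely, the delicate point is that a set bounded away from $\mathbb C$ need not be contained in a product of sets bounded away from $\mathbb C_1$ and $\mathbb C_2$ by the \emph{same} margin, so one needs the identity $d\big((x_1,x_2),\mathbb C\big) = d_1(x_1,\mathbb C_1)\wedge d_2(x_2,\mathbb C_2)$ to control things uniformly; this is exactly the fact that makes $\mathbb C^r = \{(x_1,x_2): d_1(x_1,\mathbb C_1) < r \text{ or } d_2(x_2,\mathbb C_2) < r\} = (\mathbb C_1^r \times \mathbb S_2) \cup (\mathbb S_1 \times \mathbb C_2^r)$, so that $\mathbb S \setminus \mathbb C^r = (\mathbb S_1\setminus\mathbb C_1^r)\times(\mathbb S_2\setminus\mathbb C_2^r)$ is itself a product. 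With that identity in hand the localization is clean, and the general-$d$ case follows by associativity of products together with the observation that $\bigcup_{i=1}^d \big(\prod_{j<i}\mathbb S_j\big)\times\mathbb C_i\times\big(\prod_{j>i}\mathbb S_j\big)$, when grouped as $\mathbb S_1\times\cdots$ versus the rest, matches the two-factor decomposition, so induction applies.
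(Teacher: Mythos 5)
Your proposal is correct, and the ``alternative, perhaps cleaner, route'' you sketch is precisely the paper's proof: restrict to $\mathbb S\setminus\mathbb C^r$ for all but countably many $r$ (the portmanteau characterization of $\mathbb M$-convergence, item (v) of Theorem~2.1 of \citealp{LRR}), use the identity $(\mathbb S_1\times\mathbb S_2)\setminus\big((\mathbb C_1\times\mathbb S_2)\cup(\mathbb S_1\times\mathbb C_2)\big)^r=(\mathbb S_1\setminus\mathbb C_1^r)\times(\mathbb S_2\setminus\mathbb C_2^r)$, invoke weak convergence of products of finite measures from weak convergence of the marginals, and let $r\downarrow 0$. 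Your first route (factorizing upper/lower bounds over closed/open product sets) is left much vaguer, but since you correctly identify and execute the second route, the proposal matches the paper's argument.
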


It should be noted that Lemma~\ref{thm:simple-product-space} itself is not exactly ``right'' in the sense that the set we take away is unnecessarily large, and hence, has limited applicability. 
More specifically, the $\mathbb M$-convergence in (\ref{crude-m-convergence-in-product-space}) applies only to the sets that are contained in a ``rectangular'' domain $\prod_{i=1}^d (\mathbb S_i\setminus \mathbb C_i)$.
Our next observation allows one to combine multiple instances of $\mathbb M$-convergences to establish a more refined one so that (\ref{crude-m-convergence-in-product-space}) applies to a class of sets that are not confined to a rectangular domain.
In particular, we will see later in Theorem~\ref{thm:two-sided-limit-theorem} and Theorem~\ref{thm:multi-d-limit-theorem} that in combination with Lemma~\ref{thm:simple-product-space}, the following lemma produces the ``right'' $\mathbb M$-convergence for two-sided L\'evy processes and random walks.

\begin{lemma}\label{thm:union-limsup}
Consider a family of measures $\{\mu^{(i)}\}_{i=0,1,\ldots,m}$ and a family of closed subsets $\{\mathbb C(i)\}_{i=0,1,\ldots,m}$ of $\mathbb S$
such that $\frac{1}{\epsilon_n{(i)}}\P(X_n \in \cdot) \to \mu^{(i)}(\cdot)$ in $\mathbb M(\mathbb S\setminus \mathbb C(i))$ for $i=0,\ldots,m$
where $\big\{\{{\epsilon_n{(i)}}: n\geq 1\}\big\}_{i=0,1,\ldots,m}$ is the family of associated normalizing sequences.
Suppose that
$\mu^{(0)} \in \mathbb M\big(\mathbb S \setminus \bigcap_{i=0}^{m}\mathbb C(i)\big)$;
$
\limsup_{n\to\infty}\frac{{\epsilon_n{(i)}}}{{\epsilon_n{(0)}}} = 0
$
for $i=1,\ldots,m$; and
for each $r>0$, there exist positive numbers $r_0,\ldots,r_m$ such that $\bigcap_{i=0}^m\mathbb C(i)^{r_i}\subseteq \big(\bigcap_{i=0}^{m}\mathbb C(i)\big)^r$.
Then
$$
\frac{1}{\epsilon_n{(0)}}\P(X_n\in \cdot) \to \mu^{(0)}
$$
in $\mathbb M\big(\mathbb S \setminus \bigcap_{i=0}^{m}\mathbb C(i)\big)$.
\end{lemma}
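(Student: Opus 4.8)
The plan is to verify the two characterizing inequalities of Theorem~\ref{result:L21} for the candidate limit $\mu^{(0)}$ in the space $\mathbb M\big(\mathbb S\setminus \bigcap_{i=0}^m \mathbb C(i)\big)$, using the already-established convergences $\frac{1}{\epsilon_n(i)}\P(X_n\in\cdot)\to\mu^{(i)}$ in $\mathbb M(\mathbb S\setminus\mathbb C(i))$. Write $\mathbb C \triangleq \bigcap_{i=0}^m \mathbb C(i)$. The first thing to check is that $\mu^{(0)}$ is genuinely an element of $\mathbb M(\mathbb S\setminus\mathbb C)$: by hypothesis we are given $\mu^{(0)}\in\mathbb M(\mathbb S\setminus\mathbb C)$, so this is assumed, but I would remark that the hypothesis $\bigcap_i \mathbb C(i)^{r_i}\subseteq \mathbb C^r$ is what makes this a nontrivial, consistent requirement, since $\mu^{(0)}$ a priori only controls mass away from $\mathbb C(0)\supseteq\mathbb C$.

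For the upper bound, fix a closed set $F\in \mathscr S_{\mathbb S\setminus\mathbb C}$ that is bounded away from $\mathbb C$, say $F\subseteq \stcomp{(\mathbb C^r)}$ for some $r>0$. By the covering hypothesis, pick $r_0,\dots,r_m>0$ with $\bigcap_{i=0}^m \mathbb C(i)^{r_i}\subseteq \mathbb C^r$; equivalently, $F$ is disjoint from $\bigcap_i \mathbb C(i)^{r_i}$, which means $F\subseteq \bigcup_{i=0}^m \stcomp{(\mathbb C(i)^{r_i})}$. The idea is then to split $F = \bigcup_{i=0}^m F_i$ where $F_i \triangleq F\cap \stcomp{(\mathbb C(i)^{r_i/2})}$ — these are closed, bounded away from $\mathbb C(i)$ (at distance $\geq r_i/2$), and their union is $F$ since any point of $F$ lies in some $\stcomp{(\mathbb C(i)^{r_i})}\subseteq \stcomp{(\mathbb C(i)^{r_i/2})}$. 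Then $\frac{1}{\epsilon_n(0)}\P(X_n\in F)\leq \sum_{i=0}^m \frac{1}{\epsilon_n(0)}\P(X_n\in F_i) = \frac{1}{\epsilon_n(0)}\P(X_n\in F_0) + \sum_{i=1}^m \frac{\epsilon_n(i)}{\epsilon_n(0)}\cdot\frac{1}{\epsilon_n(i)}\P(X_n\in F_i)$. Applying (\ref{result1ub}) to each $F_i$ in $\mathbb M(\mathbb S\setminus\mathbb C(i))$, the $i\geq 1$ terms vanish because $\frac{1}{\epsilon_n(i)}\P(X_n\in F_i)\to\mu^{(i)}(F_i)<\infty$ while $\epsilon_n(i)/\epsilon_n(0)\to 0$, and the $i=0$ term gives $\limsup_n \frac{1}{\epsilon_n(0)}\P(X_n\in F_0)\leq \mu^{(0)}(F_0)\leq\mu^{(0)}(F)$. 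Hence $\limsup_n \frac{1}{\epsilon_n(0)}\P(X_n\in F)\leq\mu^{(0)}(F)$.

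For the lower bound, fix an open $G\in\mathscr S_{\mathbb S\setminus\mathbb C}$ bounded away from $\mathbb C$. Since $G$ is bounded away from $\mathbb C\subseteq\mathbb C(0)$, it is in particular bounded away from $\mathbb C(0)$, so $G\in\mathscr S_{\mathbb S\setminus\mathbb C(0)}$ is open and bounded away from $\mathbb C(0)$; applying (\ref{result1lb}) for the $i=0$ convergence directly yields $\liminf_n \frac{1}{\epsilon_n(0)}\P(X_n\in G)\geq\mu^{(0)}(G)$. So the lower bound is immediate and requires nothing beyond the $i=0$ hypothesis. Putting the two bounds together and invoking Theorem~\ref{result:L21} (now applied in the space $\mathbb M(\mathbb S\setminus\mathbb C)$) gives the claimed convergence.

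The main obstacle — the only place where real care is needed — is the upper bound, specifically making the decomposition $F=\bigcup F_i$ work: one must ensure each piece $F_i$ is simultaneously closed, contained in $\mathbb S\setminus\mathbb C(i)$, and bounded away from $\mathbb C(i)$ so that both (\ref{result1ub}) applies and the finiteness of $\mu^{(i)}(F_i)$ is available to kill the lower-order terms. The covering hypothesis $\bigcap_i\mathbb C(i)^{r_i}\subseteq\mathbb C^r$ is precisely the combinatorial input that converts "bounded away from the intersection $\mathbb C$" into "covered by finitely many complements of enlargements of the individual $\mathbb C(i)$," and the shrink-by-half trick ($r_i\to r_i/2$) is what buys the strict separation needed for closedness of the pieces to coexist with the open-cover property. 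A secondary point worth a sentence is checking measurability: $F_i\in\mathscr S_{\mathbb S\setminus\mathbb C(i)}$ because $F_i$ is a Borel subset of $\mathbb S$ lying in $\mathbb S\setminus\mathbb C(i)$, and $\frac{1}{\epsilon_n(i)}\P(X_n\in F_i)$ makes sense as $X_n$ concentrates on $\mathbb S$. Everything else is bookkeeping.
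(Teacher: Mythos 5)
Your upper bound is fine and is essentially the paper's argument: cover $F$ by the closed pieces $F\setminus \mathbb C(i)^{r_i}$ (your halving of $r_i$ is harmless but unnecessary, since $\mathbb C(i)^{r_i}$ is an open enlargement and its complement is already closed and at distance $\geq r_i$ from $\mathbb C(i)$), apply the upper-bound half of Theorem~\ref{result:L21} to each piece in $\mathbb M(\mathbb S\setminus\mathbb C(i))$, and use $\epsilon_n(i)/\epsilon_n(0)\to 0$ together with finiteness of $\mu^{(i)}$ on sets bounded away from $\mathbb C(i)$ to discard the $i\geq 1$ terms.

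The lower bound, however, contains a genuine error. You claim that because $G$ is bounded away from $\mathbb C\triangleq\bigcap_{i=0}^m\mathbb C(i)\subseteq\mathbb C(0)$, it is ``in particular'' bounded away from $\mathbb C(0)$ (and lies in $\mathscr S_{\mathbb S\setminus\mathbb C(0)}$). The implication runs the other way: if $A\subseteq B$ then $d(x,A)\geq d(x,B)$, so being bounded away from the \emph{larger} set $\mathbb C(0)$ implies being bounded away from the intersection $\mathbb C$, not conversely; a set bounded away from $\bigcap_i\mathbb C(i)$ may intersect or come arbitrarily close to $\mathbb C(0)\setminus\mathbb C$. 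Indeed, if your claim were true the lemma would be pointless --- its whole purpose (see the discussion following Lemma~\ref{thm:simple-product-space} and the use in the proof of Theorem~\ref{thm:multi-d-limit-theorem}) is to treat sets that are bounded away from the intersection but not from any individual $\mathbb C(i)$. The correct route, which is what the paper does, is an approximation from inside: for $r>0$ the set $G\setminus\mathbb C(0)_r$ (with $\mathbb C(0)_r$ the closed $r$-enlargement) is open and bounded away from $\mathbb C(0)$, so the $i=0$ convergence gives $\liminf_n\epsilon_n(0)^{-1}\P(X_n\in G)\geq\mu^{(0)}\big(G\setminus\mathbb C(0)_r\big)$; then let $r\to 0$ and use that $\mu^{(0)}$, being the $\mathbb M(\mathbb S\setminus\mathbb C(0))$-limit, puts no mass on $\mathbb C(0)$ (this is how the hypothesis $\mu^{(0)}\in\mathbb M(\mathbb S\setminus\mathbb C)$ is read), so that by continuity from below $\mu^{(0)}(G\setminus\mathbb C(0)_r)\uparrow\mu^{(0)}(G)$. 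This approximation-and-limit step is exactly what your proof is missing; as written, the lower half of your argument does not establish the claim.
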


A version of the continuous mapping principle is satisfied by $\mathbb M$-convergence. Let $(\mathbb S',d')$ be a complete separable metric space, and let $\mathbb C'$ be a closed subset of $\mathbb S'$.

\begin{theorem}[Mapping theorem; Theorem~2.3 of \citet{LRR}]\label{result:L23}
Let $h:(\mathbb S\setminus \mathbb C, \mathscr S_{\mathbb S\setminus \mathbb C}) \to (\mathbb S'\setminus \mathbb C', \mathscr S_{\mathbb S'\setminus \mathbb C'})$ be a measurable mapping such that $h^{-1}(A')$ is bounded away from $\mathbb C$ for any $A'\in \mathscr S_{\mathbb S'\setminus \mathbb C'}$ bounded away from $\mathbb C'$.
Then $\hat h: \mathbb M({\mathbb S\setminus \mathbb C}) \to \mathbb M({\mathbb S'\setminus \mathbb C'})$ defined by $\hat h(\nu) = \nu \circ h^{-1}$ is continuous at $\mu$ provided $\mu(D_h) = 0$, where $D_h$ is the set of discontinuity points of $h$.
\end{theorem}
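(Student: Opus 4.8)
The plan is to mimic the proof of the classical continuous mapping theorem for weak convergence, using the portmanteau-type characterization of $\mathbb M$-convergence in Theorem~\ref{result:L21}. First I would check that $\hat h$ actually takes values in $\mathbb M(\mathbb S'\setminus\mathbb C')$: for $\nu\in\mathbb M(\mathbb S\setminus\mathbb C)$ and $r>0$, the set $\mathbb S'\setminus(\mathbb C')^r$ is bounded away from $\mathbb C'$, so by hypothesis $h^{-1}(\mathbb S'\setminus(\mathbb C')^r)$ is bounded away from $\mathbb C$, hence contained in $\mathbb S\setminus\mathbb C^{r'}$ for some $r'>0$, on which $\nu$ is finite; thus $(\nu\circ h^{-1})$ is finite on $\mathbb S'\setminus(\mathbb C')^r$. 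Since $\mathbb M(\mathbb S\setminus\mathbb C)$ is metrizable (\citealp{LRR}), continuity of $\hat h$ at $\mu$ is equivalent to the sequential statement: $\mu_n\to\mu$ in $\mathbb M(\mathbb S\setminus\mathbb C)$ implies $\mu_n\circ h^{-1}\to\mu\circ h^{-1}$ in $\mathbb M(\mathbb S'\setminus\mathbb C')$, which by Theorem~\ref{result:L21} reduces to verifying (\ref{result1ub}) and (\ref{result1lb}) for $\mu_n\circ h^{-1}$.

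For the upper bound, I would take $F'\in\mathscr S_{\mathbb S'\setminus\mathbb C'}$ closed and bounded away from $\mathbb C'$, set $B\triangleq h^{-1}(F')$, and pass to its closure $\bar B$ in $\mathbb S$. The point is that $B$ is bounded away from $\mathbb C$ by hypothesis, and $d(\bar B,\mathbb C)=d(B,\mathbb C)>0$ because $x\mapsto d(x,\mathbb C)$ is continuous, so $\bar B$ is a closed set lying in $\mathbb S\setminus\mathbb C$ and bounded away from $\mathbb C$; then (\ref{result1ub}) gives $\limsup_n\mu_n(\bar B)\le\mu(\bar B)$. The key inclusion is $\bar B\subseteq B\cup D_h$: if $x\in\bar B$ and $h$ is continuous at $x$, choose $x_k\in B$ with $x_k\to x$; then $h(x_k)\to h(x)$ in $\mathbb S'\setminus\mathbb C'$ with $h(x_k)\in F'$, and closedness of $F'$ forces $h(x)\in F'$, i.e.\ $x\in B$. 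Using $\mu(D_h)=0$, this gives $\mu(\bar B)\le\mu(B)+\mu(D_h)=\mu(B)=(\mu\circ h^{-1})(F')$, so $\limsup_n(\mu_n\circ h^{-1})(F')=\limsup_n\mu_n(B)\le\limsup_n\mu_n(\bar B)\le(\mu\circ h^{-1})(F')$.

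For the lower bound, I would take $G'\in\mathscr S_{\mathbb S'\setminus\mathbb C'}$ open and bounded away from $\mathbb C'$, set $B\triangleq h^{-1}(G')$ (again bounded away from $\mathbb C$, hence $\mu(B)<\infty$), and pass to its interior $B^\circ$ in $\mathbb S$, which is open, contained in $B$, and bounded away from $\mathbb C$; then (\ref{result1lb}) gives $\liminf_n\mu_n(B^\circ)\ge\mu(B^\circ)$. The key inclusion here is $B\setminus B^\circ\subseteq D_h$: if $x\in B$ and $h$ is continuous at $x$, then $h(x)\in G'$ with $G'$ open yields a neighbourhood $U$ of $x$ with $h(U)\subseteq G'$, so $U\subseteq B$ and $x\in B^\circ$. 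Hence $\mu(B)=\mu(B^\circ)+\mu(B\setminus B^\circ)\le\mu(B^\circ)+\mu(D_h)=\mu(B^\circ)$, so $\liminf_n(\mu_n\circ h^{-1})(G')=\liminf_n\mu_n(B)\ge\liminf_n\mu_n(B^\circ)\ge\mu(B^\circ)\ge\mu(B)=(\mu\circ h^{-1})(G')$. Combining the two bounds, Theorem~\ref{result:L21} gives $\mu_n\circ h^{-1}\to\mu\circ h^{-1}$, i.e.\ $\hat h$ is continuous at $\mu$.

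There is no deep obstacle here — it is the standard continuous-mapping argument — but the bookkeeping I expect to need care on is: (i) checking that taking closures and interiors inside $\mathbb S$ preserves the property of being bounded away from $\mathbb C$, which is exactly what makes the pulled-back sets admissible in Theorem~\ref{result:L21}; (ii) the role of the hypothesis on $h^{-1}$, which is the only thing translating ``bounded away from $\mathbb C'$'' into ``bounded away from $\mathbb C$'' and hence the only thing making $\hat h$ well-defined and the above estimates legitimate; and (iii) noting that $D_h$ is Borel (an $F_\sigma$ subset of $\mathbb S\setminus\mathbb C$ for maps between metric spaces) so that $\mu(D_h)=0$ is meaningful, and that since the sets $\bar B$, $B$, $B^\circ$ above are bounded away from $\mathbb C$ one only ever invokes $\mu(D_h)=0$ against sets on which $\mu$ is finite. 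None of these requires an idea beyond the classical proof.
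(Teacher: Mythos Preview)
Your proof is correct. Note, however, that the paper does not prove this theorem itself: it is quoted verbatim as Theorem~2.3 of \citet{LRR}, so there is no ``paper's own proof'' to compare against directly. What the paper does prove is the slight extension in Lemma~\ref{lem:almost-continuous-mapping}, and there the route differs from yours: rather than splitting into closed and open sets and using (\ref{result1ub})/(\ref{result1lb}), the paper works with the $\mu$-continuity characterization (Theorem~2.1(iv) of \citealp{LRR}), observing that for any $A'$ bounded away from $\mathbb C'$ one has $\partial h^{-1}(A')\subseteq h^{-1}(\partial A')\cup D_h$ (with an extra $\partial\mathbb S_0$ term in the extension), so $\mu(\partial h^{-1}(A'))=0$ and $\mu_n(h^{-1}(A'))\to\mu(h^{-1}(A'))$ directly. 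Your two-sided argument via $\bar B\subseteq B\cup D_h$ and $B\setminus B^\circ\subseteq D_h$ is the classical one and is entirely equivalent; the boundary-set approach is a bit more compact but relies on an additional portmanteau item from \citet{LRR} that the present paper only cites.
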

For our purpose, the following slight extension will prove to be useful in developing rigorous arguments.
\begin{lemma}\label{lem:almost-continuous-mapping}
Let $\mathbb S_0$ be a measurable subset of $\mathbb S$, and $h:(\mathbb S_0,  \mathscr S_{\mathbb S_0 })\to (\mathbb S'\setminus \mathbb C', \mathscr S'_{\mathbb S' \setminus \mathbb C'})$ be a measurable mapping such that $h^{-1}(A')$ is bounded away from $\mathbb C$ for any $A'\in \mathscr S_{\mathbb S'\setminus \mathbb C'}$ bounded away from $\mathbb C'$.
Then $\hat h:\mathbb M({\mathbb S\setminus \mathbb C}) \to \mathbb M({\mathbb S'}\setminus \mathbb C')$ defined by $\hat h(\nu) = \nu \circ h^{-1}$ is continuous at $\mu$ provided that $\mu(\partial \mathbb S_0\setminus \mathbb C^r) = 0$ and $\mu(D_h\setminus \mathbb C^r)=0$ for all $r>0$, where $D_h$ is the set of discontinuity points of $h$.
\end{lemma}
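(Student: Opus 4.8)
The plan is to reduce this to the standard mapping theorem (Theorem~\ref{result:L23}) by extending $h$ to all of $\mathbb S\setminus\mathbb C$ in a way that does not affect the limit. First I would fix an arbitrary $r>0$ and work in the "bounded-away" region $\mathbb S\setminus\mathbb C^{r}$, where all the relevant measures are finite; since $\mathbb M$-convergence is determined by testing against functions in $\mathcal C_{\mathbb S\setminus\mathbb C}$ (each of which has support bounded away from $\mathbb C$, hence contained in some $\mathbb S\setminus\mathbb C^{r}$), it suffices to establish, for every such $r$, that $\mu_n\circ h^{-1}$ and $\mu\circ h^{-1}$ agree in the limit on test functions supported in $\mathbb S'\setminus\mathbb C'^{r'}$ for the corresponding $r'$ furnished by the boundedness-away hypothesis on $h^{-1}$. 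Concretely, let $\mu_n\triangleq \epsilon_n^{-1}\P(X_n\in\cdot)$ — or more generally any sequence with $\mu_n\to\mu$ in $\mathbb M(\mathbb S\setminus\mathbb C)$; the statement is about continuity of $\hat h$ at $\mu$, so I would phrase it for a generic convergent sequence.

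The key device is to pick an arbitrary point $s'_{0}\in\mathbb C'$ (WLOG $\mathbb C'\neq\emptyset$; if it is empty the argument only simplifies) and define $\tilde h:\mathbb S\setminus\mathbb C\to\mathbb S'$ by $\tilde h=h$ on $\mathbb S_0\cap(\mathbb S\setminus\mathbb C)$ and $\tilde h\equiv s'_{0}$ on $(\mathbb S\setminus\mathbb S_0)\cap(\mathbb S\setminus\mathbb C)$. I would check three things. (i) $\tilde h$ maps into $\mathbb S'\setminus\mathbb C'$? — no, it maps the complement of $\mathbb S_0$ into $\mathbb C'$, so instead I restrict attention to the set $\mathbb S^{*}\triangleq h^{-1}(\mathbb S'\setminus\mathbb C')\cup\big((\mathbb S\setminus\mathbb S_0)\cap(\mathbb S\setminus\mathbb C)\big)$; more cleanly, since for any target set $A'$ bounded away from $\mathbb C'$ we have $\tilde h^{-1}(A')=h^{-1}(A')\subseteq\mathbb S_0$ (the constant-$s'_0$ part contributes nothing, as $s'_0\in\mathbb C'$), the map $\tilde h$ satisfies exactly the boundedness-away-of-preimages hypothesis of Theorem~\ref{result:L23} when viewed as a map on $\mathbb S\setminus\mathbb C$ with values assigned the dummy point on the bad set. (ii) The discontinuity set of $\tilde h$ is contained in $D_h\cup\partial\mathbb S_0$: at interior points of $\mathbb S_0$ where $h$ is continuous, $\tilde h$ is continuous; at interior points of the complement $\tilde h$ is locally constant; the only new discontinuities live on $\partial\mathbb S_0$. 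Hence $D_{\tilde h}\setminus\mathbb C^{r}\subseteq (D_h\setminus\mathbb C^{r})\cup(\partial\mathbb S_0\setminus\mathbb C^{r})$, which is $\mu$-null for every $r$ by hypothesis, so $\mu(D_{\tilde h})=0$. (iii) Applying Theorem~\ref{result:L23} to $\tilde h$ gives $\mu_n\circ\tilde h^{-1}\to\mu\circ\tilde h^{-1}$ in $\mathbb M(\mathbb S'\setminus\mathbb C')$. Finally, since $\tilde h^{-1}(A')=h^{-1}(A')$ for every $A'\in\mathscr S_{\mathbb S'\setminus\mathbb C'}$ and $\P(X_n\in\mathbb S_0)$-type considerations are not needed here (we only test on sets bounded away from $\mathbb C'$, whose preimages under $\tilde h$ and $h$ coincide), we conclude $\mu_n\circ h^{-1}=\mu_n\circ\tilde h^{-1}\to\mu\circ\tilde h^{-1}=\mu\circ h^{-1}$, which is the desired continuity of $\hat h$ at $\mu$.

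There is one technical point that needs care and is, I expect, the main obstacle: in Theorem~\ref{result:L23} the map $h$ is defined on the measurable space $(\mathbb S\setminus\mathbb C,\mathscr S_{\mathbb S\setminus\mathbb C})$ and "discontinuity points" are computed with respect to the relative topology on $\mathbb S\setminus\mathbb C$, whereas in our Lemma the domain is $\mathbb S_0$, possibly with $\mathbb S_0\cap\mathbb C\neq\emptyset$. The extension $\tilde h$ resolves this, but one must verify that the relative topology subtleties near $\mathbb C$ are harmless — precisely this is why the hypotheses are stated as $\mu(\partial\mathbb S_0\setminus\mathbb C^{r})=0$ and $\mu(D_h\setminus\mathbb C^{r})=0$ for all $r>0$ rather than globally: points of $\partial\mathbb S_0$ or $D_h$ that lie in $\mathbb C$ (or accumulate only at $\mathbb C$) are invisible to $\mathbb M(\mathbb S\setminus\mathbb C)$-convergence anyway, since every test function vanishes on a neighborhood of $\mathbb C$. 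I would make this explicit by noting that for $f\in\mathcal C_{\mathbb S'\setminus\mathbb C'}$, $f\circ\tilde h\in\mathcal C_{\mathbb S\setminus\mathbb C}$: it is bounded and non-negative, its support is $\tilde h^{-1}(\operatorname{supp} f)=h^{-1}(\operatorname{supp} f)$ which is bounded away from $\mathbb C$ by hypothesis, and $f\circ\tilde h$ is continuous at every point of $\mathbb S\setminus\mathbb C$ outside the $\mu$-null set $D_{\tilde h}$. The remaining steps are routine bookkeeping with the portmanteau characterization in Theorem~\ref{result:L21}.
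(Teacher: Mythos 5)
Your overall strategy (control the discontinuity set of an extension of $h$ by $D_h\cup\partial\mathbb S_0$, note that preimages of sets bounded away from $\mathbb C'$ under $\tilde h$ and $h$ coincide) is sound and close in content to the paper's proof, but the reduction you lean on does not go through as written. Theorem~\ref{result:L23} requires the map to be defined on $\mathbb S\setminus\mathbb C$ \emph{with values in} $\mathbb S'\setminus\mathbb C'$, and your $\tilde h$ necessarily violates this: the dummy value must lie in $\mathbb C'$ (as you chose it), otherwise $\tilde h^{-1}$ of a set bounded away from $\mathbb C'$ containing the dummy point would contain all of $(\mathbb S\setminus\mathbb S_0)\setminus\mathbb C$, which need not be bounded away from $\mathbb C$; but with the dummy value in $\mathbb C'$, $\tilde h$ is not a map into $\mathbb S'\setminus\mathbb C'$, so the theorem's hypotheses are not met. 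Your patch (``viewed as a map \dots with values assigned the dummy point on the bad set'') is an assertion that the theorem should still apply, not an argument; to justify it one has to reprove the mapping theorem for maps taking values in all of $\mathbb S'$, which is precisely the bookkeeping you defer. Your fallback via test functions is also flawed as stated: $f\circ\tilde h$ is in general \emph{not} continuous (only continuous off $D_{\tilde h}$), hence not a member of $\mathcal C_{\mathbb S\setminus\mathbb C}$, so the definition of $\mathbb M$-convergence does not directly yield $\mu_n(f\circ\tilde h)\to\mu(f\circ\tilde h)$; an a.e.-continuity extension is needed, and proving that is again the Portmanteau argument.

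The gap is fixable, and once fixed your extension $\tilde h$ becomes unnecessary. The paper argues directly on sets: for $A'$ bounded away from $\mathbb C'$ with $\mu\circ h^{-1}(\partial A')=0$, the preimage $h^{-1}(A')$ is bounded away from $\mathbb C$ and its boundary satisfies $\partial h^{-1}(A')\subseteq h^{-1}(\partial A')\cup D_h\cup\partial\mathbb S_0$, so the hypotheses $\mu(D_h\setminus\mathbb C^r)=\mu(\partial\mathbb S_0\setminus\mathbb C^r)=0$ give $\mu(\partial h^{-1}(A'))=0$; then the ``convergence on sets with $\mu$-null boundary'' characterization (Theorem~2.1(iv) of the cited reference, a sharpening of Theorem~\ref{result:L21}) is applied twice, once to get $\mu_n(h^{-1}(A'))\to\mu(h^{-1}(A'))$ and once to conclude $\mu_n\circ h^{-1}\to\mu\circ h^{-1}$ in $\mathbb M(\mathbb S'\setminus\mathbb C')$. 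Your containments $D_{\tilde h}\subseteq D_h\cup\partial\mathbb S_0$ and $\tilde h^{-1}(A')=h^{-1}(A')$ encode the same inclusion, so writing out your ``routine bookkeeping'' honestly collapses your proof into the paper's; as it stands, the step that would carry the load (the appeal to Theorem~\ref{result:L23}, or alternatively the claim $f\circ\tilde h\in\mathcal C_{\mathbb S\setminus\mathbb C}$) fails.
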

When we focus on L\'evy processes, we are specifically interested in the case where $\mathbb S$ is $\mathbb R_+^{\infty\downarrow}\times[0,1]^\infty$,
where $\mathbb R_+^{\infty\downarrow} \triangleq \{x \in \mathbb R_+^\infty: x_1 \geq x_2 \geq \ldots\}$, and $\mathbb S'$ is the Skorokhod space $\mathbb D = \mathbb D([0,1],\R)$ --- the space of real-valued RCLL functions on $[0,1]$.
We use the usual product metrics $d_{\mathbb R_+^{\infty\downarrow}}(x,y) = \sum_{i=1}^\infty \frac{|x_i - y_i| \wedge 1}{2^i}$ and $d_{[0,1]^\infty}(x,y) = \sum_{i=1}^\infty \frac{|x_i - y_i| }{2^i}$ for $\mathbb R_+^{\infty\downarrow}$ and $[0,1]^\infty$, respectively.
For the finite product of metric spaces, we use the maximum metric; i.e., we use $d_{\mathbb S_1\times\cdots\times\mathbb S_d}((x_1,\ldots,x_d), (y_1,\ldots,y_d)) \triangleq \max_{i=1,\ldots,d}d_{\mathbb S_i}(x_i,y_i) $ for the product $\mathbb S_1\times\cdots\times\mathbb S_d$ of metric spaces $(\mathbb S_i,d_{\mathbb S_i})$.
For $\mathbb D$, we use the usual Skorokhod $J_1$ metric $d(x,y) \triangleq \inf_{\lambda \in \Lambda} \|\lambda - e\| \vee \| x\circ \lambda - y\|$, where $\Lambda$ denotes the set of all non-decreasing homeomorphisms from $[0,1]$ onto itself, $e$ denotes the identity, and $\|\cdot\|$ denotes the supremum norm.
Let $$S_j \triangleq \{(x,u)\in \R_+^{\infty\downarrow}\times [0,1]^\infty: 0, 1, u_1,\ldots, u_j \text{ are all distinct}\}.$$
This set will play the role of $\mathbb S_0$ of Lemma~\ref{lem:almost-continuous-mapping}.
Define
$T_j: S_j\to \mathbb D$ to be $T_j(x,u) = \sum_{i=1}^j x_i 1_{[u_i,1]}$.
Let $\mathbb D_j$ be the subspaces of the Skorokhod space consisting of nondecreasing step functions, vanishing at the origin, with exactly $j$ jumps, and $\mathbb D_{\leqslant j}\triangleq \bigcup_{0\leq i\leq j} \mathbb D_i$---i.e., nondecreasing step functions vanishing at the origin with at most $j$ jumps.
Similarly, let $\mathbb D_{< j}\triangleq \bigcup_{0\leq i<\, j} \mathbb D_i$.
Define $\mathbb H_{j} \triangleq \{x \in \mathbb R_+^{\infty\downarrow}: x_j > 0, x_{j+1} = 0\},$ and
$\mathbb H_{< \,j} \triangleq \{x\in \mathbb R_+^{\infty \downarrow}: x_{j} = 0\}$.
The continuous mapping principle applies to $T_j$, as we can see in the following result.

\begin{lemma}[Lemma 5.3 and Lemma 5.4 of \citealp{LRR}]\label{result:L53}
Suppose $A \subset \mathbb D$ is bounded away from $\mathbb D_{< \,j}$. Then, $T_j^{-1}(A)$ is bounded away from $\mathbb H_{<\,j} \times [0,1]^\infty$. Moreover, $T_j:S_j \to \mathbb D$ is continuous.
\end{lemma}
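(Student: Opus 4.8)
The plan is to prove the two assertions of Lemma~\ref{result:L53} separately: first that $T_j^{-1}(A)$ is bounded away from $\mathbb H_{<\,j}\times[0,1]^\infty$ whenever $A$ is bounded away from $\mathbb D_{<\,j}$, and then that $T_j$ is continuous on $S_j$. For the first part, I would argue by contraposition. Suppose $T_j^{-1}(A)$ is \emph{not} bounded away from $\mathbb H_{<\,j}\times[0,1]^\infty$; then there exist sequences $(x^{(n)},u^{(n)})\in T_j^{-1}(A)\cap S_j$ and $(y^{(n)},v^{(n)})\in \mathbb H_{<\,j}\times[0,1]^\infty$ with $d_{\mathbb R_+^{\infty\downarrow}\times[0,1]^\infty}\big((x^{(n)},u^{(n)}),(y^{(n)},v^{(n)})\big)\to 0$. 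Since $y^{(n)}\in\mathbb H_{<\,j}$ means $y^{(n)}_j=0$, convergence of coordinates forces $x^{(n)}_j\to 0$, hence $x^{(n)}_i\to 0$ for all $i\geq j$ by monotonicity of the coordinates. I would then show directly that $d\big(T_j(x^{(n)},u^{(n)}),\ \widetilde T_{j-1}(x^{(n)},u^{(n)})\big)\to 0$ in the $J_1$ metric, where $\widetilde T_{j-1}$ keeps only the first $j-1$ summands $\sum_{i=1}^{j-1}x^{(n)}_i 1_{[u^{(n)}_i,1]}$: the two step functions differ only by the term $x^{(n)}_j 1_{[u^{(n)}_j,1]}$ (the later terms are already $0$ if $x^{(n)}\in\mathbb H_k$ for some $k<j$; otherwise one bounds uniformly by $(j-1)x^{(n)}_j\to 0$), and since $x^{(n)}_j\to 0$ the sup-norm distance, and hence the $J_1$ distance, between $T_j(x^{(n)},u^{(n)})$ and the $\mathbb D_{<\,j}$-element $\widetilde T_{j-1}(x^{(n)},u^{(n)})$ tends to $0$. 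This contradicts $A$ being bounded away from $\mathbb D_{<\,j}$, since $T_j(x^{(n)},u^{(n)})\in A$.

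For the continuity of $T_j$ on $S_j$, I would take a convergent sequence $(x^{(n)},u^{(n)})\to(x,u)$ in $S_j$ and exhibit an explicit sequence of time-changes $\lambda_n\in\Lambda$ witnessing $d\big(T_j(x^{(n)},u^{(n)}),T_j(x,u)\big)\to 0$. Because $(x,u)\in S_j$, the points $0,u_1,\dots,u_j,1$ are distinct, so for large $n$ the ordering of $u^{(n)}_1,\dots,u^{(n)}_j$ agrees with that of $u_1,\dots,u_j$ and they stay separated from $0$ and $1$. Take $\lambda_n$ to be the piecewise-linear homeomorphism of $[0,1]$ that maps each $u_i$ to $u^{(n)}_i$ (and fixes $0$ and $1$); then $\|\lambda_n-e\|\leq\max_i|u^{(n)}_i-u_i|\to 0$, and $\|T_j(x^{(n)},u^{(n)})\circ\lambda_n - T_j(x,u)\|$ is bounded by the jump-size discrepancies, namely $\sum_{i=1}^j|x^{(n)}_i-x_i|$ accumulated appropriately, which also tends to $0$ since $x^{(n)}\to x$ coordinatewise. (One should be slightly careful that partial sums rather than individual jumps appear, but this only changes constants.) Hence $d\big(T_j(x^{(n)},u^{(n)}),T_j(x,u)\big)\to 0$.

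I expect the main obstacle to be the first part—controlling the $J_1$ distance from $T_j(x^{(n)},u^{(n)})$ to \emph{some} element of $\mathbb D_{<\,j}$ when the smallest of the top-$j$ coordinates degenerates. The subtlety is that one must pick the right element of $\mathbb D_{<\,j}$ (dropping the $j$th jump, whose height goes to zero) and then verify that removing a vanishing jump is a vanishing perturbation in $J_1$, which is immediate in sup-norm and hence in $J_1$ since $J_1$ is dominated by sup-norm. The continuity part is more routine, the only care needed being the use of $(x,u)\in S_j$ to guarantee that the natural time-changes are genuine homeomorphisms with small displacement, i.e., that jump locations do not collide with each other or with the endpoints in the limit. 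Once both pieces are in place, the lemma follows, and it supplies exactly the hypothesis of the mapping theorem (Theorem~\ref{result:L23}) needed to push $\mathbb M$-convergence on $\mathbb R_+^{\infty\downarrow}\times[0,1]^\infty$ forward to $\mathbb D$.
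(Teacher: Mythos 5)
Your proof is correct. Note that the paper itself gives no proof of this statement — it is imported verbatim as Lemma 5.3 and Lemma 5.4 of \citealp{LRR} — so there is no internal argument to compare against; what you wrote is essentially the standard verification. Two small remarks. First, for the boundedness claim the direct form is even cleaner than the contrapositive: if $d(A,\mathbb D_{<\,j})\geq\gamma$ and $(x,u)\in T_j^{-1}(A)$, then $\sum_{i=1}^{j-1}x_i1_{[u_i,1]}\in\mathbb D_{<\,j}$ is within sup-norm, hence $J_1$, distance $x_j$ of $T_j(x,u)$, so $x_j\geq\gamma$; since any $y\in\mathbb H_{<\,j}$ has $y_j=0$, the product metric then gives $d\big((x,u),\mathbb H_{<\,j}\times[0,1]^\infty\big)\geq 2^{-j}(\gamma\wedge 1)$, a quantitative bound your limiting argument also yields implicitly. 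Second, your parenthetical about the ``later terms'' is unnecessary (and slightly muddled): $T_j$ and the truncated map differ by exactly the single summand $x_j1_{[u_j,1]}$, so the sup-norm gap is exactly $x_j^{(n)}$. The continuity part is fine as written: coordinatewise convergence follows from the product metrics, and the piecewise-linear time changes sending $u_i\mapsto u_i^{(n)}$ are genuine homeomorphisms with displacement $\max_i|u_i^{(n)}-u_i|$ precisely because $(x,u)\in S_j$ keeps the epochs distinct and away from $0$ and $1$, which is where the restriction to $S_j$ enters.
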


A consequence of Result~\ref{result:L53} and Lemma~\ref{lem:almost-continuous-mapping} along with the observation that $S_j$ is open is that one can derive a limit theorem in a path space from a limit theorem for jump sizes.
\begin{corollary}\label{result:consequence-of-L23-53-54}
If $\mu_n\to \mu$ in $\mathbb M\big((\mathbb R_+^{\infty\downarrow} \times [0,1]^\infty) \setminus (\mathbb H_{<\,j} \times [0,1]^\infty)\big)$, and $\mu\big(S_{j}^c\setminus (\mathbb H_{<\, j}\times [0,1]^\infty)^r\big) = 0$ for all $r>0$, then $\mu_n \circ T_{j}^{-1} \to \mu\circ T_{j}^{-1}$ in $\mathbb M(\mathbb D \setminus \mathbb D_{<\, j})$.
\end{corollary}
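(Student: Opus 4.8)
The plan is to deduce Corollary~\ref{result:consequence-of-L23-53-54} directly from Lemma~\ref{lem:almost-continuous-mapping} (the ``almost continuous mapping'' theorem), applied with the concrete choices $\mathbb S = \mathbb R_+^{\infty\downarrow}\times[0,1]^\infty$, $\mathbb C = \mathbb H_{<\,j}\times[0,1]^\infty$, $\mathbb S' = \mathbb D$, $\mathbb C' = \mathbb D_{<\,j}$, $\mathbb S_0 = S_j$, and $h = T_j$. So the work reduces to verifying the hypotheses of Lemma~\ref{lem:almost-continuous-mapping} for this data, and then reading off the conclusion.

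First I would check the ``pullback'' condition: for every $A'\in\mathscr S_{\mathbb D\setminus\mathbb D_{<\,j}}$ that is bounded away from $\mathbb D_{<\,j}$, the set $T_j^{-1}(A')$ is bounded away from $\mathbb H_{<\,j}\times[0,1]^\infty$. This is exactly the first assertion of Result~\ref{result:L53} (Lemma 5.3 of \citet{LRR}): any $A'$ bounded away from $\mathbb D_{<\,j}$ is in particular contained in some $A''$ that is still bounded away from $\mathbb D_{<\,j}$, so $T_j^{-1}(A')$ inherits the separation from $\mathbb H_{<\,j}\times[0,1]^\infty$. Next, the set of discontinuity points $D_h$ of $T_j$ (as a map on $\mathbb S_0 = S_j$, or rather the discontinuity points relative to all of $\mathbb S$): by the second assertion of Result~\ref{result:L53}, $T_j$ is continuous on $S_j$, so $D_{T_j}\subseteq \partial S_j = S_j^c$ (using that $S_j$ is open, which is noted in the statement just before the corollary). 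Hence both the conditions $\mu(\partial\mathbb S_0\setminus\mathbb C^r) = 0$ and $\mu(D_h\setminus\mathbb C^r) = 0$ for all $r>0$ are subsumed by the single hypothesis $\mu\big(S_j^c\setminus(\mathbb H_{<\,j}\times[0,1]^\infty)^r\big) = 0$ for all $r>0$ that is assumed in the corollary, since $\partial S_j\subseteq S_j^c$ and $D_{T_j}\subseteq S_j^c$.

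With all hypotheses of Lemma~\ref{lem:almost-continuous-mapping} in place, the lemma gives that $\hat T_j:\mathbb M(\mathbb S\setminus\mathbb C)\to\mathbb M(\mathbb S'\setminus\mathbb C')$, $\nu\mapsto\nu\circ T_j^{-1}$, is continuous at $\mu$. Therefore $\mu_n\to\mu$ in $\mathbb M\big((\mathbb R_+^{\infty\downarrow}\times[0,1]^\infty)\setminus(\mathbb H_{<\,j}\times[0,1]^\infty)\big)$ implies $\mu_n\circ T_j^{-1}\to\mu\circ T_j^{-1}$ in $\mathbb M(\mathbb D\setminus\mathbb D_{<\,j})$, which is the claim. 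One small point to make explicit: one must confirm that $T_j$ indeed maps into $\mathbb D\setminus\mathbb D_{<\,j}$ in the relevant measure-theoretic sense, i.e. that the image measure $\mu\circ T_j^{-1}$ is well-defined as an element of $\mathbb M(\mathbb D\setminus\mathbb D_{<\,j})$ — this follows because $T_j$ restricted to $S_j\setminus(\mathbb H_{<\,j}\times[0,1]^\infty)$ lands in $\mathbb D_{\leqslant j}\setminus\mathbb D_{<\,j} = \mathbb D_j$ (a point with $x_j>0$ and $j$ distinct jump times maps to a genuine $j$-step function), and the pullback condition guarantees the restriction to $\mathbb D\setminus\mathbb D_{<\,j}^r$ is finite for each $r$.

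I do not expect any serious obstacle here: the corollary is essentially a bookkeeping combination of the two cited ingredients (Result~\ref{result:L53} supplying continuity of $T_j$ on $S_j$ and the pullback/boundedness property, Lemma~\ref{lem:almost-continuous-mapping} supplying the abstract mapping principle that tolerates the measure-zero bad set $S_j^c$). The only mildly delicate step is matching the ``for all $r>0$'' quantifier correctly — one needs $D_{T_j}$ and $\partial S_j$ to both sit inside $S_j^c$ so that the single hypothesis of the corollary covers both conditions of Lemma~\ref{lem:almost-continuous-mapping} — and making sure the openness of $S_j$ is invoked so that $\partial S_j\subseteq S_j^c$. Everything else is a direct citation.
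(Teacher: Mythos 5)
Your proposal is correct and is exactly the paper's intended argument: the paper derives the corollary by applying Lemma~\ref{lem:almost-continuous-mapping} with $\mathbb S_0 = S_j$ and $h = T_j$, using Result~\ref{result:L53} for the continuity of $T_j$ on $S_j$ and the bounded-away pullback property, and the openness of $S_j$ so that both $\partial S_j$ and $D_{T_j}$ lie in $S_j^c$ and are covered by the single null-set hypothesis. The only cosmetic slip is writing $\partial S_j = S_j^c$ where you mean $\partial S_j \subseteq S_j^c$ (which you in fact use correctly later), so nothing substantive is missing.
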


To obtain the large deviations for two-sided L\'evy measures, we will  first establish the large deviations for independent spectrally positive L\'evy processes, and then apply Lemma~\ref{lem:almost-continuous-mapping} with $h(\xi,\zeta) = \xi-\zeta$.
The next lemma verifies two important conditions of Lemma~\ref{lem:almost-continuous-mapping} for such $h$.
Let $\mathbb D_{l,m}$ denote the subspace of the Skorokhod space consisting of step functions vanishing at the origin with exactly $l$ upward jumps and $m$ downward jumps.
Given $\alpha, \beta>1$, let $\mathbb D_{< \,j,k} \triangleq \bigcup_{(l,m)\in \I{<\,j,k}} \mathbb D_{l,m}$ and $\mathbb D_{<(j,k)} \triangleq\bigcup_{(l,m)\in \I{<\,j,k}} \mathbb D_l\times \mathbb D_m$,
where $\I_{<\,j,k}  \triangleq \big\{(l,m)\in \Z_+^2\setminus\{(j,k)\}: (\alpha-1) l + (\beta-1) m \leq (\alpha-1) j + (\beta-1) k\big\}$ and $\Z_+$ denotes the set of non-negative integers.
Note that in the definition of $\I_{<\,j,k}$, the inequality is not strict;
however, we choose to use the strict inequality in our notation to emphasize that $(j,k)$ is not included in $\I_{<\,j,k}$. 

\begin{lemma}\label{lem:continuous-mapping-principle-for-subtraction}
Let $h:\mathbb D \times \mathbb D \to \mathbb D$ be defined as
$h(\xi,\zeta) \triangleq \xi-\zeta.$
Then, $h$ is continuous at $(\xi,\zeta)\in \mathbb D\times \mathbb D$ such that $(\xi(t) - \xi(t-))(\zeta(t)-\zeta(t-)) = 0$ for all $t\in(0,1]$. Moreover, $h^{-1}(A)\subseteq \mathbb D\times \mathbb D$ is bounded away from $\mathbb D_{<(j,k)}$ for any $A\subseteq \mathbb D$ bounded away from $\mathbb D_{<\,j,k}$.
\end{lemma}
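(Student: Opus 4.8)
The plan is to establish the two assertions separately: first the continuity of $h(\xi,\zeta) = \xi-\zeta$ at pairs of paths with no common jump times, and then the ``bounded away'' property $d(h^{-1}(A), \mathbb D_{<(j,k)}) > 0$ whenever $d(A, \mathbb D_{<\,j,k}) > 0$.

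For the continuity statement, I would use the standard characterization of $J_1$ convergence. Suppose $(\xi_n,\zeta_n) \to (\xi,\zeta)$ in $\mathbb D\times\mathbb D$ with the maximum metric, so there are time changes $\lambda_n,\mu_n \in \Lambda$ with $\|\lambda_n - e\|\vee\|\xi_n\circ\lambda_n - \xi\| \to 0$ and $\|\mu_n - e\|\vee\|\zeta_n\circ\mu_n - \zeta\| \to 0$. The obstacle is that $\lambda_n \ne \mu_n$ in general, so $(\xi_n - \zeta_n)\circ\lambda_n$ need not be close to $\xi-\zeta$. The standard device is that when $\xi$ and $\zeta$ have no common discontinuity, one can reconcile the two time changes: using that $\xi$ is continuous at each jump of $\zeta$ and vice versa, one shows $\|\xi_n\circ\mu_n - \xi\|\to 0$ as well (uniform continuity of $\xi$ away from its jumps, combined with $\|\lambda_n - \mu_n\|\to 0$, handles the points near jumps of $\xi$ — which are regular points of $\zeta$ — and symmetrically). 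Then $\|(\xi_n-\zeta_n)\circ\mu_n - (\xi-\zeta)\| \leq \|\xi_n\circ\mu_n - \xi\| + \|\zeta_n\circ\mu_n - \zeta\| \to 0$, giving $h(\xi_n,\zeta_n)\to h(\xi,\zeta)$. (Alternatively, one can cite the known fact that addition is continuous on $\mathbb D$ at pairs without common jumps; see Whitt's book.) I expect this to be mostly routine once the reconciliation of time changes is set up carefully.

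The ``bounded away'' claim is the main point. I would argue by contrapositive: suppose $d(h^{-1}(A), \mathbb D_{<(j,k)}) = 0$, so there are $(\xi_n,\zeta_n)$ with $\xi_n - \zeta_n \in A$ and $(\eta_n,\theta_n)\in\mathbb D_{<(j,k)}$ with $d(\xi_n,\eta_n)\vee d(\zeta_n,\theta_n)\to 0$, where $\eta_n\in\mathbb D_{l_n}$, $\theta_n\in\mathbb D_{m_n}$ for some $(l_n,m_n)\in\I_{<\,j,k}$; passing to a subsequence, fix $(l,m)\in\I_{<\,j,k}$. I want to conclude $d(\xi_n-\zeta_n, \mathbb D_{<\,j,k})\to 0$, contradicting $d(A,\mathbb D_{<\,j,k})>0$. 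The natural candidate element of $\mathbb D_{<\,j,k}$ to compare $\xi_n-\zeta_n$ with is $\eta_n - \theta_n$; since $\eta_n$ has $l$ upward jumps and $\theta_n$ has $m$ upward jumps (i.e.\ $\theta_n$ contributes $m$ downward jumps to $-\theta_n$), the difference $\eta_n-\theta_n$ lies in $\mathbb D_{l',m'}$ for some $l'\le l$, $m'\le m$ (jumps can cancel or merge if $\eta_n$ and $\theta_n$ share jump times), and since $(l',m')$ satisfies $(\alpha-1)l' + (\beta-1)m' \le (\alpha-1)l + (\beta-1)m \le (\alpha-1)j+(\beta-1)k$ with strict dominance unless $(l',m')=(l,m)$, in all cases $\eta_n - \theta_n\in\mathbb D_{\le\,j,k}$, and one checks $(l',m')\ne(j,k)$ so in fact $\eta_n-\theta_n\in\mathbb D_{<\,j,k}$ (this uses that $(l,m)\in\I_{<\,j,k}$ already excludes $(j,k)$, and merging only decreases the weighted count). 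Then $d(\xi_n-\zeta_n, \eta_n-\theta_n) = d(h(\xi_n,\zeta_n), h(\eta_n,\theta_n))$; I would bound this using the continuity of $h$ — but since continuity of $h$ only holds at pairs without common jumps, I need a quantitative/uniform bound instead.

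The cleanest route for that last bound is to estimate $d(h(\xi_n,\zeta_n), h(\eta_n,\theta_n))$ directly via a single well-chosen time change. Given near-optimal time changes $\lambda_n$ for $d(\xi_n,\eta_n)$ and $\mu_n$ for $d(\zeta_n,\theta_n)$, one uses the fact that $\eta_n,\theta_n$ are step functions with boundedly many jumps: one can construct a common time change $\gamma_n$ (agreeing with $\lambda_n$ near the jumps of $\eta_n$, with $\mu_n$ near the jumps of $\theta_n$, interpolating in between) such that $\|\gamma_n - e\|$, $\|\xi_n\circ\gamma_n - \eta_n\|$, and $\|\zeta_n\circ\gamma_n - \theta_n\|$ are all controlled by $O(d(\xi_n,\eta_n) + d(\zeta_n,\theta_n))$ plus an error coming from the modulus of continuity of $\eta_n$ and $\theta_n$ away from their jumps — and on the set where both are locally constant, $\xi_n\circ\gamma_n$ and $\zeta_n\circ\gamma_n$ are automatically close to the corresponding constants. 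This yields $d(\xi_n-\zeta_n,\eta_n-\theta_n)\le \|(\xi_n-\zeta_n)\circ\gamma_n - (\eta_n-\theta_n)\|\vee\|\gamma_n-e\| \to 0$, completing the contradiction. I expect this time-change construction — making it uniform in $n$ using only the bound $l+m$ on the number of jumps, without control on the jump locations — to be the technically delicate part; a slicker alternative is to note that $\mathbb D_{<(j,k)}$ consists of step functions and invoke a lemma of the flavor of Result~\ref{result:L53} (the map on jump-size/jump-location coordinates), but I would present the direct argument as the default.
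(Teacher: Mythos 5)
Your treatment of the continuity claim is fine and is essentially what the paper does (the paper simply cites Whitt for continuity of subtraction at pairs with no common jump times), and your reduction to a fixed pair $(l,m)$ in the index set, together with the observation that $\eta_n-\theta_n$ lands in some $\mathbb D_{l',m'}\subseteq \mathbb D_{<\,j,k}$, is sound. The gap is in the key estimate of the second part: you compare $\xi_n-\zeta_n$ with the \emph{specific element} $\eta_n-\theta_n$ and claim a single time change $\gamma_n$ making $\xi_n\circ\gamma_n$ uniformly close to $\eta_n$ and simultaneously $\zeta_n\circ\gamma_n$ uniformly close to $\theta_n$. No such $\gamma_n$ exists in general, because $\eta_n$ and $\theta_n$ may share jump times while $\xi_n$ and $\zeta_n$ place the corresponding jumps at slightly different times; your construction (``agree with $\lambda_n$ near jumps of $\eta_n$, with $\mu_n$ near jumps of $\theta_n$'') is silent precisely on shared jump times. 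Concretely, take $\eta_n=\theta_n=1_{[1/2,1]}$, $\xi_n=1_{[1/2-1/n,1]}$, $\zeta_n=1_{[1/2+1/n,1]}$: then $d(\xi_n,\eta_n)\vee d(\zeta_n,\theta_n)\to 0$, yet $\xi_n-\zeta_n=1_{[1/2-1/n,\,1/2+1/n)}$ remains at $J_1$ distance $1$ from $\eta_n-\theta_n\equiv 0$, so the asserted convergence $d(h(\xi_n,\zeta_n),h(\eta_n,\theta_n))\to 0$ is false. (The lemma survives in this example only because $\xi_n-\zeta_n$ happens to lie in $\mathbb D_{1,1}$, i.e.\ it is close to a \emph{different} member of $\mathbb D_{<\,j,k}$, not to $\eta_n-\theta_n$.) Your alternative remark about invoking a Result~\ref{result:L53}-type lemma is not developed enough to fill this hole.

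The paper's proof avoids exactly this trap by comparing with the set $\mathbb D_{l,m}$ rather than with $\eta_n-\theta_n$, and by moving the time change onto the step functions instead of onto $\xi_n,\zeta_n$: if $d(\xi_n,\mathbb D_l)<\epsilon/2$, then, since a time change of a step function stays in $\mathbb D_l$ with the same jump sizes, there is $\xi'\in\mathbb D_l$ with $\|\xi_n-\xi'\|_\infty<\epsilon/2$, and similarly $\zeta'\in\mathbb D_m$ with $\|\zeta_n-\zeta'\|_\infty<\epsilon/2$ (the two implicit time changes need not agree, which is harmless). Subtraction is $1$-Lipschitz in the supremum norm, so $d(\xi_n-\zeta_n,\xi'-\zeta')\le\|\xi_n-\xi'\|_\infty+\|\zeta_n-\zeta'\|_\infty<\epsilon$, and $d(\xi'-\zeta',\mathbb D_{l,m})=0$ (perturbing jump times, or adding arbitrarily small jumps when cancellations occur). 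Hence $d(\xi_n-\zeta_n,\mathbb D_{l,m})<\epsilon$, contradicting that $A$ is bounded away from $\mathbb D_{l,m}\subseteq\mathbb D_{<\,j,k}$. To repair your write-up you should replace the common-time-change step by this sup-norm approximation argument (or an equivalent device); as written, the central estimate fails.
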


We next characterize convergence-determining classes for the convergence in $\mathbb M(\mathbb S\setminus \mathbb C)$.

\begin{lemma}\label{lem:convergence-determining-class}
Suppose that 
\emph{(i)}  $\mathcal A_p$ is a $\pi$-system; 
\emph{(ii)}  each open set $G \subseteq \mathbb S$ bounded away from $\mathbb C$ is a countable union of sets in $\mathcal A_p$; and 
\emph{(iii)}  for each closed set $F\subseteq \mathbb S$ bounded away from $\mathbb C$, there is a set $A \in \mathcal A_p$ bounded away from $\mathbb C$ such that  $F\subseteq A^\circ$ and $\mu(A\setminus A^\circ) = 0$.
If, in addition, $\mu\in \mathbb M(\mathbb S\setminus \mathbb C)$ and $\mu_n(A) \to \mu(A)$ for every $A\in \mathcal A_p$ such that $A$ is bounded away from $\mathbb C$, then $\mu_n \to \mu$ in $\mathbb M(\mathbb S\setminus \mathbb C)$.
\end{lemma}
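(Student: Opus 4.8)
The plan is to verify the two conditions of the portmanteau characterization in Result~\ref{result:L21}, namely the $\limsup$ inequality on closed sets bounded away from $\mathbb C$ and the $\liminf$ inequality on open sets bounded away from $\mathbb C$, starting from the hypothesis that $\mu_n(A)\to\mu(A)$ for all $A\in\mathcal A_p$ bounded away from $\mathbb C$. First I would address the open sets. Let $G\subseteq\mathbb S$ be open and bounded away from $\mathbb C$. By hypothesis (ii), $G=\bigcup_{k\geq 1}A_k$ with $A_k\in\mathcal A_p$; since each $A_k\subseteq G$ and $G$ is bounded away from $\mathbb C$, each $A_k$ is bounded away from $\mathbb C$, so $\mu_n(A_k)\to\mu(A_k)$. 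Because $\mathcal A_p$ is a $\pi$-system, the finite unions $B_K\triangleq\bigcup_{k=1}^K A_k$ can be written, via inclusion–exclusion, as finite signed sums of intersections of the $A_k$'s, each such intersection lying in $\mathcal A_p$ and bounded away from $\mathbb C$; hence $\mu_n(B_K)\to\mu(B_K)$ for every fixed $K$. Then
\[
\liminf_{n\to\infty}\mu_n(G)\geq \liminf_{n\to\infty}\mu_n(B_K)=\mu(B_K)\xrightarrow[K\to\infty]{}\mu(G)
\]
by monotone convergence of $\mu$ along $B_K\uparrow G$, which gives \eqref{result1lb}.

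Next I would treat the closed sets. Let $F\subseteq\mathbb S$ be closed and bounded away from $\mathbb C$. By hypothesis (iii) there is $A\in\mathcal A_p$, bounded away from $\mathbb C$, with $F\subseteq A^\circ$ and $\mu(\partial A)=\mu(A\setminus A^\circ)=0$. Since $A^\circ$ is open and bounded away from $\mathbb C$ (it is contained in $A$), the open-set case just established gives $\liminf_n\mu_n(A^\circ)\geq\mu(A^\circ)$; and since $A\in\mathcal A_p$ is bounded away from $\mathbb C$, the hypothesis gives $\mu_n(A)\to\mu(A)$. Therefore
\[
\limsup_{n\to\infty}\mu_n(F)\leq\limsup_{n\to\infty}\mu_n(A)=\mu(A)=\mu(A^\circ)\leq\mu(F)+\mu(A^\circ\setminus F),
\]
but this last term is not obviously zero, so instead I would argue directly: $\limsup_n\mu_n(F)\leq\limsup_n\mu_n(A)=\mu(A)=\mu(\overline{A^\circ})$ is still too crude. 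The cleaner route is to use $F\subseteq A^\circ\subseteq A$ together with $\mu(A)=\mu(A^\circ)$: from $\mu_n(F)\leq\mu_n(A)$ we get $\limsup_n\mu_n(F)\leq\mu(A)=\mu(A^\circ)$. This bounds $\limsup_n\mu_n(F)$ by $\mu(A^\circ)$, not by $\mu(F)$; to close the gap one applies this to a family of sets $A$ shrinking down to $F$. Concretely, since $F$ is bounded away from $\mathbb C$, for small $\eta>0$ the closed $\eta$-enlargements $F_\eta$ are still bounded away from $\mathbb C$ and decrease to $F$ as $\eta\downarrow 0$; applying hypothesis (iii) to each $F_\eta$ (or to a sequence $\eta_m\downarrow0$) yields sets $A_m\in\mathcal A_p$ with $F\subseteq F_{\eta_m}\subseteq A_m^\circ$, $\mu(\partial A_m)=0$, and $A_m^\circ$ shrinking appropriately, so that $\limsup_n\mu_n(F)\leq\mu(A_m^\circ)\to\mu(F)$ by outer regularity of the finite measure $\mu(\,\cdot\cap(\mathbb S\setminus\mathbb C^r))$ for suitable $r$. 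This delivers \eqref{result1ub}, and the two inequalities together give $\mu_n\to\mu$ in $\mathbb M(\mathbb S\setminus\mathbb C)$.

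The step I expect to be the main obstacle is the passage from ``$\limsup_n\mu_n(F)\leq\mu(A^\circ)$ for the particular $A$ furnished by (iii)'' to ``$\limsup_n\mu_n(F)\leq\mu(F)$'': hypothesis (iii) only guarantees \emph{one} such $A$ per $F$, so one must apply it along a sequence of closed sets decreasing to $F$ and invoke continuity from above of $\mu$ restricted to a set bounded away from $\mathbb C$ (where $\mu$ is finite, so continuity from above is legitimate). One should double-check that (iii), applied to $F_{\eta_m}$, produces $A_m$ that can be arranged with $\mu(A_m^\circ)\to\mu(F)$ — if (iii) as stated does not immediately give this, the monotone family $\{F_{\eta_m}\}$ plus a diagonal choice of the $A_m$ should suffice, using that $\bigcap_m F_{\eta_m}=F$. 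The remaining ingredients — inclusion–exclusion to stay inside the $\pi$-system, and monotone/continuity-from-above arguments for $\mu$ — are routine once the bounded-away-from-$\mathbb C$ bookkeeping is done carefully so that every set to which the hypothesis is applied genuinely lies in $\mathcal A_p$ and is bounded away from $\mathbb C$.
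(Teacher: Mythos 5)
Your lower bound for open sets is fine and is exactly the paper's argument (inclusion--exclusion inside the $\pi$-system for finite unions, then continuity from below of $\mu$). The genuine gap is in the upper bound. You correctly note that the single set $A$ furnished by (iii) only yields $\limsup_n \mu_n(F) \leq \mu(A) = \mu(A^\circ)$, but your proposed repair---apply (iii) to the enlargements $F_{\eta_m} \downarrow F$ and ``arrange'' the resulting $A_m$ so that $\mu(A_m^\circ) \to \mu(F)$---does not follow from the hypotheses. Condition (iii) asserts only the \emph{existence} of some $A$ with $F_{\eta_m} \subseteq A^\circ$, $\mu(A\setminus A^\circ)=0$, and $A$ bounded away from $\mathbb C$; it gives no control on how much larger $A_m$ is than $F_{\eta_m}$ (it could return the same large set for every $m$). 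Continuity from above and outer regularity of $\mu$ produce small open neighborhoods of $F$, but nothing forces a set of $\mathcal A_p$ with null boundary to sit inside such a neighborhood, and a finite-subcover argument via (ii) is unavailable because $F$ need not be compact. So the step $\mu(A_m^\circ)\to\mu(F)$, which you yourself flag as the obstacle, is not justified, and no ``diagonal choice'' can supply the missing control.

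The paper closes this with a subtraction trick that uses only the one set $A$ from (iii). Since $F$ is closed and $F\subseteq A^\circ$, the set $A^\circ\setminus F$ is open and, being contained in $A$, bounded away from $\mathbb C$; hence the already-established lower bound applies to it, and
\begin{align*}
\mu(A)-\limsup_{n\to\infty}\mu_n(F)
&=\liminf_{n\to\infty}\big(\mu_n(A)-\mu_n(F)\big)
=\liminf_{n\to\infty}\mu_n(A\setminus F)\\
&\geq\liminf_{n\to\infty}\mu_n(A^\circ\setminus F)
\geq\mu(A^\circ\setminus F)
=\mu(A^\circ)-\mu(F)
=\mu(A)-\mu(F),
\end{align*}
where the last equality uses $\mu(A\setminus A^\circ)=0$. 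Since $\mu(A)<\infty$ (as $A$ is bounded away from $\mathbb C$ and $\mu\in\mathbb M(\mathbb S\setminus\mathbb C)$), subtracting gives $\limsup_n\mu_n(F)\leq\mu(F)$ directly, with no shrinking family needed. Note that this is precisely where the null-boundary requirement in (iii) does real work; in your sketch it is never used in a way that closes the argument.
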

\begin{remark}
Since $\mathbb S$ is a separable metric space, the Lindel\"of property holds.
Therefore,
a sufficient condition for  assumption (ii) of Lemma~\ref{lem:convergence-determining-class} is that
for every $x\in \mathbb S \setminus \mathbb C$ and $\epsilon>0$, there is $A\in \mathcal A_p$ such that $x \in A^\circ \subseteq B(x,\epsilon)$. To see that this implies  assumption (ii), note that for any given open set $G$, one can construct a cover $\{(A_x)^\circ: x \in G\}$ of $G$ by choosing $A_x$ so that $x \in (A_x)^\circ \subseteq G$ and then extract a countable subcover (due to the Lindel\"of property) whose union is equal to $G$.
Note also that if $A$ in assumption (iii) is open, then $\mu(A\setminus A^\circ) = \mu(\emptyset) = 0$ automatically.
\end{remark}

\section{Sample-Path Large Deviations}\label{sec:sample-path-ldps}
In this section, we present large-deviations results for scaled L\'evy processes with heavy-tailed L\'evy measures. Section~\ref{subsec:one-sided-large-deviations} studies a special case, where the L\'evy measure is concentrated on the positive part of the real line, and Section~\ref{subsec:two-sided-large-deviations} extends this result to L\'evy processes with two-sided L\'evy measures. In both cases, let $X_n(t) \triangleq X(nt)$ be a scaled process of $X$, where $X$ is a L\'evy process with a L\'evy measure $\nu$.
Recall that $X_n$ has It\^{o} representation (see, for example, Section 2 of \citealp{kyprianou2014fluctuations}):
\begin{align*}
X_n(s)
&= 
nsa + B(ns) 
\\
&\quad
+ \int_{|x|\leq 1} x[N([0,ns]\times dx) - ns\nu(dx)] 
+ \int_{|x|>1} xN([0,ns]\times dx),
\end{align*}
with $a$ a drift parameter, $B$ a Brownian motion, and $N$ a Poisson random measure with mean measure Leb$\times\nu$ on $[0,n]\times(0,\infty)$; Leb denotes the Lebesgue measure.

\subsection{One-sided Large Deviations}\label{subsec:one-sided-large-deviations}
Let $X$ be a L\'evy process with  L\'evy measure $\nu$.
In this section, we assume that $\nu$ is a regularly varying (at infinity, with index $-\alpha<-1$) L\'evy measure concentrated on $(0,\infty)$.
Consider a centered and scaled process
\begin{equation}\label{math-display-above-definition-nu-alpha-j}
\bar X_n(s) \triangleq \frac{1}{n}X_n(s) - sa - \mu_1^+\nu_1^+s,
\end{equation}
where $\mu_1^+ \triangleq \frac{1}{\nu_1^+}\int_{[1,\infty)} x\nu(dx)$, and $\nu_1^+ \triangleq \nu[1,\infty)$.
For each constant $\gamma>1$, let $\nu_\gamma(x,\infty) \triangleq x^{-\gamma}$, and 
let $\nu_\gamma^j$ denote the restriction (to $\mathbb R_+^{j\downarrow}$) of the $j$-fold product measure of $\nu_\gamma$.
Let $C_0(\cdot)\triangleq \delta_\mathbf 0(\cdot)$ be the Dirac measure concentrated on the zero function. Additionally, for each $j\geq 1$, define a measure $C_j\in \mathbb M(\mathbb D \setminus \mathbb D_{<\, j})$  concentrated on $\mathbb D_{j}$ as $C_j(\cdot) \triangleq \E \Big[\nu_\alpha ^j \{y\in (0,\infty)^j:\sum_{i=1}^j y_i 1_{[U_i,1]}\in \cdot\}\Big]$, where the random variables $U_i, i\geq 1$ are i.i.d.\ uniform on $[0,1]$.

The proof of the main result of this section hinges critically on the following limit theorem.
\begin{theorem}\label{thm:one-sided-limit-theorem}
For each $j\geq 0$,
\begin{equation}\label{eq:main-result}
(n\nu[n,\infty))^{-j}\P(\bar X_n\in \cdot) \to C_j(\cdot),
\end{equation}
in $\mathbb M(\mathbb D \setminus \mathbb D_{<\, j})$, as $n\to \infty$.
Moreover, $\bar X_n$ is asymptotically equivalent to a process that assumes values in $\mathbb D_{\leqslant \mathcal J(A)}$ almost surely.
\end{theorem}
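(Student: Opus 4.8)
The plan is to decompose the centered, scaled L\'evy process $\bar X_n$ into a part carrying the $j$ largest jumps (exceeding a suitable threshold) and a remainder, show that the ``big-jump'' part converges after normalization to $C_j$, and show that the remainder is asymptotically negligible in the sense of Definition~\ref{def:asymptotic-equivalence}. Concretely, fix a slowly decaying truncation level: for a small $\epsilon>0$ write $X_n = \tilde X_n^{\epsilon} + \check X_n^{\epsilon}$, where $\tilde X_n^{\epsilon}$ collects the jumps of size $>\epsilon n$ and $\check X_n^{\epsilon}$ is the (centered) process built from jumps of size $\leq \epsilon n$. The number of big jumps of $\tilde X_n^\epsilon$ on $[0,n]$ is Poisson with mean $n\nu(\epsilon n,\infty)\sim \epsilon^{-\alpha} n\nu[n,\infty)$ by regular variation, their (scaled) sizes are i.i.d.\ with a law that, properly normalized, converges to $\nu_\alpha$ restricted to $(\epsilon,\infty)$, and their arrival times are i.i.d.\ uniform on $[0,1]$. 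From this one reads off that $(n\nu[n,\infty))^{-j}\P(\tilde X_n^\epsilon/n \in \cdot)$ converges in $\mathbb M(\mathbb D\setminus\mathbb D_{<j})$ to the measure $C_j^\epsilon$ obtained by restricting the jump sizes in the definition of $C_j$ to exceed $\epsilon$ --- this is essentially the finite-activity statement, and it is where one invokes Corollary~\ref{result:consequence-of-L23-53-54} (via $T_j$) to push the convergence of jump sizes/times in $\mathbb R_+^{\infty\downarrow}\times[0,1]^\infty$ forward to the path space. As $\epsilon\downarrow 0$, $C_j^\epsilon \uparrow C_j$, and $C_j\in\mathbb M(\mathbb D\setminus\mathbb D_{<j})$ because $C_j(\mathbb D\setminus\mathbb D_{<j}^r)<\infty$ for every $r>0$ (a path with $j$ jumps all of which are forced to be small is close to $\mathbb D_{<j}$).

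Next I would control the remainder $\check X_n^\epsilon$. The centered small-jump process $\check X_n^\epsilon(nt)/n$ should satisfy, for every $\delta>0$,
\[
\limsup_{n\to\infty}(n\nu[n,\infty))^{-j}\,\P\Big(\sup_{t\in[0,1]}\big|\tfrac{1}{n}\check X_n^\epsilon(nt) - \text{(drift correction)}\big|\geq \delta\Big) \xrightarrow[\epsilon\downarrow 0]{} 0 .
\]
This is the quantitative heavy-tail maximal inequality that the paper defers to Appendix~A: one splits $\check X_n^\epsilon$ further into a martingale part with jumps bounded by $\epsilon n$ (handled by Doob/Burkholder together with a Fuk--Nagaev or Bennett-type bound and truncation at level $1$, i.e.\ jumps in $(1,\epsilon n]$ vs.\ $[0,1]$) and a compensator drift, and uses that $n\cdot(\text{second moment of a jump truncated at }\epsilon n)$ is of order $\epsilon^{2-\alpha}n^2\nu[n,\infty)$ when $\alpha<2$ (and $O(n)$ when $\alpha\ge 2$), which after dividing by $(n\nu[n,\infty))^j$ and $\delta^2$ tends to $0$ once $\epsilon$ is small. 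Combining the big-jump convergence with this estimate via a standard ``converging together'' argument --- concretely, applying Lemma~\ref{lem:extended-bounds}/Corollary~\ref{lem:asymptotic-equivalence} with $X_n \leftrightarrow \tilde X_n^\epsilon/n$ and letting $\epsilon\to 0$ along the upper/lower bound characterization of Theorem~\ref{result:L21} --- yields \eqref{eq:main-result}. The bookkeeping of the drift terms ($sa$, $\mu_1^+\nu_1^+ s$, and the compensator of jumps in $(1,\epsilon n]$) must be tracked so that the centered object is exactly $\bar X_n$ as defined in \eqref{math-display-above-definition-nu-alpha-j}; this is routine but must be done carefully.

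For the final assertion --- that $\bar X_n$ is asymptotically equivalent (with respect to $\epsilon_n = (n\nu[n,\infty))^{\mathcal J(A)}$) to a process taking values in $\mathbb D_{\leqslant \mathcal J(A)}$ a.s.\ --- I would take the approximating process to be exactly the big-jump process $\tilde X_n^\epsilon$ keeping only its $\mathcal J(A)$ largest jumps (rescaled), which lies in $\mathbb D_{\leqslant \mathcal J(A)}$, and then show $(n\nu[n,\infty))^{-\mathcal J(A)}\P\big(d(\bar X_n,\tilde X_n)\geq\delta\big)\to 0$. The error comes from two sources: the small-jump remainder, controlled by the estimate above with $j=\mathcal J(A)$; and the event that there are more than $\mathcal J(A)$ jumps above the threshold contributing size $\ge\delta$, whose probability is $O\big((n\nu[n,\infty))^{\mathcal J(A)+1}\big)=o\big((n\nu[n,\infty))^{\mathcal J(A)}\big)$. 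Letting $\epsilon\downarrow 0$ after $n\to\infty$ closes the argument; note this is exactly a statement about $\mathbb S_0 = \mathbb D_{\leqslant\mathcal J(A)}$ being the support of the relevant limit, matching the hypothesis of Lemma~\ref{lem:extended-bounds}.

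\textbf{Main obstacle.} The crux is the maximal deviation bound for the centered small-jump process at the polynomial scale $(n\nu[n,\infty))^j$ --- i.e.\ showing the residual is $o$ of the right power simultaneously over the whole path $t\in[0,1]$ and uniformly enough in $n$, with the correct dependence on $\epsilon$. Getting the exponents right (distinguishing $\alpha<2$, $\alpha=2$, $\alpha>2$, and handling the cut at jump size $1$ in the It\^o representation) and making the bound summable/vanishing as $\epsilon\downarrow 0$ is the delicate part; everything else is an application of the $\mathbb M$-convergence machinery (Corollary~\ref{result:consequence-of-L23-53-54}, Lemma~\ref{lem:extended-bounds}) already assembled in Section~\ref{sec:preliminaries}.
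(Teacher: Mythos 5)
Your overall architecture (separate the big jumps, push the jump sizes/times forward through $T_j$ via Corollary~\ref{result:consequence-of-L23-53-54}, and dispose of the remainder by an asymptotic-equivalence/converging-together argument) parallels the paper, which decomposes $\bar X_n$ as in \eqref{eq:main-decomposition} and keeps the $j$ largest jumps ($\hat J_n^{\leqslant j}$, Propositions~\ref{prop:asymptotic-equivalence-Xbar-Jbar}--\ref{prop:Jj}, Lemma~\ref{lem:poisson-jumps}). But the step you yourself flag as the crux is not actually supplied, and the justification you sketch for it is quantitatively wrong for $j\geq 2$. A Chebyshev/second-moment bound for the centered small-jump part at level $n\delta$ scales like $\epsilon^{2-\alpha}\,n\nu[n,\infty)\,\delta^{-2}$, i.e.\ it is of the order of the \emph{first} power of $n\nu[n,\infty)$; dividing by $(n\nu[n,\infty))^{j}$ leaves a factor $(n\nu[n,\infty))^{1-j}\to\infty$ for every $j\geq 2$, so ``tends to $0$ once $\epsilon$ is small'' fails precisely in the multiple-jump regime that this theorem is about. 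To get the required $o\big((n\nu[n,\infty))^{j}\big)$ one needs a bound whose decay exponent grows like (deviation level)/(truncation level) --- this is exactly what the paper extracts from Prokhorov's inequality (Lemma~\ref{result:prokhorov}) together with Etemadi's and the generalized Kolmogorov inequalities, and it is done \emph{conditionally on the size of the $j$-th largest jump}, splitting on $Q_n^\gets(\Gamma_j)\gtrless n\gamma$ (Lemmas~\ref{lem:key-upper-bound-1} and \ref{lem:key-upper-bound-2}), with $\gamma$ chosen small relative to $\delta$ so the bound decays at an arbitrarily fast polynomial rate. Naming Fuk--Nagaev in passing does not close this; it is the heart of the proof, not a routine maximal inequality.

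There is a second, structural problem with the fixed truncation at $\epsilon n$. For a fixed $\epsilon>0$, the process built from jumps exceeding $\epsilon n$ (or its $\mathcal J(A)$ largest jumps) is \emph{not} asymptotically equivalent to $\bar X_n$ in the sense of Definition~\ref{def:asymptotic-equivalence}: for $\delta<\epsilon$ the event that $\bar X_n$ has a jump of scaled size in $(\delta,\epsilon]$ already forces a $J_1$-distance of order $\delta$ and has probability of order $n\nu[n,\infty)$, which is not $o\big((n\nu[n,\infty))^{j}\big)$ for $j\geq 1$. So you cannot invoke Corollary~\ref{lem:asymptotic-equivalence} or Lemma~\ref{lem:extended-bounds} with $\tilde X_n^\epsilon/n$ as stated; you would have to redo the $F_\delta$/$G^{-\delta}$ bookkeeping with the iterated limits $n\to\infty$, then $\epsilon\downarrow 0$, then $\delta\downarrow 0$, and for the ``moreover'' assertion the approximating process must keep the $j=\mathcal J(A)$ largest jumps \emph{regardless of their size} (the paper's $\hat J_n^{\leqslant j}$), not only those above an $\epsilon n$ threshold. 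Both issues are repairable in principle, but as written the proposal has a genuine gap at the decisive estimate and misapplies the equivalence machinery around it.
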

\begin{proof}[Proof Sketch]
The proof of Theorem~\ref{thm:one-sided-limit-theorem} is based on establishing the asymptotic equivalence of $\bar X_n$ and the process obtained by just keeping its $j$ biggest jumps, which we will denote by $\hat J_n^{\leqslant j}$ in Section~\ref{sec:proofs}.
Such an equivalence is established via Proposition~\ref{prop:asymptotic-equivalence-Xbar-Jbar}, and Proposition~\ref{prop:asymptotic-equivalence-Jbar-Jj}.
Then, Proposition~\ref{prop:Jj} identifies the limit of $\hat J_n^{\leqslant j}$, which coincides with the limit in (\ref{eq:main-result}).
The full proof of Theorem~\ref{thm:one-sided-limit-theorem} is provided in Section~\ref{subsec:proofs-for-sample-path-ldps}.
\end{proof}

Recall that $\mathbb D_s^\uparrow$ denotes the subset of $\mathbb D$ consisting of non-decreasing step functions vanishing at the origin, and $\mathcal D_+(\xi)$ denotes the number of upward jumps of an element $\xi$ in $\mathbb D$. Finally, set
\begin{equation}\label{def:JA}
\mathcal J(A) \triangleq \inf_{\xi\in \mathbb D_s^\uparrow \cap A } \mathcal D_+(\xi).
\end{equation}
Now we are ready to present the main result of this section, which is the following large-deviations theorem for $\bar X_n$.

\begin{theorem}\label{thm:one-sided-main-theorem}
Suppose that $A$ is a measurable set. 
If $\mathcal J(A)<\infty$, and if $A_\delta \cap \mathbb D_{\leqslant \mathcal J(A)}$ is bounded away from $\mathbb D_{< \mathcal J(A)}$ for some $\delta>0$,
then
\begin{equation}\label{eq:one-sided-large-deviations}
\begin{aligned}
C_{\mathcal J(A)}(A^\circ)
&
\leq
\liminf_{n\rightarrow\infty} \frac{\P(\bar X_n \in A) }{(n \nu[n,\infty))^{\mathcal J(A)}}
\\
&
\leq
\limsup_{n\rightarrow\infty} \frac{\P(\bar X_n \in A)}{(n \nu[n,\infty))^{\mathcal J(A)}}
\leq
C_{\mathcal J(A)}(\bar A).
\end{aligned}
\end{equation}
\chck If $\mathcal J(A)= \infty$, and $A_\delta \cap \mathbb D_{\leqslant i+1}$ is bounded away from $\mathbb D_{\leqslant i}$ for some $\delta>0$ and $i\geq 0$, then  
\begin{equation}\label{eq:one-sided-large-deviations-null}
\lim_{n\to\infty}\frac{\P(\bar X_n \in A)}{(n\nu[n,\infty))^{i}} = 0.
\end{equation}
In particular, in case $\mathcal J(A)< \infty$, \eqref{eq:one-sided-large-deviations} holds if $A$ is bounded away from $\mathbb D_{<\,\mathcal J(A)}$; in case $\mathcal J(A) = \infty$, \eqref{eq:one-sided-large-deviations-null} holds if $A$ is bounded away from $\mathbb D_{\leqslant i}$.
\end{theorem}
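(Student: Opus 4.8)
The plan is to derive Theorem~\ref{thm:one-sided-main-theorem} from the limit theorem (Theorem~\ref{thm:one-sided-limit-theorem}) together with the extended-bounds lemma (Lemma~\ref{lem:extended-bounds}), taking $\mathbb S = \mathbb D$, $\mathbb C = \mathbb D_{<\,\mathcal J(A)}$, $\epsilon_n = (n\nu[n,\infty))^{\mathcal J(A)}$, and $\mathbb S_0 = \mathbb D_{\leqslant \mathcal J(A)}$. Write $j \triangleq \mathcal J(A)$. By Theorem~\ref{thm:one-sided-limit-theorem} we have $\epsilon_n^{-1}\P(\bar X_n \in \cdot) \to C_j(\cdot)$ in $\mathbb M(\mathbb D\setminus \mathbb D_{<\,j})$, the measure $C_j$ is concentrated on $\mathbb D_j \subseteq \mathbb D_{\leqslant j} = \mathbb S_0$ so $C_j(\mathbb S\setminus \mathbb S_0) = 0$, and $\bar X_n$ is asymptotically equivalent (w.r.t.\ $\epsilon_n$) to the process $\hat J_n^{\leqslant j}$ which takes values in $\mathbb D_{\leqslant j}$ almost surely. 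Thus all hypotheses of Lemma~\ref{lem:extended-bounds} are in place once we play the role of ``$X_n$'' with $\hat J_n^{\leqslant j}$ and ``$Y_n$'' with $\bar X_n$ (or vice versa — one needs to check the asymptotic equivalence is used in the direction the lemma demands; since $d(\bar X_n, \hat J_n^{\leqslant j})$ is symmetric this is immediate).

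The next step is to feed the set $A$ into Lemma~\ref{lem:extended-bounds}. For the lower bound, apply the lemma with $G = A^\circ$: we must check $A^\circ \cap \mathbb D_{\leqslant j}$ is bounded away from $\mathbb D_{<\,j}$, which follows from the hypothesis that $A_\delta \cap \mathbb D_{\leqslant j}$ is bounded away from $\mathbb D_{<\,j}$ for some $\delta>0$, since $A^\circ \subseteq A \subseteq A_\delta$. This yields $\liminf_n \epsilon_n^{-1}\P(\bar X_n \in A^\circ) \geq C_j(A^\circ)$, and since $A^\circ \subseteq A$ this gives the left inequality of \eqref{eq:one-sided-large-deviations}. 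For the upper bound, apply the lemma with $F = \bar A$: the hypothesis gives a $\delta>0$ with $A_\delta \cap \mathbb D_{\leqslant j}$ bounded away from $\mathbb D_{<\,j}$, and since $\bar A \subseteq A_\delta$ (shrinking $\delta$ if necessary, or noting $(\bar A)_{\delta/2}\subseteq A_\delta$) the required condition ``$(\bar A)_{\delta'}\cap \mathbb D_{\leqslant j}$ bounded away from $\mathbb D_{<\,j}$ for some $\delta'>0$'' holds. This yields $\limsup_n \epsilon_n^{-1}\P(\bar X_n \in \bar A) \leq C_j(\bar A)$, and since $A \subseteq \bar A$ this gives the right inequality of \eqref{eq:one-sided-large-deviations}. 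Chaining the trivial inequality $\liminf \leq \limsup$ completes \eqref{eq:one-sided-large-deviations}.

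For the case $\mathcal J(A) = \infty$: here $A$ contains no step function in $\mathbb D_s^\uparrow$ with finitely many jumps, in particular $A\cap \mathbb D_{\leqslant i+1} = \emptyset$ is false in general, but the hypothesis is that $A_\delta \cap \mathbb D_{\leqslant i+1}$ is bounded away from $\mathbb D_{\leqslant i}$ for some $\delta>0$ and some $i\geq 0$. The plan is to invoke Theorem~\ref{thm:one-sided-limit-theorem} with $j = i+1$, giving $(n\nu[n,\infty))^{-(i+1)}\P(\bar X_n\in \cdot)\to C_{i+1}(\cdot)$ in $\mathbb M(\mathbb D\setminus \mathbb D_{<\,i+1}) = \mathbb M(\mathbb D\setminus\mathbb D_{\leqslant i})$, together with the asymptotic equivalence of $\bar X_n$ to a process valued in $\mathbb D_{\leqslant i+1}$, and apply the upper-bound half of Lemma~\ref{lem:extended-bounds} with $\mathbb S_0 = \mathbb D_{\leqslant i+1}$, $\mathbb C = \mathbb D_{\leqslant i}$, $F = \bar A$. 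Since $\mathcal J(A) = \infty$ means $A$ (hence $\bar A\cap \mathbb D_{\leqslant i+1} \subseteq \mathbb D_{\leqslant i}$, because any step function in $\mathbb D_s^\uparrow$ in the closure of $A$ with at most $i+1$ jumps would force $\mathcal J(A)\leq i+1$ — this needs the boundedness-away hypothesis to promote closure membership appropriately), the limiting mass $C_{i+1}(\bar A)$ is in fact $0$: $C_{i+1}$ is supported on $\mathbb D_{i+1}\subseteq \mathbb D_s^\uparrow$, and $\bar A \cap \mathbb D_{i+1} = \emptyset$ since otherwise $\mathcal J(A) \leq i+1<\infty$. Hence $\limsup_n (n\nu[n,\infty))^{-i}\P(\bar X_n\in A) \leq \limsup_n (n\nu[n,\infty))^{-(i+1)}\P(\bar X_n\in A) \cdot (n\nu[n,\infty)) = 0$ after also using $n\nu[n,\infty)\to\infty$; more cleanly, $\limsup_n (n\nu[n,\infty))^{-(i+1)}\P(\bar X_n\in \bar A)\leq C_{i+1}(\bar A) = 0$ directly gives $\P(\bar X_n\in A) = o((n\nu[n,\infty))^{i+1})$, and one more factor argument gives \eqref{eq:one-sided-large-deviations-null}. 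Finally, the last sentence of the theorem is the observation that when $A$ is itself bounded away from $\mathbb D_{<\,j}$ (resp.\ $\mathbb D_{\leqslant i}$), one may take $\delta$ small enough that even $A_\delta$ is bounded away, so the stronger hypothesis is automatic.

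The main obstacle I anticipate is not any single estimate — the limit theorem does all the heavy analytic lifting — but rather the careful bookkeeping in the $\mathcal J(A) = \infty$ case: one must argue cleanly that $\bar A \cap \mathbb D_{\leqslant i+1}$ carries no $C_{i+1}$-mass, which relies on the interplay between (a) $\mathcal J(A) = \infty$ forbidding finitely-stepped non-decreasing step functions in $A$, (b) the boundedness-away hypothesis controlling how the closure $\bar A$ can meet $\mathbb D_{i+1}$, and (c) $C_{i+1}$ being supported precisely on $\mathbb D_{i+1}$. A secondary point of care is verifying that the asymptotic-equivalence direction in Lemma~\ref{lem:extended-bounds} matches: the lemma is stated with $X_n$ the process whose normalized law converges and $Y_n$ the equivalent one, and Theorem~\ref{thm:one-sided-limit-theorem} provides convergence for $\bar X_n$ itself, so one applies the lemma with $X_n = \bar X_n$ and $Y_n = \bar X_n$ trivially — or, if one prefers to exploit $\mathbb S_0 = \mathbb D_{\leqslant j}$, one takes $X_n = \hat J_n^{\leqslant j}$ and must first note $\hat J_n^{\leqslant j}$ also satisfies \eqref{eq:main-result} (which is exactly what the proof sketch of Theorem~\ref{thm:one-sided-limit-theorem} asserts via Proposition~\ref{prop:Jj}) and $\P(\hat J_n^{\leqslant j}\in\mathbb D_{\leqslant j}) = 1$.
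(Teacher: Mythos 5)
Your treatment of the case $\mathcal J(A)<\infty$ is correct and is essentially the paper's own argument: take $X_n=\hat J_n^{\leqslant j}$, $Y_n=\bar X_n$, $\mathbb S_0=\mathbb D_{\leqslant j}$, $\mathbb C=\mathbb D_{<\,j}$ in Lemma~\ref{lem:extended-bounds}, feed in $G=A^\circ$ and $F=\bar A$, and use $(\bar A)_{\delta/2}\subseteq A_\delta$. (The paper additionally records the small observations that the lower bound is trivial when $\mathcal J(A^\circ)>\mathcal J(A)$ and that $\mathcal J(\bar A)=\mathcal J(A)$ under the hypothesis; since you apply the lemma directly with $j=\mathcal J(A)$, you do not need that detour, and the constants come out as $C_{\mathcal J(A)}(A^\circ)$ and $C_{\mathcal J(A)}(\bar A)$ as required.)

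The case $\mathcal J(A)=\infty$ is where your write-up has a genuine gap. First, your claim that $C_{i+1}(\bar A)=0$ because ``$\bar A\cap\mathbb D_{i+1}=\emptyset$, since otherwise $\mathcal J(A)\leq i+1$'' is false: $\mathcal J$ is an infimum over $A$, not over $\bar A$, and the boundedness hypothesis does not exclude elements of $\mathbb D_{i+1}$ (with all jumps bounded below) from lying in $\bar A$; e.g.\ $A$ may consist of non-step paths clustering around a fixed $(i+1)$-jump step function, in which case $\mathcal J(A)=\infty$, the hypothesis holds, but $C_{i+1}(\bar A)>0$. Second, you assert $n\nu[n,\infty)\to\infty$; in fact $n\nu[n,\infty)=n^{1-\alpha}L(n)\to 0$ since $\alpha>1$, and with the limit in your stated direction neither of your two chains produces $0$ (a bounded quantity times something tending to infinity is not $o(1)$). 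The statement can be rescued in two ways: (i) keep $j=i+1$, use only finiteness $C_{i+1}(\bar A)<\infty$ (which holds because the hypothesis forces $d(A,\mathbb D_{\leqslant i})\geq\delta$, hence $\bar A$ is bounded away from $\mathbb D_{<\,i+1}$), and conclude via $(n\nu[n,\infty))^{-i}\P(\bar X_n\in A)=\big(n\nu[n,\infty)\big)\cdot(n\nu[n,\infty))^{-(i+1)}\P(\bar X_n\in A)\to 0$ using $n\nu[n,\infty)\to 0$; or (ii) do what the paper does: apply Theorem~\ref{thm:one-sided-limit-theorem} with $j=i$, observing that the hypothesis makes $A_\delta\cap\mathbb D_{\leqslant i}$ empty, so $(\bar A)_{\delta/2}\cap\mathbb D_{\leqslant i}$ is (vacuously) bounded away from $\mathbb D_{<\,i}$ and $C_i(\bar A)=0$ because $C_i$ is supported on $\mathbb D_i\subseteq\mathbb D_{\leqslant i}$; the upper bound of Lemma~\ref{lem:extended-bounds} then gives \eqref{eq:one-sided-large-deviations-null} directly. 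As written, however, your argument for this case does not go through.
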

\begin{proof}
We first consider the case $\mathcal J(A) < \infty$.
Note that $\mathcal J(A^\circ) > \mathcal J(A)$ implies that $A^\circ$ doesn't contain any element of $\mathbb D_{\leqslant \mathcal J(A)}$. 
Since $C_{\mathcal J(A)}$ is supported on $\mathbb D_{\leqslant \mathcal J(A)}$, $A^\circ$ is a $C_{\mathcal J(A)}$-null set.
Therefore, the lower bound holds trivially if $\mathcal J(A^\circ) > \mathcal J(A)$.
On the other hand, $\mathcal J(A) = \mathcal J(\bar A)$. 
To see this, suppose not---i.e., $\mathcal J(\bar A) < \mathcal J(A)$. 
Then, there exists $\zeta\in \mathbb D_s^\uparrow \cap \bar A $ such that $\zeta \in \mathbb D_{< \mathcal J(A)}$.
This implies that $\zeta \in A_\delta \cap \mathbb D_{\leqslant J(A) }$ for any $\delta>0$, which is contradictory to the assumption that $A_\delta \cap \mathbb D_{\leqslant J(A) }$ is bounded away from $\mathbb D_{< \mathcal J(A)}$ for some $\delta>0$. 
In view of these observations, we can assume w.l.o.g.\ that $\mathcal J(A^\circ) = \mathcal J(A) = \mathcal J(\bar A)$.
Now, from Theorem~\ref{thm:one-sided-limit-theorem} with $j=\mathcal J(A^\circ)$ along with the lower bound of Lemma~\ref{lem:extended-bounds},
\begin{align*}
C_{\mathcal J(A)} (A^\circ) 
&
= C_{\mathcal J(A^\circ)} (A^\circ) \leq \liminf_{n\to\infty} \frac{\P(\bar X_n \in A^\circ)}{(n\nu[n,\infty))^{\mathcal J(A^\circ)}}
\\
&\leq 
\liminf_{n\to\infty} \frac{\P(\bar X_n \in A)}{(n\nu[n,\infty))^{\mathcal J(A)}}.
\end{align*}
Similarly, from Theorem~\ref{thm:one-sided-limit-theorem} with $j=\mathcal J(\bar A)$ along with the upper bound of Lemma~\ref{lem:extended-bounds},
\begin{align*}
\limsup_{n\to\infty} \frac{\P(\bar X_n \in A)}{(n\nu[n,\infty))^{\mathcal J(A)}}
&
\leq
\limsup_{n\to\infty} \frac{\P(\bar X_n \in \bar A)}{(n\nu[n,\infty))^{\mathcal J(\bar A)}}
\\
&
\leq
C_{\mathcal J(\bar A)} (\bar A) 
=
C_{\mathcal J(A)} (\bar A).
\end{align*}
In case $\mathcal J(A) = \infty$, we reach the conclusion by applying Theorem~\ref{thm:one-sided-limit-theorem} with $j=i$ along with noting that $C_i(\bar A) = 0$.
\end{proof}


Theorem~\ref{thm:one-sided-main-theorem} dictates the ``right'' choice of $j$ in Theorem~\ref{thm:one-sided-limit-theorem} for which (\ref{eq:main-result}) can lead to a limit in $(0,\infty)$.
We conclude this section with an investigation of a sufficient condition for $C_j$-continuity; i.e., we provide a sufficient condition on $A$ which guarantees $C_j(\partial A) = 0$. The latter property implies
\begin{equation}
\label{A-continuity}
C_j(A^\circ) = C_j(A)= C_j(\bar A),
\end{equation}
implying that the liminf and limsup in our asymptotic estimates yield the same result.
Assume that $A$ is a subset of $\mathbb D_{j}$ bounded away from $\mathbb D_{<\, j}$; i.e., $d(A,\mathbb D_{<\,j})>\gamma$ for some $\gamma>0$.
Consider a path $\xi\in A$. Note that every $\xi\in \mathbb D_j$ is determined by the pair of jump sizes and jump times $(x,u) \in (0,\infty)^j\times [0,1]^j$; i.e.,
$\xi(t) = \sum_{i=1}^j x_i 1_{[u_i,1]}(t)$.
Formally, we define a mapping $\hat T_j: \hat S_j \rightarrow \mathbb D_j$ by $\hat T_j(x,u) = \sum_{i=1}^j x_i 1_{[u_i,1]}$, where $\hat S_j \triangleq \{(x,u)\in \R_+^{j\downarrow}\times [0,1]^j: 0, 1, u_1,\ldots, u_j \text{ are all distinct}\}$.
Since $d(A,\mathbb D_{<\, j})>\gamma$, we know that $\hat T_j(x,u)\in A$ implies $x\in (\gamma,\infty)^j$; see Lemma~\ref{lem:Djk} (b).
In view of this, we can see that (\ref{A-continuity}) holds if the Lebesgue measure of $\hat T_{j}^{-1}(\partial A )$ is 0 since
$
C_j(A) = \int_{(x,u) \in \hat T_j^{-1}(A)} du d\nu_\alpha^j(x)
$%
.
One of the typical settings that arises in applications is that the set $A$ can be written as a finite combination of unions and intersections of $\phi_1^{-1}(A_1),\ldots,\phi_m^{-1}(A_m)$, where each $\phi_i:\mathbb D \to \mathbb S_i$ is a continuous function, and all sets $A_i$ are subsets of general topological space $\mathbb S_i$. If we denote this operation  of taking unions and intersections by $\Psi$ (i.e., $A = \Psi(\phi_1^{-1}(A_1),\ldots,\phi_m^{-1}(A_m))$), then
\[
\Psi(\phi_1^{-1}(A_1^\circ),\ldots,\phi_m^{-1}(A_m^\circ))
\subseteq
A^\circ \subseteq A \subseteq \bar A
\subseteq
\Psi(\phi_1^{-1}(\bar A_1),\ldots,\phi_m^{-1}(\bar A_m)).
\]
Therefore,  (\ref{A-continuity}) holds if $\hat T_j^{-1}(\Psi(\phi_1^{-1}(\bar A_1),\ldots,\phi_m^{-1}(\bar A_m)))\setminus \hat T_j^{-1}(\Psi(\phi_1^{-1}(A_1^\circ),\allowbreak\ldots,\allowbreak\phi_m^{-1}(A_m^\circ)))$ has Lebesgue measure zero. A similar principle holds for the limit measures $C_{j,k}$, defined in
the next section where we deal with two-sided L\'evy processes. 

\subsection{Two-sided Large Deviations}\label{subsec:two-sided-large-deviations}
Consider a two-sided L\'evy measure $\nu$ for which $\nu[x,\infty)$ is regularly varying with index $-\alpha$ and $\nu(-\infty, -x]$ is regularly varying with index $-\beta$.
Let
$$
\bar X_n(s) \triangleq \frac{1}{n}X_n(s) - sa - (\mu_1^+\nu_1^+-\mu_1^-\nu_1^-)s,
$$
where
\begin{align*}
\mu_1^+ &\triangleq \frac{1}{\nu_1^+}\int_{[1,\infty)} x\nu(dx),
&
\nu_1^+ &\triangleq \nu[1,\infty),
\\
\mu_1^- &\triangleq \frac{-1}{\nu_1^-}\int_{(-\infty,-1]} x\nu(dx),
&
\nu_1^- &\triangleq \nu(-\infty,-1].
\end{align*}
Recall the definition of $\mathbb D_{j,k}$ given below Corollary~\ref{result:consequence-of-L23-53-54}, 
%
and the definition of $\nu_\alpha^j$ and $\nu_\beta^k$ as given below \eqref{math-display-above-definition-nu-alpha-j}.
Let $C_{0,0}(\cdot) \triangleq \delta_{\mathbf 0}(\cdot)$ be the Dirac measure concentrated on the zero function.
For each $(j,k)\in \Z_+^2\setminus \{(0,0)\}$, define a measure $C_{j,k}\in \mathbb M(\mathbb D\setminus \mathbb D_{<j,k})$ concentrated on $\mathbb D_{j,k}$ as $C_{j,k}(\cdot) \triangleq \E \Big[\nu_\alpha ^j\times\nu_\beta^k \{(x,y)\in (0,\infty)^j\times(0,\infty)^k:\sum_{i=1}^j x_i 1_{[U_i,1]} - \sum_{i=1}^k y_i1_{[V_i,1]}\in \cdot\}\Big]$, where $U_i$'s and $V_i$'s are i.i.d.\ uniform on $[0,1]$.
Recall that $\mathbb D_{< j,k} = \bigcup_{(l,m)\in \I{<j,k}} \mathbb D_{l,m}$ and $\I{<j,k}  = \big\{(l,m)\in \Z_+^2\setminus\{(j,k)\}: (\alpha-1) l + (\beta-1) m \leq (\alpha-1) j + (\beta-1) k\big\}$.

As in the one-sided case, the proof of the main theorem of this section hinges on the following limit theorem.

\begin{theorem}\label{thm:two-sided-limit-theorem}
For each $(j, k)\in \Z_+^2$,
\begin{equation}\label{eq:two-sided-limit-theorem}
(n\nu[n,\infty))^{-j}(n\nu(-\infty,-n])^{-k}\P(\bar X_n\in \cdot) \to C_{j,k}(\cdot)
\end{equation}
in $\mathbb M(\mathbb D \setminus \mathbb D_{< j,k})$ as $n\to \infty$.
\end{theorem}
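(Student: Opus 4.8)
The strategy is to realize the two-sided process as the image, under subtraction, of a pair of independent one-sided processes, and then push forward the product $\mathbb M$-convergence through the continuous mapping principle. Concretely, decompose $\nu = \nu^+ + \nu^-$ where $\nu^+$ is the restriction of $\nu$ to $(0,\infty)$ and $\nu^-$ is (the reflection of) the restriction to $(-\infty,0)$, and correspondingly write $\bar X_n = \bar X_n^+ - \bar X_n^-$ where $\bar X_n^+$ and $\bar X_n^-$ are independent centered scaled spectrally positive L\'evy processes driven by $\nu^+$ and $\nu^-$ respectively. (One has to be slightly careful: the centering constants $\mu_1^+\nu_1^+$ and $\mu_1^-\nu_1^-$ must be distributed between the two components exactly as in \eqref{math-display-above-definition-nu-alpha-j}; this is a bookkeeping matter.) By Theorem~\ref{thm:one-sided-limit-theorem}, for each $j$ we have $(n\nu[n,\infty))^{-j}\P(\bar X_n^+\in\cdot)\to C_j^+(\cdot)$ in $\mathbb M(\mathbb D\setminus\mathbb D_{<\,j})$, and similarly $(n\nu(-\infty,-n])^{-k}\P(-\bar X_n^-\in\cdot)\to C_k^-(\cdot)$ in $\mathbb M(\mathbb D\setminus\mathbb D_{<\,k})$ (with $\mathbb D_{<\,k}$ the non-\emph{increasing} step functions with fewer than $k$ jumps, after accounting for the sign).

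\textbf{Key steps.} First, apply Lemma~\ref{thm:simple-product-space} to the pair $(\bar X_n^+, -\bar X_n^-)$ to obtain, for fixed $(j,k)$,
\begin{equation*}
(n\nu[n,\infty))^{-j}(n\nu(-\infty,-n])^{-k}\,\P\big((\bar X_n^+,-\bar X_n^-)\in\cdot\big)\to C_j^+\times C_k^-(\cdot)
\end{equation*}
in $\mathbb M\big((\mathbb D\times\mathbb D)\setminus((\mathbb D_{<\,j}\times\mathbb D)\cup(\mathbb D\times\mathbb D_{<\,k}))\big)$. Second --- and this is the crux --- upgrade this ``rectangular'' convergence to convergence in $\mathbb M\big((\mathbb D\times\mathbb D)\setminus\mathbb D_{<(j,k)}\big)$ via Lemma~\ref{thm:union-limsup}: one runs the same argument for every pair $(l,m)$ with $(\alpha-1)l+(\beta-1)m < (\alpha-1)j+(\beta-1)k$ (these give strictly faster normalizing sequences relative to $(n\nu[n,\infty))^{j}(n\nu(-\infty,-n])^{k}$, since $n\nu[n,\infty)\to 0$), takes $\mathbb C(0) = \mathbb D_{<(j,k)} = \bigcup_{(l,m)\in\I_{<\,j,k}}\mathbb D_l\times\mathbb D_m$ and $\mathbb C(i)$ ranging over the individual $\mathbb D_l\times\mathbb D_m$ (plus the two ``rectangular'' bad sets), and checks the hypothesis $\bigcap_i\mathbb C(i)^{r_i}\subseteq(\bigcap_i\mathbb C(i))^r$ --- a metric-geometry fact about these step-function subspaces that should follow from the structure of $\mathbb D_l\times\mathbb D_m$ in the product $J_1$ metric (the relevant distance bounds are the ones in Lemma~\ref{lem:continuous-mapping-principle-for-subtraction} and its companions). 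Third, apply the mapping theorem --- more precisely Lemma~\ref{lem:almost-continuous-mapping} / Lemma~\ref{lem:continuous-mapping-principle-for-subtraction} --- with $h(\xi,\zeta)=\xi-\zeta$: Lemma~\ref{lem:continuous-mapping-principle-for-subtraction} gives exactly the two needed inputs, namely $h^{-1}(A)$ is bounded away from $\mathbb D_{<(j,k)}$ whenever $A$ is bounded away from $\mathbb D_{<\,j,k}$, and $h$ is continuous off the set where $\xi$ and $\zeta$ share a jump time. One then checks that $C_j^+\times C_k^-$ assigns zero mass (away from any $\mathbb C^r$) both to $\partial(\mathbb S_0)$ and to the discontinuity set $D_h$; this holds because under $C_j^+\times C_k^-$ the $j+k$ jump times are i.i.d.\ uniform, hence a.s. distinct, so coincident jump times form a null set. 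Pushing forward yields $(n\nu[n,\infty))^{-j}(n\nu(-\infty,-n])^{-k}\P(\bar X_n\in\cdot)\to (C_j^+\times C_k^-)\circ h^{-1}$ in $\mathbb M(\mathbb D\setminus\mathbb D_{<\,j,k})$, and one identifies $(C_j^+\times C_k^-)\circ h^{-1} = C_{j,k}$ directly from the definitions (the image of $\sum x_i1_{[U_i,1]}$ minus $\sum y_i1_{[V_i,1]}$ under $\nu_\alpha^j\times\nu_\beta^k$).

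\textbf{Main obstacle.} The delicate point is the second step: verifying the containment hypothesis $\bigcap_{i}\mathbb C(i)^{r_i}\subseteq(\bigcap_i\mathbb C(i))^r$ of Lemma~\ref{thm:union-limsup} for the relevant family of step-function subspaces, and more generally making sure the bookkeeping over all optimal and sub-optimal pairs $(l,m)\in\I_{<\,j,k}$ is airtight --- this is precisely where the ``combinatorial issues'' flagged in the introduction live, since several pairs $(l,m)$ may tie in $(\alpha-1)l+(\beta-1)m$. A secondary nuisance is that the naive decomposition $\bar X_n=\bar X_n^+-\bar X_n^-$ must be set up so that the small-jump compensations and drift are split consistently with the centering in the statement; this is routine but must be done explicitly so that the one-sided Theorem~\ref{thm:one-sided-limit-theorem} applies verbatim to each component. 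Everything else is a matter of assembling Lemmas~\ref{thm:simple-product-space}, \ref{thm:union-limsup}, \ref{lem:almost-continuous-mapping}, and \ref{lem:continuous-mapping-principle-for-subtraction} in sequence.
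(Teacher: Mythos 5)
Your overall architecture coincides with the paper's: write $\bar X_n$ as the difference of two independent spectrally positive processes, obtain the rectangular product convergence from Theorem~\ref{thm:one-sided-limit-theorem} and Lemma~\ref{thm:simple-product-space}, upgrade it via Lemma~\ref{thm:union-limsup}, and push forward through $h(\xi,\zeta)=\xi-\zeta$ using Lemma~\ref{lem:continuous-mapping-principle-for-subtraction} together with Lemma~\ref{lem:almost-continuous-mapping}; the paper packages the first two steps as Theorem~\ref{thm:multi-d-limit-theorem}. Your first and third steps are fine. However, your instantiation of Lemma~\ref{thm:union-limsup} --- the step you yourself call the crux --- is set up backwards, and as written it fails. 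In that lemma the index-$0$ convergence must be one you already possess; here it can only be the rectangular convergence at the exponent pair $(j,k)$ itself, i.e.\ $\mathbb C(0)=(\mathbb D_{<j}\times\mathbb D)\cup(\mathbb D\times\mathbb D_{<k})$ with $\epsilon_n(0)=(n\nu[n,\infty))^{j}(n\nu(-\infty,-n])^{k}$; it cannot be $\mathbb D_{<(j,k)}$, since convergence in $\mathbb M$ away from $\mathbb D_{<(j,k)}$ is precisely what is being proved. Likewise the auxiliary convergences must be the rectangular convergences at pairs $(l,m)$ \emph{outside} $\I_{<j,k}$, i.e.\ with $(\alpha-1)l+(\beta-1)m$ strictly \emph{larger} than $(\alpha-1)j+(\beta-1)k$: only for those does $\epsilon_n(l,m)/\epsilon_n(j,k)\to 0$ (larger exponents make $(n\nu_\pm)^{\,\cdot}$ smaller). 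For the pairs you propose, with strictly smaller weighted sum, the ratio diverges and the hypothesis of the lemma is violated; those pairs are not auxiliary inputs at all --- they (and the tied pairs) are simply part of the removed set. Finally, the $\mathbb C(i)$ cannot be the individual products $\mathbb D_l\times\mathbb D_m$: Lemma~\ref{thm:union-limsup} deletes the \emph{intersection} $\bigcap_i\mathbb C(i)$, whereas $\mathbb D_{<(j,k)}$ is the \emph{union} of those products; the identity one needs is $\mathbb D_{<(j,k)}=\bigcap_{(l,m)\notin\I_{<j,k}}\big((\mathbb D_{<l}\times\mathbb D)\cup(\mathbb D\times\mathbb D_{<m})\big)$.

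What is therefore missing is exactly the content of the paper's Theorem~\ref{thm:multi-d-limit-theorem}: (a) reduce the infinite intersection above to a finite one by passing to the finitely many minimal pairs (for the componentwise order) outside $\I_{<j,k}$, so that Lemma~\ref{thm:union-limsup} is applicable; (b) verify the neighborhood-containment hypothesis for this finite family, which is done by a jump-counting argument --- if $(\xi,\zeta)$ is $r$-far from every $\mathbb D_l\times\mathbb D_m$ with $(l,m)\in\I_{<j,k}$, consider the smallest $l,m$ with $\xi\in(\mathbb D_{\leqslant l})^r$ and $\zeta\in(\mathbb D_{\leqslant m})^r$ and exhibit a minimal pair whose rectangular bad set is $r$-far from $(\xi,\zeta)$; and (c) check the ratio condition for those minimal pairs, where the strict inequality of weighted sums beats the slowly varying factors. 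Once this is in place, the remainder of your outline (the null-set checks for $D_h$ under $C_j^+\times C_k^-$, the identification $(C_j^+\times C_k^-)\circ h^{-1}=C_{j,k}$, and the fact that $\bar X_n$ has the same law as $\bar X_n^{(+)}-\bar X_n^{(-)}$) goes through as in the paper.
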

The proof of Theorem~\ref{thm:two-sided-limit-theorem} builds on 
Theorem~\ref{thm:one-sided-limit-theorem},
using
Lemma~\ref{thm:simple-product-space},
Lemma~\ref{thm:union-limsup}, and
Lemma~\ref{lem:continuous-mapping-principle-for-subtraction}
and Theorem~\ref{thm:multi-d-limit-theorem}.
We provide the full proof in Section~\ref{subsec:proofs-for-sample-path-ldps}. 

Let $\mathcal I(j,k) \triangleq (\alpha-1)j + (\beta-1)k$, and consider a pair of integers $(\mathcal J(A),\mathcal K(A))$ such that
\begin{equation}\label{def:JK}
(\mathcal J(A), \mathcal K(A)) \in \argmin_{\substack{(j,k)\in \mathbb Z_+^2\\ \mathbb D_{j,k} \cap A \neq \emptyset}}\mathcal I(j,k).
\end{equation}
The next theorem is the first main result of this section. 
\begin{theorem}\label{thm:two-sided-main-theorem}
Suppose that $A$ is a measurable set. If the argument minimum in (\ref{def:JK}) is non-empty and $A$ is bounded away from $\mathbb D_{< \mathcal J(A), \mathcal K(A)}$, then the argument minimum is unique and
\begin{equation}\label{eq:two-sided-main-result}
\begin{aligned}
\liminf_{n\rightarrow\infty} \frac{\P(\bar X_n \in A) }{(n \nu[n,\infty))^{\mathcal J(A)}(n \nu(-\infty,-n])^{\mathcal K(A)}}  
&\geq
C_{\mathcal J(A), \mathcal K(A)}(A^\circ),
\\
\limsup_{n\rightarrow\infty} \frac{\P(\bar X_n \in A)}{(n \nu[n,\infty))^{\mathcal J(A)} (n \nu(-\infty,-n])^{\mathcal K(A)} }
&\leq
C_{\mathcal J(A),\mathcal K(A)}(\,\bar A\;).
\end{aligned}
\end{equation}
\chck Moreover, 
if the argument minimum in (\ref{def:JK}) is empty and $A$ is bounded away from $\mathbb D_{<l,m}\cup \mathbb D_{l,m}$ for some $(l,m)\in \Z_+^2\setminus \{(0,0)\}$, then 
\begin{equation}\label{two-sided-limit-tends-to-zero}
\lim_{n\to\infty}\frac{ \P(\bar X_n \in A)}{(n \nu[n,\infty))^{l}\allowbreak (n \nu(-\infty,-n])^{m}} = 0.
\end{equation}
\end{theorem}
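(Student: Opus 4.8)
The plan is to mirror the structure of the proof of Theorem~\ref{thm:one-sided-main-theorem}, with Theorem~\ref{thm:two-sided-limit-theorem} playing the role that Theorem~\ref{thm:one-sided-limit-theorem} played there, but with the additional combinatorial bookkeeping forced by the two-dimensional index set $\Z_+^2$ and the functional $\mathcal I(j,k) = (\alpha-1)j + (\beta-1)k$. I first argue \emph{uniqueness} of the argument minimum in~(\ref{def:JK}) under the hypothesis that $A$ is bounded away from $\mathbb D_{<\mathcal J(A),\mathcal K(A)}$. Suppose $(j_1,k_1)$ and $(j_2,k_2)$ are both minimizers, say $(j_1,k_1)\neq(j_2,k_2)$. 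Then each $\mathbb D_{j_i,k_i}\cap A$ is non-empty; but by definition $(j_2,k_2)\in\I_{<j_1,k_1}$ (since $\mathcal I(j_2,k_2)=\mathcal I(j_1,k_1)$ and they are distinct), so $\mathbb D_{j_2,k_2}\subseteq\mathbb D_{<j_1,k_1}$, whence $A$ meets $\mathbb D_{<\mathcal J(A),\mathcal K(A)}$, contradicting that $A$ is bounded away from this set (which in particular forces $\mathbb D_{<\mathcal J(A),\mathcal K(A)}\cap A=\emptyset$). This pins down $(\mathcal J(A),\mathcal K(A))$ unambiguously.

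Next, as in the one-sided argument, I want to replace $A$ by $A^\circ$ for the lower bound and by $\bar A$ for the upper bound while keeping the index $(\mathcal J(A),\mathcal K(A))$ fixed. For the lower bound: if $A^\circ$ contains no element of $\mathbb D_{\mathcal J(A),\mathcal K(A)}$ then $A^\circ$ is $C_{\mathcal J(A),\mathcal K(A)}$-null (that measure being concentrated on $\mathbb D_{\mathcal J(A),\mathcal K(A)}$), so the bound is trivial; otherwise the minimizer for $A^\circ$ is again $(\mathcal J(A),\mathcal K(A))$, and applying Theorem~\ref{thm:two-sided-limit-theorem} with $(j,k)=(\mathcal J(A),\mathcal K(A))$ together with the lower-bound half of Lemma~\ref{lem:extended-bounds} (taking $\mathbb S_0$ to be the class of step functions, on which $C_{\mathcal J(A),\mathcal K(A)}$ concentrates and which $\bar X_n$'s asymptotically equivalent approximations live in) gives $C_{\mathcal J(A),\mathcal K(A)}(A^\circ)\le\liminf_n \P(\bar X_n\in A^\circ)/\big((n\nu[n,\infty))^{\mathcal J(A)}(n\nu(-\infty,-n])^{\mathcal K(A)}\big)\le\liminf_n\P(\bar X_n\in A)/(\cdots)$. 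For the upper bound I need the analogue of ``$\mathcal J(A)=\mathcal J(\bar A)$'': I claim the minimizer for $\bar A$ is still $(\mathcal J(A),\mathcal K(A))$. If not, some $(l,m)$ with $\mathcal I(l,m)\le\mathcal I(\mathcal J(A),\mathcal K(A))$ and $(l,m)\ne(\mathcal J(A),\mathcal K(A))$ has $\mathbb D_{l,m}\cap\bar A\neq\emptyset$; but $\mathbb D_{l,m}\subseteq\mathbb D_{<\mathcal J(A),\mathcal K(A)}$, so $\bar A$ meets $\mathbb D_{<\mathcal J(A),\mathcal K(A)}$, which contradicts $A$ being bounded away from $\mathbb D_{<\mathcal J(A),\mathcal K(A)}$ (this set is closed, so bounded-away-ness passes to $\bar A$). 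Then Theorem~\ref{thm:two-sided-limit-theorem} with $(j,k)=(\mathcal J(A),\mathcal K(A))$ and the upper-bound half of Lemma~\ref{lem:extended-bounds}, using the hypothesis that $A$ (hence $A_\delta$ for small $\delta$, intersected with the step-function subspace) is bounded away from $\mathbb D_{<\mathcal J(A),\mathcal K(A)}$, yield $\limsup_n\P(\bar X_n\in A)/(\cdots)\le\limsup_n\P(\bar X_n\in\bar A)/(\cdots)\le C_{\mathcal J(A),\mathcal K(A)}(\bar A)$.

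Finally, for the degenerate case~(\ref{two-sided-limit-tends-to-zero}): the argument minimum being empty means $\mathbb D_{j,k}\cap A=\emptyset$ for every $(j,k)\in\Z_+^2$, so in particular $C_{l,m}(\bar A)=0$ for the given $(l,m)$ (again since $C_{l,m}$ is concentrated on $\mathbb D_{l,m}$ and $\bar A$ — being bounded away from $\mathbb D_{l,m}$ by hypothesis — misses it). Apply Theorem~\ref{thm:two-sided-limit-theorem} with $(j,k)=(l,m)$ and the upper-bound half of Lemma~\ref{lem:extended-bounds}: the hypothesis that $A$ is bounded away from $\mathbb D_{<l,m}\cup\mathbb D_{l,m}$ supplies the ``$A_\delta\cap\mathbb S_0$ bounded away from $\mathbb D_{<l,m}$'' requirement, and one concludes $\limsup_n\P(\bar X_n\in A)/\big((n\nu[n,\infty))^l(n\nu(-\infty,-n])^m\big)\le C_{l,m}(\bar A)=0$, hence the limit is $0$. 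I expect the main obstacle to be purely verifying that the hypotheses of Lemma~\ref{lem:extended-bounds} are met with the correct choice of $\mathbb S_0$ — specifically that the two-sided analogue of the asymptotic-equivalence statement in Theorem~\ref{thm:one-sided-limit-theorem} (that $\bar X_n$ is asymptotically equivalent to a process taking values in the relevant step-function subspace) is available; this should follow from the construction underlying Theorem~\ref{thm:two-sided-limit-theorem} via Theorem~\ref{thm:multi-d-limit-theorem} and the continuous-mapping step through $h(\xi,\zeta)=\xi-\zeta$, but it is the one place where the two-sided combinatorics (multiple pairs $(j,k)$ potentially optimal, residual-process control) genuinely enters rather than being a formal transcription of the one-sided proof.
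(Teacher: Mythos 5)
Your overall skeleton (uniqueness of the minimizer, reduction to $A^\circ$ and $\bar A$ with the same $(\mathcal J(A),\mathcal K(A))$, and the degenerate case via $C_{l,m}(\bar A)=0$) matches the paper, which packages exactly these steps into Lemma~\ref{wasteful-lemma} and then applies it with $Y_n=\bar X_n$. The genuine gap is in the tool you use to pass from the $\mathbb M$-convergence of Theorem~\ref{thm:two-sided-limit-theorem} to the bounds: you route both the lower and the upper bound through Lemma~\ref{lem:extended-bounds} with $\mathbb S_0$ the class of step functions, which requires that $\bar X_n$ be asymptotically equivalent, \emph{with respect to the product rate} $(n\nu[n,\infty))^{\mathcal J(A)}(n\nu(-\infty,-n])^{\mathcal K(A)}$, to a process supported on step functions. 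No such statement is proved in the paper, and it is not a formality one can defer: as the introduction points out, removing the $j$ largest upward and $k$ largest downward jumps leaves a residual whose probability of being significant is of order, e.g., $(n\nu[n,\infty))^{j+1}$, which is \emph{not} $o\big((n\nu[n,\infty))^{j}(n\nu(-\infty,-n])^{k}\big)$ unless $\alpha-1>k(\beta-1)$. So the step you flagged as ``should follow from the construction underlying Theorem~\ref{thm:two-sided-limit-theorem}'' is precisely the step that fails in general; this is why the paper proves Theorem~\ref{thm:two-sided-limit-theorem} through the product-space machinery (Theorem~\ref{thm:multi-d-limit-theorem}, Lemmas~\ref{thm:simple-product-space}, \ref{thm:union-limsup}, \ref{lem:continuous-mapping-principle-for-subtraction}) rather than through a two-sided asymptotic equivalence.

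The repair is simple and is what the paper does: Lemma~\ref{lem:extended-bounds} is not needed here at all, because the hypothesis of Theorem~\ref{thm:two-sided-main-theorem} is stronger than its one-sided counterpart — $A$ itself (not merely $A_\delta$ intersected with a step-function subspace) is bounded away from the closed set $\mathbb D_{<\mathcal J(A),\mathcal K(A)}$, hence so are the open set $A^\circ$ and the closed set $\bar A$. One can therefore apply the Portmanteau characterization of $\mathbb M$-convergence (Theorem~\ref{result:L21}, bounds \eqref{result1lb} and \eqref{result1ub}) directly to the convergence in Theorem~\ref{thm:two-sided-limit-theorem}, with $(j,k)=(\mathcal J(A),\mathcal K(A))$ for the two bounds and $(j,k)=(l,m)$, $F=\bar A$ for \eqref{two-sided-limit-tends-to-zero}. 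With that substitution your argument becomes the paper's proof; as written, however, it leans on an equivalence statement that is unavailable and generally false in the two-sided setting.
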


The proof of the theorem is provided below as a consequence of the following lemma. 

\begin{lemma}\label{wasteful-lemma}
Suppose that a sequence of $\mathbb D$-valued random elements $Y_n$ satisfies \eqref{eq:two-sided-limit-theorem} (with $\bar X_n$ replaced with $Y_n$) for each $(j,k)\in \mathbb Z_+^2$. Then \eqref{eq:two-sided-main-result} (with $\bar X_n$ replaced with $Y_n$) holds if $A$ is a measurable set for which the argument minimum in \eqref{def:JK} is non-empty, and $A$ is bounded away from $\mathbb D_{<\mathcal J(A), \mathcal K(A)}$.
Moreover, \eqref{two-sided-limit-tends-to-zero} (with $\bar X_n$ replaced with $Y_n$) holds if the argument minimum in (\ref{def:JK}) is empty and $A$ is bounded away from $\mathbb D_{<l,m}\cup \mathbb D_{l,m}$ for some $(l,m)\in \Z_+^2\setminus \{(0,0)\}$.
\end{lemma}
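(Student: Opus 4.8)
The plan is to prove Lemma~\ref{wasteful-lemma}, which immediately yields Theorem~\ref{thm:two-sided-main-theorem} upon taking $Y_n = \bar X_n$ and invoking Theorem~\ref{thm:two-sided-limit-theorem}. The structure parallels the proof of Theorem~\ref{thm:one-sided-main-theorem}, but the bookkeeping over pairs $(j,k)$ requires more care. First I would establish the \emph{uniqueness} of the argument minimum under the hypothesis that $A$ is bounded away from $\mathbb D_{<\mathcal J(A),\mathcal K(A)}$. Suppose $(j_1,k_1)$ and $(j_2,k_2)$ both attain the minimum in \eqref{def:JK}, with say $j_1 < j_2$ (hence $k_1 > k_2$, since $\mathcal I(j_1,k_1) = \mathcal I(j_2,k_2)$ forces $(\alpha-1)(j_2-j_1) = (\beta-1)(k_1-k_2)$ with both sides positive). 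Then $(j_1,k_1) \in \I_{<j_2,k_2}$ because $\mathcal I(j_1,k_1) = \mathcal I(j_2,k_2)$ and $(j_1,k_1)\neq(j_2,k_2)$; consequently $\mathbb D_{j_1,k_1}\subseteq \mathbb D_{<j_2,k_2} = \mathbb D_{<\mathcal J(A),\mathcal K(A)}$ (choosing the label $(\mathcal J(A),\mathcal K(A)) = (j_2,k_2)$). Since $\mathbb D_{j_1,k_1}\cap A \neq\emptyset$ by assumption, $A$ contains a point of $\mathbb D_{<\mathcal J(A),\mathcal K(A)}$, contradicting that $A$ is bounded away from it. Hence the argument minimum is unique; write $(\mathcal J,\mathcal K)$ for it.

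Next I would handle the two bounds in \eqref{eq:two-sided-main-result}. For the upper bound, I claim $\mathcal I(\mathcal J(\bar A),\mathcal K(\bar A)) = \mathcal I(\mathcal J,\mathcal K)$ and in fact $(\mathcal J(\bar A),\mathcal K(\bar A)) = (\mathcal J,\mathcal K)$: if $\bar A$ met some $\mathbb D_{l,m}$ with $\mathcal I(l,m) < \mathcal I(\mathcal J,\mathcal K)$, or with $\mathcal I(l,m) = \mathcal I(\mathcal J,\mathcal K)$ and $(l,m)\neq(\mathcal J,\mathcal K)$, then in either case $\mathbb D_{l,m}\subseteq \mathbb D_{<\mathcal J,\mathcal K}$, and since $\mathbb D_{l,m}$ is closed and $\bar A\cap \mathbb D_{l,m}\neq\emptyset$, this again contradicts $A$ being bounded away from $\mathbb D_{<\mathcal J,\mathcal K}$ (here one uses that $\bar A$ is also bounded away from $\mathbb D_{<\mathcal J,\mathcal K}$, as the closure of a set at positive distance from a given set stays at the same positive distance). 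Now $\bar A$ is closed and bounded away from $\mathbb D_{<\mathcal J,\mathcal K}$, so applying \eqref{eq:two-sided-limit-theorem} with $(j,k) = (\mathcal J(\bar A),\mathcal K(\bar A)) = (\mathcal J,\mathcal K)$ and the upper-bound half of Theorem~\ref{result:L21} gives $\limsup_n (n\nu[n,\infty))^{-\mathcal J}(n\nu(-\infty,-n])^{-\mathcal K}\P(Y_n\in\bar A) \leq C_{\mathcal J,\mathcal K}(\bar A)$, which dominates the corresponding $\limsup$ over $A$. For the lower bound, if $A^\circ$ meets no $\mathbb D_{l,m}$ with $\mathcal I(l,m)\leq \mathcal I(\mathcal J,\mathcal K)$, then $C_{\mathcal J,\mathcal K}(A^\circ) = 0$ since $C_{\mathcal J,\mathcal K}$ is supported on $\mathbb D_{\mathcal J,\mathcal K}$, and the bound is vacuous; otherwise the unique minimizing pair for $A^\circ$ is again $(\mathcal J,\mathcal K)$ (the same argument as for $A$ applies verbatim to $A^\circ$, which is also bounded away from $\mathbb D_{<\mathcal J,\mathcal K}$), and one applies \eqref{eq:two-sided-limit-theorem} with $(j,k)=(\mathcal J,\mathcal K)$ together with the lower-bound half of Theorem~\ref{result:L21} to $A^\circ$ (open, bounded away from $\mathbb D_{<\mathcal J,\mathcal K}$).

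For the degenerate case — argument minimum empty, $A$ bounded away from $\mathbb D_{<l,m}\cup\mathbb D_{l,m}$ — I would simply note that $A$ is then bounded away from $\mathbb D_{<l,m}\cup\mathbb D_{l,m} \supseteq \mathbb D_{\leqslant(l,m)}$ in the relevant sense, so \eqref{eq:two-sided-limit-theorem} applies with $(j,k)=(l,m)$: $(n\nu[n,\infty))^{-l}(n\nu(-\infty,-n])^{-m}\P(Y_n\in\cdot)\to C_{l,m}(\cdot)$ in $\mathbb M(\mathbb D\setminus\mathbb D_{<l,m})$. Since $A$ is bounded away from $\mathbb D_{<l,m}$ and from $\mathbb D_{l,m}$, and $C_{l,m}$ is supported on $\mathbb D_{l,m}$, we get $C_{l,m}(\bar A) = 0$, and the upper-bound half of Theorem~\ref{result:L21} forces the limit in \eqref{two-sided-limit-tends-to-zero} to be $0$.

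The main obstacle I anticipate is not any single estimate but the careful verification of the combinatorial facts about $\I_{<j,k}$ and $\mathbb D_{<j,k}$ — in particular that any two distinct minimizers of $\mathcal I$ force one of the associated step-function spaces to sit inside $\mathbb D_{<\cdot,\cdot}$ of the other, and that closures and interiors of a set bounded away from $\mathbb D_{<\mathcal J,\mathcal K}$ remain bounded away from it so that the minimizing pair is stable under $A\mapsto A^\circ$ and $A\mapsto\bar A$. Once these are pinned down, the rest is a direct transcription of the one-sided argument. The only genuinely new analytic input beyond Theorem~\ref{thm:two-sided-limit-theorem} is the observation that $\mathbb D_{l,m}$ is closed in $\mathbb D$ under $J_1$ (needed so that $\bar A\cap\mathbb D_{l,m}\neq\emptyset$ is incompatible with $\mathrm{dist}(A,\mathbb D_{l,m})>0$), which follows from the standard description of $J_1$-limits of step functions with a fixed number of up/down jumps.
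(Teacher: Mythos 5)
Your proposal is correct and follows essentially the same route as the paper's proof: reduce to $A^\circ$ and $\bar A$, argue that the minimizing pair is unchanged (or that the corresponding bound is trivial because $C_{\mathcal J,\mathcal K}(A^\circ)=0$), apply the hypothesized $\mathbb M$-convergence through the Portmanteau characterization of Theorem~\ref{result:L21} to $\bar A$ and $A^\circ$, and dispose of the degenerate case via $C_{l,m}(\bar A)=0$; the uniqueness argument you add is also the one the paper uses (in Theorem~\ref{thm:two-sided-main-theorem}). One correction to a side remark you flag as essential: $\mathbb D_{l,m}$ is \emph{not} closed in $(\mathbb D,J_1)$ (e.g.\ $n^{-1}1_{[1/2,1]}\in\mathbb D_{1,0}$ converges to the zero function, which lies in $\mathbb D_{0,0}$), but this is harmless because closedness is never actually needed: $\bar A\cap\mathbb D_{l,m}\neq\emptyset$ already forces $d(A,\mathbb D_{l,m})=d(\bar A,\mathbb D_{l,m})=0$, which is all your contradiction argument uses.
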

The proof of this lemma is provided in Section~\ref{subsec:proofs-for-sample-path-ldps}.

\begin{proof}[Proof of Theorem~\ref{thm:two-sided-main-theorem}]
The uniqueness of the argument minimum is immediate from the assumption that $A$ is bounded-away from $\mathbb D_{<\mathcal J(A), \mathcal K(A)}$.
Since $\bar X_n$ satisfies \eqref{eq:two-sided-limit-theorem} by Theorem~\ref{thm:two-sided-limit-theorem}, the conclusion of the theorem follows from applying Lemma~\ref{wasteful-lemma} with $Y_n = \bar X_n$. 
\end{proof}

In case one is interested in a set for which the $\argmin$  of $\mathcal I$ in (\ref{def:JK}) is not unique, a natural approach is to partition $A$ into smaller sets and analyze each element separately. In the next theorem, we show that this strategy can be successfully employed with a minimal requirement on $A$. However, due to the presence of two different slowly varying functions $n^\alpha\nu[n,\infty)$ and $n^\beta\nu(-\infty,-n]$, the limit behavior may not be dominated by a single $\mathbb D_{l,m}$. 

To deal with this case, let $\I_{= j,k} \triangleq \{(l,m): (\alpha-1)l+(\beta-1)m = (\alpha-1)j+(\beta-1)k\}$, $\I_{\ll j,k} \triangleq \{(l,m): (\alpha -1)l + (\beta-1)m < (\alpha-1)j+(\beta-1)k\}$, $\mathbb D_{= j,k} \triangleq \bigcup_{(l,m)\in \I_{= j,k}} \mathbb D_{l,m}$, and $\mathbb D_{\ll j,k} \triangleq \bigcup_{(l,m)\in \I_{\ll j,k}} \mathbb D_{l,m}$. Denote the slowly varying functions $n^\alpha\nu[n,\infty)$ and $n^\beta\nu(-\infty,-n]$ with $L_+(n)$ and $L_-(n)$, respectively.

\begin{theorem}\label{thm:two-sided-multiple-asymptotics}
Let $A$ be a measurable set and suppose that the argument minimum in (\ref{def:JK}) is non-empty and contains a pair of integers $(\mathcal J(A),\mathcal K(A))$.
If $A_\delta \cap \mathbb D_{=\mathcal J(A), \mathcal K(A)}$ is bounded away from $\mathbb D_{\ll\mathcal J(A), \mathcal K(A)}$ for some $\delta>0$, then for any given $\epsilon > 0$, there exists $N\in \mathbb N$ such that
\begin{equation}\label{two-sided-large-deviation-combined}
\begin{aligned}
\P(\bar X_n \in A) & \geq \frac{\sum_{(l,m)} \big(C_{l,m}(A^\circ)-\epsilon\big)L_+^l(n)L_-^m(n)}{n^{(\alpha-1)\mathcal J(A)+(\beta-1)\mathcal K(A)}},
\\
\P(\bar X_n \in A) & \leq \frac{\sum_{(l,m)} \big(C_{l,m}(\bar A)+\epsilon\big)L_+^l(n)L_-^m(n)}{n^{(\alpha-1)\mathcal J(A)+(\beta-1)\mathcal K(A)}},
\end{aligned}
\end{equation}
for all $n\geq N$, where the summations are over the pairs $(l,m)\in \I_{=\mathcal J(A), \mathcal K(A)}$.
In particular, \eqref{two-sided-large-deviation-combined} holds if $A$ is bounded away from $\mathbb D_{\ll \mathcal J(A), \mathcal K(A)}$.
\end{theorem}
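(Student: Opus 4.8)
The plan is to reduce Theorem~\ref{thm:two-sided-multiple-asymptotics} to Theorem~\ref{thm:two-sided-limit-theorem} by partitioning $A$ according to which $\mathbb D_{l,m}$ a path is ``closest'' to, and then summing the single-pair asymptotics over all $(l,m)\in \I_{=\mathcal J(A),\mathcal K(A)}$. First I would observe that, because we are now dividing by $n^{(\alpha-1)\mathcal J(A)+(\beta-1)\mathcal K(A)}$ rather than by $(n\nu[n,\infty))^{\mathcal J(A)}(n\nu(-\infty,-n])^{\mathcal K(A)}$, the statement \eqref{eq:two-sided-limit-theorem} rescales to
\[
\frac{\P(\bar X_n \in \cdot)}{n^{(\alpha-1)j+(\beta-1)k}} \;\to\; L_+^{\,j}(n)L_-^{\,k}(n)\,C_{j,k}(\cdot)
\]
only in the asymptotic sense that the ratio to $L_+^j(n)L_-^k(n)$ converges; for $(l,m)$ with $\mathcal I(l,m) = \mathcal I(\mathcal J(A),\mathcal K(A))$ the factor $n^{(\alpha-1)l+(\beta-1)m}$ is the \emph{same} $n^{(\alpha-1)\mathcal J(A)+(\beta-1)\mathcal K(A)}$, so each $(l,m)\in\I_{=\mathcal J(A),\mathcal K(A)}$ contributes at the same polynomial order but with its own slowly varying prefactor $L_+^l(n)L_-^m(n)$, which is why no single term dominates and the sum in \eqref{two-sided-large-deviation-combined} is genuinely needed.

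Next I would carry out the partition. Since $\I_{=\mathcal J(A),\mathcal K(A)}$ is finite (there are only finitely many $(l,m)\in\Z_+^2$ with $(\alpha-1)l+(\beta-1)m$ equal to a fixed value), enumerate it as $(l_1,m_1),\ldots,(l_p,m_p)$. For each $r$ choose a small $\rho>0$ (smaller than half the pairwise $J_1$-distances among the $\mathbb D_{l_i,m_i}$, which are positive and mutually bounded away since they consist of step functions with distinct jump-count signatures) and set $A^{(r)} \triangleq A \cap \big(\mathbb D_{l_r,m_r}\big)^{\rho}$ together with a ``remainder'' piece $A^{(0)} \triangleq A \setminus \bigcup_r \big(\mathbb D_{l_i,m_i}\big)^{\rho}$. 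The assumption that $A_\delta\cap\mathbb D_{=\mathcal J(A),\mathcal K(A)}$ is bounded away from $\mathbb D_{\ll\mathcal J(A),\mathcal K(A)}$ guarantees each $A^{(r)}$ is bounded away from $\mathbb D_{<l_r,m_r}$ (any strictly-cheaper path lies in $\mathbb D_{\ll}$, and within distance $\rho$ of $\mathbb D_{l_r,m_r}$ a path cannot be close to such a set), so Theorem~\ref{thm:two-sided-main-theorem} applies to $A^{(r)}$ with argmin exactly $(l_r,m_r)$, yielding $\P(\bar X_n\in A^{(r)}) = \big(C_{l_r,m_r}(\cdot) + o(1)\big)(n\nu[n,\infty))^{l_r}(n\nu(-\infty,-n])^{m_r}$ with the appropriate $A^{(r),\circ}$ / $\overline{A^{(r)}}$ on lower and upper sides; rewriting $(n\nu[n,\infty))^{l_r}(n\nu(-\infty,-n])^{m_r} = n^{(\alpha-1)\mathcal J(A)+(\beta-1)\mathcal K(A)}L_+^{l_r}(n)L_-^{m_r}(n)$ puts it in the form of \eqref{two-sided-large-deviation-combined}. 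For $A^{(0)}$ I would argue it is bounded away from $\mathbb D_{=\mathcal J(A),\mathcal K(A)}$ (by construction, distance at least $\rho$) as well as from $\mathbb D_{\ll}$ (composing the two boundedness hypotheses), hence bounded away from $\mathbb D_{\leqslant}$ at the relevant cost level, so by the ``null'' clause \eqref{two-sided-limit-tends-to-zero} of Theorem~\ref{thm:two-sided-main-theorem} its probability is $o\big(n^{-[(\alpha-1)\mathcal J(A)+(\beta-1)\mathcal K(A)]}L_+^{l}(n)L_-^{m}(n)\big)$ for each relevant $(l,m)$ — it is negligible against every surviving term.

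Then I would assemble: $\P(\bar X_n\in A) = \sum_{r=0}^{p}\P(\bar X_n\in A^{(r)})$ (a genuine disjoint decomposition up to the overlap on the $\rho$-boundaries, which I would handle either by making the pieces honestly disjoint via a slightly finer decomposition or by absorbing the boundary overlap into the $\epsilon$). For the lower bound one uses $A^{(r),\circ}\supseteq A^\circ\cap(\mathbb D_{l_r,m_r})^{\rho/2}$ and the regularity observation at the end of Section~\ref{subsec:one-sided-large-deviations} that $C_{l,m}$ charges only $\mathbb D_{l,m}$, so $C_{l_r,m_r}(A^{(r),\circ}) \to C_{l_r,m_r}(A^\circ)$ as $\rho\downarrow 0$, letting $\rho$ be small enough (depending on $\epsilon$) that the difference is under $\epsilon$; symmetrically for the upper bound with closures. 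Finally, divide through by $n^{(\alpha-1)\mathcal J(A)+(\beta-1)\mathcal K(A)}$, collect the finitely many ``$o(1)$'' and ``$\rho$-perturbation'' errors — each bounded by $\epsilon$ times the surviving $L_+^l(n)L_-^m(n)$ prefactors eventually — and choose $N$ uniformly. The last sentence (the ``in particular'') is immediate since $A$ bounded away from $\mathbb D_{\ll\mathcal J(A),\mathcal K(A)}$ implies $A_\delta\cap\mathbb D_{=\mathcal J(A),\mathcal K(A)}$ is bounded away from $\mathbb D_{\ll\mathcal J(A),\mathcal K(A)}$ for small $\delta$.

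The main obstacle I anticipate is bookkeeping the slowly varying prefactors uniformly: because $L_+,L_-$ are merely slowly varying, one cannot replace $L_+^l(n)L_-^m(n)$ by a constant, and the error terms from Theorem~\ref{thm:two-sided-main-theorem} (which are $o(1)$ relative to $(n\nu[n,\infty))^{l}(n\nu(-\infty,-n])^{m}$) must be shown to be $o(1)$ relative to the \emph{specific} prefactor pattern appearing in \eqref{two-sided-large-deviation-combined}; this is fine term-by-term but requires care because different $(l,m)$ in the sum may have $L_+^l(n)L_-^m(n)$ of very different growth rates, so one should phrase the conclusion as ``$\P(\bar X_n\in A)$ lies between the two displayed sums'' for all $n\ge N$ rather than as a single clean ratio limit — which is exactly what the theorem statement does. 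The other mild subtlety is ensuring the $\rho$-neighborhoods are honestly separating the $\mathbb D_{l_r,m_r}$; this follows from the elementary fact that two step functions with different numbers of upward (or downward) jumps are at positive $J_1$-distance, uniformly over the relevant finite index set, which I would record as a one-line lemma or cite from the $\mathbb D_{j,k}$ facts already used (e.g.\ Lemma~\ref{lem:Djk}).
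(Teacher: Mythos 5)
Your overall architecture (carve $A$ into pieces near each $\mathbb D_{l,m}$ with $(l,m)\in\I_{=\mathcal J(A),\mathcal K(A)}$, apply Theorem~\ref{thm:two-sided-main-theorem} to each piece, show the remainder is negligible, and sum) is the paper's strategy, but the step on which everything hinges is wrong as you state it. You choose $\rho$ ``smaller than half the pairwise $J_1$-distances among the $\mathbb D_{l_i,m_i}$, which are positive \dots since they consist of step functions with distinct jump-count signatures.'' Those pairwise distances are in fact zero: every $\mathbb D_{l,m}$ contains step functions with arbitrarily small jumps, hence paths arbitrarily close to the zero path, so $d(\mathbb D_{l,m},\mathbb D_{j,k})=0$ for any two such sets. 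Consequently your $\rho$ does not exist, the pieces $A^{(r)}=A\cap(\mathbb D_{l_r,m_r})^{\rho}$ need not be disjoint (which ruins the lower bound, since $\P(\bar X_n\in A)\geq\sum_r\P(\bar X_n\in A^{(r)})$ requires disjointness), and your argument that $A^{(r)}$ is bounded away from $\mathbb D_{<l_r,m_r}$ is incomplete: $\mathbb D_{<l_r,m_r}$ contains not only the strictly cheaper sets in $\mathbb D_{\ll\mathcal J(A),\mathcal K(A)}$ but also the other equal-cost sets $\mathbb D_{j,k}$ with $(j,k)\in\I_{=\mathcal J(A),\mathcal K(A)}\setminus\{(l_r,m_r)\}$, and separation from these is precisely what you tried to extract from the false claim. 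This is the genuine content the paper supplies in Lemma~\ref{lemma-for-two-sided-multiple-optima}: the hypothesis that $A$ (or $A_\delta\cap\mathbb D_{=\mathcal J(A),\mathcal K(A)}$) is bounded away from $\mathbb D_{\ll\mathcal J(A),\mathcal K(A)}$ forces any step function of $\mathbb D_{l,m}$ lying within $2\rho$ of a point of $A$ to have all its jumps bounded below (otherwise deleting the small jumps would produce a nearby cheaper path), and only then does one obtain that $A\cap(\mathbb D_{l,m})_{\rho}$ is bounded away from $(\mathbb D_{j,k})_{\rho}$ for every other equal-cost pair, giving both the disjointness and the boundedness away from $\mathbb D_{<l,m}$ that Theorem~\ref{thm:two-sided-main-theorem} needs. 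Your fallback of ``making the pieces honestly disjoint via a finer decomposition or absorbing the overlap into $\epsilon$'' does not repair this: without that lemma the overlaps can carry probability of the same order as the main terms.

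Two smaller points. For the remainder piece you invoke the null clause \eqref{two-sided-limit-tends-to-zero}, but that clause requires the argument minimum in \eqref{def:JK} for the set in question to be empty, which need not hold for $A\setminus\bigcup_r(\mathbb D_{l_r,m_r})^{\rho}$ (it may still contain higher-cost step functions); the correct route, as in the paper, is to apply the upper-bound half of the $\mathbb M$-convergence in Theorem~\ref{thm:two-sided-limit-theorem} with $(j,k)=(\mathcal J(A),\mathcal K(A))$ to the closed set $\bar A\setminus\bigcup_r(\mathbb D_{l_r,m_r})^{\rho}$, which is bounded away from $\mathbb D_{<\mathcal J(A),\mathcal K(A)}$ and has $C_{\mathcal J(A),\mathcal K(A)}$-measure zero, yielding the $\epsilon/2$ error term. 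Also, the limiting argument $C_{l_r,m_r}(A^{(r),\circ})\to C_{l_r,m_r}(A^\circ)$ as $\rho\downarrow0$ is unnecessary: since $C_{l,m}$ is supported on $\mathbb D_{l,m}$, one has $C_{l,m}\big(A^\circ\cap(\mathbb D_{l,m})^{\rho}\big)=C_{l,m}(A^\circ)$ exactly for every $\rho>0$, so no part of $\epsilon$ needs to be spent there.
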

\begin{proof}
Note first that from Lemma~\ref{lemma-for-two-sided-multiple-optima} (i), there exists a $\delta'>0$ such that $\mathbb D_{\ll \mathcal J(A), \mathcal K(A)}$ is bounded away from  $A \cap (\mathbb D_{l,m})_{\delta'}$  for all $(l,m) \in \I_{=\mathcal J(A), \mathcal K(A)}$. 
Moreover, applying Lemma~\ref{lemma-for-two-sided-multiple-optima} (ii) to each $A \cap (\mathbb D_{l,m})_{\delta'}$, we conclude that there exists $\rho>0$ such that $A\cap (\mathbb D_{l,m})_\rho$ is bounded away from $(\mathbb D_{j,k})_\rho$ for any two distinct pairs $(l,m),(j,k) \in \I_{=\mathcal J(A), \mathcal K(A)}$.
This means that $A\cap (\mathbb D_{l,m})_\rho$'s are all disjoint and bounded away from $\mathbb D_{<l,m}$. 

To derive the lower bound, we apply Theorem~\ref{thm:two-sided-main-theorem} to $A^\circ\cap (\mathbb D_{l,m})^{\rho}$ to obtain 
\begin{align*}
C_{l,m}(A^\circ) 
&= C_{l,m}(A^\circ\cap \mathbb D_{l,m} ) 
= 
C_{l,m}(A^\circ\cap \mathbb D_{l,m} \cap (\mathbb D_{l,m})^{\rho} )
\\
&=
C_{l,m}(A^\circ\cap (\mathbb D_{l,m})^{\rho} )
\leq  
\liminf_{n\to\infty} \frac{\P(\bar X_n \in A^\circ\cap (\mathbb D_{l,m})^\rho)}{(n\nu[n,\infty))^l(n\nu(-\infty,-n])^m}
\\
&
\leq
\liminf_{n\to\infty} \frac{\P(\bar X_n \in A\cap (\mathbb D_{l,m})^\rho)}{(n\nu[n,\infty))^l(n\nu(-\infty,-n])^m},
\end{align*}
for each $(l,m)\in \I_{=\mathcal J(A), \mathcal K(A)}$.
That is, for any given $\epsilon>0$, there exists an $N_{l,m}\in \mathbb N$ such that
\begin{equation}\label{eq:asympt-low-lm}
\begin{aligned}
\frac{\big(C_{l,m}( A^\circ)-\epsilon\big)L_+^l(n)L_-^m(n)}{n^{(\alpha-1)l+(\beta-1)m}}
\leq \P\big(\bar X_n \in A \cap (\mathbb D_{l,m})^\rho\big),
\end{aligned}
\end{equation}
for all $n\geq N_{l,m}$.
Meanwhile, an obvious bound holds for $A\setminus \bigcup_{(l,m)\in \I_{=\mathcal J(A), \mathcal K(A)}} \allowbreak (\mathbb D_{l,m})^\rho$; i.e.,
\begin{equation}\label{eq:asympt-low-0}
0\leq \P\left(\bar X_n \in \textstyle{A\setminus \bigcup_{(l,m)\in \I_{=\mathcal J(A), \mathcal K(A)}} (\mathbb D_{l,m})^\rho}\right).
\end{equation}
Since $(\alpha-1)l + (\beta-1)m = (\alpha-1)\mathcal J(A) + (\beta-1)\mathcal K(A)$ for $(l,m)\in \I_{=\mathcal J(A),\mathcal K(A)}$, summing (\ref{eq:asympt-low-lm}) over $(l,m) \in \I_{=\mathcal J(A), \mathcal K(A)}$ together with (\ref{eq:asympt-low-0}), we arrive at the lower bound of the theorem, with $N = \max_{(l,m)\in \I_{=\mathcal J(A), \mathcal K(A)}} N_{l,m}$. 

Turning to the upper bound, we apply Theorem~\ref{thm:two-sided-main-theorem} to $\bar A\cap (\mathbb D_{l,m})_\rho$ to get
\begin{align*}
\limsup_{n\to\infty} \frac{\P(\bar X_n \in \bar A \cap (\mathbb D_{l,m})_\rho )}{(n\nu[n,\infty))^l(n\nu(-\infty,-n])^m}
\leq C_{l,m}(\bar A\cap (\mathbb D_{l,m})_\rho) = C_{l,m}(\bar A).
\end{align*}
for each $(l,m) \in \I_{=\mathcal J(A), \mathcal K(A)}$.
That is, for any given $\epsilon>0$, there exists $N_{l,m}'\in \mathbb N$ such that
\begin{equation}\label{eq:asympt-up-lm}
\begin{aligned}
\P(\bar X_n \in A \cap (\mathbb D_{l,m})_\rho)
\leq
\frac{\big(C_{l,m}(\bar A\revrem{\cap (\mathbb D_{l,m})_\rho})+\epsilon/2\big)L_+^l(n)L_-^m(n)}{n^{(\alpha-1)l+(\beta-1)m}},
\end{aligned}
\end{equation}
for all $n\geq N_{l,m}'$.
On the other hand, since $\bar A\setminus \bigcup_{(l,m)\in \I_{=\mathcal J(A),\mathcal K(A)}}(\mathbb D_{l,m})^{\rho}
$ is closed and bounded away from $\mathbb D_{<\mathcal J(A), \mathcal K(A) }$,
\begin{equation*}
\begin{aligned}
\limsup_{n\to\infty} \frac{\P\left(\bar X_n \in  A\setminus \bigcup_{(l,m)}(\mathbb D_{l,m})^\rho \right)}{(n\nu[n,\infty))^{\mathcal J(A)}(n\nu(-\infty,-n])^{\mathcal K(A)}}
\leq C_{\mathcal J(A),\mathcal K(A)}\left( \textstyle{ \bar A\setminus \bigcup_{(l,m)}(\mathbb D_{l,m})^\rho} \right),
\end{aligned}
\end{equation*}
where the union is over the pairs $(l,m)\in \I_{=\mathcal J(A),\mathcal K(A)}$.
Therefore, there exists $N'$ such that
\begin{equation}\label{eq:asympt-up-0}
\begin{aligned}
&
\P\left(\bar X_n \in  \textstyle{A\setminus \bigcup_{(l,m)}(\mathbb D_{l,m})^\rho} \right)
\\
&
\leq
\frac{\left(C_{\mathcal J(A),\mathcal K(A)}\left(\textstyle{ \bar A\setminus \bigcup_{(l,m)}(\mathbb D_{l,m})^\rho}\right)+\epsilon/2\right)L_+^{\mathcal J(A)}(n)L_-^{\mathcal K(A)}(n)}{n^{(\alpha-1)\mathcal J(A)+(\beta-1)\mathcal K(A)}}
\\
&
=
\frac{\left(\epsilon/2\right)L_+^{\mathcal J(A)}(n)L_-^{\mathcal K(A)}(n)}{n^{(\alpha-1)\mathcal J(A)+(\beta-1)\mathcal K(A)}},
\end{aligned}
\end{equation}
for $n \geq N'$ since $\textstyle{ \bar A\setminus \bigcup_{(l,m)}(\mathbb D_{l,m})^\rho}$ is disjoint from the support of $C_{\mathcal J(A),  \mathcal K(A)}$.
Summing (\ref{eq:asympt-up-lm}) over $(l,m)\in \I_{=\mathcal J(A), \mathcal K(A)}$ and (\ref{eq:asympt-up-0}),
\begin{equation}
\P(\bar X_n \in A)
\leq
\frac{\sum_{(l,m)} \big(C_{l,m}\big(\bar A\revrem{\cap (\mathbb D_{l,m})_\delta}\big)+\epsilon\big)L_+^l(n)L_-^m(n)}{n^{(\alpha-1)\mathcal J(A)+(\beta-1)\mathcal K(A)}},
\end{equation}
for $n \geq N$, where $N = N'\vee\max_{(l,m)\in \I_{=\mathcal J(A), \mathcal K(A)}} N'_{l,m}$. \revrem{Taking $\delta \to 0$, we obtain the upper bound of the theorem.}

\end{proof}

%


\section{Implications}\label{sec:implications} This section explores the implications of the large-deviations results in Section~\ref{sec:sample-path-ldps}, and is organized as follows. Section~\ref{subsec:random-walks} proves a result similar to Theorem~\ref{thm:two-sided-main-theorem}, now focusing on random walks with regularly varying increments. Section~\ref{subsec:conditional-limit-theorem} illustrates that conditional limit theorems can easily be studied by means of the limit theorems established in Section~\ref{sec:sample-path-ldps}.  Section~\ref{subsec:weak-ldp} develops a weak large deviation priciple (LDP) of the form (\ref{weakldp}) for the scaled L\'evy processes. Finally, Section~\ref{subsec:nonexistence} shows that the weak LDP proved in Section~\ref{subsec:weak-ldp} is the best one can hope for in the presence of regularly varying tails, by showing that a full LDP of the form (\ref{weakldp}) does not exist.

\subsection{Random Walks}\label{subsec:random-walks}
Let $S_k, k\geq 0,$ be a random walk, set $\bar S_n(t) = S_{[nt]}/n, t\geq 0$, and define $\bar S_n = \{\bar S_n(t), t\in [0,1]\}$.
Let $N(t), t\geq 0,$ be an independent unit rate Poisson process. Define  the L\'evy process $X(t) \triangleq S_{N(t)}, t\geq 0$, and set $\bar X_n(t) \triangleq X(nt)/n, t\geq 0$.
The goal is to prove an analogue of Theorem~\ref{thm:two-sided-main-theorem} for the scaled random walk $\bar S_n$.
Let $\mathcal J(\cdot)$, $\mathcal K(\cdot)$, and $C_{j,k}(\cdot)$ be defined as in Section~\ref{subsec:two-sided-large-deviations}.
\begin{theorem}\label{thm:random-walk}
Suppose that $\P(S_1 \geq x)$ is regularly varying with index $-\alpha$ and $\P(S_1 \leq -x)$ is regularly varying with index $-\beta$. Let $A$ be a measurable set bounded away from $\mathbb D_{< \mathcal J(A), \mathcal K(A)}$.
Then
\begin{equation}\label{eq:random-walk}
\begin{aligned}
&
\liminf_{n\rightarrow\infty} \frac{\P(\bar S_n \in A) }{(n \P(S_1\geq n))^{\mathcal J(A)}(n \P(S_1\leq -n))^{\mathcal K(A)}}  
\geq 
C_{\mathcal J(A), \mathcal K(A)}(A^\circ),
\\
&
\limsup_{n\rightarrow\infty} \frac{ \P(\bar S_n \in A)}{(n \P(S_1\geq n))^{\mathcal J(A)} (n \P(S_1\leq -n))^{\mathcal K(A)} }
\leq
C_{\mathcal J(A),\mathcal K(A)}(\bar A).
\end{aligned}
\end{equation}
\end{theorem}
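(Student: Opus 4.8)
The plan is to transfer the sample-path large deviations for the subordinated L\'evy process $\bar X_n$, established in Theorem~\ref{thm:two-sided-main-theorem}, to the random walk $\bar S_n$ via the asymptotic equivalence machinery of Section~\ref{sec:preliminaries}. The key observation is that $X(t) = S_{N(t)}$, so that $\bar X_n(t) = S_{N(nt)}/n$ differs from $\bar S_n(t) = S_{[nt]}/n$ only through the fluctuations of the Poisson process $N$ around its mean. Concretely, I would first set $\epsilon_n \triangleq (n\P(S_1\geq n))^{\mathcal J(A)}(n\P(S_1\leq -n))^{\mathcal K(A)}$ and note that, since $\P(S_1\geq n) = \P(X(1)\geq n)(1+o(1))$ and similarly on the negative side (the tails of $S_1$ and of the compound-Poisson $X(1)$ being asymptotically proportional for regularly varying step sizes), the normalizing sequence $\epsilon_n$ is, up to a constant that can be absorbed, the same as the one in Theorem~\ref{thm:two-sided-limit-theorem}. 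Thus Theorem~\ref{thm:two-sided-limit-theorem} and Theorem~\ref{thm:two-sided-main-theorem} apply to $\bar X_n$ with these normalizations.

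The heart of the argument is to show that $\bar S_n$ is asymptotically equivalent to $\bar X_n$ with respect to $\epsilon_n$, in the sense of Definition~\ref{def:asymptotic-equivalence}: for each $\delta>0$,
\[
\limsup_{n\to\infty}\epsilon_n^{-1}\,\P\big(d(\bar S_n,\bar X_n)\geq \delta\big) = 0,
\]
where $d$ is the Skorokhod $J_1$ metric. To bound $d(\bar S_n,\bar X_n)$ I would use the time-change $\lambda_n$ that maps the jump times of $t\mapsto S_{N(nt)}$ to those of $t\mapsto S_{[nt]}$; with this choice $\|\bar S_n\circ\lambda_n - \bar X_n\|$ vanishes, so $d(\bar S_n,\bar X_n)\leq \|\lambda_n - e\|$, which is controlled by $\sup_{t\in[0,1]}|N(nt)/n - t|$, i.e.\ by the deviation of the Poisson process from its mean on $[0,n]$. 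Hence it suffices to show $\epsilon_n^{-1}\P(\sup_{t\le 1}|N(nt) - nt| \geq \delta n) \to 0$ for every $\delta>0$. Since $\epsilon_n$ decays only polynomially in $n$ (it is a fixed power of $n\P(S_1\geq n)\cdot n\P(S_1\leq-n)$, which is regularly varying), while by standard concentration (e.g.\ Etemadi's or Kolmogorov's inequality combined with a Chernoff/Bennett bound for the Poisson distribution, or the bounds collected in Appendix~A) the probability $\P(\sup_{t\le 1}|N(nt) - nt| \geq \delta n)$ decays exponentially in $n$, this ratio indeed tends to $0$. This step — verifying the asymptotic equivalence with the correct, i.e.\ exponential versus polynomial, rate comparison — is the main obstacle, though it is essentially a routine large-deviation estimate for the Poisson process; care is needed only in reducing the $J_1$ distance to $\|\lambda_n-e\|$ cleanly and in noting that $\bar S_n$ and $\bar X_n$ both take values in the path space so that Corollary~\ref{lem:asymptotic-equivalence} applies.

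With the asymptotic equivalence in hand, I would apply Lemma~\ref{lem:extended-bounds} (or its specialization to step functions): taking $\mathbb S = \mathbb D$, $\mathbb C = \mathbb D_{<\mathcal J(A),\mathcal K(A)}$, $\mathbb S_0 = \mathbb D_s$ the set of step functions (on which the limit measure $C_{\mathcal J(A),\mathcal K(A)}$ is concentrated and which, by Theorem~\ref{thm:two-sided-limit-theorem}'s construction via $T_j$ and the asymptotic-equivalence clause, also carries the approximating laws), $\mu = C_{\mathcal J(A),\mathcal K(A)}$, $X_n = \bar X_n$, and $Y_n = \bar S_n$. Since $A$ is assumed bounded away from $\mathbb D_{<\mathcal J(A),\mathcal K(A)}$, the hypotheses on $G\cap\mathbb S_0$ and $F_\delta\cap\mathbb S_0$ being bounded away from $\mathbb C$ hold for $G = A^\circ$ and $F = \bar A$ (one checks $\mathcal J(\bar A),\mathcal K(\bar A)$ agree with $\mathcal J(A),\mathcal K(A)$ exactly as in the proof of Theorem~\ref{thm:one-sided-main-theorem}). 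This yields
\[
\liminf_{n\to\infty}\epsilon_n^{-1}\P(\bar S_n\in A^\circ)\geq C_{\mathcal J(A),\mathcal K(A)}(A^\circ),
\qquad
\limsup_{n\to\infty}\epsilon_n^{-1}\P(\bar S_n\in \bar A)\leq C_{\mathcal J(A),\mathcal K(A)}(\bar A),
\]
and since $A^\circ\subseteq A\subseteq \bar A$, these are precisely the two inequalities in \eqref{eq:random-walk}. A final bookkeeping remark: one must confirm that replacing $\P(X(1)\geq n)$ by $\P(S_1\geq n)$ in the denominator (and likewise for the left tail) leaves the limit constants unchanged — this follows because $n\P(X(1)\geq n)/\big(n\P(S_1\geq n)\big)\to 1$ by the single-big-jump heuristic for compound Poisson sums, so the two normalizations are asymptotically identical and the measure $C_{\mathcal J(A),\mathcal K(A)}$ is the same on both sides.
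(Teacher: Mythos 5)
Your proposal is correct and follows essentially the same route as the paper: represent $X(t)=S_{N(t)}$, bound the $J_1$ distance between $\bar S_n$ and $\bar X_n$ by $\sup_{t\in[0,1]}|N(nt)/n-t|$ via the time change interpolating the Poisson jump epochs, use Etemadi-type concentration to get geometric decay beating the polynomial normalization, invoke $\P(S_1\geq x)\sim\P(X(1)\geq x)\sim\nu(x,\infty)$ to identify the normalizing sequences, and transfer Theorem~\ref{thm:two-sided-main-theorem} through the asymptotic-equivalence machinery. The only (cosmetic) difference is that you apply Lemma~\ref{lem:extended-bounds} directly, whereas the paper first transfers the $\mathbb M$-convergence \eqref{eq:two-sided-limit-theorem} to $\bar S_n$ via Corollary~\ref{lem:asymptotic-equivalence} and then applies Lemma~\ref{wasteful-lemma}; both handle the reduction to the case $\mathcal J(A^\circ)=\mathcal J(\bar A)=\mathcal J(A)$, $\mathcal K(A^\circ)=\mathcal K(\bar A)=\mathcal K(A)$ in the same way.
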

\begin{proof}
The idea is to combine our notion of asymptotic equivalence with Theorem~\ref{thm:two-sided-main-theorem}. First, we need to derive the asymptotic behavior of the L\'evy measure of the constructed L\'evy process.
From Example A3.17 in \cite{EmbrechtsKluppelbergMikosch97}, we obtain $\P(X(1)\geq x) \sim \P(S_1\geq x)$. Moreover, \cite{EmbrechtsVeraverbeke} implies that $\nu(x,\infty) \sim \P(X(1)\geq x)$.
Similarly, it follows that $\nu(-\infty,-x)\sim \P(S_1 \leq - x)$.

Now, from Lemma~\ref{wasteful-lemma}, \eqref{eq:random-walk} is proved if \eqref{eq:two-sided-limit-theorem} holds for $\bar S_n$.
In view of Corollary~\ref{lem:asymptotic-equivalence}, \eqref{eq:two-sided-limit-theorem} holds---and hence, the proof is completed---if we prove the asymptotic equivalence between $\bar X_n$ and $\bar S_n$ (w.r.t.\ a geometrically decaying sequence).
To prove the asymptotic equivalence, we first argue that the Skorokhod distance between $\bar S_n$ and $\bar X_n$ is bounded by $\sup_{t\in [0,1]} |N(tn)/n - t|$.
To see this, define the homeomorphism $\lambda_n(t)$ as the linear interpolation of the jump points of $N(nt)/n$, and observe that $\bar X_n(t) = \bar S_n (\lambda_n(t))$.
Thus, the distance between $\bar S_n$ and $\bar X_n$ is bounded by $\sup_{t\in [0,1]}  |\lambda_n(t)-t|$ which, in itself, is bounded by $\sup_{t\in [0,1]} |N(tn)/n - t|$.
From Lemma~\ref{lem:cont_etemadi},
\begin{equation*}
\P(\sup_{t\in [0,1]} |N(tn)/n - t|)>\delta) \leq 3\sup_{t\in [0,1]}\P( |N(tn)/n - t|)>\delta/3),
\end{equation*}
where $\P( |N(tn)/n - t|)>\delta/3)$ vanishes at a geometric rate w.r.t.\ $n$ uniform in $t\in [0,1]$, from which the asymptotic equivalence follows.
\end{proof}

\subsection{Conditional Limit Theorems}\label{subsec:conditional-limit-theorem}

\label{subsec:conditional-limit-theorem}


As before, $\bar{X}_{n}$ denotes the scaled L\'{e}vy process defined as in
Section~\ref{subsec:one-sided-large-deviations} for the one-sided case and
Section \ref{subsec:two-sided-large-deviations} for the two-sided case, respectively.
In this section, we present conditional limit theorems which give a precise description of the limit law of $\bar X_n$ conditional on $\bar X_n\in A$.

The next result, for the one-sided case, follows immediately from the definition of weak
convergence and Theorem \ref{thm:one-sided-main-theorem}.

\begin{corollary}
Suppose that a subset $B$ of $\mathbb{D}$ satisfies the conditions in
Theorem~\ref{thm:one-sided-main-theorem} and that $C_{\mathcal{J}(B)}%
(B^{\circ})=C_{\mathcal{J}(B)}(B)=C_{\mathcal{J}(B)}(\bar B)>0$. Let $\bar
{X}_{n}^{|B}$ be a process having the conditional law of $\bar{X}_{n}$ given
that $\bar{X}_{n}\in B$, then there exists a process $\bar{X}_{\infty}^{|B}$ such that
\[
\bar{X}_{n}^{|B}\Rightarrow\bar{X}_{\infty}^{|B},
\]
in $\mathbb{D}$. Moreover, if $\P^{|B}\left(  \cdot\right)  $ is the law of
$\bar{X}_{\infty}^{|B}$, then%
\[
\P^{|B}\left(  \bar{X}_{\infty}^{|B}\in
\cdot\right)  :=\frac{C_{\mathcal{J}%
(B)}(\cdot\cap B)}{C_{\mathcal{J}(B)}(B)}.
\]

\end{corollary}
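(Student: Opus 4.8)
The plan is to derive this corollary directly from Theorem~\ref{thm:one-sided-main-theorem} together with the standard portmanteau-type characterization of weak convergence. First I would observe that, under the stated hypotheses, $B$ is a $C_{\mathcal J(B)}$-continuity set in the sense that $C_{\mathcal J(B)}(B^\circ) = C_{\mathcal J(B)}(B) = C_{\mathcal J(B)}(\bar B) =: c \in (0,\infty)$, so Theorem~\ref{thm:one-sided-main-theorem} yields
\[
\lim_{n\to\infty} \frac{\P(\bar X_n \in B)}{(n\nu[n,\infty))^{\mathcal J(B)}} = c.
\]
The candidate limit law is $\P^{|B}(\cdot) := C_{\mathcal J(B)}(\,\cdot\,\cap B)/c$, which is a genuine probability measure on $\mathbb D$ since $C_{\mathcal J(B)}$ is a Borel measure supported on $\mathbb D_{\leqslant \mathcal J(B)}$, finite away from $\mathbb D_{<\mathcal J(B)}$, and $B$ is bounded away from $\mathbb D_{<\mathcal J(B)}$ (after intersecting with the support), so $C_{\mathcal J(B)}(B) < \infty$; total mass is $1$ by construction.

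Next I would verify weak convergence $\bar X_n^{|B} \Rightarrow \bar X_\infty^{|B}$ via the closed-set portmanteau inequality: it suffices to show $\limsup_{n} \P(\bar X_n^{|B} \in F) \leq \P^{|B}(F)$ for every closed $F \subseteq \mathbb D$. Writing $\P(\bar X_n^{|B} \in F) = \P(\bar X_n \in F \cap B)/\P(\bar X_n \in B)$, the denominator converges to $c(n\nu[n,\infty))^{\mathcal J(B)}(1+o(1))$ by the display above. For the numerator, the set $F \cap B$ need not itself satisfy the hypotheses of Theorem~\ref{thm:one-sided-main-theorem}, so instead I would pass through $F \cap \bar B$ (closed) and $F \cap B^\circ$ together with the fact that $(F\cap \bar B)_\delta \cap \mathbb D_{\leqslant \mathcal J(B)} \subseteq B_\delta \cap \mathbb D_{\leqslant \mathcal J(B)}$ is bounded away from $\mathbb D_{<\mathcal J(B)}$ for small $\delta$; hence $\mathcal J(F\cap \bar B) \geq \mathcal J(B)$, and Theorem~\ref{thm:one-sided-main-theorem} (the $\limsup$ half, or the null statement \eqref{eq:one-sided-large-deviations-null} when $\mathcal J(F\cap\bar B) > \mathcal J(B)$) gives
\[
\limsup_{n\to\infty} \frac{\P(\bar X_n \in F\cap B)}{(n\nu[n,\infty))^{\mathcal J(B)}} \leq \limsup_{n\to\infty} \frac{\P(\bar X_n \in F \cap \bar B)}{(n\nu[n,\infty))^{\mathcal J(B)}} \leq C_{\mathcal J(B)}(\overline{F\cap B}) \leq C_{\mathcal J(B)}(\bar F \cap \bar B).
\]
Since $C_{\mathcal J(B)}$ is supported on $\mathbb D_{\leqslant \mathcal J(B)}$ and, by the continuity hypothesis, assigns zero mass to $\bar B \setminus B$, one has $C_{\mathcal J(B)}(\bar F \cap \bar B) = C_{\mathcal J(B)}(\bar F \cap B)$; a further approximation of $\bar F$ from inside by $F$ using regularity of the finite measure $C_{\mathcal J(B)}\restriction (\mathbb D \setminus \mathbb D_{<\mathcal J(B)})$ — or simply invoking that we may replace $F$ by an arbitrarily small closed neighborhood and let it shrink — yields $\limsup_n \P(\bar X_n^{|B} \in F) \leq C_{\mathcal J(B)}(F \cap B)/c = \P^{|B}(F)$. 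The existence of the limiting process $\bar X_\infty^{|B}$ then follows since $\P^{|B}$ is a probability measure on the Polish space $\mathbb D$ and weak convergence to it has been established, so one may simply take $\bar X_\infty^{|B}$ to be any random element with law $\P^{|B}$.

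The main obstacle is the last bookkeeping step: $F \cap B$ does not directly satisfy the hypotheses of Theorem~\ref{thm:one-sided-main-theorem} because $F \cap B$ may fail to be bounded away from $\mathbb D_{<\mathcal J(B)}$ in the crude sense, and $B$ is only a continuity set, not open or closed — so care is needed to sandwich $F\cap B$ between $F \cap B^\circ$ and $F \cap \bar B$ and to transfer the measure-zero boundary condition $C_{\mathcal J(B)}(\bar B \setminus B^\circ) = 0$ through the intersections. Everything else (the denominator asymptotics, the definition of $\P^{|B}$, and the reduction to the closed-set portmanteau inequality) is routine.
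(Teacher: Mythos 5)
Your overall route---the denominator asymptotics $\P(\bar X_n\in B)\sim C_{\mathcal J(B)}(B)\,(n\nu[n,\infty))^{\mathcal J(B)}$ from Theorem~\ref{thm:one-sided-main-theorem} and the continuity hypothesis, the closed-set portmanteau criterion, and sandwiching the numerator through the closed set $F\cap\bar B$ together with $C_{\mathcal J(B)}(\bar B\setminus B)=0$---is exactly the ``immediate'' argument the paper intends (the paper gives no further detail). The genuine gap is in how you treat the case $\mathcal J(F\cap\bar B)>\mathcal J(B)$. The null statement \eqref{eq:one-sided-large-deviations-null} applies only to sets with $\mathcal J=\infty$, so it says nothing when $\mathcal J(B)<\mathcal J(F\cap\bar B)<\infty$; and neither it nor the $\limsup$ half of Theorem~\ref{thm:one-sided-main-theorem} can be invoked at the higher level, because the hypothesis required there---$(F\cap\bar B)_\delta\cap\mathbb D_{\leqslant \mathcal J(F\cap\bar B)}$ bounded away from $\mathbb D_{<\mathcal J(F\cap\bar B)}$, respectively $(F\cap\bar B)_\delta\cap\mathbb D_{\leqslant i+1}$ bounded away from $\mathbb D_{\leqslant i}$---is not implied by the standing assumption on $B$, which only controls the stratum $\mathbb D_{\leqslant\mathcal J(B)}$ relative to $\mathbb D_{<\mathcal J(B)}$. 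For instance $\bar B$ may contain step functions with $\mathcal J(B)+1$ jumps one of which is arbitrarily small, so $F\cap\bar B$ need not be bounded away from $\mathbb D_{\leqslant\mathcal J(B)}$ at all. As written, that step is unjustified.

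The repair does not change your structure: bound the numerator by going back to Theorem~\ref{thm:one-sided-limit-theorem} instead of Theorem~\ref{thm:one-sided-main-theorem}. Apply the upper bound of Lemma~\ref{lem:extended-bounds} with $\mathbb S_0=\mathbb D_{\leqslant\mathcal J(B)}$, $\mathbb C=\mathbb D_{<\mathcal J(B)}$, $\epsilon_n=(n\nu[n,\infty))^{\mathcal J(B)}$, $\mu=C_{\mathcal J(B)}$, using the asymptotic equivalence asserted in Theorem~\ref{thm:one-sided-limit-theorem}; the only condition needed is that $(F\cap\bar B)_\delta\cap\mathbb D_{\leqslant\mathcal J(B)}\subseteq B_\delta\cap\mathbb D_{\leqslant\mathcal J(B)}$ is bounded away from $\mathbb D_{<\mathcal J(B)}$, which is precisely the hypothesis on $B$. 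This yields $\limsup_{n\to\infty}(n\nu[n,\infty))^{-\mathcal J(B)}\P(\bar X_n\in F\cap\bar B)\leq C_{\mathcal J(B)}(F\cap\bar B)$ in all cases at once; when $\mathcal J(F\cap\bar B)>\mathcal J(B)$ the right-hand side is automatically $0$ because $C_{\mathcal J(B)}$ is concentrated on $\mathbb D_{\mathcal J(B)}$. From there your identification $C_{\mathcal J(B)}(F\cap\bar B)=C_{\mathcal J(B)}(F\cap B)$ is legitimate (note this subtraction uses $C_{\mathcal J(B)}(B)<\infty$, which holds since $B\cap\mathbb D_{\leqslant\mathcal J(B)}$ is bounded away from $\mathbb D_{<\mathcal J(B)}$), and the portmanteau conclusion follows; the closing remark about shrinking a closed neighborhood of $F$ is superfluous since $F$ is already closed. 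Alternatively, one can stay entirely within Theorem~\ref{thm:one-sided-main-theorem} via complementation: $\P(\bar X_n\in F\cap\bar B)\leq\P(\bar X_n\in\bar B)-\P(\bar X_n\in B^\circ\setminus F)$, applying the upper bound to $\bar B$ and the lower bound to the open set $B^\circ\setminus F$ (trivial when $\mathcal J(B^\circ\setminus F)>\mathcal J(B)$), which gives the same inequality.
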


Let us provide a more direct probabilistic description of the process $\bar
{X}_{\infty}^{|B}$. Directly from the definition of $\P^{|B}$ we have that
\[
\bar{X}_{\infty}^{|B}\left(  t\right)  =\sum_{n=1}^{\mathcal{J}(B)}\chi
_{n}1_{[U_{n},1]}\left(  t\right)  ,
\]
where $U_{1},...,U_{\mathcal{J}(B)}$ are i.i.d.\ uniform random variables on $[0,1]$ and
\begin{align*}
&  \P^{|B}\left(  \chi_{1}\in dx_{1},...,\chi_{\mathcal{J}(B)}\in
dx_{\mathcal{J}(B)}\right)  \\
&  =\frac{\Pi_{i=1}^{\mathcal{J}(B)}\left(  \alpha x_{i}{}^{-\alpha-1}%
dx_{i}\right)  \,\I\left(  x_{\mathcal{J}(B)}>...>x_{1}>0\right)  \P\left(
\sum_{n=1}^{\mathcal{J}(B)}x_{n}1_{[U_{n},1]}\left(  \cdot\right)  \in
B\right)  }{C_{\mathcal{J}(B)}(B)}.
\end{align*}

An easier to interpret description of $\P^{|B}$ can be obtained by using the
fact that $\delta_{B}:=d\left(  B,\mathbb{D}_{\leqslant\mathcal{J}(B)-1}\right)
>0$. Define an auxiliary probability measure, $\P_{\#}^{|B}$, under which, not
only $U_{1},...,U_{\mathcal{J}(B)}$ are i.i.d. Uniform$\left(  0,1\right)  $, but
also $\chi_{1},...,\chi_{\mathcal{J}(B)}$ are i.i.d. distributed
Pareto$\left(  \alpha,\delta_{B}\right)  $ and independent of the $U_{i}$'s;
that is,%
\begin{align*}
&\P_{\#}^{|B}\left(  \chi_{1}\in dx_{1},...,\chi_{\mathcal{J}(B)}\in
dx_{\mathcal{J}(B)}\right)  
\\
&
=(\alpha/\delta_{B})^{\mathcal{J}(B)}\Pi
_{i=1}^{\mathcal{J}(B)}(x_{i}/\delta_{B})^{-\alpha-1}dx_{i}\,\I\left(  x_{i}%
\geq\delta_{B}\right)  .
\end{align*}
Then, we have that
\begin{equation}
\P^{|B}\left(  \bar{X}_{\infty}^{|B}\in\cdot\right)  =\P_{\#}^{|B}\left(
\bar{X}_{\infty}^{|B}\in\cdot\text{ }|\text{ }\bar{X}_{\infty}^{|B}\in
B\right)  .\label{Rec_Cond}%
\end{equation}
Moreover, note that%
\begin{equation}
\P_{\#}^{|B}\left(  \bar{X}_{\infty}^{|B}\in B\right)  =\delta_{B}%
^{-\mathcal{J}(B)\left(  \alpha+2\right)  }C_{\mathcal{J}(B)}%
(B)>0.\label{Rec_Cond_2}%
\end{equation}

In view of (\ref{Rec_Cond}) and (\ref{Rec_Cond_2}) one can say, at least
qualitatively, that the most likely way in which the event $\bar{X}_{n}\in B$
is seen to occur is by means of $\mathcal{J}(B)$ i.i.d.\ jumps which are
suitably Pareto distributed and occurring uniformly throughout the time
interval $[0,1]$.

We now are ready to provide the corresponding conditional limit theorem for the two-sided case, building on
Theorem \ref{thm:two-sided-main-theorem}. The proof is
again immediate, using the definition of weak convergence.

\begin{corollary}
\label{cor:two-sided-conditional-probability_special_case} Suppose that a
subset $B$ of $\mathbb{D}$ satisfies the conditions in
Theorem~\ref{thm:two-sided-main-theorem} and that
\[
C_{\mathcal{J}(B),\mathcal{K}(B)}(B^{\circ})=C_{\mathcal{J}(B),\mathcal{K}%
(B)}(B)=C_{\mathcal{J}(B),\mathcal{K}(B)}(\bar B)>0.
\]
Let $\bar{X}_{n}^{|B}$ be a process having the conditional law of $\bar{X}%
_{n}$ given that $\bar{X}_{n}\in B$, then
\[
\bar{X}_{n}^{|B}\Rightarrow\bar{X}_{\infty}^{|B},
\]
in $\mathbb{D}$. Moreover, if $\P^{|B}\left(  \cdot\right)  $ is the law of
$\bar{X}_{\infty}^{|B}$, then%
\[
\P^{|B}\left(  \bar{X}_{\infty}^{|B}\in\cdot\right)  :=\frac{C_{\mathcal{J}%
(B),\mathcal{K}(B)}(\cdot\cap B)}{C_{\mathcal{J}(B),\mathcal{K}(B)}(B)}.
\]

\end{corollary}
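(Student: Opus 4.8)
The plan is to deduce the weak convergence $\bar X_n^{|B}\Rightarrow\bar X_\infty^{|B}$ from the $\mathbb M$-convergence in Theorem~\ref{thm:two-sided-limit-theorem} via the portmanteau characterization of weak convergence (see, e.g., \citealp{billingsley2013convergence}), exactly paralleling the one-sided Corollary above. Write $j\triangleq\mathcal J(B)$, $k\triangleq\mathcal K(B)$, $\mu\triangleq C_{j,k}$, and $r_n\triangleq(n\nu[n,\infty))^j(n\nu(-\infty,-n])^k$. Since $d(\,\cdot\,,\mathbb D_{<j,k})$ is unchanged under passing to the closure and does not decrease under passing to a subset, the hypothesis that $B$ is bounded away from $\mathbb D_{<j,k}$ is inherited by $B^\circ$, by $\bar B$, and by $F\cap\bar B$ and $G\cap B^\circ$ for arbitrary $F,G\subseteq\mathbb D$. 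Moreover, because $\mu\in\mathbb M(\mathbb D\setminus\mathbb D_{<j,k})$ and $B$ is bounded away from $\mathbb D_{<j,k}$, the measure $\mu(\,\cdot\cap B)$ is finite, and since $\mu(B)>0$ by assumption, $\P^{|B}(\cdot)\triangleq\mu(\cdot\cap B)/\mu(B)$ is a Borel probability measure on $\mathbb D$, concentrated on $B\cap\mathbb D_{j,k}$. The first step is to record that $r_n^{-1}\P(\bar X_n\in B)\to\mu(B)$: applying Theorem~\ref{result:L21} to the $\mathbb M$-convergence $r_n^{-1}\P(\bar X_n\in\cdot)\to\mu$ of Theorem~\ref{thm:two-sided-limit-theorem} with the open set $B^\circ$ and the closed set $\bar B$ (both bounded away from $\mathbb D_{<j,k}$) gives $\mu(B^\circ)\le\liminf_n r_n^{-1}\P(\bar X_n\in B^\circ)\le\limsup_n r_n^{-1}\P(\bar X_n\in\bar B)\le\mu(\bar B)$, and the continuity hypothesis $\mu(B^\circ)=\mu(B)=\mu(\bar B)$ together with $B^\circ\subseteq B\subseteq\bar B$ forces convergence. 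In particular $\P(\bar X_n\in B)>0$ for all large $n$, so $\bar X_n^{|B}$ is well defined for such $n$.

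The second step is to verify the closed-set half of the portmanteau theorem for the conditional laws. Fix an arbitrary closed $F\subseteq\mathbb D$; then $F\cap\bar B$ is closed and bounded away from $\mathbb D_{<j,k}$, so Theorem~\ref{result:L21} applied to Theorem~\ref{thm:two-sided-limit-theorem} gives $\limsup_n r_n^{-1}\P(\bar X_n\in F\cap\bar B)\le\mu(F\cap\bar B)$, and $\mu(F\cap\bar B)=\mu(F\cap B)$ because $\mu(\bar B\setminus B)=\mu(\bar B)-\mu(B)=0$. Combining this with $\P(\bar X_n\in F\cap B)\le\P(\bar X_n\in F\cap\bar B)$ and the limit from the first step,
\[
\limsup_{n\to\infty}\P\big(\bar X_n\in F\mid\bar X_n\in B\big)\le\limsup_{n\to\infty}\frac{r_n^{-1}\P(\bar X_n\in F\cap\bar B)}{r_n^{-1}\P(\bar X_n\in B)}=\frac{\mu(F\cap B)}{\mu(B)}=\P^{|B}(F).
\]
By the portmanteau theorem this proves $\bar X_n^{|B}\Rightarrow\bar X_\infty^{|B}$ with $\bar X_\infty^{|B}$ distributed according to $\P^{|B}$; the symmetric lower bound for open sets, obtained by replacing $F\cap\bar B$ with $G\cap B^\circ$ and using $\mu(B\setminus B^\circ)=0$, can be recorded for completeness but is not needed. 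Finally, spelling out the definition of $\mu=C_{j,k}$ yields the explicit description of $\bar X_\infty^{|B}$ as a superposition of $j$ Pareto$(\alpha)$ upward jumps and $k$ Pareto$(\beta)$ downward jumps placed at i.i.d.\ uniform times, conditioned to produce a path in $B$, in direct analogy with the one-sided discussion preceding this corollary.

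The only point that is more than bookkeeping is that an arbitrary closed (or open) subset of $\mathbb D$ need not be bounded away from $\mathbb D_{<j,k}$, which is precisely the hypothesis required to invoke Theorem~\ref{result:L21}; the remedy is to intersect with $\bar B$ (respectively $B^\circ$), which is bounded away from $\mathbb D_{<j,k}$ by assumption and, thanks to the $C_{j,k}$-continuity of $B$ built into the statement, carries exactly the portion of the mass of $\mu$ that is relevant to the conditional law. Everything else—measurability of $B$, finiteness of $\mu(B)$, and the elementary squeeze and quotient limits—is routine.
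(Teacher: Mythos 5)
Your argument is correct and is exactly the intended one: the paper declares the corollary ``immediate'' from Theorem~\ref{thm:two-sided-main-theorem} and the definition of weak convergence, and your write-up simply fills in that argument via the sharp asymptotics (limit theorem plus Theorem~\ref{result:L21}), the squeeze $\mu(B^\circ)=\mu(B)=\mu(\bar B)$ giving $r_n^{-1}\P(\bar X_n\in B)\to\mu(B)>0$, and the closed-set portmanteau bound after intersecting with $\bar B$ and using $\mu(\bar B\setminus B)=0$. No gaps; the observation that arbitrary closed sets must first be intersected with $\bar B$ to be bounded away from $\mathbb D_{<\mathcal J(B),\mathcal K(B)}$ is precisely the point that makes the ``immediate'' proof rigorous.
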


A probabilistic description, completely analogous to that given for the
one-sided case, can also be provided in this case. Define $\delta_{B}=d\left(
B,\mathbb{D}_{<\mathcal{J}(B),\mathcal{K}(B)}\right)  >0$ and introduce a
probability measure $\P_{\#}^{|B}$ under which we have the following: First,
$U_{1},...,U_{\mathcal{J}(B)},V_{1},...,V_{\mathcal{K}(B)}$ are i.i.d.
$U\left(  0,1\right)  $; second, $\chi_{1},...,\chi_{\mathcal{J}(B)}$ are
i.i.d. Pareto($\alpha,\delta_{B}$), and, finally $\varrho_{1},...,\varrho
_{\mathcal{K}(B)}$ are i.i.d. Pareto($\beta,\delta_{B}$) random variables (all
of these random variables are mutually independent). Then, write
\[
\bar{X}_{\infty}^{|B}\left(  t\right)  =\sum_{n=1}^{\mathcal{J}(B)}\chi
_{n}1_{[U_{n},1]}\left(  t\right)  -\sum_{n=1}^{\mathcal{K}(B)}\varrho
_{n}1_{[V_{n},1]}\left(  t\right)  .
\]
Applying the same reasoning as in the one sided case we have that
\[
\P^{|B}\left(  \bar{X}_{\infty}^{|B}\in\cdot\right)
=\P_{\#}^{|B}\left(
\bar{X}_{\infty}^{|B}\in\cdot\text{ }|\text{ }\bar{X}_{\infty}^{|B}\in
B\right)
\]
and
\[
\P_{\#}^{|B}\left(  \bar{X}_{\infty}^{|B}\in B\right)  =\delta_{B}%
^{-\mathcal{J}(B)\left(  \alpha+2\right)  -\mathcal{K}(B)\left(
\beta+2\right)  }C_{\mathcal{J}(B),\mathcal{K}(B)}(B)>0.
\]

We note that these results also hold for random walks, and thus is a significant extension of Theorem 3.1 in \cite{durrett1980conditioned}, where it is assumed that $\alpha>2$ and $B=\{\bar{X}_{n}\left(1\right)  \geq a\}$.

\subsection{Large Deviation Principle}\label{subsec:weak-ldp}

In this section, we show that $\bar X_n$ satisfies a weak large deviation principle with speed $\log n$,
and a rate function which is piece-wise linear in the number of discontinuities.
More specifically, define
\begin{equation}\label{eq:rate-function}
I(\xi)\triangleq
\left\{\begin{array}{ll}
	(\alpha-1)\mathcal D_+(\xi) + (\beta-1)\mathcal D_-(\xi), & \text{if $\xi$ is a step function \& $\xi(0) = 0$;}
	\\
	\infty, & \text{otherwise.}
\end{array}
\right.
\end{equation}
where $\mathcal D_-(\xi)$ denotes the number of downward jumps in $\xi$.
\begin{theorem}\label{thm:weak-ldp}
The scaled process $\bar X_n$ satisfies the weak large deviation principle with rate function $I$ and  speed  $\log n$, i.e.,
\begin{equation}\label{eq:ldp-lower-bound}
-\inf_{x \in G} I(x)
\leq
\liminf_{n\to\infty} \frac{\log \P(\bar X_n \in G)}{\log n}
\end{equation}
for every open set $G$, and
\begin{equation}\label{eq:ldp-upper-bound}
\limsup_{n\to\infty} \frac{\log \P(\bar X_n \in K)}{\log n}
\leq
-\inf_{x\in K} I(x)
\end{equation}
for every compact set $K$.
\end{theorem}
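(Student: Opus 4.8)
The plan is to deduce the weak LDP from the sharp polynomial asymptotics already established, via the identity $\log\big(n\nu[n,\infty)\big) = -(\alpha-1)\log n + o(\log n)$ and the analogous identity $\log\big(n\nu(-\infty,-n]\big) = -(\beta-1)\log n + o(\log n)$, which hold because $n^\alpha\nu[n,\infty)$ and $n^\beta\nu(-\infty,-n]$ are slowly varying. Under these identities, if $A$ is bounded away from $\mathbb D_{<\mathcal J(A),\mathcal K(A)}$, then Theorem~\ref{thm:two-sided-main-theorem} (together with finiteness of $C_{\mathcal J(A),\mathcal K(A)}(\bar A)$, which follows since $\bar A$ is bounded away from $\mathbb D_{<\mathcal J(A),\mathcal K(A)}$ so only jumps exceeding a fixed positive constant contribute) yields
\[
\lim_{n\to\infty}\frac{\log\P(\bar X_n\in A)}{\log n} = -\big((\alpha-1)\mathcal J(A)+(\beta-1)\mathcal K(A)\big) = -\inf_{(l,m):\,\mathbb D_{l,m}\cap A\neq\emptyset}\mathcal I(l,m),
\]
and when the argmin is empty, \eqref{two-sided-limit-tends-to-zero} gives that this log-ratio tends to $-\infty$. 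The right-hand side is exactly $-\inf_{\xi\in A}I(\xi)$ whenever $A$ is bounded away from $\mathbb D_{<\mathcal J(A),\mathcal K(A)}$, since $I(\xi)=\mathcal I(\mathcal D_+(\xi),\mathcal D_-(\xi))$ on step functions vanishing at $0$ and is $+\infty$ elsewhere, and the infimum of $I$ over $A$ is attained on step functions in $A$. So the core of the argument is a localization: reduce the lower bound for a general open $G$ to small balls (which are automatically bounded away from the relevant $\mathbb D_{<\cdot,\cdot}$), and reduce the upper bound for a compact $K$ to a finite cover by such balls.

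\textbf{Lower bound.} Fix an open set $G$ and $\xi\in G$ with $I(\xi)<\infty$ (if no such $\xi$ exists the bound is trivial). Then $\xi$ is a step function with $\xi(0)=0$, say with $j$ upward and $k$ downward jumps, so $\xi\in\mathbb D_{j,k}$ and $I(\xi)=\mathcal I(j,k)$. Pick $\epsilon>0$ small enough that the open ball $B(\xi,\epsilon)\subseteq G$; choosing $\epsilon$ smaller than half the minimal jump size of $\xi$ and smaller than the spacing of its jump times ensures $B(\xi,\epsilon)$ is bounded away from $\mathbb D_{<j,k}$ and that $\mathbb D_{j,k}\cap B(\xi,\epsilon)\neq\emptyset$ while no $\mathbb D_{l,m}$ with $\mathcal I(l,m)<\mathcal I(j,k)$ meets it, so $(\mathcal J,\mathcal K)(B(\xi,\epsilon))=(j,k)$. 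Applying the asymptotics above to $A=B(\xi,\epsilon)$ and monotonicity $\P(\bar X_n\in G)\geq\P(\bar X_n\in B(\xi,\epsilon))$ gives $\liminf_n \log\P(\bar X_n\in G)/\log n\geq -\mathcal I(j,k)=-I(\xi)$. Taking the supremum over such $\xi$ yields \eqref{eq:ldp-lower-bound}. (A minor point to check: $C_{j,k}(B(\xi,\epsilon))>0$, which holds because $B(\xi,\epsilon)$ contains a neighborhood of $\xi$ in $\mathbb D_{j,k}$ of positive $\nu_\alpha^j\times\nu_\beta^k\times\mathrm{Leb}$-measure.)

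\textbf{Upper bound.} Fix a compact $K$ and set $\kappa=\inf_{x\in K}I(x)$; we must show $\limsup_n\log\P(\bar X_n\in K)/\log n\leq-\kappa$. Let $M$ be any integer with $M<\kappa$ (if $\kappa=0$ there is nothing to prove beyond a trivial bound; handle $\kappa=\infty$ by taking $M$ arbitrarily large at the end). For each $\xi\in K$ we have $I(\xi)>M$, and by lower semicontinuity of $I$—equivalently, by the fact that $\mathbb D_{<l,m}\cup\mathbb D_{l,m}$ is closed and that $\xi$ either fails to be a step function vanishing at $0$ or has $\mathcal I(\mathcal D_+(\xi),\mathcal D_-(\xi))>M$—there is an open ball $B_\xi\ni\xi$ such that $B_\xi$ is bounded away from $\bigcup_{(l,m):\,\mathcal I(l,m)\leq M}\big(\mathbb D_{<l,m}\cup\mathbb D_{l,m}\big)$; the key geometric fact here is that if $\xi$ is within distance $\delta$ of a step function with at most some given number of jumps, then making $\delta$ small forces $\xi$ itself to be ``close to that complexity,'' so a genuinely more-complex or non-step $\xi$ is at positive distance from every such low-complexity stratum. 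By compactness extract a finite subcover $K\subseteq\bigcup_{i=1}^N B_{\xi_i}$. For each $i$, $B_{\xi_i}$ is bounded away from $\mathbb D_{<l,m}\cup\mathbb D_{l,m}$ for every $(l,m)$ with $\mathcal I(l,m)\leq M$, so \eqref{two-sided-limit-tends-to-zero} (applied with the largest relevant $(l,m)$, or directly) gives $\limsup_n\log\P(\bar X_n\in B_{\xi_i})/\log n\leq -M$; alternatively, if the argmin over strata meeting $B_{\xi_i}$ is nonempty it consists of $(l,m)$ with $\mathcal I(l,m)>M$, and Theorem~\ref{thm:two-sided-main-theorem} again gives the same bound. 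Then $\P(\bar X_n\in K)\leq\sum_{i=1}^N\P(\bar X_n\in B_{\xi_i})$ gives $\limsup_n\log\P(\bar X_n\in K)/\log n\leq-M$, and letting $M\uparrow\kappa$ finishes \eqref{eq:ldp-upper-bound}.

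\textbf{Main obstacle.} The genuinely delicate part is the upper bound's geometric lemma: verifying that an arbitrary $\xi$ with $I(\xi)>M$ admits a ball bounded away from \emph{all} strata $\mathbb D_{<l,m}\cup\mathbb D_{l,m}$ of complexity $\mathcal I(l,m)\leq M$ simultaneously. For $\xi$ a step function vanishing at $0$ this is a statement about the $J_1$ distance from $\xi$ to step functions with few jumps, controlled by the minimal jump size of $\xi$; for $\xi$ not of this form (not a step function, or $\xi(0)\neq0$), one needs that such $\xi$ is $J_1$-bounded away from the closed set $\mathbb D_{\leqslant p}$-type strata for each fixed $p$, which uses that a path within small $J_1$-distance of a step function with $\leq p$ jumps is itself ``almost'' such a step function. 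This is exactly the kind of estimate packaged in the bounded-away hypotheses throughout Section~\ref{sec:sample-path-ldps}, so it should follow from the lemmas cited there (e.g. the structure of $\mathbb D_{<j,k}$ and Lemma~\ref{lem:Djk}-type facts), but it is where the compactness of $K$ is genuinely used and where the failure of a \emph{full} LDP (non-closedness issues for general sets) is localized away.
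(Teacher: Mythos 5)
Your lower bound is essentially the paper's own argument: localize to a small ball around a minimizing step function $\xi\in G\cap\mathbb D_{j,k}$, note it is bounded away from $\mathbb D_{<j,k}$ and has positive $C_{j,k}$-measure, and pass to logarithmic scale using slow variation; that part is fine. The upper bound also follows the paper's skeleton (finite subcover of $K$, union bound, sharp asymptotics per ball), and the geometric fact you flag as the ``main obstacle'' is indeed available (it is Lemma~\ref{lem:Djk}~(e),(f) applied to the finitely many strata of level $\leq M$). The genuine gap is in the per-ball estimate itself. Your balls $B_{\xi_i}$ are only bounded away from the strata $\mathbb D_{l,m}$ with $\mathcal I(l,m)\leq M$, where $M<\kappa\triangleq\inf_{x\in K}I(x)$, and you then invoke Theorem~\ref{thm:two-sided-main-theorem} or \eqref{two-sided-limit-tends-to-zero} to claim $\limsup_n\log\P(\bar X_n\in B_{\xi_i})/\log n\leq-M$. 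Neither applies as cited: \eqref{two-sided-limit-tends-to-zero} requires the argmin in \eqref{def:JK} to be \emph{empty} (no stratum meets the ball at all), which fails, e.g., whenever $\xi_i$ is itself a step function with $I(\xi_i)=\kappa$; and even when it applies it only yields the exponent $\mathcal I(l,m)$ of a pair for which the ball is bounded away from $\mathbb D_{<l,m}\cup\mathbb D_{l,m}$, i.e.\ from \emph{all} strata of level $\leq\mathcal I(l,m)$, which forces $\mathcal I(l,m)$ to be at most the largest attainable $\mathcal I$-value below $\kappa$. The alternative route through Theorem~\ref{thm:two-sided-main-theorem} needs the ball bounded away from $\mathbb D_{<\mathcal J,\mathcal K}$, a set that contains every stratum $(l,m)\neq(\mathcal J,\mathcal K)$ with $\mathcal I(l,m)\leq\mathcal I(\mathcal J,\mathcal K)$ --- in particular strata with $M<\mathcal I(l,m)\leq\mathcal I(\mathcal J,\mathcal K)$ and other strata at the same level as the argmin --- and your construction gives no control on the distance to these.

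This is consequential, not cosmetic, because $\mathcal I$ takes values in a discrete, locally finite set: the per-ball exponents you can certify this way are $\mathcal I$-values strictly below $\kappa$, whose supremum is some $\kappa''<\kappa$, so ``letting $M\uparrow\kappa$'' (a fortiori through integers, as written) cannot produce the exponent $\kappa$ in \eqref{eq:ldp-upper-bound}. To reach $-\kappa$ one must apply the sharp asymptotics at level exactly $\kappa$, and hence verify bounded-awayness from the \emph{other} strata of level $\leq\kappa$. The paper does this by centering the balls at the points $\zeta\in K$ themselves: when $I(\zeta)=\kappa$ it works with $\zeta$'s own pair $(j_\zeta,k_\zeta)$ and uses Lemma~\ref{lem:Djk}~(e) (a step function with $j_\zeta$ up- and $k_\zeta$ down-jumps is at positive distance from any $\mathbb D_{l,m}$ with $l<j_\zeta$ or $m<k_\zeta$, hence from every other stratum of level $\leq\kappa$), while when $I(\zeta)>\kappa$ it uses Lemma~\ref{lem:Djk}~(f) to get a ball on which $C_{j,k}$ vanishes; the case $\inf_K I=\infty$ is handled by a separate cover. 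Your argument could be repaired along these lines, or by invoking Theorem~\ref{thm:two-sided-multiple-asymptotics} for the balls that do meet level-$\kappa$ strata after checking its hypothesis, but as written the per-ball bound, and therefore the upper bound \eqref{eq:ldp-upper-bound}, is not established.
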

The proof of Theorem~\ref{thm:weak-ldp} is provided in Section~\ref{subsec:proofs-for-implications}. It is based on Theorem~\ref{thm:two-sided-main-theorem}, and a reduction of the case of general $A$ to open neighborhoods; reminiscent of arguments made in the proof of Cram\'ers theorem \cite{dembozeitouni}.

\subsection{Nonexistence of Strong Large Deviation Principle}\label{subsec:nonexistence}
We conclude the current section by showing that the weak LDP presented in the previous section is the best one can hope for in our setting, in the sense that for any L\'evy process $X$ with a regularly varying L\'evy measure, $\bar X_n$ cannot satisfy a strong LDP; i.e., (\ref{eq:ldp-upper-bound}) in Theorem~\ref{thm:weak-ldp} cannot be extended to all closed sets.

Consider a mapping $\pi:\mathbb D \to \R_+^2$ that maps paths in $\mathbb D$ to their largest jump sizes, i.e.,
$$
\pi(\xi) \triangleq \Big(\sup_{t\in (0,1]} \big(\xi(t)-\xi(t-)\big), \sup_{t\in(0,1]} \big(\xi(t-) - \xi(t)\big)\Big).
$$
Note that $\pi$ is continuous, since each coordinate is continuous: for example, if the first coordinate (the largest upward jump sizes) of $\pi(\xi)$ and $\pi(\zeta)$ differ by $\epsilon$ then $d(\xi,\zeta) \geq \epsilon/2$, which implies that the first coordinate is continuous.
Now, to derive a contradiction, suppose that $\bar X_n$ satisfies a strong LDP. In particular, suppose (\ref{eq:ldp-upper-bound}) in Theorem~\ref{thm:weak-ldp} is true for all closed sets rather than just compact sets.
Since $\pi$ is continuous w.r.t.\ the $J_1$ metric, $\pi(\bar X_n)$ has to satisfy a strong LDP with rate function
$I'(y) = \inf \{I(\xi): \xi\in \mathbb D, y=\pi (x)\}$ by the contraction principle, in case $I'$ is a rate function. (Since $I$ is not a good rate function, $I'$ is not automatically guaranteed to be a rate function per se; see, for example, Theorem~4.2.1 and the subsequent remarks of \citealp{dembozeitouni}.) From the exact form of $I'$, given by
$$
I'(y_1,y_2)  = (\alpha-1)\I(y_1>0) + (\beta-1)\I(y_2>0),
$$
one can check that $I'$ indeed happens to be a rate function. For the sake of simplicity, suppose that $\alpha = \beta = 2$, and $\nu[x,\infty) = \nu(-\infty,-x] = x^{-2}$.
Let
$
\hat J_n^{\leqslant 1} \triangleq \frac1n  Q_n^\gets (\Gamma_1)1_{[U_1,1]}
$
and
$
\hat K_n^{\leqslant 1} \triangleq \frac1n R_n^\gets (\Delta_1)1_{[V_1,1]}
$
where
$
Q_n^{\gets}(y) \triangleq \inf\{s>0: n\nu[s,\infty)< y\}= \left(n/y\right)^{1/2}
$ and
$R_n^{\gets}(y) \triangleq \inf\{s>0: n\nu(-\infty,-s]< y\}= \left(n/y\right)^{1/2}$.
%
%
%
The random variables $\Gamma_1$ and $\Delta_1$ are standard exponential, and $U_1,V_1$ uniform $[0,1]$ (see also Section~\ref{sec:proofs} for similar and more general notational conventions).
Note that $\bar Y_n\triangleq (\hat J_n^{\leqslant 1}, \hat K_n^{\leqslant 1})$ is exponentially equivalent to $\pi(\bar X_n)$ if we couple $\pi(\bar X_n)$ and $(\hat J_n^{\leqslant 1}, \hat K_n^{\leqslant 1})$, using the representation of $\bar X_n$ as in (\ref{eq:Poisson-Jump-representation}): for any $\delta>0$, $\P\big( |\bar Y_n - \pi(\bar X_n)|>\delta \big) \leq \P\big(\bar Y_n \neq \pi(\bar X_n)\big) = \P\big( Q_n^{\gets} (\Gamma_1)\leq 1 \text{ or }  R_n^{\gets} (\Delta_1)\leq 1\big) $, which decays at an exponential rate. Hence,
$$
\frac{\log \P\big( |\bar Y_n - \pi(\bar X_n)|>\delta \big)}{\log n}\to -\infty,
$$
as $n\to \infty$, where $|\cdot|$ is the Euclidean distance. As a result, $\bar Y_n$ should satisfy the same (strong) LDP as $\pi(\bar X_n)$.
Now, consider the set $A \triangleq \bigcup_{k=2}^\infty [\log k, \infty) \times [k^{-1/2},\infty)$. Then, since $[\log k, \infty) \times [k^{-1/2},\infty) \subseteq A$ for $k\geq 2$,
\begin{align*}
\P(\bar Y_n \in A)
&
\geq \P\big((\hat J_n^{\leqslant 1}, \hat K_n^{\leqslant 1}) \in [\log n, \infty) \times[n^{-1/2},\infty)\big)
\\
&
= \P\big( Q_n^{\gets}(\Gamma_1) > n\log n, R_n^{\gets}(\Delta_1) > n^{1/2}\big)
\\
&
= \P\left( \left(\frac{n}{\Gamma_1}\right)^{1/2} > n\log n, \left(\frac{n}{\Delta_1}\right)^{1/2}  > n^{1/2}\right)
\\
&
= \P\left(\Gamma_1 < \frac{1}{n(\log n)^2}\right)\P( \Delta_1 < 1)
\\
&
= (1-e^{- \frac{1}{n(\log n)^2}})(1-e^{-1}).
\end{align*}
Thus,
\begin{equation}\label{eq:no-full-ldp-1}
\begin{aligned}
\limsup_{n\to\infty} \P(\bar Y_n \in A)
&
\geq
\limsup_{n\to\infty} \frac{\log (1-e^{- \frac{1}{n(\log n)^2}})(1-e^{-1})}{\log n}
\\
&
\geq
\limsup_{n\to\infty} \frac{\log \frac{1}{n(\log n)^2} (1-\frac{1}{2n(\log n)^2})(1-e^{-1})}{\log n}
\\
&
= -1.
\end{aligned}
\end{equation}
On the other hand, since $A \subseteq (0,\infty) \times (0,\infty)$,
\begin{equation}\label{eq:no-full-ldp-2}
-\inf_{(y_1,y_2)\in A} I'(y_1,y_2) = -2.
\end{equation}
Noting that $A$ is a closed (but not compact) set, we arrive at a contradiction to the large deviation upper bound for $\bar Y_n$. This, in turn, proves that $\bar X_n$  cannot satisfy a full LDP.

\section{Proofs}\label{sec:proofs}
Section~\ref{subsec:proofs-for-M-convergence},
Section~\ref{subsec:proofs-for-sample-path-ldps},
and
Section~\ref{subsec:proofs-for-implications}
provide proofs of the results in
Section~\ref{sec:preliminaries},
Section~\ref{sec:sample-path-ldps},
and
Section~\ref{sec:implications},
respectively.

\subsection{Proofs of Section~\ref{sec:preliminaries}}\label{subsec:proofs-for-M-convergence}
Recall that $F_\delta = \{x \in \mathbb S: d(x,F) \leq \delta\}$ and $G^{-\delta} = ((G^c)_\delta)^c$.
\begin{proof}[Proof of Lemma~\ref{lem:extended-bounds}]
Let $G$ be an open set such that $G\cap \mathbb S_0$ is bounded away from $\mathbb C$. 
For a given $\delta>0$, due to the assumed asymptotic equivalence, $\P(X_n \in G^{-\delta}, d(X_n,Y_n) \geq \delta) = o(\epsilon_n)$. 
Therefore,
\begin{equation}\label{eq:asymp-equiv-lower-bound}
\begin{aligned}
&
\liminf_{n\to\infty} \epsilon_n^{-1}\P(Y_{n} \in G)
\\
&
\geq
\liminf_{n\to\infty} \epsilon_n^{-1}\P\left(X_n \in G^{-\delta}, d(X_n,Y_n) < \delta\right)
\\&
=
\liminf_{n\to\infty} \epsilon_n^{-1}\left\{\P\left(X_n \in G^{-\delta}\right)- \P\left(X_n\in G^{-\delta},d(X_n,Y_n) \geq \delta\right) \right\}
\\&
= \liminf_{n\to\infty} \epsilon_n^{-1}\P\left(X_n \in G^{-\delta}\right)
\end{aligned}
\end{equation}
Pick $r>0$ such that $G^{-\delta}\cap \mathbb S_0 \cap \mathbb C_r = 0$ and note that $G^{-\delta}\cap {\mathbb C_r}^\mathsf{c}$ is an open set bounded away from $\mathbb C$.
Then,
\begin{align*}
\liminf_{n\to\infty} \epsilon_n^{-1}\P(X_n \in G^{-\delta})
&
=
\liminf_{n\to\infty} \epsilon_n^{-1}\P(X_n \in G^{-\delta}\cap \mathbb S_0)
\\
&
=
\liminf_{n\to\infty} \epsilon_n^{-1}\P(X_n \in G^{-\delta}\cap \mathbb S_0 \cap {\mathbb C_r}^\mathsf{c})
\\
&
=
\liminf_{n\to\infty} \epsilon_n^{-1}\P(X_n \in G^{-\delta}\cap {\mathbb C_r}^\mathsf{c})
\geq
\mu(G^{-\delta}\cap {\mathbb C_r}^\mathsf{c})
\\
&
=
\mu(G^{-\delta}\cap {\mathbb C_r}^\mathsf{c}\cap \mathbb S_0)
=
\mu(G^{-\delta}\cap \mathbb S_0)
=
\mu(G^{-\delta}).
\end{align*}
Since $G$ is an open set, $G = \bigcup_{\delta>0} G^{-\delta}$. Due to the continuity of measures,
$\lim_{\delta\to 0}\mu(G^{-\delta}) = \mu(G),$
and hence, we arrive at the lower bound
\begin{align*}
&\liminf_{n\to\infty} \epsilon_n^{-1}\P(Y_n \in G) \geq \mu(G)
\end{align*}
by taking $\delta \to 0$. 

Now, turning to the upper bound,
consider a closed set $F$ such that $F_\delta \cap \mathbb S_0$ is bounded away from $\mathbb C$. 
Given a $\delta>0$, by the equivalence assumption, $\P(Y_n \in F, d(X_n,Y_n) \geq \delta) = o(\epsilon_n)$. Therefore,
\begin{equation}\label{eq:asymp-equiv-upper-bound}
\begin{aligned}
&
\limsup_{n\to\infty} \epsilon_n^{-1}\P(Y_{n} \in F)
\\
&
=
\limsup_{n\to\infty} \epsilon_n^{-1}\big\{\P\left(Y_n \in F,\, d(X_n,Y_n) < \delta\right) 
\\
&\qquad\qquad\qquad\qquad
+ \P\left(Y_n \in F,\, d(X_n,Y_n) \geq \delta\right)\big\}
\\&
= \limsup_{n\to\infty} \epsilon_n^{-1}\P\left(X_n \in F_{\delta}\right)
%
=
\limsup_{n\to\infty}\epsilon_n^{-1}\P(X_n \in F_\delta \cap \mathbb S_0)
\\
&
\leq
\limsup_{n\to\infty}\epsilon_n^{-1}\P(X_n \in \overline{F_\delta \cap \mathbb S_0}\,)
\leq
\mu\big(\,\overline{F_\delta \cap \mathbb S_0}\,\big)
=\mu\big(\,\overline{F_\delta \cap \mathbb S_0}\,\cap \mathbb S_0\big)
\\
&
\leq\mu\big(\bar{F}_\delta\cap \mathbb S_0\big)
=\mu(\bar F_\delta) = \mu(F_\delta).
\end{aligned}
\end{equation}
Note that $\{F_\delta\}$ is a decreasing sequence of sets, $F = \bigcap_{\delta>0} F_\delta$ (since $F$ is closed), and $\mu\in \mathbb M(\mathbb S\setminus \mathbb C)$ (and hence $\mu$ is a finite measure on $\mathbb S\setminus \mathbb C^r$ for some  $r>0$ such that $F_\delta \subseteq \mathbb S\setminus \mathbb C^r$ for some $\delta>0$). Due to the continuity (from above) of finite measures,
$
\lim_{\delta \to 0}  \mu(F_{\delta}) = \mu(F).
$
Therefore, we arrive at the upper bound
\begin{align*}
&\limsup_{n\to\infty} \epsilon_n^{-1}\P(X_n \in F) \leq \mu(F)
\end{align*}
by taking $\delta \to 0$.
\end{proof}

\revadd{
For a measure $\mu$ on a measurable space $\mathbb S$, denote the restriction of $\mu$ to a subspace $\mathbb O\subseteq \mathbb S$ with $\mu_{|\mathbb O}$.

\begin{proof}[Proof of Lemma~\ref{thm:simple-product-space}]
We provide a proof for $d=2$ which suffices for the application in this article. The extension to general $d$ is straightforward, and hence, omitted.
In view of the Portmanteau theorem for $\mathbb M$-convergence---in particular item \emph{(v)} of Theorem 2.1 of \cite{LRR}---it is enough to show that for all but countably many $r>0$, $(\mu^{(1)}_n\times\mu^{(2)}_n)_{|(\mathbb S_1\times\mathbb S_2)\setminus ((\mathbb C_1\times\mathbb S_2)\cup (\mathbb S_1\times\mathbb C_2))^r}(\cdot)$ converges to $(\mu^{(1)}\times\mu^{(2)}\allowbreak)_{|(\mathbb S_1\times\mathbb S_2)\setminus ((\mathbb C_1\times\mathbb S_2)\cup (\mathbb S_1\times\mathbb C_2))^r}(\cdot)$ weakly on $(\mathbb S_1\times\mathbb S_2)\setminus \big((\mathbb C_1\times\mathbb S_2)\cup (\mathbb S_1\times\mathbb C_2)\big)^r$, which is equipped with the relative topology as a subspace of $\mathbb S_1\times\mathbb S_2$.
From the assumptions of the lemma and again by Portmanteau theorem for $\mathbb M$-convergence, we note that ${\mu^{(1)}_n}_{|{\mathbb S_1\setminus\mathbb C_1^r}}$ converges to ${\mu^{(1)}}_{|\mathbb S_1\setminus\mathbb C_1^r}$ weakly in $\mathbb S_1 \setminus \mathbb C_1^r$, and ${\mu^{(2)}_n}_{|\mathbb S_2\setminus\mathbb C_2^r}$ converges to ${\mu^{(2)}}_{|\mathbb S_2\setminus\mathbb C_2^r}$ weakly in $\mathbb S_2 \setminus \mathbb C_2^r$ for all but countably many $r>0$.
For such $r$'s, ${\mu^{(1)}_n}_{|\mathbb S_1\setminus\mathbb C_1^r}\times {\mu^{(2)}_n}_{|\mathbb S_2\setminus\mathbb C_2^r}$ converges weakly to ${\mu^{(1)}}_{|\mathbb S_1\setminus\mathbb C_1^r}\times {\mu^{(2)}}_{|\mathbb S_2\setminus\mathbb C_2^r}$ in $\big(\mathbb S_1 \setminus \mathbb C_1^r\big)\times\big(\mathbb S_2 \setminus \mathbb C_2^r\big)$.
Noting that $(\mathbb S_1\times\mathbb S_2)\setminus \big((\mathbb C_1\times\mathbb S_2)\cup (\mathbb S_1\times\mathbb C_2)\big)^r$ coincides with $\big(\mathbb S_1 \setminus \mathbb C_1^r\big)\times\big(\mathbb S_2 \setminus \mathbb C_2^r\big)$, and ${\mu^{(1)}}_{|\mathbb S_1\setminus\mathbb C_1^r} \times{\mu^{(2)}}_{|\mathbb S_2\setminus \mathbb C_2^r}$ and ${\mu_n^{(1)}}_{|\mathbb S_1\setminus \mathbb C_1}\times{\mu_n^{(2)}}_{|\mathbb S_2\setminus \mathbb C_2}$ coincide with $(\mu^{(1)}\times\mu^{(2)})_{|(\mathbb S_1\times \mathbb S_2)\setminus((\mathbb C_1\times \mathbb S_2)\cup(\mathbb S_1 \times \mathbb C_2))^r}$ and $({\mu_n^{(1)}}\times {\mu_n^{(2)}})_{|(\mathbb S_1\times \mathbb S_2)\setminus((\mathbb C_1\times \mathbb S_2)\cup(\mathbb S_1 \times \mathbb C_2))^r}$, respectively, we reach the conclusion.
\end{proof}

\begin{proof}[Proof of Lemma~\ref{thm:union-limsup}]
Starting with the upper bound, suppose that $F$ is a closed set bounded away from $\bigcap_{i=0}^m\mathbb C(i)$. From the assumption, there exist $r_0,\ldots,r_m$ such that $F\subseteq \bigcup_{i=0}^m (\mathbb S \setminus \mathbb C(i)^{r_i})$, and hence,
\begin{align*}
\limsup_{n\to\infty}\frac{\P(X_n\in F)}{{\epsilon_n{(0)}}}
&\leq
\limsup_{n\to\infty}\sum_{i=0}^m\frac{\P\big(X_n\in F \cap (\mathbb S \setminus \mathbb C(i)^{r_i})\big)}{{\epsilon_n{(i)}}}\frac{{\epsilon_n{(i)}}}{{\epsilon_n{(0)}}}
\\&
\leq
\limsup_{n\to\infty}\sum_{i=0}^m\frac{\P(X_n\in F\setminus \mathbb C(i)^{r_i})}{{\epsilon_n{(i)}}}\frac{{\epsilon_n{(i)}}}{{\epsilon_n{(0)}}}
\\
&=
\limsup_{n\to\infty}\frac{\P(X_n\in F\setminus \mathbb C(0)^{r_0})}{{\epsilon_n{(0)}}}
\\
&
\leq
\mu^{(0)}(F\setminus \mathbb C(0)^{r_0})
\leq
\mu^{(0)}(F)
\end{align*}
Turning to the lower bound, if $G$ is an open set bounded away from $\bigcap_{i=0}^m\mathbb C(i)$,
\begin{align*}
\liminf_{n\to\infty}\frac{\P(X_n\in G)}{{\epsilon_n{(0)}}}
&\geq
\liminf_{n\to\infty}\frac{\P(X_n\in G  \setminus \mathbb C(0)_{r})}{{\epsilon_n{(0)}}}
\geq \mu^{(0)}(G  \setminus \mathbb C(0)_{r}).
\end{align*}
Taking $r\to 0$ yields the lower bound.

\end{proof}
}

\begin{proof}[Proof of Lemma~\ref{lem:almost-continuous-mapping}]
Suppose that $\mu_n \to \mu$ in $\mathbb M(\mathbb S\setminus \mathbb C)$, and $\mu(D_h\setminus \mathbb C^r) = 0$ and $\mu(\partial \mathbb S_0 \setminus \mathbb C^r)= 0$ for each $r>0$. 
Note that $\partial h^{-1}(A') \subseteq \mathbb S \setminus \mathbb C^r$ for some $r>0$ due to the assumption, and $\partial h^{-1}(A') \subseteq h^{-1}(\partial A') \cup D_h \cup \partial \mathbb S_0$.
Therefore, 
$\mu( \partial h^{-1}(A')) \leq \mu\circ h^{-1}(\partial A') + \mu(D_h\setminus \mathbb C^{r}) + \mu(\partial \mathbb S_0 \setminus \mathbb C^{r}) = 0$. 
Applying Theorem 2.1 (iv) of \cite{LRR} for $h^{-1}(A')$, 
we conclude that $\mu_n(h^{-1}(A')) \to \mu(h^{-1}(A'))$. 
Again, by Theorem 2.1 (iv) of \cite{LRR}, this means that $\mu_n\circ h^{-1} \to \mu \circ h^{-1}$ in $\mathbb M(\mathbb S'\setminus \mathbb C')$, and hence, $\hat h$ is continuous at $\mu$.

\end{proof}

\revrem{
\begin{proof} [Proof of Lemma~\ref{lemma:two-sided-continuous-mapping}]
The first two claims are straightforward analogies of Result~\ref{result:L53}.
As a consequence of those claims, we can apply Lemma~\ref{lem:almost-continuous-mapping} with $\mathbb S_0 = S_{j,k}$ and $h = T_{j,k}$ to conclude the proof of the last claim.
\end{proof}
}

\revadd{
\begin{proof}[Proof of Lemma~\ref{lem:continuous-mapping-principle-for-subtraction}]
The continuity of $h$ is well known; see, for example, \cite{whitt1980some}.
For the second claim, it is enough to prove that for each $j$ and $k$, $h^{-1}(A)\subseteq \mathbb D\times \mathbb D$ is bounded away from $\mathbb D_{j}\times \mathbb D_{k}$ whenever $A\subseteq \mathbb D$ is bounded away from $\mathbb D_{j,k}$.
Given $j$ and $k$, let $A\subseteq \mathbb D$ be bounded away from $\mathbb D_{j,k}$.
To prove that $h^{-1}(A)$ is bounded away from $\mathbb D_j\times \mathbb D_k$ by contradiction, suppose that it is not.
Then, for any given $\epsilon>0$, one can find $\xi\in \mathbb D$ and $\zeta \in \mathbb D$ such that
$d(\xi,\mathbb D_j) < \epsilon/2$, $d(\zeta,\mathbb D_k)< \epsilon/2$, and $\xi-\zeta \in A$.
Since a time-change of a step function doesn't change the number of jumps and jump-sizes, there exist $\xi'\in \mathbb D_j$ and $\zeta' \in \mathbb D_k$ such that $\|\xi - \xi'\|_\infty < \epsilon/2$ and $\|\zeta - \zeta'\|_\infty < \epsilon/2$.
Therefore, $
d(\xi-\zeta,\xi'-\zeta') \leq \| (\xi - \zeta) - (\xi'-\zeta')\|_\infty \leq \|\xi - \xi'\|_\infty + \|\zeta - \zeta'\|_\infty < \epsilon.
$
From this along with the property $d(\xi'-\zeta', \mathbb D_{j,k}) = 0$, we conclude that
$
d(\xi-\zeta, \mathbb D_{j,k}) < \epsilon
$.
Taking $\epsilon\to 0$, we arrive at $d(A, \mathbb D_j \times \mathbb D_k) = 0$ which is contradictory to the assumption.
\end{proof}
}

\begin{proof}[Proof of Lemma~\ref{lem:convergence-determining-class}]
From (i) and the inclusion-exclusion formula, $\mu_n(\bigcup_{i=1}^m A_i) \to \mu(\bigcup_{i=1}^m A_i)$ as $n\to \infty$ for any finite $m$ if $A_i \in \mathcal A_p$ is bounded away from $\mathbb C$ for $i=1,\ldots,m$.
If $G$ is open and bounded away from $\mathbb C$, there is a sequence of sets $A_i, i\geq 1$ in $\mathcal A_p$ such that $G=\bigcup_{i=1}^\infty A_i$;  note that since $G$ is bounded away from $\mathbb C$, $A_i$'s are also bounded away from $\mathbb C$.
For any $\epsilon >0$, one can find $M_\epsilon$ such that $\mu(\bigcup_{i=1}^{M_\epsilon} A_i) \geq \mu( G)-\epsilon$, and hence,
$$\liminf_{n\to\infty} \mu_n(G) \geq \liminf_{n\to\infty} \mu_n(\bigcup_{i=1}^{M_\epsilon} A_i) = \mu(\bigcup_{i=1}^{M_\epsilon} A_i) \geq \mu(G) -\epsilon.$$
Taking $\epsilon \to 0$, we arrive at the lower bound (\ref{result1lb}). Turning to the upper bound, given a closed set $F$, we pick $A\in \mathcal A_p$ bounded away from $\mathbb C$ such that $F\subseteq A^\circ$. Then,
\begin{align*}
\mu(A) - \limsup_{n\to\infty} \mu_n(F)
&
= \lim_{n\to\infty}\mu_n(A) + \liminf_{n\to\infty} (-\mu_n(F))
\\
&= \liminf_{n\to\infty} (\mu_n(A)-\mu_n(F))
= \liminf_{n\to\infty} \mu_n(A\setminus F)
\\
&
\geq \liminf_{n\to\infty} \mu_n(A^\circ \setminus F)
\geq \mu(A^\circ\setminus F)
\\
&
= \mu(A) - \mu(F).
\end{align*}
Note that $\mu(A)<\infty$ since $A$ is bounded away from $\mathbb C$, which together with the above inequality
establishes the upper bound (\ref{result1lb}).
\end{proof}

\subsection{Proofs of Section~\ref{sec:sample-path-ldps}}\label{subsec:proofs-for-sample-path-ldps}
This section provides the proofs for the limit theorems (Theorem~\ref{thm:one-sided-limit-theorem}, Theorem~\ref{thm:two-sided-limit-theorem}) presented in Section~\ref{sec:sample-path-ldps}. The proof of Theorem~\ref{thm:one-sided-limit-theorem} is based on
\begin{itemize}
\item[1.] The asymptotic equivalence between the target object $\bar X_n$ and the process obtained by keeping its $j$ largest jumps, which will be denoted as $J_n^{\leqslant j}$: Proposition~\ref{prop:asymptotic-equivalence-Xbar-Jbar} and Proposition~\ref{prop:asymptotic-equivalence-Jbar-Jj} prove such asymptotic equivalences.
Two technical lemmas (Lemma~\ref{lem:key-upper-bound-1} and Lemma~\ref{lem:key-upper-bound-2}) play key roles in Proposition~\ref{prop:asymptotic-equivalence-Jbar-Jj}.
\item[2.] $\mathbb M$-convergence  of $J_n^{\leqslant j}$: Lemma~\ref{lem:poisson-jumps} identifies the convergence of jump size sequences, and Proposition~\ref{prop:Jj} deduces the convergence of $J_n^{\leqslant j}$ from the convergence of the jump size sequences via the mapping theorem established in Section~\ref{sec:preliminaries}.
\end{itemize}
For Theorem~\ref{thm:two-sided-limit-theorem}, 
we first establish a general result (Theorem~\ref{thm:multi-d-limit-theorem}) for the $\mathbb M$-convergence of multiple L\'evy processes in the associated product space using Lemma~\ref{thm:simple-product-space} and \ref{thm:union-limsup}. 
We then apply Lemma~\ref{lem:continuous-mapping-principle-for-subtraction} to prove Theorem~\ref{thm:two-sided-limit-theorem}.


Recall that $X_n(t) \triangleq X(nt)$ is a scaled process of $X$,
where $X$ is a L\'evy process with a L\'evy measure $\nu$ supported on $(0,\infty)$.
Also recall that $X_n$ has It\^{o} representation
\begin{align}\label{eq:ito_rep_onesided2}
X_n(s) 
&
= nsa + B(ns) + \int_{|x|\leq 1} x[N([0,ns]\times dx) - ns\nu(dx)] 
\\
&
\hspace{75pt}
+ \int_{|x|>1} xN([0,ns]\times dx),
\nonumber
\end{align}
where $N$ is the Poisson random measure with mean measure Leb$\times\nu$ on $[0,n]\times(0,\infty)$ and Leb denotes the Lebesgue measure.
It is easy to see that
\begin{align*}
J_n(s)
&
\triangleq \sum_{l=1}^{\tilde N_n} Q_n^\gets (\Gamma_l)1_{[U_l,1]}(s)
\stackrel{\mathcal D}{=} \int_{|x|>1} xN([0,ns]\times dx),
\end{align*}
where $\Gamma_l = E_1 + E_2 + ... + E_l$; $E_i$'s are i.i.d.\ and standard exponential random variables; $U_l$'s are i.i.d.\ and uniform variables in $[0,1]$; $\tilde N_n  = N_n\big([0,1]\times [1,\infty)\big)$;
$
N_n = \sum_{l=1}^\infty \delta_{(U_l, Q_n^{\gets}(\Gamma_l))},
$
where $\delta_{(x,y)}$ is the Dirac measure concentrated on $(x,y)$;
$
Q_n(x) \triangleq  n\nu[x,\infty)$,
$
Q_n^{\gets}(y) \triangleq \inf\{s>0: n\nu[s,\infty)< y\}
$.
Note that $\tilde N_n$ is the number of $\Gamma_l$'s  such that $\Gamma_l \leq n\nu_1^+$, where $\nu_1^+ \triangleq \nu[1,\infty)$, and hence, $\tilde N_n \sim \text{Poisson}(n\nu_1^+)$.
Throughout the rest of this section, we use the following representation for the centered and scaled process $\bar X_n \triangleq \frac{1}{n} X_n$:
\begin{align}\label{eq:Poisson-Jump-representation}
\bar X_n(s) 
&\stackrel{\mathcal D}{=} 
\frac1n J_n(s) + \frac1n B(ns) 
\\
&\hspace{15pt}
+ \frac1n\int_{|x|\leq 1} x[N([0,ns]\times dx) - ns\nu(dx)] - (\mu_1^+\nu_1^+)s.
\nonumber
\end{align}
\begin{proof}[Proof of Theorem~\ref{thm:one-sided-limit-theorem}]
We decompose $\bar X_n$ into a centered compound Poisson process $\bar J_n$, a centered L\'evy process with small jumps and continuous increments $\bar Y_n$, and a residual process that arises due to centering $\bar Z_n$. After that, we will show that the compound Poisson process determines the limit. More specifically, consider the following decomposition:
\begin{equation}\label{eq:main-decomposition}
\begin{aligned}
\bar X_n(s)
&\stackrel{\D}{=}
\bar Y_n(s) + \bar J_n(s) + \bar Z_n(s),\\
\bar Y_n(s) &\triangleq \frac1n B(ns) + \frac1n \int_{|x|\leq 1} x[N([0,ns]\times dx) - ns\nu(dx)],\\
\bar J_n(s) &\triangleq \frac1n \sum_{l=1}^{\tilde N_n} (Q_n^{\gets}(\Gamma_l)-\mu_1^+)1_{[U_l,1]}(s),\\
\bar Z_n(s) &\triangleq \frac1n \sum_{l=1}^{\tilde N_n} \mu_1^+ 1_{[U_l, 1]}(s) - \mu_1^+\nu_1^+s,
\end{aligned}
\end{equation}
where $\mu_1^+ \triangleq \frac{1}{\nu_1^+}\int_{[1,\infty)} x\nu(dx)$.
Let $\hat J_n^{\leqslant j} \triangleq \frac1n \sum_{l=1}^j Q_n^\gets(\Gamma_l)1_{[U_l, 1]}$ be, roughly speaking, the process obtained by just keeping the $j$ largest (un-centered) jumps of $\bar J_n$.
 In view of Corollary~\ref{lem:asymptotic-equivalence} and Proposition~\ref{prop:Jj}, it suffices to show that $\bar X_n$ and $\hat J_n^{\leqslant j}$ are asymptotically equivalent. Proposition~\ref{prop:asymptotic-equivalence-Xbar-Jbar} along with Proposition~\ref{prop:asymptotic-equivalence-Jbar-Jj} prove the desired asymptotic equivalence, and hence, conclude the proof of the Theorem~\ref{thm:one-sided-limit-theorem}.
\end{proof}

\begin{proposition}\label{prop:asymptotic-equivalence-Xbar-Jbar}
Let $\bar X_n$ and $\bar J_n$ be as in the proof of Theorem~\ref{thm:one-sided-limit-theorem}. Then,
$\bar X_n$ and $\bar J_n$ are asymptotically equivalent w.r.t.\ $\big(n\nu[n,\infty)\big)^{j}$ for any $j\geq 0$.
\end{proposition}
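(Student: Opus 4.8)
The plan is to realise the three pieces of the decomposition \eqref{eq:main-decomposition} on a common probability space, so that $\bar X_n(s) = \bar Y_n(s) + \bar J_n(s) + \bar Z_n(s)$ holds pathwise; this is possible since all pieces are built from the same Brownian motion, Poisson random measure, $\Gamma_l$'s and $U_l$'s. Taking $\lambda = e$ in the definition of the Skorokhod metric then gives
$$
d(\bar X_n,\bar J_n)\ \le\ \|\bar X_n - \bar J_n\|\ =\ \|\bar Y_n + \bar Z_n\|\ =:\ \|W_n\|,
$$
and it suffices to prove that for each $\delta>0$, $\limsup_{n\to\infty}(n\nu[n,\infty))^{-j}\P(\|W_n\|\ge\delta)=0$. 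Since $n\nu[n,\infty)=L(n)n^{1-\alpha}$ is regularly varying with negative index, the normalising factor satisfies $(n\nu[n,\infty))^{-j}=L(n)^{-j}n^{j(\alpha-1)}=e^{o(n)}$; hence it is enough to show that $\P(\|W_n\|\ge\delta)$ decays exponentially fast in $n$, for every $\delta>0$ (and this bound will not depend on $j$, covering all $j\ge 0$ at once).

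The key point is that $W_n=\bar Y_n+\bar Z_n$ is itself a time-scaled centered Lévy process with \emph{bounded} jumps. Reading off \eqref{eq:main-decomposition}, $W_n(s)=\tfrac1n\hat W(ns)$, where $\hat W$ has Brownian part $B$, the compensated small jumps of $\nu$ on $(0,1]$, and a compensated compound Poisson term with jumps of size $\mu_1^+$ at rate $\nu_1^+$; note that $\mu_1^+<\infty$ precisely because $\alpha>1$. Thus $\hat W$ has Lévy measure $\nu|_{(0,1]}+\nu_1^+\delta_{\mu_1^+}$, supported on $(0,\max(1,\mu_1^+)]$, so its Laplace exponent $\Psi(\theta)=\log\E e^{\theta\hat W(1)}$ is finite for all $\theta\in\R$, with $\Psi(0)=\Psi'(0)=0$ (centering) and $\Psi$ convex with bounded second derivative near $0$; in particular there are $\theta_0>0$ and $\sigma_W^2 = \var\hat W(1) <\infty$ with $\Psi(\theta)\le\sigma_W^2\theta^2$ for $|\theta|\le\theta_0$.

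Given this, I would apply the exponential maximal inequality. For every $\theta\in\R$ the processes $\{e^{\pm\theta W_n(s)}\}_{s\in[0,1]}$ are nonnegative submartingales (because $\Psi\ge0$), and $\E e^{\pm\theta W_n(1)}=\E e^{\pm(\theta/n)\hat W(n)}=e^{\,n\Psi(\pm\theta/n)}$. Doob's inequality then yields
$$
\P\Bigl(\sup_{s\in[0,1]}|W_n(s)|\ge\delta\Bigr)\ \le\ e^{-\theta\delta}\bigl(e^{n\Psi(\theta/n)}+e^{n\Psi(-\theta/n)}\bigr).
$$
Taking $\theta=\epsilon n$ with $\epsilon=\min\{\theta_0,\ \delta/(2\sigma_W^2)\}$ bounds the right-hand side by $2\exp(-\tfrac12\epsilon\delta\, n)$, i.e. exponential decay with a rate $c_\delta>0$ depending only on $\delta$ and the (fixed) Lévy characteristics. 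Combining with $(n\nu[n,\infty))^{-j}=e^{o(n)}$ gives $(n\nu[n,\infty))^{-j}\P(d(\bar X_n,\bar J_n)\ge\delta)\to0$ for every $\delta>0$ and every $j\ge0$, which is exactly the claimed asymptotic equivalence.

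The step I expect to need the most care is the control of the centering residual $\bar Z_n$ — equivalently, the compound Poisson part of $W_n$. The event $\{\|W_n\|\ge\delta\}$ forces an order-$n$ deviation of a Poisson count with mean of order $n$, \emph{uniformly} over $[0,1]$, so a crude estimate through the terminal value $\tilde N_n$ alone is too weak; what makes the argument work is treating $W_n$ as a single Lévy process in which all three constituents have exponential moments, so that one Doob-type exponential bound supplies the required rate. (Alternatively one could bound $\|\bar Y_n\|$ and $\|\bar Z_n\|$ separately, using a Gaussian/bounded-jump Lévy estimate for the former and Poisson concentration together with an Ottaviani--Etemadi maximal inequality for the latter; the combined-process route is cleaner and avoids the case analysis for small times.)
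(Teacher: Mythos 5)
Your proposal is correct. You use the same decomposition \eqref{eq:main-decomposition} as the paper, and, as in the paper, the pathwise identity is obtained by working with the representation \eqref{eq:Poisson-Jump-representation} built from a common Poisson random measure, Brownian motion and $(\Gamma_l,U_l)$'s; the task reduces in both cases to showing $\P(\|\bar Y_n+\bar Z_n\|\geq\delta)$ is negligible relative to $(n\nu[n,\infty))^{j}$, for which super-polynomial decay suffices since $(n\nu[n,\infty))^{-j}$ is regularly varying. Where you genuinely diverge is in how this tail bound is proved. The paper treats the pieces separately: the Brownian part via the reflection principle, the compensated small-jump part via the continuous-time Etemadi bound (Lemma~\ref{lem:cont_etemadi}) combined with a Chernoff bound on integer times and Doob's inequality with $m$-th moments on the fractional part (giving only arbitrary polynomial decay there), and the centering residual $\bar Z_n$ again via Etemadi applied to a centered Poisson process. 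You instead observe that $\bar Y_n+\bar Z_n$ is itself a rescaled centered L\'evy process with jumps bounded by $\mu_1^+\vee 1$ (the large-jump counting process being independent of the small-jump and Brownian parts), hence with an everywhere-finite Laplace exponent $\Psi$, and a single exponential Chernoff--Doob maximal inequality with $\theta=\epsilon n$ yields uniform exponential decay in $n$. This is cleaner and strictly stronger than what the paper extracts (it avoids the Etemadi-based case analysis and the polynomial-moment step), while the paper's piecewise route has the side benefit that Lemma~\ref{lem:cont_etemadi} is reused elsewhere (e.g.\ for $\bar Z_n$-type processes and in the random-walk section). Two cosmetic points: the constant in $\Psi(\theta)\leq\sigma_W^2\theta^2$ on $|\theta|\leq\theta_0$ should really be $\tfrac12\sup_{|\xi|\leq\theta_0}\Psi''(\xi)$, which can be taken $\leq\sigma_W^2$ only after shrinking $\theta_0$ using continuity of $\Psi''$ at $0$ (where $\Psi''(0)=\var\hat W(1)$); and the submartingale property of $e^{\pm\theta W_n(\cdot)}$ is most directly justified by Jensen applied to the martingale $\pm\theta W_n$, which is the same as your observation that $\Psi\geq 0$. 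Neither affects the argument.
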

\begin{proof}
In view of the decomposition (\ref{eq:main-decomposition}), we are done if we show that $\P(\|\bar Y_n\| > \delta)  = o\left((n\nu[n,\infty))^{-j}\right)$ and $\P(\|\bar Z_n\| > \delta)  = o\left((n\nu[n,\infty))^{-j}\right)$.
For the tail probability of $\|\bar Y_n\|$,
\begin{align*}
\P\bigg[\sup_{t\in[0,1]}|\bar Y_{n}(t)| > \delta\bigg]
&\leq \P\bigg[\sup_{t\in[0,n]}\big|B(t)\big|>n\delta/2\bigg]
\\
&
+
\P\bigg[\sup_{t\in[0,n]}\left|\int_{|x|\leq 1} x[N((0,t] \times dx) - t\nu(dx)]\right| > n\delta/2\bigg].
\end{align*}
We have an explicit expression for the first term by the reflection principle, and in particular, it decays at a geometric rate w.r.t.\ $n$. 
For the second term, let $Y'(t)\triangleq \int_{|x|\leq 1} x[N((0,t] \times dx) - t\nu(dx)]$.
Using Etemadi's bound for L\'evy processes (see Lemma~\ref{lem:cont_etemadi}), we obtain
\begin{align*}
&\P\bigg[\sup_{t\in[0,n]}\left|\int_{|x|\leq 1} x[N([0,t] \times dx) - t\nu(dx)]\right| > n\delta/2\bigg]\\
&\leq 3 \sup_{t\in[0,n]}\P\bigg[\left|Y'(t)\right| > n\delta/6\bigg]\\
&\leq 3 \sup_{t\in[0,n]}\bigg\{\P\bigg[|Y'(\lfloor t\rfloor)| > n\delta/12\bigg] + \P\bigg[|Y'(t)-Y'(\lfloor t\rfloor) | > n\delta /12\bigg]\bigg\}\\
&\leq 3 \sup_{t\in[0,n]}\P\bigg[|Y'(\lfloor t\rfloor)| > n\delta/12\bigg] + 3\sup_{t\in[0,n]}\P\bigg[|Y'(t)-Y'(\lfloor t\rfloor) | > n\delta /12\bigg]\\
&= 3 \sup_{1 \leq k \leq n}\P\bigg[|Y'(k)| > n\delta/12\bigg] + 3\sup_{t\in[0,1]}\P\bigg[|Y'(t) | > n\delta /12\bigg]\\
&
\leq
3 \sup_{1 \leq k \leq n}\P\bigg[\bigg|\sum_{i=1}^k\{Y'(i)-Y'(i-1)\}\bigg| > n\delta/12\bigg] 
\\
&\hspace{180pt}
+ 3\P\bigg[\sup_{t\in[0,1]}|Y'(t) |^m > (n\delta /12)^m\bigg].
\end{align*}
Since $Y'(i)-Y'(i-1)$ are i.i.d.\ with $Y'(i)-Y'(i-1)\stackrel{\D}{=}Y'(1)= \int_{|x|\leq 1}\allowbreak x[N((0,1]\times dx) - \nu(dx)]$ and $Y'(1)$ has exponential moments, the first term decreases at a geometric rate w.r.t.\ $n$ due to the Chernoff bound;
on the other hand, since $Y'(t)$ is a martingale, 
the second term is bounded by $3\frac{\E |Y'(1)|^m}{n^m (\delta/12)^m}$ for any $m$ by Doob's submartingale maximal inequality.
Therefore, by choosing $m$ large enough, this term can be made negligible.
For the tail probability of $\|\bar Z_n\|$, note that $\bar Z_n$ is a mean zero L\'evy process with the same distribution as $\mu_1^+(N(ns)/n - \nu_1^+s)$, where $N$ is the Poisson process with rate $\nu_1^+$.
Therefore, again from the continuous-time version of Etemadi's bound, we see that
$\P(\|\bar Z_n\| > \delta)$
decays at a geometric rate w.r.t.\ $n$ for any $\delta>0$.
\end{proof}

\begin{proposition}\label{prop:asymptotic-equivalence-Jbar-Jj}
For each $j\geq 0$, let $\bar J_n$  and $\hat J_n^{\leqslant j}$ be defined as in the proof of Theorem~\ref{thm:one-sided-limit-theorem}. Then,
$\bar J_n$ and $\hat J_n^{\leqslant j}$ are asymptotically equivalent w.r.t.\ $\big(n\nu[n,\infty)\big)^{j}$.
\end{proposition}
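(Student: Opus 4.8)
The plan is to make the difference $\bar J_n-\hat J_n^{\leqslant j}$ explicit and then to show that it is uniformly small off an event of probability $o\big((n\nu[n,\infty))^{j}\big)$, which is exactly the asymptotic equivalence of Definition~\ref{def:asymptotic-equivalence}. First I would note that, since $l\mapsto\Gamma_l$ is nondecreasing and $Q_n^\gets$ is nonincreasing, on the event $\{\tilde N_n\ge j\}$ the $j$ largest jumps of $\bar J_n$ are exactly $Q_n^\gets(\Gamma_1),\dots,Q_n^\gets(\Gamma_j)$, so that
\[
\bar J_n(s)-\hat J_n^{\leqslant j}(s)=-\frac{\mu_1^+}{n}\sum_{l=1}^{j}1_{[U_l,1]}(s)\;+\;R_n(s),\qquad R_n(s)\triangleq\frac1n\sum_{l=j+1}^{\tilde N_n}\big(Q_n^\gets(\Gamma_l)-\mu_1^+\big)1_{[U_l,1]}(s).
\]
The first term has sup-norm at most $j\mu_1^+/n$, hence lies below any fixed $\delta$ for all large $n$, and $\P(\tilde N_n<j)=\P(\mathrm{Poisson}(n\nu_1^+)<j)$ decays super-exponentially, faster than any power of $n\nu[n,\infty)$. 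Thus it suffices to prove $\P(\|R_n\|\ge\delta)=o\big((n\nu[n,\infty))^{j}\big)$ for every $\delta>0$.

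Next I would truncate at level $bn$, with $b>0$ a small constant to be fixed at the end in terms of $\delta,\alpha,j$. On the event $\mathcal E_{n,b}\triangleq\{N_n([0,1]\times(bn,\infty))\le j\}$ every jump of $\bar J_n$ beyond the $j$ largest is $\le bn$, so there $R_n=R_n'-\tilde R_n$, where $R_n'(s)\triangleq\frac1n\sum_{l=1}^{\tilde N_n}\big(Q_n^\gets(\Gamma_l)\wedge bn-\mu_1^+\big)1_{[U_l,1]}(s)$ is the $bn$-truncated centered compound Poisson process and $\|\tilde R_n\|\le j(bn+\mu_1^+)/n\le jb+j\mu_1^+/n<\delta/2$ for $b$ small and $n$ large; hence $\P(\|R_n\|\ge\delta)\le\P(\mathcal E_{n,b}^{\,c})+\P(\|R_n'\|\ge\delta/2)$. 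The first summand is a direct Poisson computation — essentially the content of Lemma~\ref{lem:key-upper-bound-1}: $N_n([0,1]\times(bn,\infty))\sim\mathrm{Poisson}(n\nu(bn,\infty))$, and by regular variation $n\nu(bn,\infty)\le 2b^{-\alpha}n\nu[n,\infty)$ eventually, so $\P(\mathcal E_{n,b}^{\,c})\le(2b^{-\alpha}n\nu[n,\infty))^{j+1}/(j+1)!=O\big((n\nu[n,\infty))^{j+1}\big)=o\big((n\nu[n,\infty))^{j}\big)$.

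The heart of the argument — the content of Lemma~\ref{lem:key-upper-bound-2} — is $\P(\|R_n'\|\ge\delta/2)=o\big((n\nu[n,\infty))^{j}\big)$. Here I would first use Etemadi's bound for L\'evy processes (Lemma~\ref{lem:cont_etemadi}) to reduce the supremum to an endpoint, $\P(\|R_n'\|\ge\delta/2)\le 3\sup_{t\in[0,1]}\P(|R_n'(t)|\ge\delta/6)$, and then bound the two tails of $R_n'(t)$ by a Chernoff argument with a \emph{growing} tilt. Since $R_n'(t)$ is a Poisson integral, $\E[e^{\theta R_n'(t)}]=\exp\big(tn\int_{[1,\infty)}(e^{\theta(x\wedge bn-\mu_1^+)/n}-1)\,\nu(dx)\big)$; for $\theta>0$ the linear part of the integrand contributes $-t\theta\int_{(bn,\infty)}(x-bn)\nu(dx)\le0$, and using $e^a-1-a\le\frac{a^2}{2}e^{|a|}$ with $|a|\le\theta b+\theta\mu_1^+/n$ one gets $\log\E[e^{\theta R_n'(t)}]\le\frac{\theta^2 e^{\theta b+1}}{2n}C_n$, where $C_n\triangleq\int_{[1,\infty)}(x\wedge bn-\mu_1^+)^2\nu(dx)$ (the lower tail being handled identically, the only change being a harmless linear term of order $|\theta|\int_{(bn,\infty)}(x-bn)\nu(dx)=o(1)$). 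By Karamata's theorem $C_n=O(1)$ when $\alpha>2$ and $C_n=O\big(n^{(2-\alpha)^+}L(n)\log n\big)$ in general. Choosing $\theta_n\triangleq\kappa b^{-1}\log n$ with $0<\kappa<\min(1,\alpha-1)$ makes $e^{\theta_n b}=n^{\kappa}$, so $\frac{\theta_n^2 e^{\theta_n b}}{n}C_n=\frac{\kappa^2(\log n)^2}{b^2}n^{\kappa-1}C_n\to0$, whence $\log\E[e^{\theta_n R_n'(t)}]\le1$ eventually and $\P(|R_n'(t)|\ge\delta/6)=O\big(n^{-\kappa\delta/(6b)}\big)$ uniformly in $t$. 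Finally, taking $b$ so small that $\kappa\delta/(6b)>(\alpha-1)j+1$ yields $\P(\|R_n'\|\ge\delta/2)=O\big(n^{-(\alpha-1)j-1}\big)=o\big(L(n)^{j}n^{-(\alpha-1)j}\big)=o\big((n\nu[n,\infty))^{j}\big)$, and combining the three estimates proves the proposition.

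The main obstacle is precisely this last step. Truncating at a \emph{fixed} level $bn$ and tilting with a \emph{fixed} $\theta$ only produces a bound that is $O(1)$ in $n$, which can never beat the \emph{vanishing} normalization $(n\nu[n,\infty))^{j}\to0$; one genuinely must let $b$ be small and $\theta_n\to\infty$ at a logarithmic rate so that $e^{\theta_n b}$ grows only polynomially, and then verify — via Karamata's theorem, separately in the regimes $\alpha>2$, $\alpha=2$, and $1<\alpha<2$ — that the quadratic (variance) term in the cumulant generating function stays negligible while the linear term $\theta_n\delta/6$ forces polynomial decay of arbitrarily high degree. Getting this balance right, and absorbing the slowly varying factor $L(n)$ cleanly, is where the real work lies.
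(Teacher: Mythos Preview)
Your argument is correct, but the route differs from the paper's, and your attribution of the two steps to Lemmas~\ref{lem:key-upper-bound-1} and~\ref{lem:key-upper-bound-2} is misleading. The paper splits $\P(\|\bar J_n^{>j}\|\ge\delta)$ according to whether the $j$-th largest jump itself is large or small, i.e.\ on $\{Q_n^\gets(\Gamma_j)\ge n\gamma\}$ versus $\{Q_n^\gets(\Gamma_j)\le n\gamma\}$. On the first event (Lemma~\ref{lem:key-upper-bound-1}) it exploits independence of $\Gamma_j$ and $(\Gamma_l-\Gamma_j)_{l>j}$ to factor the probability into $\P(Q_n^\gets(\Gamma_j)\ge n\gamma)=O((n\nu[n,\infty))^j)$ times a factor shown to vanish via the generalized Kolmogorov inequality; on the second event (Lemma~\ref{lem:key-upper-bound-2}) it conditions on $(\Gamma_j,\tilde N_n)$, computes conditional means and variances of the remaining jumps, and applies Etemadi followed by Prokhorov's inequality.

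You instead split on whether the \emph{number} of jumps exceeding $bn$ is $\le j$ or $>j$. Your bound on $\P(\mathcal E_{n,b}^c)$ is a one-line Poisson tail estimate giving $O((n\nu[n,\infty))^{j+1})$ directly---this is not what Lemma~\ref{lem:key-upper-bound-1} does, and is in fact much simpler. Your bound on $\P(\|R_n'\|\ge\delta/2)$ is closer in spirit to Lemma~\ref{lem:key-upper-bound-2} (both exploit that all relevant jumps are bounded), but you work with the \emph{unconditional} law of the truncated process and a direct Chernoff bound with growing tilt, avoiding the paper's conditioning on $(\Gamma_j,\tilde N_n)$ and the attendant order-statistic bookkeeping. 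Your approach is more streamlined; the paper's makes the role of the $j$-th jump size explicit, which aligns with the narrative of $\hat J_n^{\leqslant j}$ capturing ``the $j$ big jumps''. Both are valid.
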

\begin{proof}
With the convention that the summation is $0$ in case the superscript is strictly smaller than the subscript, consider the following decomposition of $\bar J_n$:
\begin{align*}
\hat J_n^{\leqslant j} &\triangleq \frac1n \sum_{l=1}^{j} Q_n^\gets(\Gamma_l) 1_{[U_l,1]},
&
\bar J_n^{>j} &\triangleq \frac1n \sum_{l=j+1}^{\tilde N_n} (Q_n^\gets(\Gamma_l) -\mu_1^+)1_{[U_l,1]},
\\
\check J_n^{\leqslant j}  &\triangleq \frac1n \sum_{l=1}^j -\mu_1^+1_{[U_l,1]},
&
\bar R_n &\triangleq \frac1n \I(\tilde N_n < j)\sum_{l=\tilde N_n+1}^j (Q_n^\gets(\Gamma_l) - \mu_1^+)1_{[U_l,1]},
\end{align*}
so that
$$
\bar J_n
= \hat J_n^{\leqslant j} + \check J_n^{\leqslant j} + \bar J_n^{>j} - \bar R_n.
$$
Note that $\P(\|\check J_n^{\leqslant j}\| \geq \delta) =0$ for sufficiently large $n$ since $\|\check J_n^{\leqslant j}\| = j\mu_1/n$.
On the other hand, $\P(\|\bar R_n\|\geq \delta)$ decays at a geometric rate since $\{\|\bar R_n\|\geq \delta\}  \subseteq \{\tilde N_n < j\}$ and $\P(\tilde N_n < j)$ decays at a geometric rate.
Since $\P(\|\bar J_n^{>j}\| \geq \delta) \leq \P(\|\bar J_n^{>j}\| \geq \delta, Q_n^\gets(\Gamma_j)\geq n\gamma ) + \P(\|\bar J_n^{>j}\| \geq \delta, Q_n^\gets(\Gamma_j)\leq n\gamma )$, Lemma~\ref{lem:key-upper-bound-1} and Lemma~\ref{lem:key-upper-bound-2} given below imply $\P(\|\bar J_n^{>j}\| \geq \delta)=o\left( (n\nu[n,\infty))^j\right)$ by choosing $\gamma$ small enough.
Therefore, $\hat J_n^{\leqslant j}$ and $\bar J_n$ are asymptotically equivalent w.r.t.\ $(n\nu[n,\infty))^j$.
\end{proof}

Define a measure $\mu_\alpha^{(j)}$ on $\mathbb R_+^{\infty \downarrow}$ by
\begin{equation*}
\begin{aligned}
\mu_\alpha^{(j)}(dx_1, dx_2, \cdots)
&\triangleq \prod_{i=1}^j \nu_\alpha(dx_i)\I_{[x_1\geq x_2\geq \cdots \geq x_j > 0]} \prod_{i=j+1}^\infty \delta_0(dx_i),
\end{aligned}
\end{equation*}
where $\nu_\alpha(x,\infty) = x^{-\alpha}$, and $\delta_0$ is the Dirac measure concentrated at $0$.

\begin{proposition}\label{prop:Jj}
For each $j\geq 0$,
$$
\big(n\nu[n,\infty)\big)^{-j}\P(\hat J_n^{\leqslant j}\in \cdot)
\to C_j(\cdot)
$$
in $\mathbb M\big(\mathbb D \setminus \mathbb D_{<\, j}\big)$ as $n\to\infty$.
\end{proposition}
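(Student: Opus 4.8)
The plan is to recognize $\hat J_n^{\leqslant j}$ as the image, under the map $T_j$ of Section~\ref{sec:preliminaries}, of the sequence of scaled jump sizes and jump times, and then to transport the $\mathbb M$-convergence of that sequence to $\mathbb D$ via the mapping principle. Set
\[
W_n \triangleq \Big(\big(\tfrac1n Q_n^\gets(\Gamma_l)\big)_{l\geq 1},\ (U_l)_{l\geq 1}\Big).
\]
Since $\Gamma_1<\Gamma_2<\cdots$ a.s.\ and $Q_n^\gets$ is non-increasing, the first component of $W_n$ lies in $\mathbb R_+^{\infty\downarrow}$; since $U_1,U_2,\ldots$ are i.i.d.\ and continuous, $W_n\in S_j$ a.s., and by definition of $T_j$ we have $\hat J_n^{\leqslant j}=T_j(W_n)$ a.s. So it suffices to (i) establish the $\mathbb M$-convergence of the normalized law of $W_n$ and (ii) apply Corollary~\ref{result:consequence-of-L23-53-54}. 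The case $j=0$ is immediate: $\hat J_n^{\leqslant 0}\equiv\mathbf 0$, $C_0=\delta_{\mathbf 0}$, and $\mathbb M(\mathbb D\setminus\mathbb D_{<0})=\mathbb M(\mathbb D)$ is ordinary weak convergence, so the claim is trivial; assume henceforth $j\geq 1$.

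\textbf{Step (i): convergence of the jump sequence.} I would prove that
\[
(n\nu[n,\infty))^{-j}\,\P(W_n\in\cdot)\ \longrightarrow\ \mu_\alpha^{(j)}\times\lambda^{\otimes\infty}
\]
in $\mathbb M\big((\mathbb R_+^{\infty\downarrow}\times[0,1]^\infty)\setminus(\mathbb H_{<\,j}\times[0,1]^\infty)\big)$, where $\lambda$ is Lebesgue measure on $[0,1]$; this is the content of Lemma~\ref{lem:poisson-jumps}. The starting point is that $\big\{\tfrac1n Q_n^\gets(\Gamma_l)\big\}_{l\geq 1}$ are, in decreasing order, the points of a Poisson random measure on $(0,\infty)$ with mean measure $m_n$ given by $m_n(x,\infty)=n\nu[nx,\infty)$, so for Borel $B\subseteq\mathbb R_+^{j\downarrow}$ one has the exact identity
\[
\P\big((\tfrac1n Q_n^\gets(\Gamma_1),\ldots,\tfrac1n Q_n^\gets(\Gamma_j))\in B\big)=\int_{B\cap\{x_1>\cdots>x_j\}}e^{-m_n(x_j,\infty)}\,m_n(dx_1)\cdots m_n(dx_j).
\]
Dividing by $(n\nu[n,\infty))^{j}$ and using the regular variation of $\nu[\cdot,\infty)$, which gives $m_n(dx)/(n\nu[n,\infty))\to\nu_\alpha(dx)$ vaguely on $(0,\infty)$, together with $n\nu[n,\infty)\to0$ (valid since $\alpha>1$), which forces $e^{-m_n(x_j,\infty)}\to1$ uniformly over $x_j$ bounded away from $0$, the right-hand side converges to $\nu_\alpha^{\otimes j}(B\cap\{x_1>\cdots>x_j\})=\nu_\alpha^j(B)$ whenever $B$ is bounded away from $\{x_j=0\}$; the independent marks $U_l$ contribute the factor $\lambda^{\otimes\infty}$, and the trailing coordinates $\tfrac1n Q_n^\gets(\Gamma_l)$, $l>j$, tend to $0$ (again because $n\nu[n,\infty)\to0$), matching the $\delta_0$'s in $\mu_\alpha^{(j)}$. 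Testing these limits on a $\pi$-system of ``rectangles'' bounded away from $\mathbb H_{<\,j}\times[0,1]^\infty$ and invoking Lemma~\ref{lem:convergence-determining-class} yields the $\mathbb M$-convergence.

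\textbf{Step (ii): mapping and identification of the limit.} Apply Corollary~\ref{result:consequence-of-L23-53-54} with $\mu_n=(n\nu[n,\infty))^{-j}\P(W_n\in\cdot)$ and $\mu=\mu_\alpha^{(j)}\times\lambda^{\otimes\infty}$: the hypothesis $\mu\big(S_j^c\setminus(\mathbb H_{<\,j}\times[0,1]^\infty)^r\big)=0$ holds because, by Tonelli, for each fixed $x$ the set of $u$ for which $0,1,u_1,\ldots,u_j$ fail to be distinct is $\lambda^{\otimes\infty}$-null. The corollary then gives $\mu_n\circ T_j^{-1}\to\mu\circ T_j^{-1}$ in $\mathbb M(\mathbb D\setminus\mathbb D_{<\,j})$. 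Here $\mu_n\circ T_j^{-1}=(n\nu[n,\infty))^{-j}\P(\hat J_n^{\leqslant j}\in\cdot)$, while a direct Fubini computation, using that the first $j$ coordinates of $\mu_\alpha^{(j)}$ carry the measure $\nu_\alpha^j$, gives
\[
\mu\circ T_j^{-1}(\cdot)=\E\Big[\nu_\alpha^j\big\{x\in\mathbb R_+^{j\downarrow}:\textstyle\sum_{i=1}^j x_i1_{[U_i,1]}\in\cdot\big\}\Big]=C_j(\cdot),
\]
which is exactly the assertion.

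\textbf{Main obstacle.} The delicate point is Step (i), specifically passing from the exact order-statistics formula to its limit: one must control $e^{-m_n(x_j,\infty)}$ and the normalized mean measures $m_n(dx)/(n\nu[n,\infty))$ simultaneously over domains that are bounded away from $\{x_j=0\}$ but unbounded above, which requires Potter-type bounds on $\nu[\cdot,\infty)$ to ensure the contribution of large jump sizes is uniformly negligible; the infinite-dimensional bookkeeping (handling the trailing coordinates, and working through a convergence-determining class rather than testing individual sets) is routine but must be organized carefully.
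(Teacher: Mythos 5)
Your proposal is correct and, at the level of the proposition itself, is exactly the paper's argument: write $\hat J_n^{\leqslant j}=T_j(W_n)$, observe $(\mu_\alpha^{(j)}\times\mathrm{Leb})\circ T_j^{-1}=C_j$, and combine the jump-sequence convergence of Lemma~\ref{lem:poisson-jumps} with Corollary~\ref{result:consequence-of-L23-53-54} (whose $\mu(S_j^c\setminus(\mathbb H_{<\,j}\times[0,1]^\infty)^r)=0$ hypothesis is checked just as you do, via the almost-sure distinctness of the uniforms). Where you diverge is inside Step (i), i.e.\ in how you would prove the ingredient Lemma~\ref{lem:poisson-jumps}: the paper does not use the top-$j$ order-statistics density of the Poisson random measure, but instead computes the exact joint distribution function of $(\Gamma_1,\ldots,\Gamma_l)$ via the uniform-order-statistics representation of $(\Gamma_1/\Gamma_l,\ldots,\Gamma_{l-1}/\Gamma_l)$, performs an explicit change of variables, and passes to the limit on the convergence-determining class $\{z:x_1\leq z_1,\ldots,x_l\leq z_l\}$ by bounded convergence; your route instead starts from the identity $\P\big((\tfrac1n Q_n^\gets(\Gamma_1),\ldots,\tfrac1n Q_n^\gets(\Gamma_j))\in B\big)=\int_{B\cap\{x_1>\cdots>x_j\}}e^{-m_n(x_j,\infty)}m_n(dx_1)\cdots m_n(dx_j)$ and the vague convergence $m_n/(n\nu[n,\infty))\to\nu_\alpha$. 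Both work, but note two points your sketch glosses over: that exact density identity (with strict ordering and the factor $e^{-m_n(x_j,\infty)}$) is only valid when $m_n$ is atomless, whereas $\nu$ may have atoms---the paper's computation through the $\Gamma_l$'s sidesteps this, and if you keep your route you should either perturb around atoms or replace the identity by the corresponding inequality sandwich; and the treatment of sets in the convergence-determining class with $l>j$ active coordinates (where the limit must vanish at rate $o((n\nu[n,\infty))^{j})$) deserves an explicit estimate rather than the remark that trailing coordinates ``tend to $0$''. These are repairable details, not gaps in the overall strategy.
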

\begin{proof}
Noting that $(\mu_\alpha^{(j)}\times \text{Leb})\circ T_j^{-1} = C_j$ and $\P(\hat J_n^{\leqslant j} \in \cdot) = \P\big(\allowbreak\big((Q_n^\gets(\Gamma_l)/n,l\geq 1),(U_l,l\geq 1)\big)\in T_j^{-1}(\cdot)\big)$, Lemma~\ref{lem:poisson-jumps} and  Corollary~\ref{result:consequence-of-L23-53-54} prove the proposition.
\end{proof}

\begin{lemma}\label{lem:key-upper-bound-1}
For any fixed $\gamma>0$, $\delta>0$,and $j\geq 0$,
\begin{equation}\label{eq:hatJ_on_geq}
\P \left\{\| \bar J_n^{>j}\|\geq \delta, Q_n^\gets(\Gamma_j) \geq n\gamma\right\} = o\left((n\nu[n,\infty))^j\right).
\end{equation}
\end{lemma}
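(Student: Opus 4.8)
The plan is to split the event in \eqref{eq:hatJ_on_geq} according to whether or not the $(j{+}1)$-st largest jump already exceeds $n\epsilon$ for a suitably small $\epsilon\in(0,\gamma)$, and then to exploit the decomposition of the Poisson process of jumps into independent pieces over disjoint index ranges. Fix $\epsilon:=\gamma/2$ and recall that, since $Q_n(s)=n\nu[s,\infty)$ is non-increasing and left-continuous, $Q_n^\gets(\Gamma_l)\ge x\iff \Gamma_l\le n\nu[x,\infty)$. Setting $E_n\triangleq\{Q_n^\gets(\Gamma_j)\ge n\gamma\}\cap\{Q_n^\gets(\Gamma_{j+1})< n\epsilon\}$, the bound
\begin{equation*}
\P\big\{\|\bar J_n^{>j}\|\ge\delta,\,Q_n^\gets(\Gamma_j)\ge n\gamma\big\}\ \le\ \P\big\{Q_n^\gets(\Gamma_{j+1})\ge n\epsilon\big\}\ +\ \P\big\{\|\bar J_n^{>j}\|\ge\delta,\,E_n\big\}
\end{equation*}
reduces the problem to two terms.

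The first term I would handle directly. Since $\Gamma_{j+1}$ has a $\mathrm{Gamma}(j{+}1,1)$ law, $\P\{\Gamma_{j+1}\le t\}\le t^{j+1}/(j+1)!$, so $\P\{Q_n^\gets(\Gamma_{j+1})\ge n\epsilon\}=\P\{\Gamma_{j+1}\le n\nu[n\epsilon,\infty)\}\le (n\nu[n\epsilon,\infty))^{j+1}/(j+1)!$; since $n\nu[n\epsilon,\infty)\sim\epsilon^{-\alpha}\,n\nu[n,\infty)$ by regular variation, this term is $O\big((n\nu[n,\infty))^{j+1}\big)=o\big((n\nu[n,\infty))^{j}\big)$.

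The second term is the crux. I would first observe that on $E_n$, and for $n$ large enough that $1<n\epsilon<n\gamma$, exactly $j$ of the ordered points $\Gamma_1<\Gamma_2<\cdots$ fall in $(0,Q_n(n\epsilon)]$, all of them in $(0,Q_n(n\gamma)]$ (because $Q_n(n\gamma)\le Q_n(n\epsilon)$); equivalently, the $j$ largest jumps all exceed $n\gamma$, the remaining jumps are all below $n\epsilon$, and no jump lies in $[n\epsilon,n\gamma)$. It follows that $\{j+1,\dots,\tilde N_n\}=\{l:1\le Q_n^\gets(\Gamma_l)<n\epsilon\}=\{l:\Gamma_l\in(Q_n(n\epsilon),n\nu_1^+]\}$ almost surely on $E_n$, so that $\bar J_n^{>j}$ coincides a.s.\ on $E_n$ with
\begin{equation*}
W_n(s)\ \triangleq\ \tfrac1n\textstyle\sum_{l\,:\,1\le Q_n^\gets(\Gamma_l)<n\epsilon}\big(Q_n^\gets(\Gamma_l)-\mu_1^+\big)1_{[U_l,1]}(s),
\end{equation*}
which is $\tfrac1n$ times the compound Poisson process (centered by $\mu_1^+$) generated by the jumps in $[1,n\epsilon)$. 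The decisive point is that $E_n$ is measurable with respect to the restriction of the marked Poisson process $\{(\Gamma_l,U_l)\}$ to $(0,Q_n(n\epsilon)]\times[0,1]$, whereas $W_n$ is measurable with respect to its restriction to $(Q_n(n\epsilon),n\nu_1^+]\times[0,1]$; being restrictions to disjoint regions, $W_n$ and $E_n$ are independent, and therefore $\P\{\|\bar J_n^{>j}\|\ge\delta,E_n\}=\P\{\|W_n\|\ge\delta\}\,\P(E_n)$.

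It then remains to bound the two factors. For the first, $\P(E_n)\le\P\{Q_n^\gets(\Gamma_j)\ge n\gamma\}=\P\{\Gamma_j\le n\nu[n\gamma,\infty)\}\le (n\nu[n\gamma,\infty))^j/j!=O\big((n\nu[n,\infty))^{j}\big)$ (with the convention $\Gamma_0=0$ this is simply $\le1$ when $j=0$). For the second, $W_n$ is a L\'evy process on $[0,1]$ with $\E W_n(1)=\mu_1^+\nu[n\epsilon,\infty)-\int_{[n\epsilon,\infty)}x\,\nu(dx)\to0$ and $\var W_n(1)=\tfrac1n\int_{[1,n\epsilon)}(x-\mu_1^+)^2\,\nu(dx)\to0$, where the latter uses that $\alpha>1$ forces $\tfrac1n\int_{[1,n\epsilon)}x^2\,\nu(dx)\to0$ (an elementary computation via Karamata's theorem); combining Etemadi's inequality for L\'evy processes (Lemma~\ref{lem:cont_etemadi}) with Chebyshev's inequality gives $\P\{\|W_n\|\ge\delta\}\le 3\sup_{t\le1}\P\{|W_n(t)|\ge\delta/3\}\le\tfrac{27}{\delta^2}\big(\var W_n(1)+(\E W_n(1))^2\big)\to0$. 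Multiplying, $\P\{\|\bar J_n^{>j}\|\ge\delta,E_n\}=o(1)\cdot O\big((n\nu[n,\infty))^{j}\big)=o\big((n\nu[n,\infty))^{j}\big)$, which together with the first term yields \eqref{eq:hatJ_on_geq}. I expect the main obstacle to be exactly the bookkeeping in the third paragraph: verifying that on $E_n$ the residual process is \emph{precisely} the small-jump compound Poisson process $W_n$, and that $W_n$ is independent of $E_n$. Without this product structure one only gets $\min\big(\P\{\|W_n\|\ge\delta\},\P(E_n)\big)=O\big((n\nu[n,\infty))^j\big)$, which is not $o\big((n\nu[n,\infty))^j\big)$; it is the independence that supplies the extra $o(1)$ factor.
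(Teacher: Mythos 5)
Your proof is correct, but it follows a genuinely different route from the paper's. The paper never truncates at an intermediate level: it keeps the whole residual sum over $l>j$ and decouples it from $A_n=\{Q_n^\gets(\Gamma_j)\ge n\gamma\}$ by a sandwich argument, replacing $\Gamma_l$ by $\Gamma_l-\Gamma_j$ and by $\Gamma_l-\Gamma_j+Q_n(n\gamma)$ (both independent of $\Gamma_j$), and then shows that each of the two shifted compound Poisson sums stays below level $n\delta$ with probability tending to one, using concentration of $\tilde N_n$ and the generalized Kolmogorov inequality (Lemma~\ref{result:gen_kol}) with a truncated-variance Karamata estimate; the factor $\P(A_n)=O\big((n\nu[n,\infty))^j\big)$ is taken from Proposition~\ref{prop:Jj} together with Lemma~\ref{lem:Djk}(c). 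You instead pay an $O\big((n\nu[n,\infty))^{j+1}\big)=o\big((n\nu[n,\infty))^j\big)$ price to exclude a large $(j+1)$-st jump, and on the remaining event $E_n$ you identify $\bar J_n^{>j}$ exactly with the small-jump compound Poisson process $W_n$; that identification and the independence are genuine: $E_n$ is determined by the Poisson points in $(0,Q_n(n\epsilon)]$ (the marks play no role there), while $W_n$ is a functional of the marked points in the disjoint strip $(Q_n(n\epsilon),Q_n(1)]\times[0,1]$, so the restriction property of the Poisson random measure gives independence (the labels $l$ are irrelevant because the sum depends only on the point configuration), and the equality of index sets $\{j+1,\dots,\tilde N_n\}=\{l:\Gamma_l\in(Q_n(n\epsilon),Q_n(1)]\}$ holds on $E_n$ once $1<n\epsilon<n\gamma$. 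What your route buys is elementarity: $\P(E_n)$ is bounded by a Gamma-tail computation rather than by the limit theorem for $\hat J_n^{\leqslant j}$, and $\P(\|W_n\|\ge\delta)\to0$ needs only Etemadi (Lemma~\ref{lem:cont_etemadi}) plus Chebyshev, with $\alpha>1$ entering through $n^{-1}\int_{[1,n\epsilon)}x^2\,\nu(dx)\to0$; the paper's sandwich argument avoids your extra union-bound term but pays with the more delicate conditional mean/variance estimates and Prokhorov/Kolmogorov-type bounds. Both arguments are sound, and your observation that the independence is exactly what upgrades $O\big((n\nu[n,\infty))^j\big)$ to $o\big((n\nu[n,\infty))^j\big)$ is the right diagnosis of where the work lies.
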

\begin{proof} (Throughout the proof of this lemma, we use $\mu_1$ and $\nu_1$ in place of $\mu_1^+$ and $\nu_1^+$ respectively.) We start with the following decomposition of $\bar J_n^{>j}$: for any fixed $\lambda \in \left(0, \frac{\delta}{3\nu_1\mu_1}\right)$,
\begin{align*}
\bar J_n^{>j} &= \frac1n \sum_{l=j+1}^{\tilde N_n} (Q_n^\gets(\Gamma_l) - \mu_1)1_{[U_1,1]}
\\
&
= \tilde J_n^{[j+1,n\nu_1(1+\lambda)]} - \tilde J_n^{[\tilde N_n + 1, n\nu_1(1+\lambda)]}\I(\tilde N_n < n\nu_1(1+\lambda))
\\
&\hspace{80pt}
+ \tilde J_n^{[n\nu_1(1+\lambda)+1,\tilde N_n]}\I(\tilde N_n > n\nu_1(1+\lambda)),
\end{align*}
where
$$
\tilde J_n^{[a,b]}\triangleq \frac1n \sum_{l=\lceil a\rceil}^{\lfloor b\rfloor} (Q_n^\gets(\Gamma_l)-\mu_1)1_{[U_l,1]}.
$$
Therefore,
\begin{align*}
&\P \left\{\| \bar J_n^{>j} \|\geq \delta, Q_n^\gets(\Gamma_j) \geq n\gamma\right\}
\\
&
\leq
\P\left(\left\|\tilde J_n^{[j+1,n\nu_1(1+\lambda)]}\right\|\geq \delta/3, Q_n^\gets(\Gamma_j)\geq n\gamma\right)
\\
&\hspace{10pt}
+\P\left( \left\|\tilde J_n^{[\tilde N_n + 1, n\nu_1(1+\lambda)]}\right\| \geq \delta/3\right)
+\P\left(\tilde N_n > n\nu_1(1+\lambda)\right)
\\
&
= \text{(i)} + \text{(ii)} + \text{(iii)}.
\end{align*}
Noting that
$\left\|\tilde J_n^{[\tilde N_n + 1, n\nu_1(1+\lambda)]}\right\| \leq (\nu_1(1+\lambda)-\tilde N_n/n)\mu_1$ --- recall that $\tilde N_n$ is defined to be the number of $l$'s such that $Q_n^\gets(\Gamma_l) \geq 1$, and hence, $0\leq Q_n^\gets(\Gamma_l)<1$ for $l>\tilde N_n$ --- we see that (ii) is bounded by
$$\P((\nu_1(1+\lambda)-\tilde N_n/n)\mu_1\geq \delta/3)= \P\left(\frac{\tilde N_n}{n\nu_1}\leq 1+\lambda-\frac{\delta}{3\nu_1\mu_1}\right),$$
which decays at a geometric rate w.r.t.\ $n$ since $\tilde N_n$ is Poisson with rate $n\nu_1$.
For the same reason, (iii) decays at a geometric rate w.r.t.\ $n$.
We are done if we prove that (i) is $o\left((n\nu[n,\infty))^j\right)$.
Note that $Q_{n}^{\gets}(\Gamma_j) \geq n\gamma$ implies $Q_n(n\gamma)\geq \Gamma_j$, and hence,
\begin{align*}
&\sum_{l=j+1}^{(1+\lambda)n\nu_1} \big(Q_{n}^{\gets}(\Gamma_l - \Gamma_j + Q_n(n\gamma))-\mu_1\big)1_{[U_l,1]}
\\
&\leq \sum_{l=j+1}^{(1+\lambda)n\nu_1} \big(Q_{n}^{\gets}(\Gamma_l)-\mu_1\big)1_{[U_l,1]} \\
&\leq
\sum_{l=j+1}^{(1+\lambda)n\nu_1} \big(Q_{n}^{\gets}(\Gamma_l - \Gamma_j)-\mu_1\big)1_{[U_l,1]}.
\end{align*}
Therefore, if we define
\begin{align*}
A_n &\triangleq \left\{Q_n^\gets(\Gamma_j) \geq n\gamma\right\},\\
B_n' &\triangleq \left\{
\sup_{t\in[0,1]}\sum_{l=j+1}^{(1+\lambda)n\nu_1} \big(Q_{n}^{\gets}(\Gamma_l - \Gamma_j)-\mu_1\big)1_{[U_l,1]}(t)
\geq
n\delta
\right\},
\\
B_n'' &\triangleq \left\{\inf_{t\in[0,1]}\sum_{l=j+1}^{(1+\lambda)n\nu_1} \big(Q_{n}^{\gets}(\Gamma_l - \Gamma_j + Q_n(n\gamma))-\mu_1\big)1_{[U_l,1]}(t)
\leq
-n\delta\right\},
\end{align*}
then we have that
$$
\text{(i)} \leq \P(A_n \cap (B'_n \cup B''_n)) \leq \P(A_n \cap B'_n) + \P(A_n \cap B''_n) = \P(A_n)(\P(B'_n)+\P(B''_n))
$$
where the last equality is from the independence of $A_n$ and $B_n'$ as well as of $A_n$ and $B_n''$ (which is, in turn, due to the independence of $\Gamma_j$ and $\Gamma_l - \Gamma_j$). 
From Lemma~\ref{lem:Djk} (c) and Proposition~\ref{prop:Jj}, $\P(A_n) = \P(\hat J_n^{\leqslant j} \in (\mathbb D\setminus \mathbb D_{<\, j})^{-\gamma/2}) =  O\left((n\nu[n,\infty))^j\right)$, and hence, it suffices to show that the probabilities of the complements of $B_n'$ and $B_n''$ converge to 1---i.e.,
for any fixed $\gamma>0$,
\begin{equation}\label{eq:lem_cn1}
\P\left\{\sup_{t\in[0,1]}\sum_{l=j+1}^{(1+\lambda)n\nu_1} \big(Q_{n}^{\gets}(\Gamma_l - \Gamma_j)-\mu_1\big)1_{[U_l,1]}(t)
<
n\delta\right\}\to 1,
\end{equation}
and
\begin{equation}\label{eq:lem_cn2}
\P \left\{\inf_{t\in[0,1]}\sum_{l=j+1}^{(1+\lambda)n\nu_1} \big(Q_{n}^{\gets}(\Gamma_l - \Gamma_j + Q_n(n\gamma))-\mu_1\big)1_{[U_l,1]}(t)
>
-n\delta\right\}\to 1.
\end{equation}
Starting with (\ref{eq:lem_cn1})
\begin{align*}
&\P\left\{\sup_{t\in[0,1]}\sum_{l=j+1}^{(1+\lambda)n\nu_1} \big(Q_{n}^{\gets}(\Gamma_l - \Gamma_j)-\mu_1\big)1_{[U_l,1]}(t)
<
n\delta\right\}\\
&=\P\left\{\sup_{t\in[0,1]}\sum_{l=1}^{(1+\lambda)n\nu_1-j} \big(Q_{n}^{\gets}(\Gamma_l)-\mu_1\big)1_{[U_l,1]}(t)
<
n\delta\right\}\\
&\geq\P\left\{\sup_{t\in[0,1]}\sum_{l=1}^{(1+\lambda)n\nu_1-j} \big(Q_{n}^{\gets}(\Gamma_l)-\mu_1\big)1_{[U_l,1]}(t)
<
n\delta, \tilde N_n \leq (1+\lambda)n\nu_1 - j\right\}\\
&\geq\P\left\{\sup_{t\in[0,1]}\sum_{l=1}^{\tilde N_n} \big(Q_{n}^{\gets}(\Gamma_l)-\mu_1\big)1_{[U_l,1]}(t)
<
n\delta, \tilde N_n \leq (1+\lambda)n\nu_1 - j\right\}\\
&\geq\P\left\{\sup_{t\in[0,1]}\sum_{l=1}^{\tilde N_n} \big(Q_{n}^{\gets}(\Gamma_l)-\mu_1\big)1_{[U_l,1]}(t)
<
n\delta)\right\} - \P\left\{\tilde N_n > (1+\lambda)n\nu_1 - j\right\}.
\end{align*}
The second inequality is due to the definition of $Q_{n}^\gets$ and that $\mu_1\geq 1$ (and hence $Q_{n}^\gets(\Gamma_l) - \mu_1 \leq 0$ on $l\geq \tilde N_n$), while the last inequality comes from the generic inequality $\P(A\cap B) \geq \P(A) - \P(B^c)$.
The second probability converges to 0 since $\tilde N$ is Poisson with rate $n\nu_1$.
Moving on to the first probability in the last expression, observe that $\sum_{l=1}^{\tilde N_n} \big(Q_{n}^{\gets}(\Gamma_l)-\mu_1\big)1_{[U_l,1]}(\cdot)$ has the same distribution as the compound Poisson process $\sum_{i=1}^{J(n\cdot)} (D_i-\mu_1)$, where $J$ is a Poisson process with rate $\nu_1$ and $D_i$'s are i.i.d.\ random variables with the distribution $\nu$ conditioned (and normalized) on $[1,\infty)$, i.e., $\P\{D_i \geq s\} = 1\wedge\big(\nu[s,\infty)/\nu[1,\infty)\big)$. Using this, we obtain
\begin{align}
&\P\left\{\sup_{t\in[0,1]}\sum_{l=1}^{\tilde N_n} \big(Q_{n}^{\gets}(\Gamma_l)-\mu_1\big)1_{[U_l,1]}(t)
<
n\delta\right\} \nonumber
\\
&= \P\left\{\sup_{1\leq m \leq J(n)}\sum_{l=1}^{m} (D_l -\mu_1)
<
n\delta\right\} \label{eq:same_form_cp}
\\
&\geq
\P\left\{\sup_{1\leq m\leq 2n\nu_1} \sum_{l=1}^m (D_l-\mu_1) < n\delta , J(n) \leq 2n\nu_1\right\} \nonumber
\\
&\geq
\P\left\{\sup_{1\leq m\leq 2n\nu_1} \sum_{l=1}^m (D_l-\mu_1) < n\delta \right\} - \P\big\{J(n) > 2n\nu_1\big\} \nonumber
\end{align}
The second probability vanishes at a geometric rate w.r.t.\ $n$ because $J(n)$ is Poisson with rate $n\nu_1$. The first term can be investigated by the generalized Kolmogorov inequality, cf.\ \cite{Shneer2009} (given as Result~\ref{result:gen_kol} in Appendix A):
\begin{align*}
\P\left(\max_{1\leq m\leq 2n\nu_1} \sum_{l=1}^m(D_l - \mu_1) \geq n\delta/2\right)
&\leq C\frac{2n\nu_1V(n\delta/2)}{(n\delta/2)^2},
\end{align*}
where
$V(x) = \E [(D_l-\mu_1)^2; \mu_1 - x \leq D_l \leq \mu_1 + x]\leq \mu_1^2 + \E[ D_l^2; D_l \leq \mu_1 + x]$.
Note that
\begin{align*}
\E [D_l^2; D_l \leq \mu_1 + x]
&= \int_0^1 2s ds+\int_1^{\mu_1 + x} 2s \frac{\nu[s,\infty)}{\nu[1,\infty)}ds
\\&
=1 + \frac{2}{\nu_1} (\mu_1+x)^{2-\alpha}L(\mu_1+x),
\end{align*}
for some slowly varying $L$. Hence,
$$\P\left(\max_{1\leq m\leq 2n\nu_1} \sum_{l=1}^m(D_l - \mu_1) < n\delta\right) \geq 1-\P\left(\max_{1\leq m\leq 2n\nu_1} \sum_{l=1}^m(D_l - \mu_1) \geq n\delta/2\right)\to 1,$$
as $n\to\infty$.

Now, turning to (\ref{eq:lem_cn2}), let $\gamma_n \triangleq Q_n(n\gamma)$.
\begin{align*}
&\P \left\{\inf_{t\in[0,1]}\sum_{l=j+1}^{(1+\lambda)n\nu_1} \big(Q_{n}^{\gets}(\Gamma_l - \Gamma_j + Q_n(n\gamma))-\mu_1\big)1_{[U_l,1]}(t)
>
-n\delta\right\}
\\
&= \P \left\{\inf_{t\in[0,1]}\sum_{l=1}^{(1+\lambda)n\nu_1-j} \big(Q_{n}^{\gets}(\Gamma_l +\gamma_n)-\mu_1\big)1_{[U_l,1]}(t)
>
-n\delta\right\}
\\
&\geq \P \left\{\inf_{t\in[0,1]}\sum_{l=1}^{(1+\lambda)n\nu_1-j} \big(Q_{n}^{\gets}(\Gamma_l +\gamma_n)-\mu_1\big)1_{[U_l,1]}(t)
>
-n\delta, E_0 \geq \gamma_n\right\}
\\
&\geq \P \left\{\inf_{t\in[0,1]}\sum_{l=1}^{(1+\lambda)n\nu_1-j} \big(Q_{n}^{\gets}(\Gamma_l +E_0)-\mu_1\big)1_{[U_l,1]}(t)
>
-n\delta, E_0 \geq \gamma_n\right\}
\\
&= \P \left\{\inf_{t\in[0,1]}\sum_{l=2}^{(1+\lambda)n\nu_1-j+1} \big(Q_{n}^{\gets}(\Gamma_l)-\mu_1\big)1_{[U_l,1]}(t)
>
-n\delta, \Gamma_1 \geq \gamma_n\right\}
\\
&\geq \P \left\{\inf_{t\in[0,1]}\sum_{l=2}^{(1+\lambda)n\nu_1-j+1} \big(Q_{n}^{\gets}(\Gamma_l)-\mu_1\big)1_{[U_l,1]}(t)
>
-n\delta\right\} - \P\left\{\Gamma_1 < \gamma_n\right\}\\
&= (A)-(B),
\end{align*}
where $E_0$ is a standard exponential random variable. (Recall that $\Gamma_l \triangleq E_1 + E_2 + \cdots + E_l$, and hence $(\Gamma_l + E_0, U_l) \stackrel{\D}{=} (\Gamma_{l+1}, U_{l})\stackrel{\D}{=} (\Gamma_{l+1}, U_{l+1})$.)
Since $(B) = \P\left\{\Gamma_1 < \gamma_n\right\} \to 0$ (recall that $\gamma_n = n\nu[n\gamma,\infty)$ and $\nu$ is regularly varying with index $-\alpha<-1$), we focus on proving that the first term (A) converges to 1:
\begin{align*}
(A) &=\P \Bigg\{\inf_{t\in[0,1]}\sum_{l=2}^{(1+\lambda)n\nu_1-j+1} \big(Q_{n}^{\gets}(\Gamma_l)-\mu_1\big)1_{[U_l,1]}(t)
>
-n\delta\Bigg\}
\\
&\geq \P \Bigg\{\inf_{t\in[0,1]}\sum_{l=2}^{(1+\lambda)n\nu_1-j+1} \big(Q_{n}^{\gets}(\Gamma_l)-\mu_1\big)1_{[U_l,1]}(t) > -n\delta,
\\
&\hspace{180pt}
\tilde N_n \leq (1+\lambda)n\nu_1 -j+1
\Bigg\}
\\
&\geq \P \Bigg\{\inf_{t\in[0,1]}\sum_{l=1}^{\tilde N_n} \big(Q_{n}^{\gets}(\Gamma_l)-\mu_1\big)1_{[U_l,1]}(t) \geq -n\delta/3,
\\
&\hspace{80pt}
\inf_{t\in[0,1]}-\big(Q_{n}^{\gets}(\Gamma_1)-\mu_1\big)1_{[U_1,1]}(t)> -n\delta/3,  
\\
&\hspace{80pt}
\inf_{t\in[0,1]}\sum_{l=\tilde N_n+1}^{(1+\lambda)n\nu_1-j+1} \big(Q_{n}^{\gets}(\Gamma_l)-\mu_1\big)1_{[U_l,1]}(t) \geq -n\delta/3,
\\
&\hspace{80pt}
\tilde N_n \leq (1+\lambda)n\nu_1 -j +1
\Bigg\}
\\
&\geq \P \Bigg\{\inf_{t\in[0,1]}\sum_{l=1}^{\tilde N_n} \big(Q_{n}^{\gets}(\Gamma_l)-\mu_1\big)1_{[U_l,1]}(t) \geq -n\delta/3, \Bigg\} 
\\
&\hspace{50pt}
+\P\Big\{Q_{n}^{\gets}(\Gamma_1)-\mu_1< n\delta/3\Big\}
\\
&\hspace{50pt}
+\P\Bigg\{\inf_{t\in[0,1]}\sum_{l=\tilde N_n+1}^{(1+\lambda)n\nu_1-j+1} \big(Q_{n}^{\gets}(\Gamma_l )-\mu_1\big)1_{[U_l,1]}(t) \geq -n\delta/3 \Bigg\}
\\
&\hspace{50pt}
+\P\left\{\tilde N_n \leq (1+\lambda)n\nu_1 -j +1\right\}-3
\\
&=\text{(AI) + (AII) + (AIII) + (AIV)}-3.
\end{align*}
The third inequality comes from applying the generic inequality $\P(A\cap B) \geq \P(A) + \P(B) -1$ three times.
Since $\tilde N_n$ is Poisson with rate $n\nu_1$, 
\begin{align*}
\text{(AIV)} = \P\left\{\tilde N_n \leq (1+\lambda)n\nu_1 -j +1\right\}
&=
\P\left\{\frac{\tilde N_n}{n\nu_1 } \leq 1+\lambda-\frac{j -1}{n\nu_1 }\right\}
\to 1.
\end{align*}
For the first term (AI),
\begin{align*}
\text{(AI)} &=\P \Bigg\{\inf_{t\in[0,1]}\sum_{l=1}^{\tilde N_n} \big(Q_{n}^{\gets}(\Gamma_l )-\mu_1\big)1_{[U_l,1]}(t) \geq -n\delta/3 \Bigg\}
\\
&=
\P \Bigg\{\sup_{t\in[0,1]}\sum_{l=1}^{\tilde N_n} \big(\mu_1 - Q_{n}^{\gets}(\Gamma_l )\big)1_{[U_l,1]}(t) \leq n\delta/3 \Bigg\}
\\
&=
\P \Bigg\{\sup_{1\leq m\leq J(n)}\sum_{l=1}^{m} (\mu_1-D_l) \leq n\delta/3 \Bigg\},
\end{align*}
where $D_i$ is defined as before. Note that this is of exactly same form as (\ref{eq:same_form_cp}) except for the sign of $D_l$, and hence, we can proceed exactly the same way using the generalized Kolmogorov inequality to prove that this quantity converges to $1$ --- recall that the formula only involves the square of the increments, and hence, the change of the sign has no effect.
For the second term (AII),
\begin{align*}
\text{(AII)} \geq \P\big\{Q_n^\gets(\Gamma_1) \leq n\delta/3\big\}\geq \P\big\{\Gamma_1 > Q_{n}(n\delta/3)\big\} \to 1,
\end{align*}
since $Q_{n}(n\delta/3)\to 0$.
For the third term (AIII),
\begin{align*}
\text{(AIII)} &= \P\Bigg\{\inf_{t\in[0,1]}\sum_{l=\tilde N_n+1}^{(1+\lambda)n\nu_1-j+1} \big(Q_{n}^{\gets}(\Gamma_l)-\mu_1\big)1_{[U_l,1]}(t) \geq -n\delta/3 \Bigg\}
\\
&\geq \P\Bigg\{\inf_{t\in[0,1]}\sum_{l=\tilde N_n+1}^{(1+\lambda)n\nu_1-j+1} (1-\mu_1) 1_{[U_l,1]}(t) \geq -n\delta/3 \Bigg\}
\\
&\geq \P\Bigg\{\sum_{l=\tilde N_n+1}^{(1+\lambda)n\nu_1-j+1} (\mu_1-1) \leq n\delta/3 \Bigg\}
\\
&\geq
\P\Bigg\{(\mu_1-1)\big((1+\lambda)n\nu_1-j - \tilde N_n+1\big) \leq n\delta/3\Bigg\}
\\
&\geq
\P\Bigg\{1+\lambda-\frac{\delta}{3\nu_1(\mu_1-1)}\leq \frac{\tilde N_n}{n\nu_1}  +\frac{j-1}{n\nu_1}\Bigg\}
\\
&\to 1,
\end{align*}
since $\lambda < \frac{\delta}{3\nu_1(\mu_1-1)}$. This concludes the proof of the lemma.
\end{proof}

\begin{lemma} \label{lem:key-upper-bound-2} For any $j\geq0$, $\delta>0$, and $m<\infty$, there is $\gamma_0>0$ such that
\begin{align*}
\P\left\{\left\|\bar J_n^{>j}\right\| > \delta,
Q_{n}^\gets(\Gamma_j)\leq n\gamma_0
\right\}=o(n^{-m}).
\end{align*}
\end{lemma}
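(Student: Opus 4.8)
The plan is to exploit that on the event $A_n\triangleq\{Q_n^\gets(\Gamma_j)\le n\gamma_0\}$ every jump feeding $\bar J_n^{>j}$ is bounded by $n\gamma_0$, so that after dividing by $n$ one is left with a compound Poisson process whose increments are bounded by $\gamma_0$, and for such a process a Bennett/Fuk--Nagaev-type exponential inequality yields a bound of order $n^{-c\delta/\gamma_0}$, which is $o(n^{-m})$ once $\gamma_0$ is small. (The case $j=0$ is vacuous: then $\Gamma_0=0$ and $Q_n^\gets(0)=\infty$, so $A_n=\emptyset$; assume $j\ge1$.) First I would reduce to a bounded-jump process. Since $\ell\mapsto\Gamma_\ell$ is non-decreasing and $Q_n^\gets$ is non-increasing, $A_n$ forces $Q_n^\gets(\Gamma_\ell)\le n\gamma_0$ for all $\ell\ge j$, so on $A_n$ one may replace $Q_n^\gets(\Gamma_\ell)$ by $Q_n^\gets(\Gamma_\ell)\wedge(n\gamma_0)$ inside $\bar J_n^{>j}$; writing $\sum_{\ell=j+1}^{\tilde N_n}=\sum_{\ell=1}^{\tilde N_n}-\sum_{\ell=1}^{j}$ and using $\mu_1^+\ge1$, the $j$ discarded (scaled, truncated) terms have supremum norm at most $2j\gamma_0$ for $n$ large. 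Hence, provided $\gamma_0<\delta/(4j)$,
\[
\P\big(\|\bar J_n^{>j}\|>\delta,\,A_n\big)\ \le\ \P\big(\|\bar W_n\|>\delta/2\big),\qquad
\bar W_n(s)\triangleq\tfrac1n\sum_{\ell=1}^{\tilde N_n}\big(Q_n^\gets(\Gamma_\ell)\wedge(n\gamma_0)-\mu_1^+\big)1_{[U_\ell,1]}(s),
\]
and the constraint $A_n$ has been removed. As in the surrounding discussion, $\{(U_\ell,Q_n^\gets(\Gamma_\ell)):\ell\le\tilde N_n\}$ is a Poisson random measure with intensity $\mathrm{Leb}\times(n\nu)|_{[1,\infty)}$, so $\bar W_n$ has the law of $s\mapsto\tfrac1n\sum_{i=1}^{J(ns)}(D_i\wedge(n\gamma_0)-\mu_1^+)$ with $J$ a rate-$\nu_1^+$ Poisson process, $D_i$ i.i.d.\ with law $\nu(\cdot\cap[1,\infty))/\nu_1^+$ and $\mu_1^+=\E D_i$; in particular $\|\bar W_n\|$ is distributed as $\tfrac1n\max_{0\le m\le J(n)}\big|\sum_{i=1}^m(D_i\wedge(n\gamma_0)-\mu_1^+)\big|$.

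Next I would peel off two harmless pieces and then estimate. Since $J(n)$ is Poisson$(n\nu_1^+)$, $\P(J(n)>2n\nu_1^+)$ decays geometrically (hence is $o(n^{-m})$), so it suffices to bound the probability on $\{J(n)\le2n\nu_1^+\}$. Set $\tilde D_i\triangleq D_i\wedge(n\gamma_0)$, $\mu_n\triangleq\E\tilde D_i$, $Y_i\triangleq\tilde D_i-\mu_n$ (i.i.d., mean zero, $|Y_i|\le n\gamma_0$); Karamata's theorem gives $\mu_1^+-\mu_n=\E[(D_i-n\gamma_0)^+]=\tfrac1{\nu_1^+}\int_{n\gamma_0}^\infty\nu[x,\infty)\,dx\sim\tfrac{\gamma_0}{(\alpha-1)\nu_1^+}\,n\,\nu[n\gamma_0,\infty)\to0$, so on $\{J(n)\le2n\nu_1^+\}$ the accumulated centering bias is below $\delta/4$ for $n$ large, and the event reduces to $\{\max_{m\le2n\nu_1^+}|\sum_{i=1}^mY_i|>n\delta/4\}$. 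To this I would apply the exponential inequality for sums of i.i.d.\ mean-zero variables bounded by $M=n\gamma_0$: with $t=n\delta/4$, $N=2n\nu_1^+$ and $\sigma_n^2\triangleq\mathrm{Var}(\tilde D_i)\le1+\tfrac2{\nu_1^+}\int_1^{n\gamma_0}x\,\nu[x,\infty)\,dx$,
\[
\P\Big(\max_{m\le N}\Big|{\textstyle\sum_{i=1}^m}Y_i\Big|\ge t\Big)\ \le\ 2\exp\!\Big(\tfrac{t}{M}-\tfrac{t}{M}\log\!\big(1+\tfrac{Mt}{N\sigma_n^2}\big)\Big).
\]
Karamata shows $\sigma_n^2=O(1)$ for $\alpha>2$, is slowly varying for $\alpha=2$, and is $O((n\gamma_0)^{2-\alpha}L^\ast(n))$ (with $L^\ast$ slowly varying) for $\alpha\in(1,2)$, so in every case $Mt/(N\sigma_n^2)$ grows at least like a fixed positive power of $n$ and $\log(1+Mt/(N\sigma_n^2))\ge c_\alpha\log n$ for $n$ large, with $c_\alpha>0$ depending only on $\alpha$. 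Since $t/M=\delta/(4\gamma_0)$, the displayed bound is then $\le n^{-c_\alpha\delta/(8\gamma_0)}$ for $n$ large; choosing $\gamma_0$ so small that $c_\alpha\delta/(8\gamma_0)>m+1$ (and $\gamma_0<\delta/(4j)$) makes it $O(n^{-(m+1)})=o(n^{-m})$, which, together with the geometrically small contribution of $\{J(n)>2n\nu_1^+\}$, completes the argument.

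The main obstacle is the infinite-variance regime $\alpha\in(1,2)$: there the truncated variance $\sigma_n^2$ grows polynomially in $n$, eroding the ratio $Mt/(N\sigma_n^2)$ that governs the exponential bound, and one must verify that this ratio is nonetheless polynomially large so that the crucial logarithmic factor in the Bennett/Fuk--Nagaev estimate survives. A plain second-moment (Kolmogorov-type) bound such as Result~\ref{result:gen_kol} --- used elsewhere in the paper --- would yield only polynomial decay and is therefore insufficient here, since the lemma demands $o(n^{-m})$ for \emph{every} finite $m$. The remaining work is routine bookkeeping of the cases $\alpha>2$, $\alpha=2$, $\alpha\in(1,2)$ with their slowly varying corrections, and checking that the final $\gamma_0$ depends only on $\delta$, $j$, $m$ and $\alpha$ (not on $n$).
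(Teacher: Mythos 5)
Your argument is correct, and it reaches the conclusion by a genuinely more streamlined route than the paper, although the engine is the same: once the relevant jumps are forced below $n\gamma_0$, a Prokhorov/Bennett-type bound yields an exponent proportional to $(\delta/\gamma_0)\log n$, which beats any fixed polynomial rate for small $\gamma_0$ (and, as you note, the plain Kolmogorov-type bound would not suffice). The paper executes this by conditioning on $(\Gamma_j,\tilde N_n)$, representing $\Gamma_{j+1},\dots,\Gamma_{\tilde N_n}$ as uniform order statistics on $[\Gamma_j,n\nu_1^+]$, estimating the conditional mean shift and truncated variance as functions of the random level $q=Q_n^\gets(\Gamma_j)$, applying Etemadi plus Prokhorov conditionally, and then performing an additional split on $\{Q_n^\gets(\Gamma_j)\geq n^{1/(2\alpha)}\}$ to convert $q$-dependent bounds into $n$-dependent ones. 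You bypass all of that: monotonicity of $Q_n^\gets$ makes the truncation at the deterministic level $n\gamma_0$ exact on the event, the first $j$ truncated jumps are absorbed at cost $2j\gamma_0<\delta/2$, and the remaining object is an unconditioned compound Poisson process with jumps bounded by $n\gamma_0$, whose variance bound ($O(1)$, slowly varying, or $O((n\gamma_0)^{2-\alpha})$ up to slowly varying factors, by Karamata) is deterministic, so a single maximal exponential inequality finishes the proof. Two small points to make explicit in a full write-up: the identity $\sum_{\ell=j+1}^{\tilde N_n}=\sum_{\ell=1}^{\tilde N_n}-\sum_{\ell=1}^{j}$ should be invoked only on $\{\tilde N_n\geq j\}$ (on the complement $\bar J_n^{>j}\equiv 0$, so the event cannot occur), and the maximal form of your Bennett bound needs a one-line justification (Doob's inequality applied to the exponential submartingale $\exp(\lambda\sum_{i\le m}Y_i)$, or Etemadi's inequality as the paper uses, either of which works).
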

\begin{proof}
(Throughout the proof of this lemma, we use $\mu_1$ and $\nu_1$ in place of $\mu_1^+$ and $\nu_1^+$ respectively, for the sake of notational simplicity.) Note first that $Q_n^\gets(\Gamma_j)=\infty$ if $j=0$ and hence the claim of the lemma is trivial. Therefore, we assume $j\geq1$ throughout the rest of the proof.
Since for any $\lambda > 0$
\begin{align}
&\P\left\{\left\|\bar J_n^{>j} \right\| > \delta,
Q_{n}^\gets(\Gamma_j)\leq n\gamma
\right\}
\nonumber
\\
&
\leq
\P\Bigg\{\Bigg\|\sum_{l=j+1}^{\tilde N_n} (Q_{n}^\gets(\Gamma_l)-\mu_1) 1_{[U_l,1]} \Bigg\| > n\delta,
Q_{n}^\gets(\Gamma_j)\leq n\gamma, 
\label{eq:key_bound}
\\
\nonumber
&\hspace{190pt}
\frac{\tilde N_n}{n\nu_1} \in \left[\frac{j}{n\nu_1}, 1+\lambda\right]
\Bigg\}
\\
&
\hspace{10pt}+
\P\left\{\frac{\tilde N_n}{n\nu_1} \notin \left[\frac{j}{n\nu_1}, 1+\lambda\right]
\right\},
\nonumber
\end{align}
and
$\P\left\{\frac{\tilde N_n}{n\nu_1} \notin \left[\frac{j}{n\nu_1}, 1+\lambda\right]
\right\}$ decays at a geometric rate w.r.t.\ $n$, it suffices to show that (\ref{eq:key_bound})  is $o(n^{-m})$ for small enough $\gamma>0$.
First, recall that by the definition of $Q_n^{\gets}(\cdot)$,
$$ Q_n^{\gets}(x) \geq s \qquad \Longleftrightarrow \qquad x \leq Q_n(s),$$
and
$$  n\nu(Q_n^\gets(x), \infty) \leq x \leq n\nu[Q_n^\gets(x),\infty).$$
Let $L$ be a random variable conditionally (on $\tilde N_n$) independent of everything else and uniformly sampled on $\{j+1, j+2,\ldots, \tilde N_n\}$.
Recall that given $\tilde N_n$ and $\Gamma_j$, the distribution of $\{\Gamma_{j+1}, \Gamma_{j+2}, \ldots, \Gamma_{\tilde N_n}\}$ is same as that of the order statistics of $\tilde N_n - j$ uniform random variables on $[\Gamma_j, n\nu[1,\infty)]$.
Let $D_l, l\geq 1$, be i.i.d.\ random variables whose conditional distribution is the same as the conditional distribution of $Q_{n}^\gets  (\Gamma_L)$ given $\tilde N_n$ and $\Gamma_j$.
Then the conditional distribution of $\sum_{l=j+1}^{\tilde N_n} (Q_{n}(\Gamma_l)-\mu_1) 1_{[U_l,1]}$ is the same as that of $ \sum_{l=1}^{\tilde N_n-j} (D_l-\mu_1)1_{[U_l,1]} $.
Therefore, the conditional distribution of $\left\|\sum_{l=j+1}^{\tilde N_n} (Q_{n}(\Gamma_l)-\mu_1) 1_{[U_l,1]}\right\|_\infty$  is the same as the corresponding conditional distribution of
$\sup_{1\leq m \leq \tilde N_n-j}\allowbreak\Big|\sum_{l=1}^{m} (D_l-\mu_1)\Big|$.
To make use of this in the analysis what follows, we make a few observations on the conditional distribution of $Q_n^\gets(\Gamma_L)$ given $\Gamma_j$ and $\tilde N_n$.
\begin{itemize}
\item[(a)] \emph{The conditional distribution of $Q_n^{\gets}(\Gamma_L)$:}\\
Let $q \triangleq Q_n^\gets(\Gamma_j)$. Since $\Gamma_L$ is uniformly distributed on $[\Gamma_j, Q_n(1)] = [\Gamma_j, n\nu[1,\infty)]$, the tail probability is
\begin{align*}
\P\{Q_n^\gets(\Gamma_L) \geq s | \Gamma_j, \tilde N_n \}
&= \P\{ \Gamma_L \leq Q_n(s) | \Gamma_j, \tilde N_n \}
\\
&
= \P\{\Gamma_L \leq n\nu[s,\infty)|\Gamma_j, \tilde N_n \}\\
&= \P\left\{\left. \frac{\Gamma_L - \Gamma_j}{n\nu[1,\infty) - \Gamma_j} \leq\frac{n\nu[s,\infty)-\Gamma_j}{n\nu[1,\infty)- \Gamma_j} \right| \Gamma_j, \tilde N_n\right\}
\\
&= \frac{n\nu[s,\infty)-\Gamma_j}{n\nu[1,\infty)- \Gamma_j}
\end{align*}
for $s\in [1,q]$; since this is non-increasing w.r.t.\ $\Gamma_j$ and $n\nu(q,\infty) \leq \Gamma_j \leq n\nu[q,\infty)$, we have that
\begin{align*}
\frac{\nu[s,q)}{\nu[1,q)}
\leq \P\{Q_n^\gets(\Gamma_L) \geq s | \Gamma_j, \tilde N_n \}
\leq \frac{\nu[s,q]}{\nu[1,q]}.
\end{align*}
\item[(b)]\emph{Difference in mean between conditional and unconditional distribution:}\\
From (a), we obtain
\begin{align*}
\tilde\mu_n \triangleq \E[ Q_n^\gets(\Gamma_L)|\Gamma_j, \tilde N_n] \in \left[1+\int_1^q \frac{\nu[s,q)}{\nu[1,q)}ds, 1+\int_1^q \frac{\nu[s,q]}{\nu[1,q]}ds\right],
\end{align*}
and hence,
\begin{align*}
|\mu_1 - \tilde\mu_n|
&\leq
  \left|  \frac{\nu[1,q)\int_1^\infty \nu[s,\infty) ds - \nu[1,\infty)\int_1^q \nu[s,q)ds}{\nu[1,\infty)\nu[1,q)}\right|
  \\
  &\hspace{15pt}
  \vee
  \left|  \frac{\nu[1,q]\int_1^\infty \nu[s,\infty) ds - \nu[1,\infty)\int_1^q \nu[s,q]ds}{\nu[1,\infty)\nu[1,q]}\right|.
\end{align*}
Since
\begin{align*}
&\frac{\nu[1,q)\int_1^\infty \nu[s,\infty) ds - \nu[1,\infty)\int_1^q \nu[s,q)ds}{\nu[1,\infty)\nu[1,q)}
\\
&
= \frac{\nu[q,\infty)}{\nu[1,q)}(q-1)
+ \frac{\int_q^\infty \nu[s,\infty) ds}{\nu[1,\infty)}
- \frac{\nu[q,\infty)\int_1^q \nu[s,\infty)ds}{\nu[1,\infty)\nu[1,q)},
\end{align*}
and
\begin{align*}
&\frac{\nu[1,q)\int_1^\infty \nu[s,\infty) ds - \nu[1,\infty)\int_1^q \nu[s,q)ds}{\nu[1,\infty)\nu[1,q)}
\\
&\hspace{100pt}-\frac{\nu[1,q]\int_1^\infty \nu[s,\infty) ds - \nu[1,\infty)\int_1^q \nu[s,q]ds}{\nu[1,\infty)\nu[1,q]}\\
&=\frac{\nu\{q\}\left((q-1)\nu[1,\infty) + \int_q^\infty \nu[s,\infty) ds + \int_1^q \nu[s,\infty)ds\right)}{\nu[1,\infty)(\nu[1,q)+\nu\{q\})},
\end{align*}
we see that $|\mu_1-\tilde\mu_n|$ is bounded by a regularly varying function with index $1-\alpha$ (w.r.t.\ $q$) from Karamata's theorem.
\item[(c)] \emph{Variance of $ Q_n^{\gets}(\Gamma_L)$:}
Turning to the variance, we observe  that, if $\alpha \leq 2$,
\begin{equation}
\label{eq:conditional_variance}
\begin{aligned}
&
\E [Q_n^{\gets} (\Gamma_L) ^2 |\Gamma_j, \tilde N_n]
\\
&
\leq \int_0^1 2s ds + 2\int_1^q s\frac{\nu[s,q]}{\nu[1,q]}ds 
\\
&
\leq 1+\frac{2}{\nu[1,q]}\int_1^q s\nu[s,\infty)ds = 1+q^{2-\alpha} L(q)
\end{aligned}
\end{equation}
for some slowly varying function $L(\cdot)$. If $\alpha > 2$, the variance is bounded w.r.t.\ $q$.
\end{itemize}
Now, with (b) and (c) in  hand, we can proceed with an explicit bound since all the randomness is contained in $q$. Namely, we infer
\begin{align*}
&\P\Bigg(\Bigg\|\sum_{l=j+1}^{\tilde N_n} (Q_{n}^\gets(\Gamma_l)-\mu_1) 1_{[U_l,1]} \Bigg\|_\infty > n\delta,
Q_n^\gets(\Gamma_j) \leq n\gamma, \frac{\tilde N_n}{n\nu_1} \in \left[\frac{j}{n\nu_1}, 1+\lambda\right]
\Bigg)\\
&=\P\Bigg(\Bigg\|\sum_{l=j+1}^{\tilde N_n} (Q_{n}^\gets(\Gamma_l)-\mu_1) 1_{[U_l,1]} \Bigg\|_\infty > n\delta,
\Gamma_j \geq Q_{n}(n\gamma), \frac{\tilde N_n}{n\nu_1} \in \left[\frac{j}{n\nu_1}, 1+\lambda\right]
\Bigg)\\
&=\E\left[ \P\Bigg(\Bigg\|\sum_{l=j+1}^{\tilde N_n} (Q_{n}^\gets(\Gamma_l)-\mu_1) 1_{[U_l,1]} \Bigg\|_\infty > n\delta \right|\Gamma_j, \tilde N_n\Bigg)
; \Gamma_j \geq Q_{n}(n\gamma), 
\\
&\hspace{260pt}
\frac{\tilde N_n}{n\nu_1} \in \left[\frac{j}{n\nu_1}, 1+\lambda\right]
 \Bigg]
\\&
=\E\Bigg[ \P\Bigg(\left. \max_{1\leq m \leq \tilde N_n-j}\left|\sum_{l=1}^{m} (D_l-\mu_1)\right| > n\delta \right|\Gamma_j, \tilde N_n\Bigg)
; \Gamma_j \geq Q_{n}(n\gamma),
\\
&\hspace{260pt}
\frac{\tilde N_n}{n\nu_1} \in \left[\frac{j}{n\nu_1}, 1+\lambda\right]\Bigg].
\end{align*}
By Etemadi's bound (Result~\ref{result:etimedi} in Appendix),
\begin{equation}\label{eq:etemadi-in-action}
\begin{aligned}
&
\P\left(\left.\max_{1\leq m \leq \tilde N_n-j}\left|\sum_{l=1}^m(D_l - \mu_1)\right|\geq n\delta\right|\Gamma_j, \tilde N_n\right)
\\&
\leq 3\max_{1\leq m \leq \tilde N_n}\P\left(\left.\left|\sum_{l=1}^m(D_l - \mu_1)\right|\geq n\delta\right|\Gamma_j, \tilde N_n\right)
\\&
\leq 3\max_{1\leq m \leq \tilde N_n}\Bigg\{\P\left(\left.\sum_{l=1}^m(D_l - \mu_1)\geq n\delta\right|\Gamma_j,\tilde N_n\right)
\\
&\hspace{150pt}
+\P\left(\left.\sum_{l=1}^m(\mu_1 - D_l)\geq n\delta\right|\Gamma_j,\tilde N_n\right)\Bigg\}
\end{aligned}
\end{equation}
and as $|D_l-\tilde\mu_n|$ is bounded by $q$, we can apply Prokhorov's bound (Result~\ref{result:prokhorov} in Appendix) to get
\begin{align*}
&\P\left(\left.\sum_{l=1}^m(\mu_1-D_l)\geq n\delta\right|\Gamma_j,\tilde N_n\right)
\\
&
= \P\left(\left.\sum_{l=1}^m(\tilde\mu_n-D_l)\geq n\delta-m(\mu_1-\tilde\mu_n)\right|\Gamma_j,\tilde N_n\right)
\\&
\leq \P\left(\left.\sum_{l=1}^m(\tilde \mu_n-D_l)\geq n\delta-n\nu_1(1+\lambda)(\mu_1-\tilde\mu_n)\right|\Gamma_j,\tilde N_n\right)
\\&
\leq \left(\frac{qn(\delta - \nu_1(1+\lambda)(\mu_1-\tilde\mu_n))}{m\var(Q_n^\gets(\Gamma_L))}\right)^{-\frac{n(\delta - \nu_1(1+\lambda)(\mu_1-\tilde\mu_n))}{2q}}
\\&
\leq \left(\frac{n\nu_1(1+\lambda)\var(Q_n^\gets(\Gamma_L))}{qn(\delta - \nu_1(1+\lambda)(\mu_1-\tilde\mu_n))}\right)^{\frac{n(\delta - \nu_1(1+\lambda)(\mu_1-\tilde\mu_n))}{2q}}
\end{align*}
\begin{align*}
=\left\{
\begin{array}{ll}
\left(\frac{\nu_1(1+\lambda)(1+q^{2-\alpha}L_1(q))}{q(\delta - \nu_1(1+\lambda)q^{1-\alpha}L_2(q))}\right)^{\frac{n(\delta - \nu_1(1+\lambda)q^{1-\alpha}L_2(q))}{2q}} & \text{if } \alpha \leq 2,
\\
\left(\frac{\nu_1(1+\lambda)C}{q(\delta - \nu_1(1+\lambda)q^{1-\alpha}L_2(q))}\right)^{\frac{n(\delta - \nu_1(1+\lambda)q^{1-\alpha}L_2(q))}{2q}} & \text{otherwise,}
\end{array}
\right.
\end{align*}
for some $C>0$ if $m\leq (1+\lambda)n\nu_1$. Therefore, there exist constants $M$ and $c$ such that $q \geq M$ (i.e., $\Gamma_j \leq Q_n(M)$) implies
\begin{align*}
\P\left(\left.\sum_{l=1}^m(\mu_1 - D_l) \geq n\delta\right|\Gamma_j\right)
\leq c(q^{1-\alpha\wedge 2})^{\frac{n\delta}{8q}},
\end{align*}
and since we are conditioning on $q = Q_{n}^\gets(\Gamma_j)\leq n\gamma$,
\begin{align*}
c(q^{1-\alpha\wedge 2})^{\frac{n\delta}{8q}}
\leq c(q^{1-\alpha\wedge 2})^{\frac{\delta}{8\gamma}}.
\end{align*}
Hence,
\begin{align*}
\P\left(\left.\sum_{l=1}^m(\mu_1 - D_l) \geq n\delta\right|\Gamma_j\right)
\leq c\left(Q_n^\gets(\Gamma_j)^{1-\alpha\wedge 2}\right)^{\frac{\delta}{8\gamma}}.
\end{align*}
With the same argument, we also get
\begin{align*}
\P\left(\left.\sum_{l=1}^m( D_l-\mu_1) \geq n\delta\right|\Gamma_j\right)
\leq c\left(Q_n^\gets(\Gamma_j)^{1-\alpha\wedge 2}\right)^{\frac{\delta}{8\gamma}}.
\end{align*}
Combining (\ref{eq:etemadi-in-action}) with the two previous estimates, we obtain
\begin{align*}
\P\left(\left.\max_{1\leq m \leq \tilde N_n-j}\left|\sum_{l=1}^m(D_l - \mu_1)\right|\geq n\delta\right|\Gamma_j, \tilde N_n\right)
\leq 6 c\left(Q_n^\gets(\Gamma_j)^{1-\alpha\wedge 2}\right)^{\frac{\delta}{8\gamma}},
\end{align*}
on $\Gamma_j\geq Q_n(n\gamma)$, $\tilde N_n - j \leq n\nu_1(1+\lambda)$, and $\Gamma_j \leq Q_n(M)$.
Now,
\begin{align*}
&\E\bigg[ \P\Bigg(\left. \max_{1\leq m \leq \tilde N_n-j}\left|\sum_{l=1}^{m} (D_l-\mu_1)\right| > n\delta \right|\Gamma_j, \tilde N_n\Bigg)
; \Gamma_j \geq Q_{n}(n\gamma)\ 
\\
&\hspace{220pt}
\&\ \frac{\tilde N_n}{n\nu_1} \in \left[\frac{j}{n\nu_1}, 1+\lambda\right]\bigg]
\\&
\leq \E\bigg[ \P\Bigg(\left. \max_{1\leq m \leq \tilde N_n-j}\left|\sum_{l=1}^{m} (D_l-\mu_1)\right| > n\delta \right|\Gamma_j, \tilde N_n\Bigg)
; \Gamma_j \geq Q_{n}(n\gamma);\\
&
\hspace{180pt}  
\frac{\tilde N_n}{n\nu_1} \in \left[\frac{j}{n\nu_1}, 1+\lambda\right]; \Gamma_j\leq Q_n(M)\bigg]
\\&
\hspace{20pt} +  \P(\Gamma_j > Q_n(M))
\\&
\leq \E \left[6 c\left(Q_n^\gets(\Gamma_j)^{1-\alpha\wedge 2}\right)^{\frac{\delta}{8\gamma}}\right] + \P(\Gamma_j > Q_n(M))
\\&
\leq \E \left[6 c\left(Q_n^\gets(\Gamma_j)^{1-\alpha\wedge 2}\right)^{\frac{\delta}{8\gamma}};Q_n^\gets(\Gamma_j) \geq n^\beta \right] +
\P \left( Q_n^\gets(\Gamma_j) < n^\beta  \right)
\\
&\hspace{250pt}
+  \P\big(\Gamma_j > Q_n(M)\big)
\\&
\leq 6 c\left(n^{\beta(1-\alpha\wedge 2)}\right)^{\frac{\delta}{8\gamma}} +
\P \left( \Gamma_j > Q_n(n^\beta)  \right)
 + \P\big(\Gamma_j > Q_n(M)\big)
\\&
\leq 6 c\left(n^{\beta(1-\alpha\wedge 2)}\right)^{\frac{\delta}{8\gamma}} +
\P \left( \Gamma_j > (n^{1-\alpha\beta}L(n))  \right)
 + \P\big(\Gamma_j > Q_n(M)\big),
\end{align*}
for any $\beta>0$. If one chooses $\beta$ so that $1-\alpha\beta>0$  (for example, $\beta = \frac1{2\alpha}$), the second and third terms vanish at a geometric rate w.r.t.\ $n$. On the other hand, we can pick $\gamma$ small enough compared to $\delta$, so that the first term is decreasing at an arbitrarily fast polynomial rate. This concludes the proof of the lemma.
\end{proof}

Recall that we denote the Lebesgue measure on $[0,1]^\infty$ with $\text{Leb}$ and
defined measures $\mu_\alpha^{(j)}$ and $\mu_\beta^{(j)}$ on $\mathbb R_+^{\infty \downarrow}$ as
\begin{equation*}
\mu_\alpha^{(j)}(dx_1, dx_2, \ldots)
\triangleq \prod_{i=1}^j \nu_\alpha(dx_i)\I_{[x_1\geq x_2\geq \cdots \geq x_j > 0]} \prod_{i=j+1}^\infty \delta_0(dx_i),
\end{equation*}
and $\nu_\alpha(x,\infty) = x^{-\alpha}$, where $\delta_0$ is the Dirac measure concentrated at $0$.

\begin{lemma}\label{lem:poisson-jumps}
For each $j\geq 0$,
$$\big(n\nu[n,\infty))^{-j}\P[((Q_{n}^{\gets}(\Gamma_l)/n, l \geq 1), (U_l, l\geq 1)) \in \cdot] \to (\mu_\alpha^{(j)}\times \text{Leb})(\cdot)$$
in $\mathbb M\big((\mathbb R_+^{\infty\downarrow}\times[0,1]^\infty) \setminus (\mathbb H_{<\, j}\times [0,1]^\infty)\big)$ as $n\to\infty$.
\end{lemma}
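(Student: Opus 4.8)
The plan is to peel off everything irrelevant and reduce this infinite-dimensional $\mathbb M$-convergence to a single $j$-dimensional change of variables. Throughout one uses that $n\nu[n,\infty)$ is regularly varying of index $1-\alpha<0$, so $n\nu[n,\infty)\to0$.

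First I would truncate to the $j$ largest jumps. Writing $\theta_n\triangleq\big((Q_n^\gets(\Gamma_l)/n)_{l\ge1},(U_l)_{l\ge1}\big)$ and letting $\theta_n^{(j)}$ be $\theta_n$ with all jump-size coordinates of index $>j$ replaced by $0$ (keeping all the $U_l$), monotonicity of $l\mapsto Q_n^\gets(\Gamma_l)$ gives $d(\theta_n,\theta_n^{(j)})\le 2^{-j}Q_n^\gets(\Gamma_{j+1})/n$; hence, using $Q_n^\gets(y)\ge s\iff y\le n\nu[s,\infty)$, the Gamma tail bound $\P(\Gamma_{j+1}\le t)\le t^{j+1}/(j+1)!$ and regular variation,
\[
\P\big(d(\theta_n,\theta_n^{(j)})>\delta\big)\le\P\big(\Gamma_{j+1}\le n\nu[2^{j}\delta n,\infty)\big)=O\big((n\nu[n,\infty))^{j+1}\big)=o\big((n\nu[n,\infty))^{j}\big).
\]
So $\theta_n$ and $\theta_n^{(j)}$ are asymptotically equivalent with respect to $(n\nu[n,\infty))^{j}$, and by Corollary~\ref{lem:asymptotic-equivalence} it is enough to prove the statement with $\theta_n^{(j)}$ in place of $\theta_n$. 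Next I would split off the jump times: since $(\Gamma_l)_l$ and $(U_l)_l$ are independent, the law of $\theta_n^{(j)}$ factors as the law of $\big((Q_n^\gets(\Gamma_l)/n)_{l\le j},0,0,\dots\big)$ on $\mathbb R_+^{\infty\downarrow}$ times $\text{Leb}$ on $[0,1]^\infty$, the latter not depending on $n$. Applying Lemma~\ref{thm:simple-product-space} with $\mathbb C_1=\mathbb H_{<j}$ and $\mathbb C_2=\emptyset$ — the second factor being a constant sequence, for which $\mathbb M$-convergence is just weak convergence, and the removed set being exactly $\mathbb H_{<j}\times[0,1]^\infty$ — reduces everything to showing
\[
(n\nu[n,\infty))^{-j}\,\P\big[\big((Q_n^\gets(\Gamma_l)/n)_{l\le j},0,0,\dots\big)\in\cdot\big]\ \to\ \mu_\alpha^{(j)}(\cdot)\quad\text{in }\ \mathbb M\big(\mathbb R_+^{\infty\downarrow}\setminus\mathbb H_{<j}\big).
\]

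For this last convergence I would use the Portmanteau characterization (Theorem~\ref{result:L21}). Any set $B$ bounded away from $\mathbb H_{<j}=\{x:x_j=0\}$ satisfies $x_j\ge\epsilon$ on $B$ for some $\epsilon>0$, since $d(x,\mathbb H_{<j})\le2^{1-j}x_j$ when $x_j<1$; consequently only the event $\{\Gamma_j\le n\nu[\epsilon n,\infty)\}$ contributes, and on it $\Gamma_j\le n\nu[\epsilon n,\infty)\to0$. On the simplex $\{0<\gamma_1<\dots<\gamma_j\}$ the vector $(\Gamma_1,\dots,\Gamma_j)$ has density $e^{-\gamma_j}$; substituting $\gamma_l=(n\nu[n,\infty))s_l$, the prefactor $(n\nu[n,\infty))^{-j}$ is absorbed exactly by the Jacobian, $e^{-\gamma_j}\to1$ uniformly on the (shrinking) relevant range, and, by the uniform convergence theorem for regularly varying functions, $Q_n^\gets\big((n\nu[n,\infty))s\big)/n\to s^{-1/\alpha}$ locally uniformly on $(0,\infty)$. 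Fatou's lemma on open sets, and reverse Fatou on closed sets using the $n$-uniform integrable envelope $\I\{0<s_1<\dots<s_j<\epsilon^{-\alpha}+1\}$, then bound $\liminf$ from below (resp.\ $\limsup$ from above) by $\int_{\{0<s_1<\dots<s_j\}\cap\{(s_l^{-1/\alpha})_l\in G\}}ds$ (resp.\ the same with $\bar F$); the substitution $v_l=s_l^{-1/\alpha}$ (so $ds_l=\alpha v_l^{-\alpha-1}dv_l$ and $s_1<\dots<s_j\iff v_1>\dots>v_j$) turns this into $\mu_\alpha^{(j)}(G)$, resp.\ $\mu_\alpha^{(j)}(\bar F)$, which by Theorem~\ref{result:L21} is precisely the asserted $\mathbb M$-convergence.

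The hard part is this last step: the elementary scaling limit $Q_n^\gets((n\nu[n,\infty))s)/n\to s^{-1/\alpha}$ must be pushed through the integral sign, which forces one to treat open and closed test sets separately (a generic $B$ bounded away from $\mathbb H_{<j}$ is not guaranteed to be a $\mu_\alpha^{(j)}$-continuity set) and to exhibit an $n$-uniform integrable majorant for the integrand; it is precisely the ``bounded away'' hypothesis, through the bound $x_j\ge\epsilon$, that confines the integration to a bounded simplex and supplies that majorant. The truncation and the splitting of the time coordinates are comparatively routine.
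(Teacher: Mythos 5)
Your proposal is correct, but its core is genuinely different from the paper's argument. The paper never truncates the jump-size sequence: it works with the full sequence $(Q_n^{\gets}(\Gamma_l)/n,\,l\geq 1)$ and reduces the $\mathbb M(\mathbb R_+^{\infty\downarrow}\setminus\mathbb H_{<\,j})$-convergence to checking convergence on a convergence-determining class of ``rectangle'' sets $\{z: x_1\leq z_1,\ldots,x_l\leq z_l\}$ with $l\geq j$ (justified via Lemma~\ref{lem:convergence-determining-class}, through a small recursion showing these sets determine the limit); on such sets it computes both $\mu_\alpha^{(j)}$ and the pre-limit probability in closed form, using the order-statistics representation of $(\Gamma_1/\Gamma_l,\ldots,\Gamma_{l-1}/\Gamma_l)$ and a change of variables, and concludes by bounded convergence, with the case $l>j$ killed by the factor $\I(j=l)$ that emerges from the exact computation. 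You instead dispose of the coordinates beyond $j$ at the outset by asymptotic equivalence (your bound $\P(d(\theta_n,\theta_n^{(j)})>\delta)=O((n\nu[n,\infty))^{j+1})$ is valid and plays the role of the paper's $\I(j=l)$), and then prove the $j$-dimensional limit directly through the Portmanteau characterization of Theorem~\ref{result:L21}, a change of variables in the Gamma density, locally uniform convergence of the inverse $Q_n^{\gets}((n\nu[n,\infty))s)/n\to s^{-1/\alpha}$, and Fatou/dominated convergence, with the ``bounded away'' hypothesis supplying the bounded simplex that serves as majorant. Both routes finish identically, combining independence of sizes and times with Lemma~\ref{thm:simple-product-space}. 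What the trade-off buys: the paper's exact computation needs only the elementary pointwise facts $Q_n(nx)/(n\nu[n,\infty))\to x^{-\alpha}$ and $Q_n(nx)\to 0$, at the cost of verifying the convergence-determining class; your route avoids that machinery entirely but pays with the truncation step, the inverse-function uniform convergence, and the separate open/closed-set bookkeeping you correctly identify as the delicate part.
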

\begin{proof} We first prove that
\begin{equation}\label{eq:claim-poisson-jumps}
\big(n\nu[n,\infty))^{-j}\P[(Q_{n}^{\gets}(\Gamma_l)/n, l \geq 1) \in \cdot] \to \mu_\alpha^{(j)}(\cdot)
\end{equation}
in $\mathbb M(\mathbb R_+^{\infty\downarrow}\setminus \mathbb H_{<\, j})$ as $n\to\infty$. 
To show this, we only need to check that
\begin{equation}\label{eq:what-to-check}
\big(n\nu[n,\infty))^{-j} \allowbreak \P[(Q_{n}^{\gets}(\Gamma_l)/n, l \geq 1) \in A] \to \mu_\alpha^{(j)}(A)
\end{equation}
for $A$'s that belong to the convergence-determining class $\mathcal A_{j} \triangleq \big\{\{z\in \R_+^{\infty\downarrow}: x_1 \leq z_1 , \ldots, x_l \leq z_l\}: l\geq j, x_1\geq\ldots\geq x_l>0\big\}$.
To see that $\mathcal A_{j}$ is a convergence-determining class for $\mathbb M(\mathbb R_+^{\infty\downarrow}\setminus \mathbb H_{<\, j})$-convergence, note that $\mathcal A'_{j} \triangleq \big\{\{z\in \R_+^{\infty\downarrow}: x_1 \leq z_1 < y_1, \ldots, x_l \leq z_l <  y_l\}: l\geq j,\ x_1,\ldots,x_l\in (0,\infty),\ y_1,\ldots,y_l \in (0,\infty] \big\}$  satisfies conditions (i), (ii), and (iii) of Lemma~\ref{lem:convergence-determining-class}, and hence, is a convergence-determining class.
Now define $\mathcal A_j(i)$'s recursively as $\mathcal A_{j}(i+1) \triangleq \{B\setminus A: A,B\in \mathcal A_j(i), A\subseteq B\}$ for $i\geq 0$, and $\mathcal A_{j}(0) = \mathcal A_j'' \triangleq \big\{\{z\in \R_+^{\infty\downarrow}: x_1 \leq z_1 , \ldots, x_l \leq z_l\}: l\geq j, x_1,\ldots, x_l>0\big\}$.
Since we restrict the set-difference operation between nested sets, the limit associated with the sets in $\mathcal A_{j}(i+1)$ is determined by the sets in $\mathcal A_j(i)$, and eventually, $\mathcal A_j''$. Noting that $\mathcal A'_j \subseteq \bigcup_{i=0}^\infty \mathcal A_j(i)$, we see that $\mathcal A_j''$ is a convergence-determining class.
Now, since both $\P[(Q_n^\gets(\Gamma_l)/n, l\geq 1)\in \cdot]$ and $\mu_\alpha^{(j)}(\cdot)$ are supported on $\R^{\infty\downarrow}_+$, one can further reduce the convergence determining class from $\mathcal A_j''$ to $\mathcal A_j$.

To check the desired convergence for the sets in $\mathcal A_j$, we first characterize the limit measure. 
Let $l \geq j$ and $x_1 \geq \cdots \geq x_l > 0$.
By the change of variables  $v_i = x_i^\alpha y_i^{-\alpha}$ for $i=1,\ldots, j$,
\begin{align*}
&
\mu_\alpha^{(j)} (\{z \in \mathbb R_+^{\infty\downarrow}: x_1 \leq z_1, \ldots, x_l \leq z_l\})
\nonumber\\
&
=\I(j=l)\cdot
\int_{x_j}^\infty \cdots\int_{x_1}^\infty \I(y_1\geq \cdots \geq y_j) d\nu_\alpha(y_1)\cdots d\nu_\alpha(y_j)
\nonumber
\\
&
=
\I(j=l)\cdot
\left(\prod_{i=1}^j x_i\right)^{-\alpha}\cdot\int_0^1\cdots \int_0^1 \I(x_1^{-\alpha}v_1 \leq \cdots \leq x_j^{-\alpha} v_j) dv_1\cdots dv_j.
\end{align*}
Next, we find a similar representation for the distribution of $\Gamma_1,\ldots, \Gamma_l$. 
Let $U_{(1)},\ldots, U_{(l-1)}$ be the order statistics of $l-1$ iid uniform random variables on $[0,1]$.
Recall first that the conditional distribution of $(\Gamma_1/\Gamma_l,\ldots, \Gamma_{l-1}/\Gamma_l)$ given $\Gamma_j$ does not depend on $\Gamma_j$ and coincides with the distribution of $(U_{(1)},\ldots, U_{(l-1)})$; see, for example, \cite{pyke1965spacings}.
Suppose that $l\geq j$ and $0\leq y_1\leq \cdots \leq y_l$. 
By the change of variables $u_i = \gamma^{-1} y_i v_i$ for $i=1,\ldots,l-1$, and $\gamma = y_{l}v_l$,
\begin{align*}
&
\P\big(\Gamma_1 \leq y_1,\ldots, \Gamma_l \leq y_l\big)
\\
&
= 
\E \Big[\P\big({\Gamma_1}/{\Gamma_l} \leq {y_1}/{\Gamma_l},\,\ldots,\, {\Gamma_{l-1}}/{\Gamma_l} \leq {y_{l-1}}/{\Gamma_l}\big| \Gamma_l\big)\cdot\I\big(\Gamma_l \leq y_l\big)\Big]
\\
&
=
\int_0^{y_l}\P\big(U_{(1)} \leq y_1/\gamma,\ldots,\, U_{(l-1)} \leq y_{l-1}/\gamma\big) \frac{e^{-\gamma}\gamma^{l-1}}{(l-1)!}d\gamma
\\
&
=
\int_0^{y_l} e^{-\gamma}\gamma^{l-1}\int_0^{y_{l-1}/\gamma}\cdots\int_0^{y_1/\gamma}\I(u_1\leq\cdots\leq u_{l-1} \leq 1) du_1\cdots du_{l-1}d\gamma
\\
&
=
\left(\prod_{i=1}^{l-1} y_i\right)\int_0^{y_l} e^{-\gamma}\int_0^1\cdots\int_0^1\I(y_1v_1\leq\cdots\leq y_{l-1} v_{l-1} \leq \gamma) dv_1\cdots dv_{l-1}d\gamma
\\
&
=
\left(\prod_{i=1}^{l} y_i\right)\cdot
\int_0^1\cdots\int_0^1 e^{-y_lv_l}\I(y_1v_1\leq \cdots \leq y_{l}v_{l}) dv_1\cdots dv_l.
\end{align*}
Since $0\leq Q_n(nx_1)\leq \ldots \leq Q_n(nx_l)$ for $x_1\geq \cdots \geq x_l > 0$,
\begin{align*}
&
(n\nu[n,\infty))^{-j}\P[Q_n^\gets(\Gamma_1)/n \geq x_1, \ldots, Q_n^\gets(\Gamma_l) \geq x_l]
\\
&
=
(n\nu[n,\infty))^{-j}\P[\Gamma_1\leq Q_n(n x_1), \ldots, \Gamma_l\leq Q_n(n x_l)]
\\
&
=
(n\nu[n,\infty))^{-j}\cdot\left(\prod_{i=1}^{l} Q_n(nx_i)\right)
\\
&\hspace{50pt}
\cdot\int_0^1\cdots\int_0^1 e^{-Q_n(nx_l)v_l}\I(Q_n(nx_1)v_1\leq \cdots \leq Q_n(nx_l)v_{l}) dv_1\cdots dv_l
\\
&
=
\Bigg(\prod_{i=1}^{j} \frac{Q_n(nx_i)}{n\nu[n,\infty)}\Bigg)\cdot \Bigg(\prod_{i=j+1}^{l} Q_n(nx_i)\Bigg)
\\
&\hspace{30pt}
\cdot\int_0^1\cdots\int_0^1 e^{-Q_n(nx_l)v_l}\I\bigg(\frac{Q_n(nx_i)}{n\nu[n,\infty)}v_1\leq \cdots \leq \frac{Q_n(nx_i)}{n\nu[n,\infty)}v_{l}\bigg) dv_1\cdots dv_l.
\end{align*}
Note that $Q_n(nx_i)\to 0$ and  $\frac{Q_n(nx_i)}{n\nu[n,\infty)}\to x_i^{-\alpha}$ as $n\to\infty$ for each $i=1,\ldots, l$. 
Therefore, by bounded convergence,
\begin{align*}
&
(n\nu[n,\infty))^{-j}\P[Q_n^\gets(\Gamma_1)/n \geq x_1, \ldots, Q_n^\gets(\Gamma_l) \geq x_l]
\\
&\to 
\I(j=l)\Bigg(\prod_{i=1}^{j}x_i\Bigg)^{-\alpha}
\cdot\int_0^1\cdots\int_0^1 \I(x_1^{-\alpha}v_1\leq \cdots \leq x_j^{-\alpha}v_{j}) dv_1\cdots dv_j
\\
&
= 
\mu_\alpha^{(j)} (\{z \in \mathbb R_+^{\infty\downarrow}: x_1 \leq z_1, \ldots, x_l \leq z_l\}),
\end{align*}
which concludes the proof of \eqref{eq:claim-poisson-jumps}.
The conclusion of the lemma follows from the independence of $(Q_n^\gets(\Gamma_l)/n, l\geq 1)$ and $(U_l,l\geq 1)$ and Lemma~\ref{thm:simple-product-space}.

\end{proof}

\begin{lemma}\label{lem:Djk}
Suppose that $x_1 \geq \cdots \geq x_j \geq 0$; $u_i\in(0,1)$ for $i=1,\ldots,j$;  $y_1 \geq \cdots \geq y_k \geq 0$; $v_i \in (0,1)$ for $i=1, \ldots,k$; $u_1,\ldots,u_j, v_1, \ldots, v_k$ are all distinct.
\begin{itemize}
	\item[(a)] For any $\epsilon>0$,
	\begin{align*}
	&
	\{x \in G: d(x,y)< (1+\epsilon)\delta \text{ implies } y\in G\}
	\\
	&
	\subseteq G^{-\delta}
	\\
	&
	\subseteq\{x \in G: d(x,y)< \delta \text{ implies } y\in G\}.
	\end{align*}
	Also, $(A\cap B)_\delta \subseteq A_\delta \cap B_\delta$ and $A^{-\delta} \cup B^{-\delta} \subseteq (A\cup B)^{-\delta}$ for any $A$ and $B$.
	
	\item[(b)] $\sum_{i=1}^j x_i 1_{[u_i,1]} \in (\mathbb D \setminus \mathbb D_{<\,j})^{-\delta}$ implies $x_j \geq \delta$.
	
	\item[(c)] $\sum_{i=1}^j x_i 1_{[u_i,1]} \notin (\mathbb D \setminus \mathbb D_{<\, j})^{-\delta}$ implies $x_j \leq 2\delta$.
	
	\item[(d)] $\sum_{i=1}^j x_i 1_{[u_i,1]} - \sum_{i=1}^k y_i 1_{[v_i,1]} \in (\mathbb D \setminus \mathbb D_{< j,k})^{-\delta}$ implies $x_j \geq \delta$ and $y_k \geq \delta$.

	\item[(e)] Suppose that $\xi \in \mathbb D_{j,k}$. If $l<j$ or $m<k$, then $\xi$ is bounded away from $\mathbb D_{l,m}$. 

	\item[(f)] If $I(\xi) > (\alpha -1)j + (\beta-1)k$, then $\xi$ is bounded away from $\mathbb D_{<j,k}\cup \mathbb D_{j,k}$.
\end{itemize}
\end{lemma}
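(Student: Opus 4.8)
The plan is to establish the six items essentially in the stated order; only (c) and (e) require a genuine metric estimate, everything else being manipulation of the definitions $G^{-\delta}=((G^{c})_{\delta})^{c}=\{x:d(x,G^{c})>\delta\}$ and $F_\delta=\{x:d(x,F)\le\delta\}$ together with the elementary bound $d(\xi,\zeta)\le\|\xi-\zeta\|_\infty$ (take the identity time change). For (a): if $x\in G$ and every $y$ with $d(x,y)<(1+\epsilon)\delta$ lies in $G$, then no point of $G^{c}$ lies within $(1+\epsilon)\delta$ of $x$, so $d(x,G^{c})\ge(1+\epsilon)\delta>\delta$, i.e.\ $x\in G^{-\delta}$; conversely $d(x,G^{c})>\delta$ forces $x\notin G^{c}$ and every $y$ with $d(x,y)<\delta$ into $G$. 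The two displayed set identities follow from $A\cap B\subseteq A$ (hence $d(x,A\cap B)\ge d(x,A)$, giving $(A\cap B)_\delta\subseteq A_\delta\cap B_\delta$) and $(A\cup B)^{c}=A^{c}\cap B^{c}\subseteq A^{c}$ (hence $d(x,(A\cup B)^{c})\ge d(x,A^{c})$, giving $A^{-\delta}\subseteq(A\cup B)^{-\delta}$, and symmetrically in $B$).

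For (b) and (d), delete the smallest jump. With $\xi=\sum_{i=1}^{j}x_i1_{[u_i,1]}$, the truncation $\xi':=\sum_{i=1}^{j-1}x_i1_{[u_i,1]}$ is a non-decreasing step function vanishing at the origin with at most $j-1$ jumps, hence $\xi'\in\mathbb D_{<j}$, so $d(\xi,\mathbb D_{<j})\le\|\xi-\xi'\|_\infty=x_j$. Contraposing, $x_j<\delta$ gives $d(\xi,\mathbb D_{<j})<\delta$, i.e.\ $\xi\notin(\mathbb D\setminus\mathbb D_{<j})^{-\delta}$, which is (b) (the case $j=0$ being vacuous). For (d) one truncates $\eta:=\sum_i x_i1_{[u_i,1]}-\sum_i y_i1_{[v_i,1]}$ by removing either the smallest upward jump or the smallest downward jump; since the $u_i$ and $v_i$ are all distinct, no cancellation occurs, and the two truncations lie in $\mathbb D_{j-1,k}$ and $\mathbb D_{j,k-1}$, both contained in $\mathbb D_{<j,k}$ because $(\alpha-1)(j-1)+(\beta-1)k$ and $(\alpha-1)j+(\beta-1)(k-1)$ are strictly below $(\alpha-1)j+(\beta-1)k$. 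Thus $d(\eta,\mathbb D_{<j,k})\le x_j\wedge y_k$, and the same contraposition yields $x_j\ge\delta$ and $y_k\ge\delta$.

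The core estimate, which underlies (c) and (e), is: \emph{if $\zeta$ is a step function with strictly fewer than $j$ upward jumps, then $d(\xi,\zeta)\ge\tfrac12 x_j$} (when $x_j>0$; the case $x_j=0$ is trivial), so that $d(\xi,\mathbb D_{<j})\ge\tfrac12 x_j$ and (c) follows at once: $\xi\notin(\mathbb D\setminus\mathbb D_{<j})^{-\delta}$ means $d(\xi,\mathbb D_{<j})\le\delta$, whence $x_j\le2\delta$. To prove the estimate, suppose $d(\xi,\zeta)<r$ and pick $\lambda$ with $\|\lambda-e\|<r$ and $\|\xi\circ\lambda-\zeta\|_\infty<r$. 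Then $\tilde\xi:=\xi\circ\lambda$ is a step function with the same $j$ jump sizes, at times $\tilde t_1<\cdots<\tilde t_j$; choosing $\epsilon>0$ below both $\tilde t_1$ and the minimal gap between consecutive jump times of $\tilde\xi$, the function $\tilde\xi$ is constant equal to some $v_i$ on $(\tilde t_i-\epsilon,\tilde t_i)$, so $\zeta(\tilde t_i-\epsilon/2)<v_i+r$ while $\zeta(\tilde t_i)>\tilde\xi(\tilde t_i)-r\ge v_i+x_j-r$; hence $\zeta$ increases by more than $x_j-2r$ on each of the disjoint intervals $[\tilde t_i-\epsilon/2,\tilde t_i]$. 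If $r<\tfrac12 x_j$ this forces an upward jump of $\zeta$ in each interval, so $\zeta$ has at least $j$ upward jumps, a contradiction. Running the identical argument on downward jumps (with $\zeta$ forced to decrease near each downward jump of $\xi$) shows that for $\xi\in\mathbb D_{j,k}$ with smallest jump modulus $\theta>0$, any $\zeta\in\mathbb D_{l,m}$ with $l<j$ or $m<k$ satisfies $d(\xi,\zeta)\ge\tfrac12\theta>0$ (use the upward count if $l<j$, the downward count if $m<k$), which is (e). I expect this time-change-and-jump-counting bookkeeping to be the one genuinely delicate step; everything else is routine.

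For (f), observe that $\mathbb D_{<j,k}\cup\mathbb D_{j,k}$ is the union of the finitely many sets $\mathbb D_{l,m}$ with $(\alpha-1)l+(\beta-1)m\le(\alpha-1)j+(\beta-1)k$ (finitely many since $\alpha-1,\beta-1>0$), so it suffices to bound $d(\xi,\mathbb D_{l,m})$ away from $0$ for each such $(l,m)$ and take the minimum. If $I(\xi)<\infty$, then $\xi$ is a step function vanishing at the origin and $(\alpha-1)\mathcal D_+(\xi)+(\beta-1)\mathcal D_-(\xi)=I(\xi)>(\alpha-1)l+(\beta-1)m$ forces $\mathcal D_+(\xi)>l$ or $\mathcal D_-(\xi)>m$; part (e) then gives $d(\xi,\mathbb D_{l,m})>0$. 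If $I(\xi)=\infty$, then $\xi$ is not a step function vanishing at the origin: if $\xi(0)\ne0$ then $d(\xi,\mathbb D_{l,m})\ge|\xi(0)|>0$, since every time change fixes $0$ and every element of $\mathbb D_{l,m}$ vanishes at the origin; otherwise, were $d(\xi,\mathbb D_{l,m})=0$, we could pick $\zeta_n\in\mathbb D_{l,m}$ and time changes $\lambda_n$ with $\|\lambda_n-e\|\to0$ and $\|\xi\circ\lambda_n-\zeta_n\|_\infty\to0$, so that $\zeta_n\circ\lambda_n^{-1}$ is a step function with at most $l+m$ jumps, vanishing at the origin, converging to $\xi$ uniformly; but a uniform limit of step functions with a bounded number of jumps is again such a step function (pass to a subsequence along which the number of jumps, the jump locations, and the jump sizes all converge), forcing $\xi$ to be a step function vanishing at the origin, a contradiction. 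Hence $d(\xi,\mathbb D_{l,m})>0$ for every admissible $(l,m)$, and $\xi$ is bounded away from $\mathbb D_{<j,k}\cup\mathbb D_{j,k}$.
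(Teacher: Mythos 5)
Your proof is correct, and for parts (a)--(e) it is essentially the paper's route: (a), (b), (d) are the same definition-chasing and smallest-jump-deletion arguments, and your single ``core estimate'' $d(\xi,\zeta)\ge x_j/2$ for step functions $\zeta$ with fewer than $j$ upward jumps is the same jump-matching idea the paper uses, just proved once and reused for both (c) and (e) (the paper proves (c) with a $(2+\epsilon)\delta$ margin and lets $\epsilon\to0$; your direct factor-$2$ bound is marginally cleaner but not a different idea). The genuine divergence is in (f), in the case $I(\xi)=\infty$ with $\xi(0)=0$: the paper splits this into sub-cases---infinitely many upward or downward jumps (handled via (e)) and the presence of a continuous non-constant piece, for which it derives the explicit quantitative bound $\|\xi-\zeta\|\ge\big(\xi(t_2-)-\xi(t_1)\big)/\big(2(\mathcal D_+(\zeta)+1)\big)$ and hence $d(\xi,\mathbb D_{j,k})\ge\big(\xi(t_2-)-\xi(t_1)\big)/\big(2(j+1)\big)$---whereas you argue softly: if $d(\xi,\mathbb D_{l,m})=0$, then absorbing the time changes into $\zeta_n\circ\lambda_n^{-1}$ exhibits $\xi$ as a uniform limit of step functions vanishing at the origin with at most $l+m$ jumps, and this class is closed under uniform limits, contradicting $I(\xi)=\infty$. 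Your route unifies the sub-cases and avoids any explicit estimate, at the price of the closedness claim, which is true but deserves slightly more care than your parenthetical: breakpoints of the approximants may merge in the limit, so ``jump locations and jump sizes converge'' does not by itself describe the limit function; the clean argument is that away from the finitely many limit points of the breakpoints the approximants are eventually locally constant, hence so is $\xi$, so $\xi$ is piecewise constant with at most $l+m$ discontinuities and $\xi(0)=0$. The paper's approach, in exchange, yields an explicit lower bound on the distance, which your qualitative argument does not provide but which is not needed for the lemma as stated.
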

\begin{proof} 
(a) Immediate consequences of the definition.

(b) From (a), we see that $\sum_{i=1}^j x_i 1_{[u_i,1]} \in (\mathbb D \setminus \mathbb D_{<\, j})^{-\delta}$ and $\sum_{i=1}^{j-1} x_i 1_{[u_i,1]}\in \mathbb D_{<\, j}$ implies $d\Big(\sum_{i=1}^j x_i 1_{[u_i,1]},\allowbreak\sum_{i=1}^{j-1} x_i 1_{[u_i,1]}\Big) \geq \delta$, which is not possible if  $x_j < \delta$.

(c) We prove that for any $\epsilon>0$, $\sum_{i=1}^j x_i 1_{[u_i,1]} \notin (\mathbb D \setminus \mathbb D_{<\,j})^{-\delta}$ implies $x_j \leq (2+\epsilon)\delta$. 
To show this, in turn, we work with the contrapositive. 
Suppose that $x_j> (2+\epsilon)\delta$. 
If $d(\sum_{i=1}^j x_i 1_{[u_i,1]}, \zeta)< (1+\epsilon/2)\delta$, by the definition of the Skorokhod metric, there exists a non-decreasing homeomorphism $\phi$ of $[0,1]$ onto itself such that $\|\sum_{i=1}^j x_i 1_{[u_i,1]}-\zeta\circ \phi\|_\infty < (1+\epsilon/2)\delta$.
Note that at each discontinuity point of $\sum_{i=1}^{j}x_i1_{[y_i,1]}$, $\zeta\circ\phi$ should also be discontinuous. 
Otherwise, the supremum distance between $\sum_{i=1}^j x_i 1_{[u_i,1]}$ and $\zeta\circ \phi$ has to be greater than $(1+\epsilon/2)\delta$, since the smallest jump size of $\sum_{i=1}^j x_i 1_{[u_i,1]}$ is greater than $(2+\epsilon)\delta$.  Hence, there has to be at least $j$ discontinuities in the path of $\zeta$; i.e., $\zeta \in \mathbb D \setminus \mathbb D_{<\,j}$. We have shown that $d(\sum_{i=1}^j x_i 1_{[u_i,1]}, \zeta)< (1+\epsilon/2)\delta$ implies $\zeta\in \mathbb D \setminus \mathbb D_{<\,j}$, which in turn, along with (a), shows that $\sum_{i=1}^j x_i 1_{[u_i,1]} \in (\mathbb D \setminus \mathbb D_{<\, j})^{-\delta}$.

(d) Suppose that $\sum_{i=1}^j x_i 1_{[u_i,1]} - \sum_{i=1}^k y_i 1_{[v_i,1]}\in (\mathbb D\setminus \mathbb D_{<j,k})^{-\delta}$. Since $\sum_{i=1}^{j-1} x_i 1_{[u_i,1]} - \sum_{i=1}^k y_i 1_{[v_i,1]}\notin \mathbb D \setminus \mathbb D_{<j,k}$,
$$x_j \geq d\left(\sum_{i=1}^j x_i 1_{[u_i,1]} - \sum_{i=1}^k y_i 1_{[v_i,1]}, \ \sum_{i=1}^{j-1} x_i 1_{[u_i,1]} - \sum_{i=1}^k y_i 1_{[v_i,1]}\right)
\geq \delta.$$
Similarly, we get $y_k \geq \delta$.

(e) Let $\xi = \sum_{i=1}^j x_i 1_{[u_i,1]} - \sum_{i=1}^k y_i 1_{[v_i,1]}$. 
First, we claim that $d(\zeta, \xi) \geq x_j/2$ for any $\zeta\in \mathbb D_{l,m}$ with $l < j$.  
Suppose not, i.e., $d(\zeta, \xi) < x_j / 2$.
Then there exists a  non-decreasing homeomorphism $\phi$ of $[0,1]$ onto itself such that $\|\sum_{i=1}^j x_i 1_{[u_i,1]}-\zeta\circ \phi\|_\infty < x_j/2$.
Note that this implies that at each discontinuity point $s$ of $\xi$, $\zeta\circ\phi$ should also be discontinuous. 
Otherwise, $|\zeta\circ\phi(s)- \xi(s)|+|\zeta\circ\phi(s-) - \xi(s-)|\geq|\xi(s)-\xi(s-)| \geq x_j$, and hence it is contradictory to the bound on the supremum distance between $\xi$ and $\zeta\circ \phi$.
However, this implies that $\zeta$ has $j$ upward jumps and hence, contradictory to the assumption $\zeta \in \mathbb D_{l,m}$, proving the claim.
Likewise, $d(\zeta, \xi) \geq y_k/2$ for any $\xi \in \mathbb D_{l,m}$ with $m < k$.

%

(f) Note that in case $I(\xi)$ is finite, $\mathcal D_+(\xi) >  j$ or $\mathcal D_-(\xi) > k$.
In this case, the conclusion is immediate from (e).
In case $I(\xi)=\infty$, either $\mathcal D_+(\zeta) =\infty$, $\mathcal D_-(\zeta)=\infty$, $\xi(0) \neq 0$, or $\xi$  contains a continuous non-constant piece. By containing a continuous non-constant piece, we refer to the case that there exist $t_1$ and $t_2$ such that $t_1 < t_2$, $\xi(t_1)\neq \xi(t_2-)$ and $\xi$ is continuous on $(t_1, t_2)$.
For the first two cases where the number of jumps is infinite, the conclusion is an immediate consequence of (e). The case $\xi(0)\neq 0$ is also obvious. Now we are left with dealing with the last case, where $\xi$ has a continuous non-constant piece.
To discuss this case, assume w.l.o.g.\ that $\xi(t_1) < \xi(t_2-)$.
We claim that $d(\xi,\mathbb D_{j,k}) \geq \frac{\xi(t_2-)-\xi(t_1)}{2(j+1)}$.
Note that for any step function  $\zeta$,
\begin{align*}
\|\xi-\zeta\|
&
\geq |\xi(t_2-)-\zeta(t_2-)|\vee |\xi(t_1)-\zeta(t_1)|
\\
&
\geq (\xi(t_2-)-\zeta(t_2-)) \vee (\zeta(t_1)-\xi(t_1))
\\
&
\geq \frac{1}{2}\Big\{ (\xi(t_2-) - \xi(t_1))-(\zeta(t_2-)-\zeta(t_1))\Big\}
\\
&
\geq \frac12 \Big\{(\xi(t_2-) - \xi(t_1)) - \sum_{t\in(t_1,t_2)} \big(\zeta(t)-\zeta(t-)\big) \Big\}
\\
&
\geq \frac12 \Big\{(\xi(t_2-)-\xi(t_1)) - 2\mathcal D_+(\zeta) \|\xi-\zeta\|\Big\},
\end{align*}
where the fourth inequality is due to the fact that $\|\xi-\zeta\| \geq \frac{\zeta(t)-\zeta(t-)}{2}$ for all $t\in(t_1,t_2)$. From this, we get
$$
\|\xi-\zeta\| \geq \frac{\xi(t_2-)-\xi(t_1)}{2(\mathcal D_+(\zeta) + 1)} \geq \frac{\xi(t_2-)-\xi(t_1)}{2(j + 1)},
$$
for $\zeta \in \mathbb D_{j,k}$. Now, suppose that $\zeta \in \mathbb D_{j,k}$. Since $\zeta\circ\phi$ is again in $\mathbb D_{j,k}$ for any non-decreasing homeomorphism $\phi$ of $[0,1]$ onto itself,
$$
d(\xi, \zeta) \geq \frac{\xi(t_2-)-\xi(t_1)}{2(j + 1)},
$$
which proves the claim.
\end{proof}


Now we move on to the proof of Theorem~\ref{thm:two-sided-limit-theorem}.
We first establish Theorem~\ref{thm:multi-d-limit-theorem}, which plays a key role in the proof. 
Recall that
$\mathbb D_{<j} = \bigcup_{0\leq l<j} \mathbb D_l$ and let
$\mathbb D_{<(j_1,\ldots,j_d)}
\triangleq \bigcup_{(l_1,\ldots,l_d) \in \I_{<(j_1,\ldots,j_d)}}\prod_{i=1}^d\mathbb D_{l_i}$
where
$
\I_{<(j_1,\ldots,j_d)}
\triangleq \big\{(l_1,\ldots,l_d)\allowbreak\in \mathbb Z_+^d\setminus\{(j_1,\ldots,j_d)\}: (\alpha_1-1)l_1+\cdots+(\alpha_d-1)l_d \leq (\alpha_1-1)j_1+\cdots+(\alpha_d-1)j_d\big\}$.
For each $l\in \mathbb Z_+$ and $i=1,\ldots,d$, let 
$
C_l^{(i)}(\cdot) \triangleq \E \Big[\nu_{\alpha_i}^l \{x\in (0,\infty)^l:\sum_{j=1}^l x_j 1_{[U_j,1]}\in \cdot\}\Big]
$
where $U_1,\ldots,U_l$ are iid uniform on $[0,1]$,
and 
$
\nu_{\alpha_i}^l
$ 
is as defined right below \eqref{math-display-above-definition-nu-alpha-j}.

\begin{theorem}\label{thm:multi-d-limit-theorem}
Consider independent 1-dimensional L\'evy processes $X^{(1)},\allowbreak\ldots, X^{(d)}$ with spectrally positive L\'evy measures $\nu_1(\cdot), \ldots,\nu_d(\cdot)$, respectively.
Suppose that each $\nu_i$ is regularly varying (at infinity) with index $-\alpha_i<-1$, and 
let $\bar X^{(i)}_n$ be centered and scaled scaled version of $X^{(i)}$ for each $i=1,\ldots,d$.
Then, for each $(j_1,\ldots,j_d)\in \mathbb Z_+^d$,
$$
\frac{\P((\bar X_n^{(1)},\ldots,\bar X_n^{(d)})\in \cdot)}{\prod_{i=1}^d\big(n\nu_i[n,\infty)\big)^{j_i}}\to C_{j_1}^{(1)}\times\cdots\times C_{j_d}^{(d)}(\cdot)
$$
in $\mathbb M\Big(\prod_{i=1}^d\mathbb D\setminus \mathbb D_{<(j_1,\ldots,j_d)}\Big)$.
\end{theorem}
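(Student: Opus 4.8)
The plan is to lift the one-dimensional statement, Theorem~\ref{thm:one-sided-limit-theorem}, to the product space in two stages: first obtain a ``rectangular'' $\mathbb M$-convergence on $\mathbb D^d$ for every multi-index, and then glue these together into the genuinely non-rectangular statement on $\mathbb M(\prod_i\mathbb D\setminus\mathbb D_{<(j_1,\ldots,j_d)})$ by means of Lemma~\ref{thm:union-limsup}. First, Theorem~\ref{thm:one-sided-limit-theorem} applied to each spectrally positive process $X^{(i)}$ gives, for every $l\ge0$, that $(n\nu_i[n,\infty))^{-l}\P(\bar X_n^{(i)}\in\cdot)\to C_l^{(i)}(\cdot)$ in $\mathbb M(\mathbb D\setminus\mathbb D_{<l})$. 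Since $X^{(1)},\dots,X^{(d)}$ are independent, the joint law of $(\bar X_n^{(1)},\dots,\bar X_n^{(d)})$ factorizes, so Lemma~\ref{thm:simple-product-space} yields, for every $\mathbf l=(l_1,\dots,l_d)\in\Z_+^d$,
\[
\frac{\P\big((\bar X_n^{(1)},\dots,\bar X_n^{(d)})\in\cdot\big)}{\prod_{i=1}^d\big(n\nu_i[n,\infty)\big)^{l_i}}\ \longrightarrow\ C_{l_1}^{(1)}\times\cdots\times C_{l_d}^{(d)}(\cdot)\qquad\text{in }\ \mathbb M\big(\mathbb D^d\setminus\mathbb C_{\mathbf l}\big),
\]
where $\mathbb C_{\mathbf l}\triangleq\bigcup_{i=1}^d\big(\mathbb D^{i-1}\times\mathbb D_{<l_i}\times\mathbb D^{d-i}\big)$, so that $\mathbb D^d\setminus\mathbb C_{\mathbf l}=\prod_{i=1}^d(\mathbb D\setminus\mathbb D_{<l_i})$.

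Now fix the target $\mathbf j=(j_1,\dots,j_d)$ and set $W\triangleq\sum_{i}(\alpha_i-1)j_i$. I would apply Lemma~\ref{thm:union-limsup} on $\mathbb S=\mathbb D^d$ with index $0$ equal to $\mathbf l^{(0)}=\mathbf j$, and the remaining indices enumerating the (finitely many) minimal elements of the up-set $\{\mathbf p\in\Z_+^d:\sum_i(\alpha_i-1)p_i>W\}$ of ``heavy'' multi-indices; these minimal heavy tuples include, for each $i$, the multi-index supported on coordinate $i$ alone whose $i$-th entry is the least integer $P_i$ with $(\alpha_i-1)P_i>W$. For index $s$ one takes $\epsilon_n(s)=\prod_i(n\nu_i[n,\infty))^{l_i^{(s)}}$, $\mu^{(s)}=\prod_i C_{l_i^{(s)}}^{(i)}$, $\mathbb C(s)=\mathbb C_{\mathbf l^{(s)}}$, the required input convergences being exactly those of the previous paragraph. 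The combinatorial heart of the argument is the identity $\bigcap_{s}\mathbb C(s)=\mathbb D_{<(j_1,\ldots,j_d)}$. ``$\supseteq$'': a light tuple $\mathbf l'\in\I_{<(j_1,\ldots,j_d)}$ is never $\ge\mathbf j$ coordinatewise and never $\ge$ any heavy tuple (the weight $\sum_i(\alpha_i-1)(\cdot)_i$ is strictly coordinate-monotone), hence $\prod_i\mathbb D_{l'_i}\subseteq\mathbb C_{\mathbf l^{(s)}}$ for all $s$. ``$\subseteq$'': for a point $\mathbf\xi$ of $\bigcap_s\mathbb C(s)$, membership in the $\mathbb C(s)$ coming from the single-coordinate tuples attached to the $P_i$ forces, since those tuples carry $\mathbb D_{<0}=\emptyset$ in every other slot, each coordinate of $\mathbf\xi$ to be a nondecreasing step function vanishing at $0$ with finitely many jumps; its vector $\mathbf m$ of jump counts then satisfies $\sum_i(\alpha_i-1)m_i\le W$ (else some minimal heavy $\mathbf p^{(s)}\le\mathbf m$, contradicting $\mathbf\xi\in\mathbb C(s)$) and $\mathbf m\neq\mathbf j$ (else $\mathbf\xi\in\prod_i\mathbb D_{j_i}$, which is disjoint from $\mathbb C(0)=\mathbb C_{\mathbf j}$), so $\mathbf\xi\in\prod_i\mathbb D_{m_i}\subseteq\mathbb D_{<(j_1,\ldots,j_d)}$.

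It then remains to check the remaining hypotheses of Lemma~\ref{thm:union-limsup}. The ratio condition $\limsup_n\epsilon_n(s)/\epsilon_n(0)=0$ for $s\ge1$ holds because $\epsilon_n(s)/\epsilon_n(0)$ is regularly varying of index $W-\sum_i(\alpha_i-1)p_i^{(s)}<0$. The requirement $\mu^{(0)}=\prod_iC_{j_i}^{(i)}\in\mathbb M(\mathbb D^d\setminus\mathbb D_{<(j_1,\ldots,j_d)})$ reduces to checking that any set bounded away from $\mathbb D_{<(j_1,\ldots,j_d)}$ carries finite $\mu^{(0)}$-mass; since $\mu^{(0)}$-almost every path has $i$-th coordinate a step function with exactly $j_i$ jumps, being bounded away from the lighter configurations forces a uniform positive lower bound on all those jump sizes, and finiteness follows — this is the $d$-fold analogue of the Skorokhod-distance estimates in Lemma~\ref{lem:Djk}(d)--(e). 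The last hypothesis, ``for every $r>0$ there are $r_0,\dots,r_m>0$ with $\bigcap_{s}\mathbb C(s)^{r_s}\subseteq\big(\bigcap_{s}\mathbb C(s)\big)^r$'', follows from those same distance estimates together with the scale-homogeneity of the sets $\mathbb C_{\mathbf l}$. Lemma~\ref{thm:union-limsup} then delivers convergence of $\epsilon_n(0)^{-1}\P\big((\bar X_n^{(1)},\dots,\bar X_n^{(d)})\in\cdot\big)$ to $\prod_iC_{j_i}^{(i)}$ in $\mathbb M(\mathbb D^d\setminus\bigcap_s\mathbb C(s))=\mathbb M(\mathbb D^d\setminus\mathbb D_{<(j_1,\ldots,j_d)})$, as asserted. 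I expect the genuine difficulty to be concentrated in the last two displayed conditions — establishing the identity $\bigcap_s\mathbb C(s)=\mathbb D_{<(j_1,\ldots,j_d)}$ rigorously and, above all, proving the Minkowski-type inclusion — since both hinge on quantitative control of $J_1$-distances between tuples of step functions with prescribed jump counts, i.e.\ the multi-dimensional extension of Lemma~\ref{lem:Djk}.
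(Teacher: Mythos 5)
Your route is the paper's own: apply Theorem~\ref{thm:one-sided-limit-theorem} coordinatewise, use Lemma~\ref{thm:simple-product-space} to get the rectangular convergences on $\mathbb D^d$, and then glue them with Lemma~\ref{thm:union-limsup} over a finite family of minimal indices. Your family (the multi-index $(j_1,\ldots,j_d)$ together with the minimal elements of the heavy up-set) differs only inessentially from the paper's choice (the minimal elements of $\Z_+^d\setminus\I_{<(j_1,\ldots,j_d)}$), and your verifications of the intersection identity $\bigcap_s\mathbb C(s)=\mathbb D_{<(j_1,\ldots,j_d)}$ (which the paper merely asserts), of the ratio condition, and of $\mu^{(0)}\in\mathbb M\big(\mathbb D^d\setminus\mathbb D_{<(j_1,\ldots,j_d)}\big)$ are all correct. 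The gap is the remaining hypothesis of Lemma~\ref{thm:union-limsup}: that for every $r>0$ there exist $r_0,\ldots,r_m>0$ with $\bigcap_s\mathbb C(s)^{r_s}\subseteq\big(\bigcap_s\mathbb C(s)\big)^r$. You never establish it; you appeal to ``distance estimates together with scale-homogeneity'' and predict it requires a quantitative multi-dimensional extension of Lemma~\ref{lem:Djk}. That is precisely the one substantive verification in the paper's proof of this theorem, so leaving it open leaves the proof incomplete --- and the pointer is also misdirected, since no jump-size estimates are needed at all.

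The condition closes by a soft argument with all $r_s=r$, using only two structural facts: the product metric is the maximum metric, so the distance from $\xi=(\xi_1,\ldots,\xi_d)$ to a cylinder $\mathbb D^{i-1}\times\mathbb D_{<l}\times\mathbb D^{d-i}$ equals $d(\xi_i,\mathbb D_{<l})$; and each $\mathbb C_{(l_1,\ldots,l_d)}$ is a finite union of such cylinders, so the distance to it is the minimum over $i$. Argue the contrapositive: suppose $d\big(\xi,\mathbb D_{<(j_1,\ldots,j_d)}\big)\geq r$ and set $m_i\triangleq\inf\{k\geq 0:\xi_i\in(\mathbb D_{\leqslant k})^r\}$. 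If some $m_i=\infty$, then $d(\xi,\mathbb C_{P_i\mathbf e_i})=d(\xi_i,\mathbb D_{<P_i})\geq r$, and $P_i\mathbf e_i$ is in your family. If all $m_i<\infty$, then $d(\xi_i,\mathbb D_{m_i})<r$ for every $i$, hence $\xi\in\big(\prod_i\mathbb D_{m_i}\big)^r$, which forces $(m_1,\ldots,m_d)\notin\I_{<(j_1,\ldots,j_d)}$; on the other hand $d(\xi_i,\mathbb D_{<m_i})\geq r$ for every $i$ by the definition of $m_i$, so $d\big(\xi,\mathbb C_{(m_1,\ldots,m_d)}\big)\geq r$, and since your family contains some $(l_1,\ldots,l_d)$ coordinatewise below $(m_1,\ldots,m_d)$ (either $(j_1,\ldots,j_d)$ itself or a minimal heavy tuple), the monotonicity $\mathbb C_{(l_1,\ldots,l_d)}\subseteq\mathbb C_{(m_1,\ldots,m_d)}$ gives $\xi\notin\big(\mathbb C_{(l_1,\ldots,l_d)}\big)^r$. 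This is exactly how the paper finishes; supply it (or an equivalent) and your proposal is complete, but without it Lemma~\ref{thm:union-limsup} cannot be invoked.
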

\begin{proof}
From Theorem~\ref{thm:one-sided-limit-theorem}, we know that
$
(n\nu_i[n,\infty))^{-j}\P(\bar X^{(i)}_n\in \cdot)\to C_{j}(\cdot)
$
in $\mathbb M(\mathbb D\setminus \mathbb D_{<j})$ for $i=1,\ldots,d$ and any $j\geq 0$. 
This along with Lemma~\ref{thm:simple-product-space}, for each $(l_1,\ldots,l_d)\in \mathbb Z_+^d$ we obtain 
$$
\prod_{i=1}^d\big(n\nu_i[n,\infty)\big)^{-l_i}\P((\bar X^{(1)}_n,\ldots,\bar X^{(d)}_n)\in \cdot)\to C_{l_1}^{(1)}\times \cdots \times C_{l_d}^{(d)}(\cdot)
$$
in 
$
\mathbb M\Big( \prod_{i=1}^d\mathbb D \setminus \mathbb C_{(l_1,\ldots,l_d)}\Big)
$
where
$
\mathbb C_{(l_1,\ldots,l_d)}
\triangleq \bigcup_{i=1}^d  (\mathbb D^{i-1} \times \mathbb D_{<l_i} \times \mathbb D^{d-i})
$. 
Since $
\mathbb D_{<(j_1,\ldots,j_d)} 
= \bigcap_{(l_1,\ldots,l_d)\notin \I_{<(j_1,\ldots,j_d)}}\allowbreak\mathbb C_{(l_1,\ldots,l_d)},
$
our strategy is to proceed with Lemma~\ref{thm:union-limsup} to obtain the desired $\mathbb M\Big(\prod_{i=1}^d\mathbb D\setminus \mathbb D_{<(j_1,\ldots,j_d)}\Big)$-convergence by combining the $\mathbb M\Big( \prod_{i=1}^d\mathbb D \setminus \mathbb C_{(l_1,\ldots,l_d)}\Big)$-convergences for $(l_1,\ldots,l_d)\notin \I_{<(j_1,\ldots,j_d)}$. 
%
%
We first rewrite the infinite intersection over $\mathbb Z_+^d \setminus \I_{<(j_1,\ldots,j_d)}$ as a finite one to facilitate the application of the lemma.
Consider a partial order $\prec$ on $\mathbb Z_+^{d}$ such that $(l_1,\ldots,l_d) \prec (m_1,\ldots,m_d)$ if and only if $\mathbb C_{(l_1,\ldots,l_d)} \subsetneq \mathbb C_{(m_1,\ldots,m_d)}$. 
Note that this is equivalent to $l_i\leq m_i$ for $i=1,\ldots,d$ and $l_i < m_i$ for at least one $i=1,\ldots,d$. Let $\J_{j_1,\ldots,j_d}$ be the subset of $\mathbb Z_+^{d}$ consisting of the minimal elements of $\mathbb Z_+^{d}\setminus \I_{<(j_1,\ldots,j_d)}$, i.e., 
$
\J_{j_1,\ldots,j_d} \triangleq \{(l_1,\ldots,l_d)\in \mathbb Z_+^{d}\setminus \I_{<(j_1,\ldots,j_d)}: (m_1,\ldots,m_d) \prec (l_1,\ldots,l_d) \text{ implies }  (m_1,\ldots,m_d) \in \I_{<(j_1,\ldots,j_d)}\}
$.
Figure~\ref{fig1} illustrates how the sets $\I_{<(j_1,\ldots,j_d)}$ and $\J_{j_1,\ldots,j_d}$ look when $d=2$, $j_1=2$, $j_2=2$, $\alpha_1 = 2$, $\alpha_2 = 3$.
\begin{figure}\label{fig1}
\centering
\begin{tikzpicture}

\tikzstyle{axes}=[]
\tikzstyle{important line}=[very thick]
        
\draw[style=help lines, step=0.5cm] (-0.1,-0.1) grid (4.1,2.6);
        
\begin{scope}[style=axes]
\draw[->] (-0.4,0) -- (4.4,0) node[right]  {$l_1$} coordinate(x axis);
\draw[->] (0.01,-0.4) -- (0.01,2.9)  node[above]  {$l_2$} coordinate(y axis);
\end{scope}

\draw (-0.25,-0.25) node{$0$};
\draw (1,-0.25) node{$2$};
\draw (2,-0.25) node{$4$};
\draw (3,-0.25) node{$6$};
\draw (4,-0.25) node{$8$};
\draw (-0.25,1) node{$2$};
\draw (-0.25,2) node{$4$};

\node[label=270:{$(j_1,j_2)=(2,2)$}] (B) at (6,2.1) {};
\draw [->](1.1,1.1) to [out=45,in=180] (2.5,1.8) to [out=0,in=160] (3.5,0.7) to [out=340,in=230] (5,1.5);
\fill[red] (1,1) circle (3pt);
\fill[red] (0,2) circle (3pt);
\fill[red] (0.5,1.5) circle (3pt);
\fill[red] (3.5,0) circle (3pt);
\fill[red] (2.5,0.5) circle (3pt);
\fill[red] (0.5,1.5) circle (3pt);
\fill[red] (3.5,0) circle (3pt);
\fill[red] (2.5,0.5) circle (3pt);
\fill[blue] (0,0) circle (3pt);
\fill[blue] (0,0.5) circle (3pt);
\fill[blue] (0,1) circle (3pt);
\fill[blue] (0,1.5) circle (3pt);
\fill[blue] (0.5,0) circle (3pt);
\fill[blue] (0.5,0.5) circle (3pt);
\fill[blue] (0.5,1) circle (3pt);
\fill[blue] (1,0) circle (3pt);
\fill[blue] (1,0.5) circle (3pt);
\fill[blue] (1.5,0) circle (3pt);
\fill[blue] (1.5,0.5) circle (3pt);
\fill[blue] (2,0) circle (3pt);
\fill[blue] (2,0.5) circle (3pt);
\fill[blue] (2.5,0) circle (3pt);
\fill[blue] (3,0) circle (3pt);

\draw[red,dashed,very thick] (-0.4,1.7) -- (3.8,-0.4);

\end{tikzpicture}
\caption{An example of $\I_{<(j_1,\ldots,j_d)}$ and $\J_{j_1,\ldots,j_d}$ where $d=2$, $j_1=2$, $j_2 = 2$, $\alpha_1 = 2$, and $\alpha_2 = 3$. The blue dots represent the elements of $\I_{<(j_1,j_2)}$, and the red dots represent the elements of $\J_{j_1,j_2}$. The dashed red line represents $(l_1, l_2)$ such that $(\alpha_1-1)l_1 +(\alpha_2-1)l_2 = (\alpha_1-1)j_1 +(\alpha_2-1)j_2$.}
\label{fig1}
\end{figure}
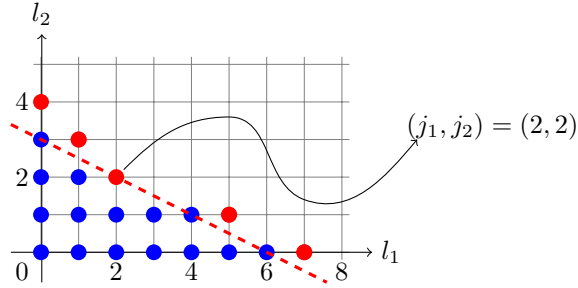
It is straightforward to show that $|\J_{j_1,\ldots,j_d}|<\infty$, and that $(m_1,\ldots,m_d)\notin \I_{<(j_1,\ldots,j_d)}$ implies $\mathbb C_{(l_1,\ldots,l_d)} \subseteq \mathbb C_{(m_1,\ldots,m_d)}$ for some $(l_1,\ldots,l_d) \in \J_{j_1,\ldots,j_d}$; 
therefore, $\mathbb D_{<(j_1,\ldots,j_d)} = \bigcap_{(l_1,\ldots,l_d)\in \J_{j_1,\ldots,j_d}} \mathbb C_{(l_1,\ldots,l_d)}$. 
In view of this and the fact that $\limsup\frac{\prod_{i=1}^d \big(n\nu_i[n,\infty)\big)^{-l_i}}{\prod_{i=1}^d \big(n\nu_i[n,\infty)\big)^{-j_i}}\to 0$ for $(l_1,\ldots,l_d)\in \J_{j_1,\ldots,j_d}\setminus \{(j_1,\ldots,j_d)\}$, the conclusion of the theorem follows from Lemma~\ref{thm:union-limsup} if we show that for each $r>0$,
$
\xi \triangleq (\xi_1,\ldots,\xi_d)\notin \big(\bigcup_{(l_1,\ldots,l_d)\in \I_{<j_1,\ldots,j_d}}\prod_{i=1}^d\mathbb D_{l_i}\big)^r
$
implies
$
\xi\notin (\mathbb C_{(l_1,\ldots,l_d)})^r
\text{ for some }(l_1,\ldots,l_d)\in \J_{j_1,\ldots,j_d}
$.
To see that this is the case, suppose that $\xi$ is bounded away from $\bigcup_{(l_1,\ldots,l_d)\in \I_{<j_1,\ldots,j_d}}\prod_{i=1}^d\mathbb D_{l_i}$ by $r>0$. Let $m_i\triangleq \inf\{k\geq0: \xi_i \in (\mathbb D_{\leqslant k})^r\}$.
In case $m_i = \infty$ for some $i$, one can pick a large enough $M\in \mathbb Z_+$ such that $M\mathbf e_i \notin \I_{<(j_1,\ldots,j_d)}$ where $\mathbf e_i$ is the unit vector with 0 entries except for the $i$-th coordinate.
Letting $(l_1,\ldots,l_d)\in \J_{j_1,\ldots,j_d}$ be an index such that $\mathbb C_{(l_1,\ldots,l_d)} \subseteq \mathbb C_{M\mathbf e_i}$, 
we find that $\xi \notin (\mathbb C_{(l_1,\ldots,l_j)})^r \subseteq (\mathbb C_{M\mathbf e_i})^r$ verifying the premise.
If $\max_{i=1,\ldots,d} m_i < \infty$,
$\xi \in (\prod_{i=1}^d \mathbb D_{m_i})^r$ and hence, $(m_1,\ldots,m_d)\notin \I_{<(j_1,\ldots,j_d)}$, which, in turn, implies that there exists $(l_1,\ldots,l_d)\in \J_{j_1,\ldots,j_d}$ such that $\mathbb C_{(l_1,\ldots,l_d)}\subseteq \mathbb C_{(m_1,\ldots,m_d)}$. 
However, due to the construction of $m_i$'s, each $\xi_i$ is bounded away from $\mathbb D_{<m_i}$ by $r$, and hence, $\xi$ is bounded away from $\mathbb D^{i-1}\times \mathbb D_{<m_i}\times \mathbb D^{d-i}$ by $r$ for each $i$. Therefore, $\xi \notin (\mathbb C_{(l_1,\ldots,l_j)})^r \subseteq (\mathbb C_{(m_1,\ldots,m_j)})^r$, and hence, the premise is verified. Now we can apply Lemma~\ref{thm:union-limsup} to reach the conclusion of the theorem.
\end{proof}

\begin{proof}[Proof of Theorem~\ref{thm:two-sided-limit-theorem}]


Let $X^{(+)}$ and $X^{(-)}$ be  L\`evy processes with spectrally positive L\'evy measures $\nu_+$ and $\nu_-$ respectively, where $\nu_+[x,\infty) = \nu[x,\infty)$ and $\nu_-[x,\infty) = \nu(-\infty,-x]$ for each $x>0$, and denote the corresponding scaled processes as 
$\bar X_n^{(+)}(\cdot) \triangleq X^{(+)}(n\cdot)/n$ and $\bar X_n^{(-)}(\cdot)\triangleq X^{(-)}(n\cdot)/n$. More specifically, let
 \begin{align*}\label{eq:plus_minus}
\bar X^{(+)}_n(s) &= sa + B(ns)/n + \frac{1}{n}\int_{|x|\leq 1} x[N([0,ns]\times dx) - ns\nu(dx)] 
\\
&
\hspace{160pt}+  \frac{1}{n}\int_{x>1} xN([0,ns]\times dx),
\\
\bar X^{(-)}_n(s) &= \frac{1}{n}\int_{x<-1} xN([0,ns]\times dx).
\end{align*}
From Theorem~\ref{thm:multi-d-limit-theorem}, we know that
$(n\nu[n,\infty))^{-j}(n\nu(-\infty,-n])^{-k}\P\big((\bar X_n^{(+)},\allowbreak \bar X_n^{(-)}) \in \cdot\big) \to C_{j}^+\times C_{k}^-(\cdot)$
in $\mathbb M\big((\mathbb D\times \mathbb D) \setminus \mathbb D_{<(j,k)}\big)$
where
$C_j^+(\cdot) \triangleq \E \Big[\nu_\alpha ^j \{x\in (0,\infty)^j:\sum_{i=1}^j x_i 1_{[U_i,1]}\in \cdot\}\Big]$
and
$C_k^-(\cdot) \triangleq \E \Big[\nu_\beta ^k \{y\in (0,\infty)^k:\sum_{i=1}^k y_i 1_{[U_i,1]}\in \cdot\}\Big]$.
In view of Lemma~\ref{lem:continuous-mapping-principle-for-subtraction} and that $C_j^+\times C_k^-\big\{(\xi,\zeta)\in \mathbb D\times\mathbb D: (\xi(t) - \xi(t-))(\zeta(t)-\zeta(t-)) \neq 0 \text{ for some } t\in(0,1]\big\} = 0$,
we can apply Lemma~\ref{lem:almost-continuous-mapping} for $h(\xi,\zeta)= \xi-\zeta$.
Noting that $C_{j,k}(\cdot) = \big(C_j^+\times C_k^-\big)\circ h^{-1}(\cdot)$, we conclude that
$(n\nu[n,\infty))^{-j}\allowbreak(n\nu(-\infty,-n])^{-k}\P\big(\bar X_n^{(+)}-\bar X_n^{(-)} \in \cdot\big) \to C_{j,k}(\cdot)$
in $\mathbb M(\mathbb D \setminus \mathbb D_{<j,k})$.
Since $\bar X_n$ has the same distribution as $\bar X_n^{(+)} - \bar X_n^{(-)}$, the desired $\mathbb M(\mathbb D \setminus \mathbb D_{<j,k})$-convergence for $\bar X_n$ follows.
\end{proof}

\begin{proof}[Proof of Lemma~\ref{wasteful-lemma}]
In general,
$$
\min_{\substack{(j,k)\in \mathbb Z_+^2\\ \mathbb D_{j,k} \cap \bar A \neq \emptyset}}\mathcal I(j,k)
\leq
\mathcal I(\mathcal J(A), \mathcal K(A))
\leq
\min_{\substack{(j,k)\in \mathbb Z_+^2\\ \mathbb D_{j,k} \cap A^\circ \neq \emptyset}}\mathcal I(j,k),
$$
and the left inequality cannot be strict since $A$ is bounded away from $\mathbb D_{<\mathcal J(A), \mathcal K(A)}$. 
On the other hand, in case the right inequality is strict, then $\mathbb D_{\mathcal J(A), \mathcal  K(A)} \cap A^\circ =\emptyset$, which in turn implies $C_{\mathcal J(A), \mathcal K(A)} (A^\circ) = 0$ since $C_{\mathcal J(A), \mathcal K(A)}$ is supported on $\mathbb D_{\mathcal J(A), \mathcal K(A)}$. Therefore, the lower bound is trivial if the right inequality is strict. In view of these observations, we can assume w.l.o.g.\ that $(\mathcal J(A), \mathcal K(A))$ is also in both
$\argmin_{\substack{(j,k)\in \mathbb Z_+^2\\ \mathbb D_{j,k} \cap A^\circ \neq \emptyset}}\mathcal I(j,k)$ and $\argmin_{\substack{(j,k)\in \mathbb Z_+^2\\ \mathbb D_{j,k} \cap \bar A \neq \emptyset}}\mathcal I(j,k)$. Since $A^\circ$ and $\bar A$ are also bounded-away from $\mathbb D_{<\mathcal J(A), \mathcal K(A)}$, the upper bound of (\ref{eq:two-sided-main-result}) is obtained from \eqref{result1ub} and Theorem~\ref{thm:two-sided-limit-theorem} for $\bar A$, $j= \mathcal J(\bar A)=\mathcal J(A)$, and $k = \mathcal K(\bar A) = \mathcal K(A)$; the lower bound of (\ref{eq:two-sided-main-result}) is obtained from \eqref{result1lb} and Theorem~\ref{thm:two-sided-limit-theorem} for $A^\circ$, $j= \mathcal J(A^\circ)=\mathcal J(A)$, and $k = \mathcal K(A^\circ) =\mathcal K(A)$.
Finally, we obtain \eqref{two-sided-limit-tends-to-zero} from Theorem~\ref{thm:two-sided-limit-theorem} and \eqref{result1ub} with $j=l$, $k = m$, $F = \bar A$ along with the fact that $C_{l,m}(\bar A) = 0$ since $A$ is bounded away from $\mathbb D_{l,m}$.
\end{proof}

\begin{lemma}\label{lemma-for-two-sided-multiple-optima}
Let $A$ be a measurable set and suppose that the argument minimum in (\ref{def:JK}) is non-empty and contains a pair of integers $(\mathcal J(A),\mathcal K(A))$.
Let $(l,m) \in \I_{=\mathcal J(A), \mathcal K(A)}$.

\begin{itemize} 
\item[(i)]
If $A_\delta \cap \mathbb D_{l,m}$ is bounded away from $\mathbb D_{\ll\mathcal J(A), \mathcal K(A)}$ for some $\delta>0$, then  $A \cap (\mathbb D_{l,m})_\gamma$ is bounded away from $\mathbb D_{\ll\mathcal J(A), \mathcal K(A)}$ for some $\gamma>0$.

\item[(ii)]
If $A$ is bounded away from $\mathbb D_{\ll \mathcal J(A),\mathcal K(A)}$, then there exists $\delta>0$ such that $A\cap (\mathbb D_{l,m})_\delta$ is bounded away from $\mathbb D_{j,k}$ for any $(j,k) \in \I_{=\mathcal J(A),\mathcal K(A)}\setminus \{(l,m)\}$. 
\end{itemize}
\end{lemma}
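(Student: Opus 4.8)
The plan is to treat the two parts separately; throughout write $\mathbb D_\ll \triangleq \mathbb D_{\ll\mathcal J(A),\mathcal K(A)}$, and note that in both parts one may assume the relevant sets are nonempty, since otherwise the conclusions hold vacuously under the convention that the infimum over an empty set is $\infty$. Part (i) is a pure triangle-inequality manipulation, which I would dispose of first. Fix $\eta>0$ with $d(x,\mathbb D_\ll)\ge\eta$ for every $x\in A_\delta\cap\mathbb D_{l,m}$ and set $\gamma\triangleq\tfrac12\min(\delta,\eta)$. Given any $\xi\in A\cap(\mathbb D_{l,m})_\gamma$ and any small $\epsilon>0$, I would choose $\zeta\in\mathbb D_{l,m}$ with $d(\xi,\zeta)<\gamma+\epsilon<\delta$; since $\xi\in A$ this forces $\zeta\in A_\delta\cap\mathbb D_{l,m}$, hence $d(\zeta,\mathbb D_\ll)\ge\eta$, so $d(\xi,\mathbb D_\ll)\ge d(\zeta,\mathbb D_\ll)-d(\xi,\zeta)\ge\eta-\gamma-\epsilon$. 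Letting $\epsilon\downarrow0$ gives $d(\xi,\mathbb D_\ll)\ge\eta-\gamma\ge\eta/2>0$ uniformly in $\xi$, which is (i) with this $\gamma$.

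For part (ii) I would first record a quantitative jump-removal fact, proved by the same reparametrization argument used in the proof of Lemma~\ref{lem:Djk}(c),(e): if $\xi,\zeta\in\mathbb D$ satisfy $d(\xi,\zeta)<\epsilon$ and $\zeta$ has at most $l$ upward jumps, then $\xi$ has at most $l$ upward jumps of size strictly larger than $2\epsilon$ --- at any discontinuity of $\xi$ of size exceeding $2\epsilon$ the matched path $\zeta\circ\lambda$ must also be discontinuous, and the matching times are distinct since $\lambda$ is a homeomorphism; the symmetric statement holds for downward jumps. Since $\I_{=\mathcal J(A),\mathcal K(A)}$ is finite, $N\triangleq\max\{j\vee k:(j,k)\in\I_{=\mathcal J(A),\mathcal K(A)}\}<\infty$; I would put $\rho\triangleq d(A,\mathbb D_\ll)>0$, propose $\delta\triangleq\rho/(4N)$, and argue by contradiction. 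Suppose $A\cap(\mathbb D_{l,m})_\delta$ is not bounded away from $\mathbb D_{j,k}$ for some $(j,k)\in\I_{=\mathcal J(A),\mathcal K(A)}\setminus\{(l,m)\}$, so there are $\xi_n\in A\cap(\mathbb D_{l,m})_\delta$ and $\zeta_n\in\mathbb D_{j,k}$ with $d(\xi_n,\zeta_n)\to0$. Since $(l,m)\ne(j,k)$ lie on the same line $(\alpha-1)\cdot+(\beta-1)\cdot=\text{const}$, either $j>l$ (hence $k<m$) or $j<l$ (hence $k>m$); in the first case pick $\xi_n'\in\mathbb D_{l,m}$ with $d(\zeta_n,\xi_n')<\delta_n'\triangleq\delta+d(\xi_n,\zeta_n)+1/n$ (possible since $d(\zeta_n,\mathbb D_{l,m})\le\delta+d(\xi_n,\zeta_n)$, and note $\delta_n'\to\delta$), apply the jump-removal fact to get that $\zeta_n$ has at least $j-l$ upward jumps of size $\le2\delta_n'$, and delete them to obtain $\zeta_n''\in\mathbb D_{l,k}$ with $\|\zeta_n-\zeta_n''\|\le2(j-l)\delta_n'$. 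Here $k<m$ together with $(l,m)\in\I_{=\mathcal J(A),\mathcal K(A)}$ forces $(\alpha-1)l+(\beta-1)k<(\alpha-1)\mathcal J(A)+(\beta-1)\mathcal K(A)$, i.e. $(l,k)\in\I_{\ll\mathcal J(A),\mathcal K(A)}$, so $\zeta_n''\in\mathbb D_\ll$; hence $d(\xi_n,\mathbb D_\ll)\le d(\xi_n,\zeta_n)+2(j-l)\delta_n'$, whose limit superior is at most $2N\delta=\rho/2<\rho$, contradicting $d(\xi_n,\mathbb D_\ll)\ge\rho$ (valid since $\xi_n\in A$). The case $j<l$ is the mirror image: one deletes $k-m$ downward jumps of $\zeta_n$ to land in $\mathbb D_{j,m}\subseteq\mathbb D_\ll$. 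As the proposed $\delta$ does not depend on $(j,k)$, it settles all pairs simultaneously.

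I expect the only delicate point to be the jump-removal fact together with the sup-norm bookkeeping of excising small jumps --- the estimate $\|\zeta_n-\zeta_n''\|\le2(j-l)\delta_n'$ and the verification $\zeta_n''\in\mathbb D_{l,k}$ --- since one must check that deleting a jump creates no new jumps and that the count of large jumps is genuinely controlled; this is, however, essentially the technique already deployed in the proof of Lemma~\ref{lem:Djk}, so the plan is to invoke it rather than redevelop it. Everything else --- the explicit constants $\gamma$ and $\delta$, the $\epsilon\downarrow0$ and $n\to\infty$ passages, and the step converting ``one fewer jump of a fixed sign'' into ``strictly smaller weight'' (which is exactly where the defining equality of $\I_{=\mathcal J(A),\mathcal K(A)}$ enters) --- is routine bookkeeping over the finite index set.
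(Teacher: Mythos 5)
Your proof is correct, and in substance it follows the same route as the paper, with part (ii) run in the mirror direction. Part (i) is exactly the paper's triangle-inequality argument (the paper states it contrapositively with the constants $2\delta$, $3\delta$; you do it directly with $\gamma=\tfrac12\min(\delta,\eta)$). In part (ii) both arguments rest on the same two ingredients: for distinct pairs $(l,m)\neq(j,k)$ in $\I_{=\mathcal J(A),\mathcal K(A)}$ the coordinatewise minimum $(j\wedge l,\,k\wedge m)$ is strictly dominated, so $\mathbb D_{j\wedge l,\,k\wedge m}\subseteq \mathbb D_{\ll\mathcal J(A),\mathcal K(A)}$, together with the $J_1$ jump-matching fact underlying Lemma~\ref{lem:Djk}. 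The paper argues directly: the $\mathbb D_{l,m}$-approximant $\zeta$ of $\xi\in A\cap(\mathbb D_{l,m})_\delta$ must have a large $(j+1)$-th upward jump (otherwise removing its small surplus jumps would place it in $\mathbb D_{j,m}\subseteq\mathbb D_{\ll\mathcal J(A),\mathcal K(A)}$, too close to $A$), which forces $d(\zeta,\mathbb D_{j,k})$ to be bounded below; you instead argue by contradiction from the $\mathbb D_{j,k}$ side, excising the small surplus jumps of $\zeta_n\in\mathbb D_{j,k}$ (upward when $j>l$, downward when $k>m$) to exhibit an explicit element of $\mathbb D_{\ll\mathcal J(A),\mathcal K(A)}$ within about $2N\delta$ of $A$, contradicting $d(A,\mathbb D_{\ll\mathcal J(A),\mathcal K(A)})=\rho$ for $\delta=\rho/(4N)$. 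A minor advantage of your bookkeeping is that this single $\delta$ is uniform over all $(j,k)\in\I_{=\mathcal J(A),\mathcal K(A)}$, whereas the paper's $\delta=\gamma/c$ with $c>8(l-j)+2$ depends on the pair, uniformity being recovered afterwards by minimizing over the finite index set. One small point to state explicitly: when you ``delete them,'' delete exactly $j-l$ (resp.\ $k-m$) of the smallest surplus jumps, so that $\zeta_n''$ lies in $\mathbb D_{l,k}$ (resp.\ $\mathbb D_{j,m}$) as claimed; deleting all jumps of size at most $2\delta_n'$ could remove more, which would still land in $\mathbb D_{\ll\mathcal J(A),\mathcal K(A)}$ but not in the set you name.
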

\begin{proof}
For (i), we prove that if
$d(A_{2\delta} \cap \mathbb D_{l,m},\, \mathbb D_{\ll \mathcal J(A), \mathcal K(A)})> 3\delta$ then
$d(A \cap (\mathbb D_{l,m})_\delta,\, \mathbb D_{\ll \mathcal J(A), \mathcal K(A)})\geq \delta$.
Suppose that $d( A \cap (\mathbb D_{l,m})_\delta,\, \mathbb D_{\ll\mathcal J(A), \mathcal K(A)}) < \delta$.
Then, there exists $\xi \in A\cap (\mathbb D_{l,m})_\delta$ and $\zeta \in \mathbb D_{\ll \mathcal J(A),\mathcal K(A)}$ such that $d(\xi, \zeta) < \delta$.
Note that we can find $\xi'\in \mathbb D_{l,m}$ such that $d(\xi, \xi') \leq 2\delta$, which means that $\xi' \in A_{2\delta} \cap \mathbb D_{l,m}$. 
Therefore, $d(A_{2\delta}\cap \mathbb D_{l,m}, \mathbb D_{\ll \mathcal J(A), \mathcal K(A)} )\leq d(\xi', \zeta) \leq d(\xi',\xi) + d(\xi, \zeta) \leq 2\delta + \delta \leq 3\delta$. 

For (ii), suppose  that $d\big(A, \mathbb D_{\ll \mathcal J(A), \mathcal K(A)}\big) > \gamma$ for some $\gamma > 0$ and  $(l,m)$ and $(j,k)$ are two distinct pairs that belong to $\I_{=\mathcal J(A), \mathcal K(A)}$. 
Assume w.l.o.g.\ that $j < l$.
(If $j > l$, it should be the case that $k < m$, and hence one can proceed in the same way by switching the roles of upward jumps and downward jumps in the following argument.)
Let $c$ be a positive number such that $c > 8(l-j)+2$ and set $\delta = \gamma / c$. 
We will show that $A \cap (\mathbb D_{l,m})_\delta$ and $(\mathbb D_{j,k})_\delta$ are bounded away from each other.
Let $\xi$ be an arbitrary element of  $A\cap (\mathbb D_{l,m})_\delta$.
Then, there exists a $\zeta \in \mathbb D_{l,m}$ such that $d(\zeta, \xi) \leq 2\delta$.
Note that $d\big(\zeta, \mathbb D_{\ll \mathcal J(A), \mathcal K(A)}\big) \geq (c-2)\delta$; in particular, $d\big(\zeta, \mathbb D_{j,m}) \geq (c-2) \delta$.
If we write $\zeta \triangleq \sum_{i=1}^l x_i 1_{[u_i,1]} - \sum_{i=1}^m y_i 1_{[v_i,1]}$, this implies that $x_{j+1} \geq \frac{(c-2)\delta}{l-j}$.
Otherwise, $(c-2)\delta > \sum_{i=j+1}^lx_i = \|\zeta - \zeta'\| \geq d(\zeta, \zeta')$, where $\zeta' \triangleq \zeta - \sum_{i=j+1}^lx_i1_{[u_i,1]} \in \mathbb D_{j,m}$.
Therefore, $d(\zeta, \mathbb D_{j,k}) \geq \frac{(c-2)\delta}{2(l-j)}$, which in turn implies $d(\xi, \mathbb D_{j,k}) \geq \frac{(c-2)\delta}{2(l-j)}-2\delta > 2\delta$.
Since $\xi$ was arbitrary, we conclude that $A\cap (\mathbb D_{l,m})_\delta$ bounded away from $(\mathbb D_{j,k})_\delta$.

\end{proof}

\subsection{Proofs for Section~\ref{sec:implications}}\label{subsec:proofs-for-implications}
Recall that
\begin{equation*}
I(\xi)\triangleq
\left\{\begin{array}{ll}
	(\alpha-1)\mathcal D_+(\xi) + (\beta-1)\mathcal D_-(\xi) & \text{if $\xi$ is a step function with $\xi(0) = 0$}
	\\
	\infty & \text{otherwise}
\end{array}
\right.
.
\end{equation*}
\begin{proof}[Proof of Theorem~\ref{thm:weak-ldp}]
Observe first that $I(\cdot)$ is a rate function. The level sets $\{\xi: I(\xi) \leq x\}$ equal $\bigcup_{\substack{(l,m)\in \mathbb Z_+^2\\(\alpha-1)l + (\beta-1)m \leq \lfloor x\rfloor}}\mathbb D_{l,m}$ and are therefore closed---note the level sets are not compact so $I(\cdot)$ is not a good rate function (see, for example, \cite{dembozeitouni} for the definition and properties of good rate functions).

Starting with the lower bound, suppose that $G$ is an open set.
We assume w.l.o.g.\ that $ \inf_{\xi\in G} I(\xi) < \infty$, since the inequality is trivial otherwise. Due to the discrete nature of $I(\cdot)$, there exists a $\xi^*\in G$ such that $I(\xi^*) = \inf_{\xi\in G} I(\xi)$.
Set $j\triangleq \mathcal D_+(\xi^*)$ and $k \triangleq \mathcal D_-(\xi^*)$.
Let $u_1^+,\ldots, u_j^+$ be the sorted (from the earliest to the latest) upward jump times of $\xi^*$; $x_1^+,\ldots, x_j^+$ be the sorted (from the largest to the smallest) upward jump sizes of $\xi^*$; $u_1^-,\ldots,u_k^-$ be the sorted downward jump times of $\xi^*$; $x_1^-,\ldots,x_k^-$ be the sorted downward jump sizes of $\xi^*$. Also, let $x_{j+1}^+=x_{k+1}^-=0$, $u_0^+=u_0^- = 0$, and $u_{j+1}^+ = u_{k+1}^-= 1$.
Note that if $\zeta \in \mathbb D_{l,m}$ for $l<j$, then $d(\xi^*,\zeta) \geq x_j^+/2$ since at least one of the $j$ upward jumps of $\xi^*$ cannot be matched by $\xi$. Likewise, if $\zeta \in \mathbb D_{l,m}$ for $m < k$, then $d(\xi^*, \zeta) \geq x_k^-/2$. Therefore, $d(\mathbb D_{<j,k}, \xi^*) \geq  (x_j^+\wedge x_k^-)/2$.
On the other hand, since $G$ is an open set, we can pick $\delta_0>0$ so that the open ball $B_{\xi^*,\delta_0}\triangleq \{\zeta\in \mathbb D: d(\zeta,\xi) < \delta_0\}$  centered at $\xi^*$ with radius $\delta_0$  is a subset of $G$---i.e., $B_{\xi^*,\delta_0} \subset G$. Let $\delta = (\delta_0 \wedge x_j^+ \wedge x_k^-)/4$.
If $j=k=0$, then $\xi^* \equiv0$, and hence, $\{\bar X_n \in G\}$ contains $\{\|\bar X_n\|\leq \delta\}$ which is a subset of $B_{\xi^*,\delta}$.
One can apply Lemma~\ref{lem:cont_etemadi} to show that $\P(X_n\in G)$ converges to 1, which, in turn, proves the inequality.
Now, suppose that either $j\geq 1$ or $k\geq 1$.
Then, $d(B_{\xi^*,\delta},\mathbb D_{<j,k})\geq \delta$. As $d(B_{\xi^*,\delta},\mathbb D_{<j,k})>0$ and $B_{\xi^*,\delta}$ is open, we see from our sharp asymptotics (Theorem~\ref{thm:one-sided-limit-theorem}) that
\begin{equation*}
 C_{j,k}(B_{\xi^*,\delta})\leq \liminf_{n\rightarrow\infty} (n\nu[n,\infty))^{-j} (n\nu(-\infty,-n])^{-k}P( \bar X_n \in B_{\xi^*,\delta}).
\end{equation*}
From the definition of $C_{j,k}$, it follows that $C_j(B_{\xi^*,\delta})>0$.
To see this, note first that we can assume w.l.o.g.\ that $x_i^\pm$'s are all distinct since $G$ is open (because, if some of the jump sizes are identical, we can pick $\epsilon$ such that $B_{\xi^*,\epsilon}\subseteq G$, and then perturb those jump sizes by $\epsilon$ to get a new $\xi^*$ which still belongs to $G$ while whose jump sizes are all distinct.)
Suppose that $\xi^* = \sum_{l=1}^j x^+_{i^+_l}1_{[u^+_i,1]}-\sum_{l=1}^k x^-_{i^-_l}1_{[u^-_i,1]}$, where $\{i^\pm_1,\ldots,i^\pm_j\}$ are permutations of $\{1,\ldots,j\}$.
Let $2\delta' \triangleq \delta \wedge \underline \Delta^+_u\wedge \underline\Delta^+_x\wedge \underline \Delta^-_u\wedge \underline\Delta^-_x$, where $\underline \Delta^+_u = \min_{i=1,\ldots,j+1} (u^+_i-u^+_{i-1})$, $\underline \Delta^+_x = \min_{i=1,\ldots,j} (x^+_{i-1}-x^+_{i})$, $\underline \Delta^-_u = \min_{i=1,\ldots,k+1} (u^-_i-u^-_{i-1})$, and $\underline \Delta^-_x = \min_{i=1,\ldots,k} (x^-_{i-1}-x^-_{i})$.
Consider a subset $B'$ of $B_{\xi^*,\delta}$:
\begin{align*}
B' \triangleq &\bigg\{\sum_{l=1}^jy^+_{i^+_l}1_{[v^+_l,1]}-\sum_{l=1}^k y^-_{i^-_l}1_{[v^-_l,1]}:
\\
&\hspace{50pt}
v^+_i \in (u^+_i-\delta', u^+_i+\delta'), y^+_i \in (x^+_i-\delta', x^+_i+\delta'), i=1,\ldots,j;
\\
&\hspace{50pt}
v^-_i \in (u^-_i-\delta', u^-_i+\delta'), y^-_i \in (x^-_i-\delta', x^-_i+\delta'), i=1,\ldots,k \bigg\}.
\end{align*}
Then,
\begin{align*}
&C_{j,k}(B_{\xi^*,\delta})
\\
&\geq C_{j,k}(B')
\\
&= \int_{(u^+_1-\delta',u^+_1+\delta')\times\cdots\times(u^+_j-\delta',u^+_j+\delta')} dLeb \cdot \int_{(x^+_1-\delta',x^+_1+\delta')\times\cdots\times(x^+_j-\delta',x^+_j+\delta')}d\nu_\alpha
\\
&\hspace{20pt}
\cdot\int_{(u^-_1-\delta',u^-_1+\delta')\times\cdots\times(u^-_k-\delta',u^-_k+\delta')} dLeb \cdot \int_{(x^-_1-\delta',x^-_1+\delta')\times\cdots\times(x^-_k-\delta',x^-_k+\delta')}d\nu_\beta
\\
&\geq {(2\delta')^{j}}{(2\delta'(x_1^+)^\alpha)^{j}}{(2\delta')^{k}}{(2\delta'(x_1^-)^{\beta})^{k}}>0.
\end{align*}
We conclude that
\begin{equation}\label{eq:getting-lower-bound}
\begin{aligned}
&\liminf_{n\rightarrow\infty}\frac{\log \P(\bar X_n \in G)}{\log n}
\geq
\liminf_{n\rightarrow\infty}\frac{\log \P(\bar X_n \in B_{\xi^*,\delta})}{\log n}
\\
&\geq
\liminf_{n\rightarrow\infty}\frac{\log (C_{j,k}(B_{\xi^*,\delta}) (n \nu[n,\infty))^j(n \nu(-\infty,-n])^k (1+o(1)))  }{\log n}
\\
&= -\big((\alpha-1)j+(\beta-1)k\big),
\end{aligned}
\end{equation}
which is the lower bound. Turning to the upper bound, suppose that $K$ is a compact set.
We first consider the case where $\inf_{\xi \in K} I(\xi) < \infty$.
Pick $\xi^*$, $j$ and $k$ as in the lower bound, i.e., $I(\xi^*) \triangleq \inf_{\xi\in K} I(\xi)$,  $j\triangleq \mathcal D_+(\xi^*)$, and $k \triangleq \mathcal D_-(\xi^*)$.
Here we can assume w.l.o.g.\ either $j \geq 1$ or $k\geq 1$ since the inequality is trivial in case $j=k=0$.
For each $\zeta \in K$, either $I(\zeta) > I(\xi^*)$, or $I(\zeta) = I(\xi^*)$. We construct an open cover of $K$  by considering these two cases separately:
\begin{itemize}
\item
If $I(\zeta) > I(\xi^*)$, $\zeta$ is bounded away from $\mathbb D_{<j,k}\cup \mathbb D_{j,k}$ (Lemma~\ref{lem:Djk} (f)).
For each such $\zeta$'s, pick a $\delta_\zeta>0$ in such a way that $d(\zeta, \mathbb D_{<j,k}\cup \mathbb D_{j,k})>\delta_\zeta$.
Set $j_\zeta \triangleq j$ and $k_\zeta \triangleq k$.
Note that in this case $C_{j_\zeta,k_\zeta}(\bar B_{\zeta,\delta_\zeta}) = 0$.
\item
If $I(\zeta) = I(\xi^*)$, set $j_\zeta \triangleq \mathcal D_+(\zeta)$ and $k_\zeta \triangleq \mathcal D_-(\zeta)$.
Since they are bounded away from $\mathbb D_{<j_\zeta,k_\zeta}$ (Lemma~\ref{lem:Djk} (e)), we can choose $\delta_\zeta>0$ such that $d(\zeta, \mathbb D_{<j_\zeta,k_\zeta}) > \delta_\zeta$ and $C_{j_\zeta,k_\zeta}(\bar B_{\zeta,\delta_\zeta}) < \infty$.
\end{itemize}
Consider an open cover $\{B_{\zeta;\delta_\zeta}: \zeta\in K\}$ of $K$ and its finite subcover
$\{B_{\zeta_i;\delta_{\zeta_i}} \}_{i=1,\ldots,m}$. For each $\zeta_i$, we apply the sharp asymptotics (Theorem~\ref{thm:two-sided-limit-theorem}) to $\bar B_{\zeta_i; \delta_{\zeta_i}}$ 
to get
\begin{equation}\label{ineq:some-upper-bound-in-sec-5}
\limsup_{n\rightarrow\infty} \frac{\log \P(\bar X_n \in \bar B_{\zeta_i; \delta_{\zeta_i}})}{\log n}
\leq
(\alpha-1)j_{\zeta_i} + (\beta-1)k_{\zeta_i}
=
-I(\xi^*).
\end{equation}
Therefore,
\begin{align}
\limsup_{n\rightarrow\infty} \frac{\log \P(\bar X_n \in \bar F)}{\log n}
&
\leq
\limsup_{n\rightarrow\infty} \frac{\log \sum_{i=1}^m \P(\bar X_n \in \bar B_{\zeta_i; \delta_{\zeta_i}})}{\log n}
\nonumber
\\
&
=
\max_{i=1,\ldots,m}\limsup_{n\rightarrow\infty} \frac{\log \P(\bar X_n \in \bar B_{\zeta_i; \delta_{\zeta_i}})}{\log n}
\nonumber
\\
&\leq
-I(\xi^*)  = -\inf_{\xi\in K} I(\xi),\label{ineq:another-upper-bound-in-sec-5}
\end{align}
completing the proof of the upper bound in case the right-hand side is finite.

Now, turning to the case $\inf_{\xi \in K} I(\xi) = \infty$, fix an arbitrary positive integer $l$. 
Since $\mathbb D_{<l,l}$ is closed and disjoint with a compact set $K$, it is also bounded away from each $\zeta\in K$. 
Now picking $\delta_\zeta>0$ so that $\bar B_{\zeta,\delta_\zeta}$ is disjoint with $K$ for each $\zeta$, one can construct an open cover $\{B_{\zeta;\delta_\zeta}:\zeta \in K\}$ of $K$. 
Let $\{B_{\zeta_i;\delta_{\zeta_i}}\}_{i=1,\ldots,m}$ its finite subcover, then from the same calculation as \eqref{ineq:some-upper-bound-in-sec-5} and \eqref{ineq:another-upper-bound-in-sec-5},
\begin{equation*}
\limsup_{n\rightarrow\infty} \frac{\log \P(\bar X_n \in K)}{\log n} \leq -(\alpha+\beta-2)m.
\end{equation*}
Taking $m\to\infty$, we arrive at the desired upper bound.
\end{proof}


\section{Applications}\label{sec:applications}
In this section, we illustrate the use of our main results, established in Section~\ref{sec:sample-path-ldps}, in several problem contexts that arise in control, insurance, and finance.
In all examples, we assume that
$\bar{X}_{n}\left(  t\right)  =X\left(  nt\right)  /n$, where $X\left(\cdot\right)$ is a centered L\'{e}vy process satisfying (1.1).

\subsection{Crossing High Levels with Moderate Jumps}\label{subsec:moderate-jumps}

We are interested in level crossing probabilities of L\'evy processes where the jumps are conditioned to be moderate. More precisely, we are interested in probabilities of the form $\P\big(\sup_{t\in [0,1]} [\bar X_n(t)-ct] \geq a ; \;\sup_{t\in [0,1]} [\bar X_n(t)-\bar X_n(t-)]\leq  b\big)$. We make a technical assumption that $a$ is not a multiple of $b$ and focus on the case where the L\'evy process $\bar X_n$ is spectrally positive.


The setting of this example is relevant in, for example, insurance, where huge claims may be reinsured and therefore do not play a role in the ruin of an insurance company.
 \cite{AsmussenPihlsgaard} focus on obtaining various estimates  of infinite-time ruin probabilities  using analytic methods.
Here, we provide complementary sharp asymptotics for the finite-time ruin probability, using probabilistic techniques.

Set $A \triangleq \{\xi\in \mathbb D: \sup_{t\in [0,1]} [\xi(t)-ct]\geq a ; \sup_{t\in [0,1]} [\xi(t)-\xi(t-)] \leq  b\}$ and define
$
j \triangleq \lceil a/b \rceil.
$
Intuitively, $j$ should be the key parameter, as it takes at least $j$ jumps of size $b$ to cross level $a$.
Our goal is to make this intuition rigorous by applying Theorem~\ref{thm:one-sided-main-theorem} and by showing that the upper and lower bounds are tight.

We first check that $A_\delta \cap \mathbb D_j$ is bounded away from the closed set $\mathbb D_{\leqslant j-1}$ for some $\delta>0$. To see this, it suffices to show that
\begin{itemize}
\item[1)] $\sup_{t\in[0,1]}[\xi(t)-\xi(t-)] \leq b$ and $\sup_{t\in[0,1]}[\zeta(t)-\zeta(t-)] > b'$ imply $d(\xi,\zeta) > \frac{b'-b}{3}$; and
\item[2)]  $\sup_{t\in [0,1]}[\xi(t) - ct]< a'$ and $\sup_{t\in[0,1]}[\zeta(t)-ct] \geq a$ imply $d(\xi,\zeta) \geq \frac{a-a'}{c+1}$.
\end{itemize}
It is straightforward to check 1).
To see 2), note that for any $\epsilon>0$, one can find $t^*$ such that $\zeta(t^*) - ct^* \geq a-\epsilon$. Of course, $\xi(\lambda(t^*)) - c\lambda(t^*) < a'$ for any homeomorphism $\lambda (\cdot )$. Subtracting the latter inequality from the former inequality, we obtain
\begin{equation}\label{eq:mod-jump-diff}
\zeta(t^*)-\xi(\lambda(t^*)) \geq  a - a' -\epsilon + c(t^* - \lambda(t^*)).
\end{equation}
One can choose $\lambda$ so that $d(\xi,\zeta) + \epsilon \geq \| \lambda - e\| \geq \lambda(t^*)- t^*$ and $d(\zeta, \xi) + \epsilon \geq \|\zeta - \xi \circ \lambda\|\geq \zeta(t^*) - \xi(\lambda(t^*))$, which together with (\ref{eq:mod-jump-diff}) yields
$$
d(\xi, \zeta) >  a-a'-(c+1)\epsilon-cd(\xi,\zeta).
$$
This leads to $d(\xi,\zeta) \geq \frac{a-a'}{c+1}$ by taking $\epsilon \to 0$.
With 1) and 2) in hand, it follows that $\phi_1(\xi) \triangleq \sup_{t\in[0,1]}[\xi(t)-\xi(t-)]$ and $\phi_2(\xi) \triangleq \sup_{t\in [0,1]}[\xi(t) - ct]$ are continuous functionals and $A_{\delta}\subseteq A(\delta)$, where $A(\delta) \triangleq \{\xi\in \mathbb D: \sup_{t\in [0,1]} [\xi(t)-ct]\geq a-(c+1)\delta ; \sup_{t\in [0,1]} [\xi(t)-\xi(t-)] \leq  b+3\delta\}$.
Since $\xi \in A(\delta)\cap \mathbb D_j$ implies that the jump size of $\xi$ is bounded from below by $(b+3\delta)j-(a-(c+1)\delta)$, one can choose $\delta>0$ so that $A(\delta)\cap \mathbb D_j$ is bounded away from $\mathbb D_{\leqslant j-1}$.
This implies that $A_\delta\cap \mathbb D_j$ is also bounded away from $\mathbb D_{\leqslant j-1}$ for sufficiently small $\delta>0$. Hence, Theorem~\ref{thm:one-sided-main-theorem} applies with $\mathcal J(A) = j$.

Next, to identify the limit, recall the discussion at the end of Section~\ref{subsec:one-sided-large-deviations}.
Note that $A = \phi_1^{-1}[a,\infty) \cap \phi_2^{-1}(-\infty,b]$ and
\begin{equation}\label{eq:constraints-for-B}
\begin{aligned}
&\hat T_{j}^{-1}(\phi_1^{-1}[a,\infty) \cap \phi_2^{-1}(-\infty,b])
\\
&= \textstyle{\left\{(x,u)\in \hat S_j: \sum_{i=1}^j x_i \geq a+ c\max_{i=1,\ldots,j} u_i,\ \max_{i=1,\ldots,j} x_i \leq b\right\}},\\
&\hat T_{j}^{-1}(\phi_1^{-1}(a,\infty) \cap \phi_2^{-1}(-\infty,b))
\\
&= \textstyle{\left\{(x,u)\in \hat S_j: \sum_{i=1}^j x_i > a+ c\max_{i=1,\ldots,j} u_i,\ \max_{i=1,\ldots,j} x_i < b\right\}}.
\end{aligned}
\end{equation}
We see that $\hat T_{j}^{-1}(\phi_1^{-1}[a,\infty) \cap \phi_2^{-1}(-\infty,b]) \setminus \hat T_{j}^{-1}(\phi_1^{-1}(a,\infty) \cap \phi_2^{-1}(-\infty,b))$ has Lebesgue measure 0, and hence, $A$ is $C_j$-continuous. Thus, (\ref{A-continuity}) holds with
\begin{equation*}
C_{j}(A)
= \E \left[\nu_\alpha^j\{(0,\infty)^j: \sum_{i=1}^jx_i1_{[U_i,1]} \in A\}\right]
= \int_{(x,u)\in \hat T_{j}^{-1}(A) } \prod_{i=1}^j [\alpha x_i^{-\alpha-1} dx_i du_i]>0.
\end{equation*}
Therefore, we conclude that
\begin{equation}
\P\left(\sup_{t\in [0,1]}[ \bar X_n(t)-ct] \geq a ; \sup_{t\in [0,1]} [\bar X_n(t)-\bar X_n(t-)] \leq  b\right) \sim C_{j}\big(A\big) (n\nu[n,\infty))^{j}.
\end{equation}
In particular, the probability of interest is regularly varying with index $-(\alpha-1)\lceil a/b \rceil$.

\subsection{A Two-sided Barrier Crossing Problem}\label{subsec:two-sided-barrier}
 We consider a L\'{e}vy-driven Ornstein-Uhlenbeck process of the form%
\[
d\bar{Y}_{n}\left(  t\right)     =-\kappa d\bar{Y}_{n}\left(  t\right)
+d\bar{X}_{n}\left(  t\right), \hspace{1cm}
\bar{Y}_{n}\left(  0\right)     =0.
\]
We apply our results to provide sharp large-deviations estimates for
\[
b\left(  n\right)  =\P\left(  \inf\{\bar{Y}_{n}\left(  t\right)
:0\leq t\leq1\}\leq-a_{-},\bar{Y}_{n}\left(  1\right)  \geq a_{+}\right)
\]
as $n\rightarrow\infty$, where $a_{-},a_{+}>0$. This probability can be
interpreted as the price of a barrier digital option (see \citealp{cont2004financial}, Section 11.3).
In order to apply our results it is useful to represent $\bar{Y}_{n}$ as an
explicit function of $\bar{X}_{n}$. In particular, we have that
\begin{align}
\bar{Y}_{n}\left(  t\right)   &  =\exp\left(  -\kappa t\right)  \left(
\bar{Y}_{n}\left(  0\right)  +\int_{0}^{t}\exp\left(  \kappa s\right)
d\bar{X}_{n}\left(  s\right)  \right) \label{REP_1}\\
&  =\bar{X}_{n}\left(  t\right)  -\kappa\exp\left(  -\kappa t\right)  \int%
_{0}^{t}\exp\left(  \kappa s\right)  \bar{X}_{n}\left(  s\right)  ds.
\label{REP_2}%
\end{align}
Hence, if $\phi:\mathbb{D}\left(  [0,1],\mathbb{R}\right)  \rightarrow
\mathbb{D}\left(  [0,1],\mathbb{R}\right)  $ is defined via
\[
\phi\left(  \xi\right)  \left(  t\right)  =\xi\left(  t\right)  -\kappa
\exp\left(  -\kappa t\right)  \int_{0}^{t}\exp\left(  \kappa s\right)
\xi\left(  s\right)  ds,
\]
then $\bar{Y}_{n}=\phi\left(  \bar{X}_{n}\right)  $. Moreover, if we let%
\[
A=\left\{  \xi\in\mathbb{D}:\inf_{0\leq t\leq1}\phi\left(  \xi\right)  \left(
t\right)  \leq-a_{-},\phi\left(  \xi\right)  \left(  1\right)  \geq
a_{+}\right\}  ,
\]
then we obtain
\[
b\left(  n\right)  ={\P}\left(  \bar{X}_{n}\in A\right)  .
\]
In order to easily verify topological properties of $A$, let us define
$
m,\pi_{1}:\mathbb{D}(  [0,1],\allowbreak\mathbb{R})  \rightarrow\mathbb{R}%
$
by
$
m\left(  \xi\right)  =\inf_{0\leq t\leq1}\xi\left(  t\right)  ,\text{ and }%
\pi_{1}\left(  \xi\right)  =\xi\left(  1\right).
$
Note that $\pi_{1}$ is continuous (see \citealp{billingsley2013convergence},
Theorem~12.5), that $m$ is continuous as well, and so is $\phi$.
Thus, $m\circ\phi$ and $\pi_{1}\circ\phi$ are continuous. We
 can therefore write%
\[
A=\left(  m\circ\phi\right)  ^{-1}(-\infty,-a_{-}]\cap\left(  \pi_{1}\circ
\phi\right)  ^{-1}[a_{+},\infty),
\]
concluding that $A$ is a closed set.
We now apply Theorem~\ref{thm:two-sided-main-theorem}. To show that $\mathbb{D}_{i,0}$ is bounded away
from $\left(  m\circ\phi\right)  ^{-1}(-\infty,-a_{-}]$, select $\theta$ such
that $d\left(  \theta,\mathbb{D}_{i,0}\right)  <r$ with $r<a_{-}/\left(
1+\kappa\exp\left(  \kappa\right)  \right)  $. There exists a
$\xi\in\mathbb{D}_{i,0}$ such that $d\left(  \theta,\xi\right)  <r$ and $\xi$ satisfies
$
\xi\left(  t\right)  =\sum_{j=1}^{i}x_{j}I_{[u_{j},1]}\left(  t\right)  ,
$
with $i\geq1$. There also exists a homeomorphism $\lambda:[0,1]\rightarrow
\lbrack0,1]$ such that
\begin{equation}
\sup_{t\in\lbrack0,1]}\left\vert \lambda\left(  t\right)  -t\right\vert
\vee\left\vert \left(  \xi\circ\lambda\right)  \left(  t\right)
-\theta\left(  t\right)  \right\vert <r.\label{BND_AUX_1}%
\end{equation}
Now, define $\psi=\theta-\left(  \xi\circ\lambda\right)$. Due to
the linearity of $\phi$, and representations (\ref{REP_1}) and
(\ref{REP_2}), we obtain that
\begin{align*}
\phi\left(  \theta\right)  \left(  t\right)   &  =\phi\left(  \left(  \xi
\circ\lambda\right)  \right)  \left(  t\right)  +\phi\left(  \psi\right)
\left(  t\right)  \\
&  =\exp\left(  -\kappa t\right)  \sum_{j=1}^{i}\exp\left(  \kappa\lambda
^{-1}\left(  u_{j}\right)  \right)  x_{j}I_{[\lambda^{-1}\left(  u_{j}\right)
,1]}\left(  t\right)    +\psi\left(  t\right) \\
&\hspace{50pt} -\kappa\exp\left(  -\kappa t\right)  \int_{0}%
^{t}\exp\left(  \kappa s\right)  \psi\left(  s\right)  ds.
\end{align*}
Since $x_{j}\geq0$, applying the triangle inequality and inequality
(\ref{BND_AUX_1}) we conclude (by our choice of $r$), that
\[
\inf_{0\leq t\leq1}\phi\left(  \theta\right)  \left(  t\right)  \geq-r\left(
1+\kappa\exp\left(  \kappa\right)  \right)  >-a_{-}.
\]
A similar argument allows us to conclude that $\mathbb{D}_{0,i}$ is bounded
away from $\left(  \pi_{1}\circ\phi\right)  ^{-1}[a_{+},\infty)$. Hence, in
addition to being closed, $A$ is bounded away from $\mathbb{D}_{0,i}%
\cup\mathbb{D}_{i,0}$ for any $i\geq1$.
Moreover, let $\xi\in A\cap\mathbb{D}_{1,1},$
with
\begin{equation}
\xi\left(  t\right)  ={x}I_{[{u},1]}(t)-{y}I_{[{v},1]}(t), \label{fREP}%
\end{equation}
where ${x}>0$ and ${y}>0$. Using (\ref{REP_1}), we obtain that $\xi\in
A\cap\mathbb{D}_{1,1},$ is equivalent to
\[
{y}\geq a_{-}\text{, \ }{u}>{v}\text{, and }{x}\geq a_{+}\exp\left(  \kappa\left(
1-{u}\right)  \right)  +{y}\exp\left(  -\kappa\left(  {u}-{v}\right)  \right)
.
\]
Now, we claim that
\begin{align}
A^{\circ}  &  =\left\{  \xi\in\mathbb{D}:\inf_{0\leq t\leq1}\phi\left(
\xi\right)  \left(  t\right)  <-a_{-},\phi\left(  \xi\right)  \left(
1\right)  >a_{+}\right\} \label{A_INT}\\
&  =\left(  m\circ\phi\right)  ^{-1}(-\infty,-a_{-})\cap\left(  \pi_{1}%
\circ\phi\right)  ^{-1}(a_{+},\infty).\nonumber
\end{align}
It is clear that $A^{\circ}$ contains the open set in the right hand side. We
now argue that such a set is actually maximal, so that equality
holds. Suppose that $\phi\left(  \xi\right)  \left(  1\right)  =a_{+}$, while
$\min_{0\leq t\leq1}\phi\left(  \xi\right)  \left(  t\right)  <-a_{-}$. We
then consider $\psi=-\delta I_{\{1\}}\left(  t\right)$ with $\delta>0$,
and note that $d\left(  \xi,\xi+\psi\right)  \leq\delta$, and
\[
\phi\left(  \xi+\psi\right)  \left(  t\right)  =\phi\left(  \xi\right)
\left(  t\right)  I_{[0,1)}\left(  t\right)  +\left(  a_{+}-\delta\right)
I_{\{1\}}\left(  t\right)  ,
\]
so that $\xi+\psi\notin A$. Similarly, we can see that the other
inequality (involving $a_{-}$) must also be strict, hence concluding that
(\ref{A_INT}) holds.

We deduce that, if $\xi\in A^{\circ}\cap\mathbb{D}_{1,1}$ with
$\xi$ satisfying (\ref{fREP}), then%
\[
{y}>a_{-}\text{, \ }{u}>{v}\text{, }{x}>a_{+}\exp\left(  \kappa\left(
1-{u}\right)  \right)  +{y}\exp\left(  -\kappa\left(  {u}-{v}\right)  \right)
.
\]
Thus, we can see that $A$ is $C_{1,1}\left(  \cdot\right)  $-continuous,
either directly or by invoking our discussion in
Section~\ref{subsec:one-sided-large-deviations} regarding continuity of sets.
Therefore, applying Theorem~\ref{thm:two-sided-main-theorem}, we conclude that
\[
b\left(  n\right)  \sim n\nu\lbrack n,\infty)n\nu(-\infty,-n]C_{1,1}\left(
A\right)
\]
as $n\rightarrow\infty$, where
\[
C_{1,1}\left(  A\right)  =\int_{0}^{1}\int_{a_{-}}^{\infty}\int_{{v}}^{1}%
\int_{a_{+}\exp\left(  \kappa\left(  1-{u}\right)  \right)  +{y}\exp\left(
-\kappa\left(  {u}-{v}\right)  \right)  }^{\infty}\nu_{\alpha}(dx)\,du\,\nu
_{\beta}(dy)dv.
\]
In particular, the probability of  interest is regularly varying with index $2-\alpha-\beta$.

\subsection{Identifying the Optimal Number of Jumps for Sets of the Form $A=\{\xi:  l \leq \xi \leq u\}$}\label{subsec:sausage}
The sets that appeared in the examples in Section~\ref{subsec:moderate-jumps} and Section~\ref{subsec:two-sided-barrier} lend themselves to a direct characterization of the optimal numbers of jumps $(\mathcal J(A), \mathcal K(A))$. However, in more complicated problems, deciding what kind of paths the most probable limit behaviors consist of may not be as obvious. In this section, we show that for sets of a certain form, we can identify an optimal path.
Consider continuous real-valued functions $l$ and $u$, which satisfy $l(t) < u(t)$ for every $t\in [0,1]$, and suppose that $l(0)<0<u(0)$. Define $A=\{\xi: l(t) \leq \xi(t) \leq u(t)\}$.
We assume that both $\alpha,\beta<\infty$, which is the most interesting case.

The goal of this section is to construct an algorithm which yields an expression for $\mathcal J(A)$ and $\mathcal K(A)$.
In fact, we can completely identify a function $h$ that solves the optimization problem defining $(\mathcal J(A),\mathcal K(A))$. This function will be a step function with both positive and negative steps.
We first construct such a function, and then verify its optimality. The first step is to identify the times at which this function jumps.
Define the sets
\begin{align*}
A_t &\triangleq \{x: l(t) \leq x \leq u(t) \},
&
A_{s,t}^* &\triangleq \cap_{s\leq r\leq t} A_r,
\end{align*}
and the times $(t_n, n\geq 1)$ by
\begin{align*}
t_{n+1} &\triangleq 1 \wedge \inf \{ t>t_n : A_{\tau_n,t} = \emptyset\} \text{ \ for \ }n\geq 2,
&t_1 &\triangleq 1 \wedge \inf \{t>0: 0\notin A_t\}.
\end{align*}
Let $n^*= \inf \{n\geq 1: t_n=1\}$. Assume that $n^*>1$, since the zero function is the obvious optimal path in case $n^*=1$. Due to the construction of the times $t_n, n\geq 1,$ we have the following properties:
\begin{itemize}
\item Either $l(t_1)=0$ or $u(t_1) = 0$.
\item For every $n = 1,\ldots,n^*-2$, $\sup_{t\in [t_{n},t_{n+1}]}l(t) = \inf_{t\in [t_{n},t_{n+1}]}u(t)$.
\item $H_{fin} \triangleq [\sup_{t\in [t_{n^*-1},t_{n^*}]}l(t), \inf_{t\in [t_{n^*-1},t_{n^*}]}u(t)]$ is nonempty.
\end{itemize}
Set $h_n \triangleq \sup_{t\in [t_{n},t_{n+1}]}l(t)$ for $n=1,\ldots,n^*-1$, and set $h_{n^*-1} \triangleq h_{fin}$ for any $h_{fin} \in  H_{fin}$.
Define now $h(t)$ as $0$ on $t\in [0,t_1)$, $h(t) = h_n$ on $t\in [t_{n},t_{n+1})$ for $n=1,\ldots,n^*-2$, and $h(t) = h_{n^*-1}$ on $t\in [t_{n^*-1}, 1]$.
We claim now that $(\mathcal J(A), \mathcal K(A)) = (\mathcal J(\{h\}), \mathcal K(\{h\}))$.
In fact, we can prove that if $g\in A$ is a step function, $\mathcal D_+(g) \geq \mathcal D_+(h)$ and $\mathcal D_-(g) \geq \mathcal D_-(h)$, which implies the optimality of $h$. The proof is based on the following observation. At each $t_{n+1}$, either
\begin{itemize}
\item[1)] for any $\epsilon>0$ one can find $t\in[t_{n+1}, t_{n+1}+\epsilon]$ such that $u(t)<h_{n}$, or
\item[2)] for any $\epsilon>0$ one can find $t\in[t_{n+1}, t_{n+1}+\epsilon]$ such that $l(t)>h_{n}$.
\end{itemize}
Otherwise, there exists $\epsilon>0$ such that $h_n \in A_{t_n,t_{n+1}+\epsilon}$, contradicting the definition of $t_n$, which requires $A_{t_n, t_{n+1}+\epsilon} = \emptyset$.
From this observation, we can prove that on each interval $(t_n, t_{n+1}]$, any feasible path must jump at least once in the same direction as that of the jump of $h$.
To see this, first suppose that 1) is the case at $t_{n+1}$, and $g\in A$ is a step function.
Note that due to its continuity, $l(\cdot)$ should have achieved its supremum at $t_{sup}\in [t_n,t_{n+1}]$, i.e., $l(t_{sup}) = h_n$, and hence, $g(t_{sup})\geq h_n$.
On the other hand, due to the right continuity of $g$ and 1), $g$ has to be strictly less than $h_{n}$ at $t_{n+1}$, i.e., $g(t_{n+1})< h_n$.
Therefore, $g$ must have a downward jump on $(t_{sup},t_{n+1}]\subseteq (t_n,t_{n+1}]$.
Note that the direction of the jump of $h$ in the interval $(t_n,t_{n+1}]$ (more specifically at $t_{n+1}$) also has to be downward.
Since $g$ is an arbitrary feasible path, this means that whenever $h$ jumps downward on $(t_n,t_{n+1})$, any feasible path in $A$ should also jump downward. Hence, any feasible path must have either equal or a greater number of downward jumps as $h$'s on $[0,1]$. Case 2) leads to a similar conclusion about the number of upward jumps of feasible paths. The number of upward jumps of $h$ is optimal, proving that $h$ is indeed the optimal path.

\subsection{Multiple Optima} \label{subsec:multiple-asymptotics-example}
This section illustrates how to handle a case where we require Theorem~\ref{thm:two-sided-multiple-asymptotics}, and consider an illustrative example
where a rare event can be caused by two different configurations of big jumps.
Suppose that the regularly varying indices $-\alpha$ and $-\beta$ for positive and negative parts of the L\'evy measure $\nu$ of $X$ are equal, and consider the set $A \triangleq \{\xi\in \mathbb D: |\xi(t)| \geq t - 1/2\}$.
Then, $\argmin_{\substack{(j,k)\in \mathbb Z_+^2\\ \mathbb D_{j,k} \cap A \neq \emptyset}}\mathcal I(j,k) = \{(1,0), (0,1)\}$, and $\mathbb D_{\ll 1,0} = \mathbb D_{\ll 0,1} = \mathbb D_{0,0}$.
Since $|\xi(1)| \geq 1/2$ for any $\xi\in A$, $d(A, \mathbb D_{0, 0}) = 1/2>0$.
Theorem~\ref{thm:two-sided-multiple-asymptotics} therefore applies, and for each $\epsilon>0$, there exists $N$ such that
\begin{align*}
\P(\bar X_n \in A)
&
\geq
\frac{\big(C_{l,m}(A^\circ\cap \mathbb D_{1,0})-\epsilon\big)L_+(n) + \big(C_{l,m}( A^\circ\cap \mathbb D_{0,1})-\epsilon\big)L_-(n)}{n^{\alpha-1}},
\\
\P(\bar X_n \in A)
&\leq
\frac{\big(C_{l,m}(A^-\cap \mathbb D_{1,0})+\epsilon\big)L_+(n) + \big(C_{l,m}(A^-\cap \mathbb D_{0,1})+\epsilon\big)L_-(n)}{n^{\alpha-1}},
\end{align*}
for all $n\geq N$.
Note that $A$ is closed, since if there is $\xi\in \mathbb D$ and $s\in [0,1]$ such that $|\xi(s)| < s-1/2$, then $B(\xi, \frac{s-1/2-\xi(s)}{2})\subseteq A^c$. Therefore,  $A^-\cap \mathbb D_{1,0} = A\cap \mathbb D_{1,0} = \{\xi=x1_{[u,1]}: x \geq 1/2, 0 < u \leq 1/2\}$, and hence,
$C_{1,0}(A^- \cap \mathbb D_{1,0}) = \P(U_1 \in (0,1/2])\nu_\alpha[1/2,\infty) = (1/2)^{1-\alpha}$.
Noting that $ A^\circ \cap \mathbb D_{1,0} \supseteq(A\cap \mathbb D_{1,0})^\circ = \{\xi = x1_{[u,1]}: x > 1/2, 0<u<1/2\}$, we deduce $C_{1,0}( A^\circ\cap \mathbb D_{1,0})\geq \P(U_1 \in (0,1/2))\nu_\alpha (1/2,\infty) = (1/2)^{1-\alpha}$.
Therefore, $C_{1,0}(A^\circ \cap \mathbb D_{1,0}) = C_{1,0}(A^- \cap \mathbb D_{1,0}) = (1/2)^{1-\alpha}$.
Similarly, we can check that
$C_{0,1}( A^\circ \cap \mathbb D_{0,1}) = C_{0,1}(A^- \cap \mathbb D_{0,1}) = (1/2)^{1-\beta}\, (=(1/2)^{1-\alpha})$. Therefore, for $n\geq N$,
\begin{align*}
&
((1/2)^{1-\alpha} - \epsilon)(L_+(n)+L_-(n))n^{1-\alpha} 
\\
&
\hspace{97pt}
\leq \P(\bar X_n \in A) 
\\
&
\hspace{97pt}
\leq ((1/2)^{1-\alpha} + \epsilon)(L_+(n)+L_-(n))n^{1-\alpha}.
\end{align*}
This is equivalent to
\begin{align*}
\left(\frac12 \right)^{1-\alpha}
&
\leq
\liminf_{n\to\infty}\frac{\P(\bar X_n \in A)}{(L_+(n)+L_-(n))n^{1-\alpha}}\\
&\leq
\limsup_{n\to\infty}\frac{\P(\bar X_n \in A)}{(L_+(n)+L_-(n))n^{1-\alpha}}
\leq
\left(\frac12 \right)^{1-\alpha}.
\end{align*}
Hence,
$$
\lim_{n\to\infty}\frac{\P(\bar X_n \in A)}{(L_+(n)+L_-(n))n^{1-\alpha}}
=
\left(\frac12 \right)^{1-\alpha}.
$$

\appendix

\section{Inequalities}

\begin{lemma}[Generalized Kolmogorov inequality; \cite{Shneer2009}]\label{result:gen_kol}
Let $S_n = X_1+\cdots+X_n$ be a random walk with mean zero increments, i.e., $\E X_i = 0$.
Then,
$$\P(\max_{k\leq n} S_k \geq x) \leq C\frac{nV(x)}{x^2},$$
where $V(x) = \E (X_1^2; |X_1|\leq x)$, for all $x>0$.
\end{lemma}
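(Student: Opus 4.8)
The plan is to combine a truncation of the increments with Kolmogorov's (equivalently Doob's) maximal inequality. It is convenient to note first that the estimate is invariant under the rescaling $X_i \mapsto X_i/x$, so one may as well work at a fixed level $x$, and that one may assume $n V(x)/x^2 \le 1$, since the bound is otherwise trivial for any $C\ge 1$.

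First I would write each increment as $X_i = Y_i + Z_i$ with $Y_i = X_i\,\I(|X_i| \le x)$ the moderate part and $Z_i = X_i\,\I(|X_i| > x)$ the large part, and set $T_k = \sum_{l\le k} Y_l$. On $\{\max_{k\le n} S_k \ge x\}$ let $\tau \le n$ be the first passage time of $S$ above $x$, so that $S_j < x$ for every $j < \tau$. Splitting the large jumps by sign, $\sum_{l \le \tau} Z_l = P_\tau - N_\tau$ with $P_\tau, N_\tau \ge 0$, whence
\[
x \le S_\tau = T_\tau + P_\tau - N_\tau \le T_\tau + P_\tau ,
\]
so at least one of $T_\tau \ge x/2$ and $P_\tau \ge x/2$ holds. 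The event $\{P_\tau \ge x/2\}$ forces some $X_l$, $l \le n$, to exceed $x$, so it has probability at most $n\,\P(X_1 > x)$, which is $\le nV(x)/x^2$ (reading $V$ with the standard convention $x^2\P(|X_1|>x)\le V(x)$; otherwise this term is absorbed using the structure of the walks to which the lemma is applied). The event $\{T_\tau \ge x/2\}$ is contained in $\{\max_{k\le n} T_k \ge x/2\}$, reducing the problem to estimating the maximum of the moderate (truncated) walk.

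It then remains to bound $\P(\max_{k\le n} T_k \ge x/2)$. Writing $c = \E Y_1$ and $\widehat T_k = T_k - kc$, the walk $\widehat T_k$ has mean-zero increments and $\var(Y_1) \le \E Y_1^2 = \E[X_1^2 ; |X_1|\le x] \le V(x)$, so Kolmogorov's maximal inequality gives $\P(\max_{k\le n}\widehat T_k \ge x/4) \le 16\,nV(x)/x^2$. The only remaining point is the centering correction $kc$. Since $\E X_1 = 0$ gives $c = -\E[X_1; |X_1|>x]$, one has $c \le 0$ precisely when the heavy tail lies on the positive half-line --- in particular whenever the increments are bounded below, as for the walks to which this lemma is applied in the rest of the paper --- and then $T_k \le \widehat T_k$, so that $\{\max_k T_k \ge x/2\}\subseteq\{\max_k\widehat T_k\ge x/2\}$ and the estimate follows directly. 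In the remaining (general) case one must instead bound $n|c|$ directly, combining $c = -\E[X_1;|X_1|>x]$ with the normalization $nV(x)/x^2 \le 1$ and with the observation that reaching the high level $x$ is incompatible with a build-up of large negative jumps. Collecting the three contributions --- the large-jump term, the centered moderate-walk term via Kolmogorov, and the centering correction --- and adjusting the absolute constant yields the claim. I expect the centering correction arising from the asymmetry of the truncation to be the only genuine obstacle; once the increments are one-sided in the favorable direction (as in the applications here), every remaining step is a routine appeal to reflection/maximal inequalities.
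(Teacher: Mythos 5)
The paper never proves this lemma: it is quoted verbatim from \cite{Shneer2009} and used as a black box, so there is no internal proof to compare against. Judged on its own terms, your truncation-plus-maximal-inequality outline is the standard (Fuk--Nagaev type) route, and the uncontroversial steps (reduction to $nV(x)/x^2\le 1$, the stopping-time split $x\le T_\tau+P_\tau$, Kolmogorov's inequality for the centered truncated walk) are fine. The problem is that the two places where you hedge are not routine loose ends but exactly where the statement, as literally written, breaks down. First, with $V(x)=\E(X_1^2;|X_1|\le x)$ the inequality $x^2\,\P(X_1>x)\le V(x)$ is simply false (take $X_1$ with a single large atom at $M$ of mass $M^{-2}$ and a small compensating negative value; with $n=1$, $x=M/2$ the claimed bound itself fails), and even without heavy tails the lemma fails for all small $x$ (e.g.\ $X_1$ uniform on $[-1,1]$, $n=1$, $x\to0$: the left side tends to $1/2$ while $nV(x)/x^2\to0$). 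So ``the standard convention'' cannot be invoked; one needs an explicit hypothesis such as $x^2\P(|X_1|>x)\le cV(x)$ (true for regularly varying tails by Karamata, which is the regime in which the paper uses the lemma), or the variant $V(x)=\E(X_1^2\wedge x^2)$, and in either case the statement one proves is not the one displayed.

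Second, the centering correction is a genuine obstruction, not a case you may wave away by appealing to ``the structure of the walks to which the lemma is applied.'' In the proof of Lemma~\ref{lem:key-upper-bound-1} the inequality is applied both to $D_l-\mu_1$ (heavy right tail, your favorable case $c\le0$) and, in the term (AI), to $\mu_1-D_l$, i.e.\ with the heavy tail on the \emph{left}; this is precisely your unfavorable case, where truncation induces a positive drift $c=-\E[X_1;|X_1|>x]>0$. A two-point example with a rare huge negative atom shows the drift issue is fatal in general: between the rare negative jumps the walk climbs linearly, $\P(\max_{k\le n}S_k\ge x)\approx1$ for $x\asymp n$, while $nV(x)/x^2\to0$, so no constant $C$ works. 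What saves the paper's application is not one-sidedness but Karamata again: for a regularly varying tail of index $-\alpha<-1$ and $x$ comparable to the number of summands, $n\,\E[|X_1|;|X_1|>x]=O(n\,x\,\P(|X_1|>x))=o(x)$, so the drift is absorbed. To close your argument you must either import the actual hypotheses of Shneer--Wachtel or prove the lemma under explicit tail conditions of this type; as written, the proposal does not establish the stated inequality, and its concluding claim that only routine steps remain is inaccurate.
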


\begin{lemma}[Etemadi's inequality]\label{result:etimedi}
Let $X_1, ..., X_n$ be independent real-valued random variables defined on some common probability space, and let $\alpha \geq 0$. Let $S_k$ denote the partial sum $S_{k} = X_{1} + \cdots + X_{k}$.
Then
$$\mathbb{P} \Bigl( \max_{1 \leq k \leq n} | S_{k} | \geq 3 x \Bigr) \leq 3 \max_{1 \leq k \leq n} \mathbb{P} \bigl( | S_{k} | \geq x \bigr).$$
\end{lemma}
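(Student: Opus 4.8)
The plan is to run the classical first‑passage decomposition. On the event $E \triangleq \{\max_{1\le k\le n}|S_k|\ge 3x\}$ (the case $x\le 0$ being trivial, since then the right‑hand side is $\ge 3$), define $\tau$ to be the first index $k\in\{1,\dots,n\}$ with $|S_k|\ge 3x$, so that $E=\bigcup_{k=1}^n\{\tau=k\}$ is a disjoint union. The one geometric fact that drives everything is: on $\{\tau=k\}$ we have $|S_k|\ge 3x$, hence if moreover $|S_n|<x$ then $|S_n-S_k|\ge|S_k|-|S_n|>2x$. Therefore
\[
\{\tau=k\}\subseteq \big(\{\tau=k\}\cap\{|S_n|\ge x\}\big)\cup\big(\{\tau=k\}\cap\{|S_n-S_k|>2x\}\big).
\]

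Next I would sum over $k$. The events $\{\tau=k\}$ are disjoint, so $\sum_{k=1}^n\P(\{\tau=k\}\cap\{|S_n|\ge x\})\le\P(|S_n|\ge x)$ and $\sum_{k=1}^n\P(\tau=k)\le 1$. For the remaining term I would use independence: $\{\tau=k\}$ is $\sigma(X_1,\dots,X_k)$‑measurable while $\{|S_n-S_k|>2x\}$ is $\sigma(X_{k+1},\dots,X_n)$‑measurable, so $\P(\{\tau=k\}\cap\{|S_n-S_k|>2x\})=\P(\tau=k)\,\P(|S_n-S_k|>2x)$. Since $|S_n-S_k|\le|S_n|+|S_k|$, the union bound gives $\P(|S_n-S_k|>2x)\le\P(|S_n|>x)+\P(|S_k|>x)\le 2\max_{1\le j\le n}\P(|S_j|\ge x)$. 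Putting these together,
\[
\P(E)\le\P(|S_n|\ge x)+2\max_{1\le j\le n}\P(|S_j|\ge x)\le 3\max_{1\le j\le n}\P(|S_j|\ge x),
\]
which is the assertion.

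There is no genuine obstacle here: this is the textbook proof of Etemadi's inequality and requires only the triangle inequality, the disjointness of the first‑passage events, and one independence split. The sole point worth stating carefully is that the set inclusion above must be applied before summing, so that the independence factorization and the disjointness of $\{\tau=k\}$ are not invoked for the same event twice. Alternatively, since the lemma is used here only as a black box, it may simply be cited from the literature (e.g.\ \citealp{billingsley2013convergence}).
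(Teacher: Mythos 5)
Your proof is correct: the first-passage decomposition at the first index where $|S_k|\ge 3x$, the triangle-inequality observation that $|S_n|<x$ forces $|S_n-S_k|>2x$ on that event, the independence factorization of $\{\tau=k\}$ from $S_n-S_k$, and the final union bound $\P(|S_n-S_k|>2x)\le\P(|S_n|>x)+\P(|S_k|>x)\le 2\max_j\P(|S_j|\ge x)$ together give exactly the stated constant $3$. Note, however, that the paper itself offers no proof of this lemma: it is quoted in the appendix as a classical inequality (used as a black box), and the only Etemadi-type statement the paper actually proves is its extension to L\'evy processes in Lemma~\ref{lem:cont_etemadi}, which is deduced from the discrete inequality by dyadic discretization and right-continuity. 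So your write-up supplies the standard textbook argument (as in Etemadi's original note or \citealp{billingsley2013convergence}) for a result the authors simply cite; there is nothing in the paper's treatment for it to diverge from, and no gap in what you wrote.
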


\begin{lemma}[Prokhorov's inequality; \cite{Prohorov}]\label{result:prokhorov}
Suppose that $\xi_i$, $i=1,\ldots,n$ are independent, zero-mean random variables such that there exists a constant $c$ for which $|\xi_i| \leq c$ for $i=1,\ldots,n$, and $\sum_{i=1}^n \var \xi_i < \infty$. Then
$$\P\left(\sum_{i=1}^n \xi_i > x\right) \leq \exp \left\{-\frac{x}{2c} \arcsinh\frac{xc}{2\sum_{i=1}^n \var \xi_i} \right\}, $$
which, in turn, implies
$$\P\left(\sum_{i=1}^n \xi_i > x\right) \leq \left(\frac{cx}{\sum_{i=1}^n \var \xi_i}\right)^{-\frac{x}{2c}}. $$
\end{lemma}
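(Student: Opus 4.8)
The plan is to derive the first (and sharper) bound by a Chernoff-type argument and then obtain the second bound as an elementary consequence. Write $S=\sum_{i=1}^n\xi_i$ and $\sigma_i^2=\var\xi_i$, so $B=\sum_{i=1}^n\sigma_i^2$ (the case $B=0$, where all $\xi_i$ vanish, being trivial). First I would apply Markov's inequality to $e^{\lambda S}$ for a free parameter $\lambda>0$ and use independence to get $\P(S>x)\le e^{-\lambda x}\prod_{i=1}^n\E e^{\lambda\xi_i}$. The next step is to control each factor using only $\E\xi_i=0$ and $|\xi_i|\le c$: I would establish the pointwise inequality $e^{\lambda y}-1-\lambda y\le\tfrac{2y^2}{c^2}(\cosh\lambda c-1)$ for $|y|\le c$ by a routine second-derivative analysis (the difference and its first derivative vanish at $y=0$; the difference is convex on $[-c,0]$, and on $[0,c]$ its minimum is attained at an endpoint, where it is nonnegative). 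Taking expectations and using $1+t\le e^t$ then gives $\E e^{\lambda\xi_i}\le\exp\{\tfrac{2\sigma_i^2}{c^2}(\cosh\lambda c-1)\}$, hence $\P(S>x)\le\exp\{-\lambda x+\tfrac{2B}{c^2}(\cosh\lambda c-1)\}$ for every $\lambda>0$.

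It then remains to optimize over $\lambda$. The exponent is strictly convex in $\lambda$, and the first-order condition $-x+\tfrac{2B}{c}\sinh\lambda c=0$ selects $\lambda c=\arcsinh\tfrac{xc}{2B}$. Writing $t=\lambda c$, so that $\sinh t=\tfrac{xc}{2B}$, the exponent at the optimum equals $-\tfrac{2B}{c^2}\bigl(t\sinh t-\cosh t+1\bigr)$. Finally I would invoke the elementary inequality $t\sinh t-\cosh t+1\ge\tfrac12 t\sinh t$ for $t\ge0$ — equivalently $\tfrac12 t\sinh t-\cosh t+1\ge0$, whose value and first derivative vanish at $t=0$ and whose second derivative is $\tfrac12 t\sinh t\ge0$ — to conclude $\P(S>x)\le\exp\{-\tfrac{B}{c^2}t\sinh t\}=\exp\{-\tfrac{x}{2c}\arcsinh\tfrac{xc}{2B}\}$, the first claimed inequality. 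For the second inequality, since $\arcsinh z=\log(z+\sqrt{z^2+1})\ge\log(2z)$ for $z>0$, applying this with $z=\tfrac{xc}{2B}$ gives $\arcsinh\tfrac{xc}{2B}\ge\log\tfrac{xc}{B}$, whence $\P(S>x)\le\exp\{-\tfrac{x}{2c}\log\tfrac{xc}{B}\}=\bigl(\tfrac{cx}{B}\bigr)^{-x/(2c)}$.

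The only point requiring genuine care is the choice of intermediate estimate for the moment generating function. The sharper Bennett-type bound $\E e^{\lambda\xi_i}\le\exp\{\tfrac{\sigma_i^2}{c^2}(e^{\lambda c}-1-\lambda c)\}$ is also valid, but optimizing with it produces the Bennett rate $(1+\tfrac{xc}{B})\log(1+\tfrac{xc}{B})-\tfrac{xc}{B}$ rather than the stated $\arcsinh$ form; the $\cosh$-based bound above is precisely the one tuned so that the optimizing $\lambda$ equals $\tfrac1c\arcsinh\tfrac{xc}{2B}$ and the optimal value collapses, via the elementary inequality, to $-\tfrac{x}{2c}\arcsinh\tfrac{xc}{2B}$. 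Everything else — the Chernoff step, the convexity estimate, and the two one-variable inequalities — is elementary bookkeeping.
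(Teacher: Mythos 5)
Your proof is correct, and all the key steps check out: the pointwise bound $e^{\lambda y}-1-\lambda y\le\tfrac{2y^2}{c^2}(\cosh\lambda c-1)$ on $|y|\le c$ does hold (the difference vanishes to first order at $0$, is convex on $[-c,0]$, is convex-then-concave on $[0,c]$ so its minimum there sits at $0$ or $c$, and both endpoint values reduce to $e^{\mp\lambda c}-1\pm\lambda c\ge0$); the optimizing choice $\lambda c=\arcsinh\tfrac{xc}{2B}$ and the inequality $t\sinh t-\cosh t+1\ge\tfrac12 t\sinh t$ then give exactly the stated $\arcsinh$ bound, and $\arcsinh z\ge\log(2z)$ yields the power-type corollary. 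Note that the paper offers no proof to compare against: this lemma is quoted from Prokhorov's 1959 paper with a citation only, so your Chernoff-plus-$\cosh$ derivation is a self-contained substitute rather than a variant of an in-paper argument. The only (minor) point worth making explicit is that the statement, and your optimization (which needs $\lambda>0$), implicitly assume $x>0$; this is how the lemma is used in the paper (with $x=n\delta$), and for $x\le 0$ the bound is not meaningful anyway.
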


We extend the Etemadi's inequality to L\'evy processes in the following lemma.
\begin{lemma}\label{lem:cont_etemadi}
Let $Z$ be a L\'evy process. Then,
$$\mathbb{P} \Bigl( \sup_{t \in [0,n]} | Z(t) | \geq  x \Bigr) \leq 3 \sup_{t\in[0,n]} \P \bigl( | Z(t) | \geq x/3 \bigr).$$
\end{lemma}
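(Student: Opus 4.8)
The plan is to reduce the continuous-time statement to the classical discrete-time Etemadi inequality (Lemma~\ref{result:etimedi}) by a careful discretization-and-passage-to-the-limit argument, using the right-continuity of sample paths of a L\'evy process. First I would fix $x>0$ and a mesh parameter $N\in\mathbb N$, and consider the dyadic-type partition $0 = t_0 < t_1 < \cdots < t_{mN} = n$ of $[0,n]$ with $t_i = in/(mN)$ for some integer $m\geq 1$ (so $t_{mN}=n$). Set $X_i \triangleq Z(t_i) - Z(t_{i-1})$; by stationary independent increments these are independent, and the partial sums $S_k = X_1+\cdots+X_k$ satisfy $S_k = Z(t_k)$. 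Applying Lemma~\ref{result:etimedi} to $X_1,\ldots,X_{mN}$ with threshold $x/3$ gives
\[
\mathbb P\Big(\max_{1\leq k\leq mN} |Z(t_k)| \geq x\Big) \leq 3\max_{1\leq k\leq mN}\mathbb P\big(|Z(t_k)|\geq x/3\big) \leq 3\sup_{t\in[0,n]}\mathbb P\big(|Z(t)|\geq x/3\big).
\]
The right-hand side is already the bound we want, uniformly in $m,N$, so the work is all on the left-hand side: I must show that $\max_{1\leq k\leq mN}|Z(t_k)|$ increases (along a suitable refining sequence of partitions) to $\sup_{t\in[0,n]}|Z(t)|$, at least in the sense that the probability of $\{\max_k |Z(t_k)| \geq x\}$ converges up to (a quantity dominating) $\mathbb P(\sup_{t\in[0,n]}|Z(t)|\geq x)$.

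The key step is therefore the following monotone-limit claim: along a nested sequence of finite partitions $\Pi_1 \subseteq \Pi_2 \subseteq \cdots$ of $[0,n]$ whose union $D$ is dense in $[0,n]$ and contains $n$, we have $\sup_{t\in D}|Z(t)| = \sup_{t\in[0,n]}|Z(t)|$ almost surely. This is exactly where right-continuity of $Z$ enters: for any $t\in[0,n)$ there is a sequence $s_j\downarrow t$ with $s_j\in D$, and $Z(s_j)\to Z(t)$, so $|Z(t)| \leq \sup_{s\in D}|Z(s)|$; the point $t=n$ is in $D$ by construction. Hence $\sup_{t\in[0,n]}|Z(t)| \leq \sup_{t\in D}|Z(t)|$, and the reverse inequality is trivial. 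Writing $D = \bigcup_m \Pi_m$ with $M_m \triangleq \max_{t\in\Pi_m}|Z(t)|$ nondecreasing in $m$ and $M_m \uparrow \sup_{t\in D}|Z(t)| = \sup_{t\in[0,n]}|Z(t)|$, continuity of probability from below gives
\[
\mathbb P\Big(\sup_{t\in[0,n]}|Z(t)| > x\Big) = \lim_{m\to\infty}\mathbb P\big(M_m > x\big) \leq 3\sup_{t\in[0,n]}\mathbb P\big(|Z(t)|\geq x/3\big).
\]
A small extra care is needed to pass from the strict inequality $\sup > x$ back to $\sup \geq x$: one runs the above with $x-\varepsilon$ in place of $x$ (so that $\{\sup \geq x\}\subseteq\{\sup > x-\varepsilon\}$), obtains the bound $3\sup_{t}\mathbb P(|Z(t)|\geq (x-\varepsilon)/3)$, and then lets $\varepsilon\downarrow 0$; since $\{|Z(t)|\geq (x-\varepsilon)/3\}\downarrow\{|Z(t)|\geq x/3\}$ as $\varepsilon\downarrow0$ for each fixed $t$, and the supremum over $t$ of a decreasing family of functions is again decreasing, one recovers $3\sup_{t\in[0,n]}\mathbb P(|Z(t)|\geq x/3)$ in the limit. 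Alternatively one can simply note that $\{\sup_{t\in[0,n]}|Z(t)|\geq x\} \subseteq \bigcap_{\varepsilon>0}\{\sup_{t\in[0,n]}|Z(t)|>x-\varepsilon\}$ and take $\varepsilon$ through a countable sequence.

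The main obstacle I anticipate is purely the measurability/limit bookkeeping: one must make sure that $\sup_{t\in[0,n]}|Z(t)|$ is a genuine random variable (which follows from right-continuity, since the supremum over $[0,n]$ equals the supremum over any countable dense subset containing the endpoints), and that the monotone convergence $M_m\uparrow \sup_{t\in[0,n]}|Z(t)|$ holds almost surely rather than merely in distribution — again this is delivered by right-continuity of paths. No delicate estimate is involved; the content is the reduction to Lemma~\ref{result:etimedi} plus the observation that evaluating on a countable dense set loses nothing. I would present the proof in the order: (1) note $\sup_{t\in[0,n]}|Z(t)|$ is measurable and equals $\sup$ over a countable dense $D\ni n$; (2) apply Etemadi to finite subsets $\Pi_m\uparrow D$; (3) take the monotone limit and, if desired, the $\varepsilon\downarrow0$ cleanup.
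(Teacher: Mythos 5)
Your proof follows essentially the same route as the paper's: discretize the time interval, apply the discrete Etemadi inequality (Lemma~\ref{result:etimedi}) to the independent increments so that the partial sums are $Z(t_k)$, and pass to the limit using the fact that for a c\`adl\`ag path the supremum over a dense set containing the right endpoint equals the supremum over $[0,n]$; your right-continuity argument for that last fact is, if anything, cleaner than the paper's compactness/subsequence argument. The one soft spot is the final $\varepsilon\downarrow 0$ cleanup: monotonicity of $\varepsilon\mapsto\sup_{t\in[0,n]}\P\bigl(|Z(t)|\geq (x-\varepsilon)/3\bigr)$ only gives that the limit exists and dominates $\sup_{t}\P\bigl(|Z(t)|\geq x/3\bigr)$; the interchange of the $\varepsilon$-limit with the supremum over $t$ that you assert does not follow from monotonicity alone and would need an extra ingredient, e.g.\ stochastic continuity of $Z$ plus a portmanteau argument along a convergent subsequence of near-maximizing times. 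This only concerns the boundary distinction between $\geq$ and $>$: the paper's own proof simply establishes the strict-inequality version (which is all that is invoked later, e.g.\ in the random-walk and decomposition estimates), so you could either drop the cleanup and state the bound with strict inequalities, or justify the interchange along the lines above.
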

\begin{proof}
Since $Z$ (and hence $|Z|$ also) is in $\mathbb D$, $\sup_{0\leq k\leq 2^m}|Z(\frac{nk}{2^m})|$ converges to $\sup_{t\in[0,n]}|Z(t)|$ almost surely as $m\to\infty$.
To see this, note that one can choose $t_i$'s such that $|Z(t_i)| \geq \sup_{t\in[0,n]}|Z(t)|-i^{-1}$. Since $\{t_i\}$'s are in a compact set $[0,n]$, there is a subsequence, say, $t'_i$, such that $t'_i \to t_0$ for some $t_0 \in [0,n]$. The supremum has to be achieved at either $t_0^-$ or $t_0$. Either way, with large enough $m$, $\sup_{0\leq k\leq 2^m}|Z(\frac{nk}{2^m})|$ becomes arbitrarily close to the supremum. Now, by bounded convergence,
\begin{align}
&
\P\left\{\sup_{t\in[0,n]} |Z(t)| > x\right\}
\nonumber
\\
&= \lim_{m\to\infty} \P\left\{\sup_{0\leq k \leq 2^m}\left|Z(\frac{nk}{2^m})\right|>x\right\}\nonumber\\
&= \lim_{m\to\infty} \P\left\{\sup_{0\leq k \leq 2^m}\left|\sum_{i=0}^k\left(Z(\frac{ni}{2^m})-Z(\frac{n(i-1)}{2^m})\right)\right|>x\right\}\nonumber\\
&\leq \lim_{m\to\infty} 3\sup_{0\leq k \leq 2^m}\P\left\{\left|\sum_{i=0}^k\left(Z(\frac{ni}{2^m})-Z(\frac{n(i-1)}{2^m})\right)\right|>x/3\right\}\nonumber\\
&= \lim_{m\to\infty} 3\sup_{0\leq k \leq 2^m}\P\left\{\left|Z(\frac{nk}{2^m})\right|>x/3\right\}\nonumber\\
&\leq 3\sup_{t\in[0,n]}\P\left\{\left|Z(t)\right|>x/3\right\},\nonumber
\end{align}
where $Z(t) \triangleq 0$ for $t<0$.
\end{proof}

\revrem{

\newpage
\section{List of Notations}
\begin{itemize}
\item $(\mathbb S,d)$: complete separable metric space
\item $F_\delta \triangleq \{x \in \mathbb D: d(x,F) \leq \delta\}$

\item $G^{-\delta} \triangleq ((G^c)_\delta)^c$

\item $A^\circ$: interior of $A$ \qquad\qquad
\\
$\bar A$: closure of $A$ \qquad\qquad \\
$\partial A = \bar A\setminus A^\circ$: boundary of $A$

\item $\nu$: regularly varying L\'evy measure with index $-\alpha$ and $-\beta$\\
i.e., $\nu[n,\infty) = n^{-\alpha}L_+(n)$ and $\nu(-\infty,-n] = n^{-\beta}L_-(n)$\\
$L_+(n) = n^\alpha\nu[n,\infty)$ \\
$L_-(n) = n^\beta\nu(-\infty,-n]$

\item $X$: L\'evy process with L\'evy measure $\nu$\\
$X_n(t) = X(nt)$\\
$\bar X_n(t) = \frac1n X_n(t) - ta - \mu_1^+\nu_1^+t$\qquad or \qquad$\bar X_n(t) = \frac1n X_n(t) - ta - (\mu_1^+\nu_1^+-\mu_1^-\nu_1^-)t$

\item
$\I_{<j,k} = \{(l,m) \in \mathbb Z_+^2 \setminus \{(j,k)\}: (\alpha-1)l + (\beta-1)m \leq (\alpha-1)j + (\beta-1)k\}$
\\
$\I_{= j,k} = \{(l,m)\in \mathbb Z_+^2: (\alpha-1)l+(\beta-1)m = (\alpha-1)j+(\beta-1)k\}$
\\
$\I_{\ll j,k} = \{(l,m)\in \mathbb Z_+^2: (\alpha -1)l + (\beta-1)m < (\alpha-1)j+(\beta-1)k\}$
\\
$\I_{<(j_1,\ldots,j_d)}
= \big\{(l_1,\ldots,l_d)\in \mathbb Z_+^d\setminus\{(j_1,\ldots,j_d)\}: (\alpha_1-1)l_1+\cdots+(\alpha_d-1)l_d \leq (\alpha_1-1)j_1+\cdots+(\alpha_d-1)j_d\big\}
$
\\
$J_{>(j_1,\ldots,j_d)}
=
\big\{(l_1,\ldots,l_d)\in \mathbb Z_+^2: (\alpha_1-1)l_1+\cdots+(\alpha_d-1)l_d > (\alpha_1-1)j_1+\cdots+(\alpha_d-1)j_d\big\} \cup \{(j_1,\ldots, j_d)\}
$

\item
$\mathbb R_+$: set of non-negative real numbers\\
$\mathbb Z_+$: set of non-negative integers
\item
$\mathbb R_+^{\infty\downarrow} = \{x \in \mathbb R_+^\infty: x_1 \geq x_2 \geq \ldots\}$\\
$\mathbb R_+^{j\downarrow} = \{x \in \mathbb R_+^j: x_1 \geq x_2 \geq \ldots \geq x_j\}$\\
$\mathbb H_{j} = \{x \in \mathbb R_+^{\infty\downarrow}: x_j > 0, x_{j+1} = 0\}$\\
$\mathbb H_{\leqslant j} = \{x\in \mathbb R_+^{\infty \downarrow}: x_{j+1} = 0\}$
\item
$\mathbb D = \mathbb D([0,1],\R)$: real-valued RCLL functions on $[0,1]$\\
$\mathbb D_s^\uparrow$: subspace of $\mathbb D$ consisting of non-decreasing step functions vanishing at 0 \\
$\mathbb D_{j}$: subspace of $\mathbb D$ consisting of non-decreasing step functions vanishing at 0 with $j$ jumps\\
$\mathbb D_{j,k}$: subspace of $\mathbb D$ consisting of step functions vanishing at 0 with $j$ upward jumps and $k$ downward jumps\\
$\mathbb D_{\leqslant j} = \bigcup_{0\leqslant l\leqslant j} \mathbb D_{l}$ \\
$\mathbb D_{< j} = \bigcup_{0\leqslant l< j} \mathbb D_{l}$ \\
$\mathbb D_{<j,k} = \bigcup_{(l,m)\in \I_{<j,k}} \mathbb D_{l,m}$\\
$\mathbb D_{<(j,k)} = \bigcup_{(l,m)\in \I_{<j,k}} \mathbb D_l\times \mathbb D_m$
\\
$\mathbb D_{= j,k} = \bigcup_{(l,m)\in \I_{= j,k}} \mathbb D_{l,m}$\\
$\mathbb D_{\ll j,k} = \bigcup_{(l,m)\in \I_{\ll j,k}} \mathbb D_{l,m}$

\item
$\mathbb C_{(l_1,\ldots,l_d)}
= \bigcup_{i=1}^d  (\mathbb D^{i-1} \times \mathbb D_{<l_i} \times \mathbb D^{d-i})
$

\item $d$: Skorokhod metric on $\mathbb D([0,1],\R)$

\item
$\hat S_j = \{(x,u)\in \R_+^{j\downarrow}\times [0,1]^j: 0, 1, u_1,\ldots, u_j \text{ are all distinct}\}$\\
$S_j = \{(x,u)\in \R_+^{\infty\downarrow}\times [0,1]^\infty: 0, 1, u_1,\ldots, u_j \text{ are all distinct}\}$\\
$\hat T_j: \hat S_j \rightarrow \mathbb D_j$ defined by $\hat T_j(x,u) = \sum_{i=1}^j x_i 1_{[u_i,1]}$ \\
$T_m: S_m\to \mathbb D$ defined by $T_m(x,u) = \sum_{i=1}^m x_i 1_{[u_i,1]}$

\item $\nu_\alpha(x,\infty) = x^{-\alpha}$\\
$\nu_\alpha^j$: restriction (to $\mathbb R_+^{j\downarrow}$) of $j$-fold product measure of $\nu_\alpha$

\item $U_i$, $V_i$: i.i.d.\ uniform random variables on $[0,1]$
\item $C_j(\cdot) = \E\left[\nu_\alpha^j\{y\in (0,\infty)^j: \sum_{i=1}^j y_i1_{[U_i,1]}\in \cdot\}\right]$
\\
$C_{j,k}(\cdot) = \E \Big[\nu_\alpha ^j\times\nu_\beta^k \{(x,y)\in (0,\infty)^j\times(0,\infty)^k:\sum_{i=1}^j x_i 1_{[U_i,1]} - \sum_{i=1}^k y_i1_{[V_i,1]}\in \cdot\}\Big]$


\item $\nu_1^+ = \nu[1, \infty)$
\\
$\mu_1^+ =  \frac{1}{\nu_1^+}\int_{[1,\infty)}x\nu(dx)$
\\
$\nu_1^- = \nu(- \infty, -1]$
\\
$\mu_1^- =  \frac{1}{\nu_1^-}\int_{(-\infty,-1]}x\nu(dx)$

\item 
$\mathcal D_+(\xi)$: number of upward jumps of $\xi\in \mathbb D$\\
$\mathcal D_-(\xi)$: number of downward jumps of $\xi\in \mathbb D$

\item $\mathcal J(A)= \inf_{\xi\in \mathbb D_s^\uparrow \cap A}\mathcal D_+(\xi)$
\item $\mathcal I(j,k) = (\alpha-1)j + (\beta-1)k$
\item $(\mathcal J(A), \mathcal K(A)) = \argmin_{\substack{(j,k)\in \mathbb Z_+^2\\ \mathbb D_{j,k} \cap A \neq \emptyset}}\mathcal I(j,k)$

\item
$I(\xi)\triangleq
\left\{\begin{array}{ll}
	(\alpha-1)\mathcal D_+(\xi) + (\beta-1)\mathcal D_-(\xi) & \text{if $\xi$ is a step function with $\xi(0) = 0$}
	\\
	\infty & \text{otherwise}
\end{array}
\right.
$

\item
$\delta_{(x,y)}$:  Dirac measure concentrated on $(x,y)$

\item
$
Q_n(x) =  n\nu[x,\infty)
$\\
$
Q_n^{\gets}(y) = \inf\{s>0: n\nu[s,\infty)< y\}
$\\
$
\tilde N_n  = N_n\big([0,1]\times [1,\infty)\big)
$\\
$
N_n = \textstyle{\sum_{l=1}^\infty} \delta_{(U_l, Q_n^{\gets}(\Gamma_l))}
$

\item
$J_n(s)
= \sum_{l=1}^{\tilde N_n} Q_n^\gets (\Gamma_l)1_{[U_l,1]}(s)
\stackrel{\mathcal D}{=} \int_{x>1} xN([0,ns]\times dx)$
\\
$\Gamma_l = E_1 + E_2 + ... + E_l$
\\
$E_i$'s are i.i.d.\ standard exponential random variables\\
$U_l$'s are i.i.d.\ uniform variables in $[0,1]$
\item
$
\bar J_n = \textstyle{\frac1n \sum_{l=1}^{\tilde N_n}} (Q_n^\gets (\Gamma_l)-\mu_1^+)1_{[U_l,1]}
$\\
$
\hat J_n^{\leqslant j} = \textstyle{\frac1n \sum_{l=1}^{j}} Q_n^\gets (\Gamma_l)1_{[U_l,1]}
$\\
$
\check J_n^{\leqslant j} = \textstyle{\frac1n\sum_{l=1}^{j}} -\mu_1^+1_{[U_l,1]},
$\\
$\bar J_n^{> j} = \textstyle{\frac1n \sum_{l=j+1}^{\tilde N_n}} (Q_n^\gets (\Gamma_l)-\mu_1^+)1_{[U_l,1]},
$\\
$
\bar R_n^+ = \textstyle{\frac1n \I(\tilde N_n < j)\sum_{l=\tilde N_n+1}^j} (Q_n^\gets(\Gamma_l) - \mu_1^+)1_{[U_l,1]},
$

\end{itemize}

}

\bibliographystyle{apalike}      
\bibliography{RheeBlanchetZwart16_ref170224}

\end{document}